\theoremstyle{plain}
\newtheorem{theorem}{Theorem}[section]
\newtheorem{lemma}[theorem]{Lemma}
\newtheorem{proposition}[theorem]{Proposition}
\newtheorem{corollary}[theorem]{Corollary}
\theoremstyle{definition}
\newtheorem{definition}[theorem]{Definition}
\newtheorem{remark}[theorem]{Remark}
\newtheorem{example}[theorem]{Example}
\newcommand{\source}{{\mathrm{s}}}
\newcommand{\target}{{\mathrm{t}}}
\newcommand{\mm}{{\mathbf{m}}}
\newcommand{\uu}{{\mathbf{u}}}
\begin{document}

\title{Bipartite determinantal ideals and concurrent vertex maps}

\author{Li Li}
\address{Department of Mathematics
and Statistics,
Oakland University, 
Rochester, MI 48309-4479, USA 
}
\email{li2345@oakland.edu}

\subjclass[2010]{Primary 14M12; Secondary 05E40, 05E45, 13C40, 13F55, 13H10}

\begin{abstract}
Bipartite determinantal ideals are introduced in \cite{IL} as a vast generalization of the classical determinantal ideals intensively studied in commutative algebra, algebraic geometry, representation theory and combinatorics.  We introduce a combinatorial model called concurrent vertex maps to describe the Stanley-Reisner complex of the initial ideal of any bipartite determinantal ideal, and study properties and applications of this model including vertex decomposability, shelling orders, formulas of the Hilbert series and $h$-polynomials.
\end{abstract}

\maketitle

%\tableofcontents

\section{Introduction}
The primary objective of this paper is to present a novel combinatorial model called ``concurrent vertex maps". This model is introduced to effectively describe the Stanley-Reisner complex of a bipartite determinantal ideal. Additionally, we use this combinatorial model to study properties of the Stanley-Reisner complex.

We begin by reviewing the classical determinantal ideals. Let ${\bf k}$ be a field, $A$ be an $m\times n$ matrix consisting of independent variables $x_{ij}$,  $r$ be a positive integer. The ideal $I$ of ${\bf k}[x_{ij}]$ generated by all $r\times r$ minors in $A$ is referred as a determinantal ideal. The quotient ring ${\bf k}[x_{ij}]/I$ is known as a determinantal ring. 
Geometrically, a determinantal ideal defines an affine variety called determinantal variety, whose ${\bf k}$-points are $m\times n$ matrices with entries in ${\bf k}$ and rank at most $r-1$. 
For example, take the case where $m=2, n=3, r=2$. The matrix $A$ and the determinantal ideal $I$ are: 
$$A=\begin{bmatrix}x_{11}&x_{12}&x_{13}\\ x_{21}&x_{22}&x_{33}\end{bmatrix},
\quad
I=\langle
\begin{vmatrix}x_{11}&x_{12}\\ x_{21}&x_{22}\end{vmatrix},
\begin{vmatrix}x_{11}&x_{13}\\ x_{21}&x_{23}\end{vmatrix},
\begin{vmatrix}x_{12}&x_{13}\\ x_{22}&x_{23}\end{vmatrix}
\rangle
$$
The corresponding determinantal variety consisits of all $2\times 3$ matrices that do not have full rank. This variety is irreducible, singular, and $4$-dimensional. 

Determinantal ideals have been a central topic, a test field, and a source of inspiration in various areas such as commutative algebra, algebraic geometry, representation theory, and combinatorics.
Researchers have extensively investigated determinantal ideals/rings, exploring topics such as minimal free resolution and syzygies (both algebraically and geometrically in terms of vector bundles), (Arithmetic) Cohen-Macaulayness, multiplicities, powers and products, Hilbert series, $a$-invariant, $F$-regularity and $F$-rationality, and local cohomology. The tools employed in these investigations include the Gr\"obner basis theory, simplicial complexes, the straightening law, the Knuth-Robinson-Schensted (KRS) correspondence, non-intersecting paths from algebraic combinatorics, and liaison theory from commutative algebra. Determinantal ideals/rings are closely connected to the study of Grassmannian and Schubert varieties, and there exist interesting specializations, generalizations, and variations, such as Segre varieties, linear determinantal varieties, rational normal scrolls, symmetric and skew-symmetric determinantal varieties, as well as classical, two-sided, or mixed ladder determinantal varieties. For further reading on these topics, interested readers can refer to \cite{Baetica, BCRV, Harris, MS, Miro-Roig,Narasimhan, Weyman} and the references therein.

In our previous work \cite{IL}, we introduced a generalization of determinantal ideals called the bipartite determinantal ideals, and showed that the natural generators form a Gr\"obner basis under a suitable monomial order. Let us recall the definition and the main results.

\begin{definition}\label{bipartitedeterminantalideal}(Bipartite determinantal ideal)
Let $\mathcal{Q}$ be a bipartite quiver with $d$ vertices and the vertex set ${\rm V}_\mathcal{Q}={\rm V}_{\rm source}\sqcup {\rm V}_{\rm target}$, and with $r$ arrows $h_i: \source(h_i)\to \target(h_i)$ (for $i=1,\dots,r$) such that $\source(h_i)\in {\rm V}_{\rm source}$ and $\target(h_i)\in {\rm V}_{\rm target}$.
Let $\mm=(m_1,\dots,m_d)$, $\uu=(u_1,\dots,u_d)$  be two $d$-tuples of nonnegative integers.
Let $X= \{ X^{(k)}=[x^{(k)}_{ij}]\}_{k=1}^r $ where  $X^{(k)}$ is an $m_{\target(h_k)}\times m_{\source(h_k)}$ matrix consisting of independent variables.  
For $\alpha\in {\rm V}_{\rm target}$ and $\beta\in {\rm V}_{\rm source}$,
 let $r_1<\cdots<r_s$ be all the indices such that $h_{r_1},\dots,h_{r_s}$ are all the arrows with target $\alpha$, 
 let $r'_1<\cdots<r'_t$ be all the indices such that $h_{r'_1},\dots,h_{r'_t}$ are all the arrows with source $\beta$, 
and define block matrices
	$$ A_\alpha := [ X^{(r_1)}|X^{(r_2)}|\hdots|X^{(r_s)} ],
	\quad
	A_\beta :=\left[
	\begin{array}{c}
	X^{(r'_1)}  \\ \hline
	X^{(r'_2)} \\ \hline
	\vdots \\ \hline
	X^{(r'_t)}
	\end{array}
	\right]. $$	
For $\gamma\in{\rm V}_\mathcal{Q}$, let $a_\gamma\times b_\gamma$ be the size of $A_\gamma$, that is, 
$(a_\alpha,b_\alpha) :=(m_\alpha,\sum_{h:\ \target(h)=\alpha} m_{\source(h)})$ for $\alpha\in {\rm V}_{\rm target}$, 
$(a_\beta,b_\beta) :=(\sum_{h:\ \source(h)=\beta} m_\alpha,m_{\target(h)})$ for $\beta\in {\rm V}_{\rm source}$. 
The \textit{bipartite determinantal ideal} $I_{\mathcal{Q},\mm,\uu}$ is the ideal of ${\bf k}[X]$ generated by $\bigcup_{\gamma\in{\rm V}_\mathcal{Q}}D_{u_\gamma+1}(A_\gamma)$, where $D_{u_\gamma+1}(A_\gamma)$ is the set of all $(u_\gamma+1)$-minors of $A_\gamma$.  These generators are called the \emph{natural generators} of $I_{\mathcal{Q},\mm,\uu}$. 
\end{definition}

As explained in \cite{IL}, the bipartite determinantal ideal specializes to the classical determinantal ideal $I^{\rm det}_{m,n,u}$ of $(u+1)$-minors in an $m\times n$ matrix for $\mathcal{Q}:2\to 1$, ${\bf m}=(m_1,m_2)=(m,n)$, ${\bf u}=(u_1,u_2)=(u,u)$; more generally, they specializes to a double determinantal ideal $I^{(r)}_{m,n,u,v}$ for $\mathcal{Q}:2\stackrel{r}{\to}1$ with $r$ arrows from $2$ to $1$ which we introduced in \cite{IL} and is generated by $(u+1)$-minors in $A_1$ and $(v+1)$-minors in $A_2$, where 
$$ A_1= [ X^{(1)}|X^{(2)}|\hdots|X^{(r)} ], 
	\hspace{12pt}
	A_2= \left[
	\begin{array}{c}
	X^{(1)}  \\ \hline
	X^{(2)} \\ \hline
	\vdots \\ \hline
	X^{(r)}
	\end{array}
	\right] $$	  
and each $X^{(k)}$ is a variable matrix of dimension $m\times n$.

Define 
$$v_\alpha :=\sum_{\target(h_i)=\alpha} u_{\source(h_{i})} \textrm{ for }\alpha\in {\rm V}_{\rm target}, 
\quad
v_\beta:=\sum_{\source(h_i)=\beta} u_{\target(h_i)} \textrm{ for }\beta\in {\rm V}_{\rm source}. $$
Without loss of generality, we assume
\begin{equation}\label{eq:condition on u} 
\forall \gamma\in {\rm V}_\mathcal{Q}: 0<u_\gamma\le \min(a_\gamma,b_\gamma),\quad
\forall\alpha\in {\rm V}_{\rm target}: u_\alpha\le v_\alpha, \quad
\forall\beta\in {\rm V}_{\rm source}: u_\beta\le v_\beta.
\end{equation}
Indeed, 
if $u_\gamma \ge \min(a_\gamma,b_\gamma)$ (resp. $u_\alpha>v_\alpha$, $u_\beta>v_\beta$), then we can replace $\mu_\gamma$ by $\min(a_\gamma,b_\gamma)$ (resp. $u_\alpha$ by $v_\alpha$, $u_\beta$ by $v_\beta$) and  $I_{\mathcal{Q},\mm,\uu}$ remains unchanged;
if $u_\gamma=0$, then let $\mathcal{Q}'$ be the quiver obtained from $\mathcal{Q}$ by removing the vertex $\gamma$ and all the arrows incident to $\gamma$, let $\mm'=(m_i)_{i\neq \gamma}$, $\uu'=(u_i)_{j\neq \gamma}$. 
We have
$I_{\mathcal{Q},\mm,\uu}= I_{\mathcal{Q}',\mm',\uu}'
+ 
\langle x^{(k)}_{ij}\in X^{(k)} \ | \ \textrm{ the arrow $h_k$ is incident to $\gamma$} \rangle $
and the natural generators of $I_{\mathcal{Q},\mm,\uu}$ consists of the natural generators of $I_{\mathcal{Q}',\mm',\uu'}$ together with variables $x^{(k)}_{ij}$ where the arrow $h_k$ is incident to $\gamma$.  So no essential information is lost when we replace 
$I_{\mathcal{Q},\mm,\uu}$ by $I_{\mathcal{Q}',\mm',\uu'}$.
We should point out that if $u_\gamma = \min(a_\gamma,b_\gamma)$, some of the proofs in this paper need to be modified, but they always degenerate to simpler or trivial cases, so we feel it is unnecessary and redundant to explain all the modifications, and is safe to omit.

For a matrix of variables  $Y=(y_{ij})$, we say that an order ``$>$'' is \textit{consistent} on $Y$ when $y_{ij}> y_{i,j+k}$ and $y_{ij}> y_{i+k,j}$ for all $i,j$.  
There exists a lexicographical monomial order that is consistent on all $A_\gamma$, for example we can take the  lex order ``$>$'' such that 
\begin{equation}\label{eq: favorite order}
\textrm{$x_{ij}^{(r)}>x_{kl}^{(s)}$ if : ``$r<s$'', or ``$r=s,i<k$'', or ``$r=s,i=k,j<l$''.}
\end{equation}

Recall the following theorem proved in \cite{IL}.

\begin{theorem} \label{general}\cite{IL}
Fix ${\mathcal{Q},\mm,\uu}$ satisfying \eqref{eq:condition on u}. 
Then the set of natural generators of  
{bipartite determinantal ideal} $I_{\mathcal{Q},\mm,\uu}$, 
if nonempty, forms a Gr\"obner basis with respect to any lexicographical monomial order that is consistent on all $A_i$. 
\end{theorem}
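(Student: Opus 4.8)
The plan is to deduce this, following \cite{IL}, from a Hilbert-function comparison rather than a direct Buchberger check (the latter would require reducing every $S$-polynomial among the natural generators to zero using the exchange relations among minors, whose bookkeeping over several block matrices is unwieldy). Write $R={\bf k}[X]$, $I=I_{\mathcal{Q},\mm,\uu}$, let $G$ be the set of natural generators, and set $M=\langle \mathrm{in}_>(g):g\in G\rangle$. Since $>$ is consistent on every block matrix $A_\gamma$, the leading term of a $(u_\gamma+1)$-minor of $A_\gamma$ is the product of its main-diagonal entries, so $M$ is the monomial ideal generated by the ``diagonal monomials'' of size $u_\gamma+1$ inside the various $A_\gamma$. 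As $G\subseteq I$, one always has $M\subseteq\mathrm{in}_>(I)$, hence $\dim_{\bf k}(R/M)_j\ge\dim_{\bf k}(R/\mathrm{in}_>(I))_j=\dim_{\bf k}(R/I)_j$ for all $j$, and $G$ is a Gr\"obner basis exactly when equality holds in every degree. Because the monomials outside $M$ always span $R/I$ (reduce any monomial by dividing out leading terms of elements of $G$), it suffices to show those monomials are linearly independent modulo $I$, i.e.\ $\dim_{\bf k}(R/I)_j\ge\dim_{\bf k}(R/M)_j$ for all $j$.

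To obtain this, I would build an explicit ${\bf k}$-basis of $R/I$ of the correct size in each degree via a standard monomial theory. First, establish a straightening law: modulo $I$, every product of minors of the block matrices $A_\gamma$ can be rewritten as a ${\bf k}$-linear combination of ``standard'' products of minors whose shape uses fewer than $u_\gamma+1$ columns in the part taken from each $A_\gamma$. Then show these standard products are linearly independent in $R/I$ by a specialization argument in the spirit of De Concini--Procesi--Eisenbud, adapted so as to keep track of the block decomposition of each $A_\gamma$. This presents the standard bitableaux of bounded shape as a basis of $R/I$.

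Finally, match the two counts: construct a Knuth--Robinson--Schensted--type correspondence --- equivalently, a non-intersecting lattice path model \`a la Lindstr\"om--Gessel--Viennot --- giving a degree-preserving bijection between the standard bitableaux above and the monomials of $R$ that avoid every forbidden diagonal pattern, i.e.\ the monomials not in $M$. This forces $\dim_{\bf k}(R/I)_j=\dim_{\bf k}(R/M)_j$ for all $j$, whence $M=\mathrm{in}_>(I)$ and $G$ is a Gr\"obner basis. (If one already knows $R/M$ is Cohen--Macaulay --- automatic in the classical case, and obtainable \emph{a posteriori} from the vertex-decomposability results proved later in this paper --- it is enough instead to check that $R/I$ and $R/M$ have the same Krull dimension and the same multiplicity; but that shortcut is logically downstream of the statement at hand.)

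I expect the main obstacle to be the interaction between the two kinds of block matrices. At a target vertex $\alpha$ the matrix $A_\alpha$ is a horizontal concatenation of the $X^{(k)}$ with $\target(h_k)=\alpha$, while at a source vertex $\beta$ the matrix $A_\beta$ is a vertical stacking of the $X^{(k)}$ with $\source(h_k)=\beta$, and each variable matrix $X^{(k)}$ sits inside one matrix of each type. A single monomial of $R$ is therefore constrained simultaneously by diagonal patterns that must be read in two incompatible directions, so both the straightening law and the KRS\,/\,lattice-path bijection have to be engineered to carry this two-sided bookkeeping at once. Organizing precisely this data is what the concurrent vertex map model introduced in this paper is designed to do.
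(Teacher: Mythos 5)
The statement you are proving is cited from \cite{IL}; the present paper does not reprove it, but invokes it as external input and builds the concurrent--vertex--map theory on top of it. So there is no internal proof here to compare your outline against, only the external reference.

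Taken on its own terms, your reduction is correct: with $M=\langle \mathrm{in}_>(g):g\in G\rangle\subseteq\mathrm{in}_>(I)$, and the monomials outside $M$ already spanning $R/I$ by division, it does suffice to prove $\dim_{\bf k}(R/I)_j\ge\dim_{\bf k}(R/M)_j$ in every degree, and your proposed route (straightening law giving standard bitableaux of bounded shape as a basis of $R/I$, then a degree-preserving KRS- or Lindstr\"om--Gessel--Viennot-type bijection onto diagonal-avoiding monomials) is the classical route for ordinary and ladder determinantal ideals. Your closing diagnosis is also on target: because each $X^{(k)}$ is simultaneously a horizontal slice of $A_{\target(h_k)}$ and a vertical slice of $A_{\source(h_k)}$, every ingredient must respect two incompatible block readings at once, and your parenthetical Cohen--Macaulay shortcut is correctly flagged as circular relative to the statement. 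What is missing, however, is that the three load-bearing steps are all left as intentions rather than carried out: (i) that a straightening law exists here at all --- in particular what partial order on minors of the several $A_\gamma$ replaces the classical one, given that the same variable appears in a minor of a target block and in a minor of a source block --- is not addressed and is not a formal consequence of the De Concini--Procesi--Eisenbud theory; (ii) the linear-independence-by-specialization argument is asserted but the specialization that keeps both block decompositions intact is not exhibited; and (iii) the degree-preserving bijection with diagonal-avoiding monomials is precisely the combinatorial heart of the matter and is the thing to be constructed, not assumed. As written this is a credible research plan identifying the right obstacles, but it is not yet a proof; each of (i)--(iii) is a genuine theorem in the bipartite setting that would have to be established before the Hilbert-function comparison closes.
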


\begin{corollary}\label{cor:general}
The bipartite determinantal ideals $I_{\mathcal{Q},\mm,\uu}$ are prime.
\end{corollary}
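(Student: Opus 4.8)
The plan is to deduce primeness of $I:=I_{\mathcal{Q},\mm,\uu}$ from two facts: that $I$ is radical, and that its zero set $V(I)$ is irreducible; granting both, $I=\sqrt I=I\bigl(V(I)\bigr)$ is the vanishing ideal of an irreducible variety, hence prime. If the set of natural generators is empty then $I=(0)$ is already prime, so assume otherwise. Since the natural generators have coefficients $\pm1$ and therefore stay a Gr\"obner basis after any field extension (Theorem~\ref{general}), while a squarefree monomial ideal stays radical under base extension, it is enough to prove primeness when ${\bf k}$ is algebraically closed; the general case follows since $I=I\overline{{\bf k}}[X]\cap{\bf k}[X]$ is then the contraction of a prime along the faithfully flat inclusion ${\bf k}[X]\hookrightarrow\overline{{\bf k}}[X]$. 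So assume ${\bf k}=\overline{{\bf k}}$.

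\emph{Radicality.} By Theorem~\ref{general}, the natural generators of $I$ form a Gr\"obner basis for the lexicographic order \eqref{eq: favorite order}, so $\operatorname{in}(I)$ is generated by the leading monomials of the $(u_\gamma+1)$-minors of the block matrices $A_\gamma$. As \eqref{eq: favorite order} is consistent on each $A_\gamma$ (its entries strictly decrease to the right and downward), the leading monomial of such a minor is the product of its main-diagonal entries, a product of distinct variables; hence $\operatorname{in}(I)$ is a squarefree monomial ideal, in particular radical, and therefore $I$ itself is radical.

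\emph{Irreducibility.} Since $I$ is generated by the $(u_\gamma+1)$-minors of the $A_\gamma$, the set $V(I)$ is exactly the locus of tuples $X=(X^{(k)})_{k=1}^r$ with $\operatorname{rank}A_\alpha\le u_\alpha$ for every target vertex $\alpha$ and $\operatorname{rank}A_\beta\le u_\beta$ for every source vertex $\beta$. Writing $\mathrm{Gr}(d,V)$ for the Grassmannian of $d$-dimensional subspaces, form the incidence variety
\begin{align*}
\widetilde V:=\bigl\{\bigl((W_\alpha)_\alpha,(U_\beta)_\beta,X\bigr):\ &W_\alpha\in\mathrm{Gr}(u_\alpha,{\bf k}^{m_\alpha}),\ U_\beta\in\mathrm{Gr}(u_\beta,{\bf k}^{m_\beta}),\\
&\operatorname{colspan}X^{(k)}\subseteq W_{\target(h_k)}\text{ and }\operatorname{rowspan}X^{(k)}\subseteq U_{\source(h_k)}\text{ for all }k\bigr\}.
\end{align*}
The projection $\pi_1$ of $\widetilde V$ onto $\prod_\alpha\mathrm{Gr}(u_\alpha,{\bf k}^{m_\alpha})\times\prod_\beta\mathrm{Gr}(u_\beta,{\bf k}^{m_\beta})$ is surjective, with fiber over each point the ${\bf k}$-vector space of tuples $(X^{(k)})$ satisfying the two displayed containments, of dimension $\sum_k u_{\source(h_k)}u_{\target(h_k)}$ independently of the point; these vector spaces are the fibers of a vector bundle built from the tautological sub- and quotient bundles on the Grassmannians, so $\widetilde V$ is the total space of a vector bundle over a product of Grassmannians and hence irreducible. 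Finally, the projection $\pi_2$ of $\widetilde V$ onto the $X$-coordinates has image exactly $V(I)$: a tuple $X$ is in the image iff for every target $\alpha$ the column spans of the $X^{(k)}$ with $\target(h_k)=\alpha$ span a space of dimension $\le u_\alpha$ and, dually, the row spans at each source $\beta$ span a space of dimension $\le u_\beta$ — which is precisely the rank condition, the needed subspaces of dimension $u_\gamma$ existing because $u_\gamma\le\min(a_\gamma,b_\gamma)$ by \eqref{eq:condition on u}. Being the continuous image of the irreducible variety $\widetilde V$, the closed set $V(I)$ is irreducible, completing the argument.

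The crux is the irreducibility step — one must verify that $\widetilde V$ really carries the structure of a vector bundle over the product of Grassmannians, so that its irreducibility is transparent, and that $\pi_2(\widetilde V)$ matches the rank-bounded locus; after that, only routine bookkeeping with \eqref{eq:condition on u} remains (this also absorbs the degenerate cases $u_\gamma=\min(a_\gamma,b_\gamma)$, where some Grassmannian factors collapse to a point). An alternative would be to induct on the number of arrows or vertices of $\mathcal{Q}$, degenerating to products of classical, prime determinantal varieties, but the single collapsing map keeps everything self-contained.
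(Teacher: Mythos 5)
Your proof is correct and takes essentially the same high-level approach as the paper: first radicality from the squarefree Gr\"obner degeneration (Theorem~\ref{general}), then primeness of a radical ideal from irreducibility of its zero set, established by exhibiting a smooth irreducible variety that surjects onto $V(I)$. The one point of departure is in that last step: the paper appeals to Nakajima-type graded quiver varieties $\tilde{\mathcal{F}}_{v,w}$, $\tilde{\mathcal{G}}_{v,w}$ from \cite{Li} as the resolving variety, whereas you build an explicit and self-contained resolution $\widetilde V$ as the total space of $\bigoplus_k \operatorname{Hom}\bigl(\mathbf{k}^{m_{\source(h_k)}}/U_{\source(h_k)}^{\perp},\, W_{\target(h_k)}\bigr)$ over the product of Grassmannians $\prod_\alpha \mathrm{Gr}(u_\alpha,\mathbf{k}^{m_\alpha})\times\prod_\beta\mathrm{Gr}(u_\beta,\mathbf{k}^{m_\beta})$, then checks that the projection to the $X$-coordinates has image exactly the locus $\operatorname{rank}A_\gamma\le u_\gamma$ for all $\gamma$; this requires $u_\alpha\le m_\alpha=a_\alpha$ and $u_\beta\le m_\beta=b_\beta$, which you correctly pull from \eqref{eq:condition on u}. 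Your construction is more elementary (no quiver-variety machinery, and visibly characteristic-free), at the cost of re-deriving what \cite{Li} packages; the paper's citation is terser but less self-contained. Both arguments also share the same easy reduction to $\mathbf{k}=\overline{\mathbf{k}}$ via faithfully flat descent, which you spell out and the paper leaves implicit.
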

\begin{proof}
It suffices to prove the statement under the assumption that ${\bf k}$ is algebraically closed. %because the contraction of a prime ideal is prime
By the Gr\"obner bases result (Theorem \ref{general}), the initial ideal of $I_{Q,\mm,\uu}$, denoted ${\rm init }(I_{\mathcal{Q},\mm,\uu})$,  is generated by the leading terms of its natural generators. Since each generator is the product of diagonal entries of a submatrix of $A_i$ for some $i$, the ideal ${\rm init }(I_{\mathcal{Q},\mm,\uu})$ must be a squarefree monomial, thus is radical. Therefore $I_{\mathcal{Q},\mm,\uu}$ is radical.
On the other hand, there exists a nonsingular irreducible variety  that maps surjectively to ${\rm Spec }({\bf k}[X]/I_{\mathcal{Q},\mm,\uu})$: for example we can construct a variety similar to Nakajima's nonsingular graded quiver variety $\tilde{\mathcal{F}}_{v,w}$ (\cite[\S4]{Li}) or similar to $\tilde{\mathcal{G}}_{v,w}$ given in \cite[\S6.2]{Li} (note that the construction works even if ${\bf k}$ has positive characteristic). it follows that ${\rm Spec }({\bf k}[X]/I_{\mathcal{Q},\mm,\uu})$ is irreducible and $I_{\mathcal{Q},\mm,\uu}$  is prime.
\end{proof}

By Stanley-Reisner correspondence (see \cite[Theorem 1.7]{MS}), there is a bijection from simplicial complexes on $n$ vertices to squarefree monomial ideals in $\mathbf{k}[x_1,\dots,x_n]$, sending $\Delta$ to
$$I_\Delta=\langle {\bf x}^\tau|\tau\notin \Delta \rangle
=\bigcap_{\sigma\in\Delta} \langle x_i\ | \ i\notin\sigma \rangle
=\bigcap_{\sigma\in{\rm facets}(\Delta)} \langle x_i\ | \ i\notin\sigma \rangle
$$
where the last one gives a minimal prime decomposition of $I_\Delta$. 
%Note that $\Delta$ is determined by the monomial ideal $I$ because $\Delta=\{\tau\  |\  x^\tau\notin I \}$. 
Let $\Delta_{\mathcal{Q},\mm,\uu}$ be the simplicial complex determined by $I_{\mathcal{Q},\mm,\uu}$ (so
${\rm init }(I_{\mathcal{Q},\mm,\uu})=I_{\Delta_{\mathcal{Q},\mm,\uu}}$). 

We will introduce a new combinatorial model, called concurrent vertex maps, as an analogue of nonintersecting lattice paths.\footnote{A concurrent vertex map consists of each point that is in a common section of a horizontal path and a vertical path. The name is chosen due to its analogy with a concurrency in a road network, which refers to ``an instance of one physical roadway bearing two or more different route numbers''. 
See Wikipidea page \url{https://en.wikipedia.org/wiki/Concurrency_(road)}} 
The precise definition is given in \S\ref{section:concurrent maps}. Roughly speaking,
 
(a) a  road map is, on each rectangular array of the size of the matrix $A_\gamma$,  
a family of nonintersecting paths $H^\gamma_p$ (for various $p$) from the left side to the right side (which we called horizontal paths), together with 
a family of nonintersecting paths $V^\gamma_q$ (for various $q$)  from the top side to the bottom side (which we called vertical paths); 

(b) a road map is called straight if each corner of a horizontal path lies in a vertical path, and each corner of a vertical path lies in a horizontal path;

(c) for a straight road map, the corresponding concurrent vertex map is the set of lattice points that are both in horizontal and vertical paths.

%Below is an example of a concurrent vertex map. 

\begin{example}\label{eg:234to1}
Consider the bipartite determinantal ideal $I_{\mathcal{Q},\mm,\uu}$  where $\mathcal{Q}$ is
$$
  \begin{tikzpicture}
    \graph[nodes={draw,circle},edges={-{Stealth[]}}] {
      2 -> 1, 
      3 -> 1,
      4 -> 1
    };
  \end{tikzpicture}
$$
$\mm=(3,2,2,2)$, $\uu=(2,1,1,1)$. The ideal is generated by 

\noindent-- all 2 minors in 
$
A_2=X_1=\begin{bmatrix}
x^{(1)}_{11}&x^{(1)}_{12}\\
x^{(1)}_{21}&x^{(1)}_{22}\\
x^{(1)}_{31}&x^{(1)}_{32}\\
\end{bmatrix}
$, 
$
A_3=X_2=\begin{bmatrix}
x^{(2)}_{11}&x^{(2)}_{12}\\
x^{(2)}_{21}&x^{(2)}_{22}\\
x^{(2)}_{31}&x^{(2)}_{32}\\
\end{bmatrix}
$, 
$
A_4=X_3=\begin{bmatrix}
x^{(3)}_{11}&x^{(3)}_{12}\\
x^{(3)}_{21}&x^{(3)}_{22}\\
x^{(3)}_{31}&x^{(3)}_{32}\\
\end{bmatrix}
$,

\noindent -- and all 3 minors in 
$A_1=[X_1 \ X_2\ X_3 ] = 
\begin{bmatrix}
x^{(1)}_{11}&x^{(1)}_{12}&x^{(2)}_{11}&x^{(2)}_{12}&x^{(3)}_{11}&x^{(3)}_{12}\\
x^{(1)}_{21}&x^{(1)}_{22}&x^{(2)}_{21}&x^{(2)}_{22}&x^{(3)}_{21}&x^{(3)}_{22}\\
x^{(1)}_{31}&x^{(1)}_{32}&x^{(2)}_{31}&x^{(2)}_{32}&x^{(3)}_{31}&x^{(3)}_{32}\\
\end{bmatrix}
$. 

\noindent In Figure \ref{fig:example concurrent map} Left, 
\begin{figure}[ht]
\begin{center}
\begin{tikzpicture}[scale=.5]
    \begin{scope}[shift={(0,0)}] %10
 \draw  [gray] (0,0) grid (1,2);
 \draw  [gray] (2,0) grid (3,2);
 \draw  [gray] (4,0) grid (5,2);
 
\begin{scope}[shift={(.07,-.07)}]
\draw [blue,densely dotted,very thick](-.2,1)--(3,1)--(3,2)--(5,2);
\draw [blue,densely dotted,very thick](-.2,0)--(4,0)--(4,1)--(5,1);
\draw (-.2,1) node [left] {\tiny $H^1_1$};
\draw (-.2,0) node [left] {\tiny $H^1_2$};
\end{scope}

\begin{scope}[shift={(-.07,.07)}]
\draw [red,densely dotted,very thick](1,2)--(1,1)--(0,1)--(0,0);
\draw [red,densely dotted,very thick](3,2)--(3,0)--(2,0);
\draw [red,densely dotted,very thick](5,2)--(5,1)--(4,1)--(4,0);
\draw (1,2) node [above] {\tiny $V^1_1$};
\draw (3,2) node [above] {\tiny $V^1_2$};
\draw (5,2) node [above] {\tiny $V^1_3$};
\end{scope}
      \end{scope}

\begin{scope}[shift={(8,0)}] %10
      \draw[draw=gray] (-.5,-.5) rectangle ++(2,3);
      \draw[draw=gray] (1.5,-.5) rectangle ++(2,3);
      \draw[draw=gray] (3.5,-.5) rectangle ++(2,3);
      \node at (0,0) {.};
      \node at (1,0) {+};
      \node at (2,0) {.};
      \node at (3,0) {.};
      \node at (4,0) {.};
      \node at (5,0) {+};
      \node at (0,1) {.};
      \node at (1,1) {.};
      \node at (2,1) {+};
      \node at (3,1) {.};
      \node at (4,1) {.};
      \node at (5,1) {.};
      \node at (0,2) {+};
      \node at (1,2) {+};
      \node at (2,2) {+};
      \node at (3,2) {.};
      \node at (4,2) {+};
      \node at (5,2) {.};
\end{scope}
\end{tikzpicture}
\end{center}
\caption{A straight road map and the corresponding concurrent vertex map.}
\label{fig:example concurrent map}
\end{figure}
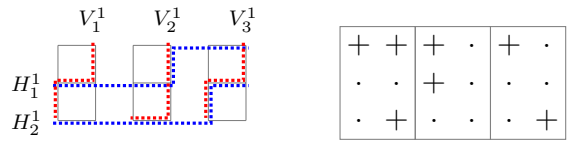
we illustrate an example of a straight road map on $A_1$, which consists of two nonintersecting horizontal paths $H^1_1, H^1_2$ and three nonintersecting vertical paths $V^1_1,V^1_2,V^1_3$. It satisfies the condition that each corner of a horizontal path lies in a vertical path, and each corner of a vertical path lies in a horizontal path.  Figure \ref{fig:example concurrent map} Right is the corresponding concurrent vertex map consisting of points on both a horizontal path and a vertical path. 
\end{example}

As we shall see in Proposition \ref{prop:concurrent equiv}, a concurrent vertex map uniquely determines a straight road map; so there is a unique way to ``connect the dots'' in a concurrent vertex map to obtain the corresponding horizontal and vertical paths. 

As the main results of the paper, we give 
a combinatorial description of ${\rm init }(I_{\mathcal{Q},\mm,\uu})$ in Theorem \ref{thm:prime decomposition of I} (the definitions of ``${\bf u}$-compatible'' and ``concurrent vertex map'' are given in Definition \ref{definition:main}, and the proof is given at the end of \S\ref{subsection:u-compatible sets}) and 
 several properties of $\Delta_{\mathcal{Q},\mm,\uu}$ and $I_{\mathcal{Q},\mm,\uu}$ in Theorem \ref{thm:complex} (proved in Proposition \ref{prop:vertex decomposable} and Proposition \ref{prop:ball}).  
\begin{theorem}\label{thm:prime decomposition of I}(Prime decomposition)
The faces and facets of the Stanley-Reisner simplicial complex $\Delta_{\mathcal{Q},\mm,\uu}$ are determined as follows:

$C\subseteq L$ is a face of $\Delta_{\mathcal{Q},\mm,\uu}$ if and only if $C$ is ${\bf u}$-compatible.

$C\subseteq L$ is a facet of $\Delta_{\mathcal{Q},\mm,\uu}$ if and only if $C$ is a concurrent vertex map.

\noindent Consequently, ${\rm init }(I_{\mathcal{Q},\mm,\uu})=\bigcap_C \; \big\langle x_{ij}^{(k)} \ | \  (i,j,k)\notin C \big\rangle$
where $C$ runs over all concurrent vertex maps.
\end{theorem}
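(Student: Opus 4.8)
The plan is to deduce Theorem \ref{thm:prime decomposition of I} from the Gröbner basis result (Theorem \ref{general}) by an explicit analysis of the squarefree monomial ideal $\mathrm{init}(I_{\mathcal{Q},\mm,\uu})$. First I would set up notation: let $L$ be the index set of all variables $(i,j,k)$ (equivalently, the vertices of $\Delta_{\mathcal{Q},\mm,\uu}$), and for $C\subseteq L$ identify $C$ with the squarefree monomial $\prod_{(i,j,k)\in C} x^{(k)}_{ij}$. By Theorem \ref{general}, the natural generators of $I_{\mathcal{Q},\mm,\uu}$ form a Gröbner basis with respect to the order \eqref{eq: favorite order}, so $\mathrm{init}(I_{\mathcal{Q},\mm,\uu})$ is generated by the leading terms of the natural generators, and (as noted in the proof of Corollary \ref{cor:general}) each such leading term is the product of the main-diagonal entries of a $(u_\gamma+1)\times(u_\gamma+1)$ submatrix of some block matrix $A_\gamma$. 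Thus a subset $\tau\subseteq L$ is a \emph{non}-face of $\Delta_{\mathcal{Q},\mm,\uu}$ iff $\mathbf{x}^\tau$ is divisible by one of these diagonal monomials, i.e.\ iff for some vertex $\gamma$ the set $\tau$ contains the diagonal of a $(u_\gamma+1)$-subarray of $A_\gamma$. Equivalently, $C\subseteq L$ is a face iff for every $\gamma\in{\rm V}_\mathcal{Q}$, the portion of $C$ lying in the array $A_\gamma$ contains no ``$(u_\gamma+1)$-diagonal''. The whole point is then to show this combinatorial condition is exactly the definition of ${\bf u}$-compatibility from Definition \ref{definition:main}, which I would do by unwinding both definitions; since the definition of ${\bf u}$-compatible is precisely engineered so that the diagonal obstruction inside each $A_\gamma$ is avoided, this should be essentially immediate (possibly needing the normalization \eqref{eq:condition on u} to rule out degenerate subarrays).

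Next I would handle the facets. A facet is a face maximal under inclusion, so I must show: (i) every concurrent vertex map is ${\bf u}$-compatible, hence a face; (ii) a concurrent vertex map is maximal among ${\bf u}$-compatible sets; and (iii) every maximal ${\bf u}$-compatible set is a concurrent vertex map. For (i), I would invoke the geometric description of concurrent vertex maps in terms of straight road maps (the nonintersecting horizontal paths $H^\gamma_p$ and vertical paths $V^\gamma_q$): because on each array $A_\gamma$ the horizontal paths are nonintersecting and there are exactly $u_\gamma$ of them (and likewise $u_\gamma$ vertical paths — I should confirm the count matches $u_\gamma$ from Definition \ref{definition:main}), any $(u_\gamma+1)$ points forming a diagonal would force two of the $u_\gamma$ paths to meet, a contradiction; so the concurrent vertex map contains no forbidden diagonal and is ${\bf u}$-compatible. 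For (ii) and (iii) I would argue that adding any point outside a concurrent vertex map $C$ creates a forbidden diagonal — intuitively, the straight road map ``fills'' each array maximally, so the complement of $C$ within reach of the paths is exactly the set of points whose addition completes a $(u_\gamma+1)$-diagonal — and conversely that a maximal ${\bf u}$-compatible set, when one ``connects the dots'' via Proposition \ref{prop:concurrent equiv}, yields a straight road map. Proposition \ref{prop:concurrent equiv} (concurrent vertex maps $\leftrightarrow$ straight road maps) is the key tool that makes (iii) tractable: it lets me pass freely between the point-set description and the path description.

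The final sentence of the theorem, ${\rm init}(I_{\mathcal{Q},\mm,\uu})=\bigcap_C \langle x^{(k)}_{ij}\mid (i,j,k)\notin C\rangle$ with $C$ ranging over concurrent vertex maps, then follows formally: by the Stanley--Reisner dictionary recalled in the excerpt, $I_\Delta=\bigcap_{\sigma\in{\rm facets}(\Delta)}\langle x_i\mid i\notin\sigma\rangle$, and we have just identified ${\rm init}(I_{\mathcal{Q},\mm,\uu})=I_{\Delta_{\mathcal{Q},\mm,\uu}}$ (Theorem \ref{general} plus the definition of $\Delta_{\mathcal{Q},\mm,\uu}$) and ${\rm facets}(\Delta_{\mathcal{Q},\mm,\uu})=\{\text{concurrent vertex maps}\}$; substituting gives the displayed decomposition. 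So this part is a one-line corollary of the face/facet characterization and needs no separate work.

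I expect the main obstacle to be step (ii)--(iii): showing that maximal ${\bf u}$-compatibility is equivalent to being a full concurrent vertex map, i.e.\ that the path structure is forced. The subtlety is that the arrays $A_\gamma$ share variables (a given $x^{(k)}_{ij}$ sits in both $A_{\target(h_k)}$ and $A_{\source(h_k)}$), so the maximality condition couples the different arrays, and one must check that a set that is locally maximal-looking in each array simultaneously, yet globally compatible, really does arise from a single coherent family of nonintersecting paths; this is where the ``concurrency'' phenomenon (one point lying on overlapping horizontal and vertical segments) and the uniqueness statement of Proposition \ref{prop:concurrent equiv} do the heavy lifting, and where I would need to be most careful. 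The face characterization (first half) and the final intersection formula, by contrast, I expect to be routine given Theorem \ref{general}.
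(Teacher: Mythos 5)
Your proposal is correct and follows essentially the same route as the paper: identify faces of $\Delta_{\mathcal{Q},\mm,\uu}$ with $\uu$-compatible sets via the Gr\"obner basis result (Theorem \ref{general}), characterize facets as maximal $\uu$-compatible sets and identify these with concurrent vertex maps by citing Proposition \ref{prop:concurrent equiv} (i)$\Leftrightarrow$(ii), and then read off the prime decomposition from the Stanley--Reisner dictionary. The only caveat is that your steps (i) and (ii) in the facet analysis are already subsumed in the cited equivalence of Proposition \ref{prop:concurrent equiv}, so re-deriving them (and the nonintersecting-path argument you sketch, which is really Lemma \ref{contained in path}/Lemma \ref{lemma:u-compatible = existence of road map}) is redundant rather than incorrect, and you slightly misdescribe Proposition \ref{prop:concurrent equiv}'s content---it is not merely the concurrent-vertex-map/straight-road-map dictionary but the equivalence of five characterizations, the relevant one here being (i)$\Leftrightarrow$(ii).
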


\begin{theorem}\label{thm:complex}
The Stanley-Reisner simplicial complexes $\Delta_{\mathcal{Q},\mm,\uu}$ are vertex decomposable (so are shellable), and homeomorphic to balls. Consequently,  $I_{\mathcal{Q},\mm,\uu}$ is Cohen-Macaulay.
\end{theorem}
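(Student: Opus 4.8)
The plan is to establish vertex decomposability of $\Delta_{\mathcal{Q},\mm,\uu}$ by induction, using the combinatorial description of faces and facets from Theorem~\ref{thm:prime decomposition of I}, and then deduce shellability (a standard consequence), Cohen--Macaulayness (also standard, since shellable complexes are Cohen--Macaulay over any field, and $\mathbf{k}[X]/I_{\mathcal{Q},\mm,\uu}$ has the same Hilbert function as $\mathbf{k}[X]/\operatorname{init}(I_{\mathcal{Q},\mm,\uu})$ by flat degeneration), and finally the ball property. For vertex decomposability, I would pick a judicious vertex $v=(i,j,k)\in L$ to decompose on --- most naturally a ``corner'' vertex such as the variable $x^{(r)}_{m_{\target(h_r)},\,b}$ that is largest in the monomial order~\eqref{eq: favorite order}, or symmetrically a top-left-most entry of some block --- and show that both the deletion $\Delta\setminus v$ and the link $\operatorname{lk}_\Delta(v)$ are again complexes of the same type $\Delta_{\mathcal{Q}',\mm',\uu'}$ for smaller data, hence vertex decomposable by induction. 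Concretely, deleting a corner vertex should correspond to shrinking one of the matrices $X^{(k)}$ by a row or column (decreasing some $m_\gamma$), while taking the link should correspond to decreasing the relevant $u_\gamma$ (and $v_\alpha$, $v_\beta$) by one, since forcing $(i,j,k)$ into every face is exactly forcing that lattice point into every concurrent vertex map, which ``uses up'' one unit of rank. One must check the base cases (empty quiver, or a single matrix with $u=\min(a,b)$, where the complex is a simplex) and that $\Delta\setminus v\neq\Delta$, i.e. $v$ is not a cone point.

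The heart of the argument is the inductive step, and I expect the main obstacle to be verifying that the link and deletion genuinely have the stated form --- in particular that the ${\bf u}$-compatibility condition of Definition~\ref{definition:main} restricts correctly. The subtlety is that the blocks $A_\gamma$ overlap (each $X^{(k)}$ sits inside both an $A_\alpha$ and an $A_\beta$), so shrinking one $X^{(k)}$ or lowering one $u_\gamma$ affects several of the defining rank conditions simultaneously, and one has to confirm the compatibility inequalities~\eqref{eq:condition on u} are preserved and that no new degeneracy is introduced. I would handle this by working directly with concurrent vertex maps: a vertex decomposition on $v$ should be visible as ``every straight road map either avoids $v$ (deletion) or routes a horizontal and a vertical path through $v$ (link),'' and the combinatorics of how paths can be prolonged or truncated through a corner cell should give the recursive identification cleanly.

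For the topological conclusion, vertex decomposable complexes that are pure (which follows since all facets are concurrent vertex maps of the same cardinality --- this cardinality count should be extracted from Theorem~\ref{thm:prime decomposition of I} or a dimension formula for $\mathbf{k}[X]/I_{\mathcal{Q},\mm,\uu}$) are shellable, and a shellable pseudomanifold-with-boundary all of whose facets share a common face, or more simply a vertex decomposable complex in which every codimension-one face lies in at most two facets with the boundary nonempty, is a ball by a theorem of Billera--Provan type. So I would, in the same induction, track that exactly: each ridge is in one or two facets, and the complex is not a sphere (e.g. because some vertex is free), whence it is a ball. Cohen--Macaulayness then follows either from shellability directly or from the ball property, and transfers to $I_{\mathcal{Q},\mm,\uu}$ itself via the Gr\"obner degeneration of Theorem~\ref{general}.
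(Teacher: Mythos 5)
Your high-level plan (vertex decomposability $\Rightarrow$ shellability $\Rightarrow$ Cohen--Macaulay; ball via a Billera--Provan type criterion; transfer to $I_{\mathcal{Q},\mm,\uu}$ by Gr\"obner degeneration) is sound, and you correctly identify the inductive step as the crux. However, the crux is where the argument breaks: the deletion and link of $\Delta_{\mathcal{Q},\mm,\uu}$ at a corner vertex $v$ are \emph{not} themselves complexes of the form $\Delta_{\mathcal{Q}',\mm',\uu'}$. The most immediate problem is one of vertex sets: $\mathrm{del}_v\Delta$ and $\mathrm{link}_v\Delta$ live on $L\setminus\{v\}$, which has one fewer point than $L$, whereas dropping a row or column to decrease some $m_\gamma$ removes an entire row's or column's worth of lattice points, and decreasing some $u_\gamma$ does not change $L$ at all. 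There is no natural bijection of vertex sets, so no isomorphism of simplicial complexes. Even setting that aside, the $\uu$-compatibility condition does not restrict the way you want: a face $G$ with $v\notin G$ belongs to $\mathrm{link}_v\Delta$ iff $G\cup\{v\}$ has no $(u_\gamma+1)$-chain in any $A_\gamma$, which is the condition ``$G$ has no $(u_\gamma+1)$-chain \emph{and} no $u_\gamma$-chain lying strictly NW of $v$ in $A_\gamma$'' --- the latter is a positional constraint that is strictly weaker than ``no $u_\gamma$-chain at all,'' so $\mathrm{link}_v\Delta$ is not $\Delta_{\mathcal{Q},\mm,\uu-e_\gamma}$ restricted to $L\setminus\{v\}$. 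So the reduction to smaller data of the same type does not go through, and the induction has no base to stand on.

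The paper sidesteps this entirely. Rather than trying to realize $\mathrm{del}_v$ and $\mathrm{link}_v$ as smaller instances, it fixes the vertex order $<_T$ of Definition~\ref{df:order} and proves \emph{directly} that every iterated composite $\Delta_{\ell,S}=F_{v_\ell}\cdots F_{v_1}\Delta_{\mathcal{Q},\mm,\uu}$ (where each $F_{v_i}$ is $\mathrm{del}_{v_i}$ or $\mathrm{link}_{v_i}$) is pure, which is all that the definition of vertex decomposability requires. The purity proof is the real content: one constructs a canonical facet $C_{\ell,S}=C_{\max}(S)\cap\{v_{\ell+1},\dots,v_{|L|}\}$ of $\Delta_{\ell,S}$ and shows via the chute-move machinery (Lemmas~\ref{lemma:chute move keep compatibility}, \ref{lem:larger C'}, \ref{lem:essential move vs move} and Proposition~\ref{prop:C(S)>=C}) that any facet of $\Delta_{\ell,S}$ has the same cardinality as $C_{\ell,S}$. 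This is morally closer to the subword-complex proofs of Knutson--Miller than to a reduction-to-the-same-family argument; the relevant combinatorial normal form is $C_{\max}(S)$ and the relevant moves are (inverse) chute moves, neither of which appear in your proposal. The ball statement is then proved separately (Proposition~\ref{prop:ball}) using Lemma~\ref{lem:one contained in two}, which shows each ridge lies in exactly $C_{\max}(S)$ and $C_{\min}(S)$; your plan here is on the right track but again needs the $C_{\max}/C_{\min}$ and essential-corner machinery to be carried out. In short: correct skeleton, wrong key lemma --- the recursive self-similarity you assume is absent, and a different device (chute moves and canonical facets) is needed to establish purity of the iterated complexes.
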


As consequences of Theorems \ref{thm:prime decomposition of I} and  \ref{thm:complex},
 we obtain combinatorial formulas for the Hilbert series and multiplicity of the quotient ring ${\bf k}[X]/I_{\mathcal{Q},\mm,\uu}$ (see Corollary \ref{cor:Hilbert series}), and recover some results on $t$-secant varieties by Conca, De Negri, and Stojanac \cite{CNS} (see \S\ref{subsection:Secant varieties}).
We also obtain a combinatorially manifestly positive formula for the $h$-polynomial of $\Delta _{\mathcal {Q},{\mathbf {m}},{\mathbf {u}}}$ in Corollary \ref{cor:h-poly}.

The paper is organized as follows. 
In \S2 we introduce the notion of road maps and concurrent vertex maps.
In \S3 we study the combinatorial property of the simplicial complex $\Delta _{\mathcal {Q},{\mathbf {m}},{\mathbf {u}}}$.
In \S4 we provide a formula for the Hilbert series and multiplicity of the quotient ring ${\bf k}[X]/I_{\mathcal{Q},\mm,\uu}$, and a combinatorially manifestly positive formula for the $h$-polynomial of $\Delta _{\mathcal {Q},{\mathbf {m}},{\mathbf {u}}}$. 
We conclude the paper with examples, algorithms, and discussions in \S5. 
Because of their close relation with classical determinantal varieties, Nakajima's graded quiver varieties, cluster algebras, and nonintersecting paths, we  believe that the bipartite determinantal ideals and the corresponding combinatorial model will garner increased attention for further study.  

We would like to point out some of the results are obtained by N.~Fieldsteel and P.~Klein in \cite{FK} and their unpublished preprint.

{\bf Acknowledgments.} The author gained valuable insights from the collaborated work with Alex Yong on Kazhdan-Lusztig ideals and Schubert varieties,  upon which the current work is based. He also expresses gratitude for discussions with Patricia Klein and Allen Knutson, and for valuable comments from Aldo Conca and an anonymous reader (in particular, his/her suggestion of \cite[Corollary 5.1.14]{BH} which leads to the discovery of Corollary \ref{cor:h-poly}). 
He is very grateful to the referee for thoroughly reviewing the paper and offering numerous valuable comments and suggestions.
Computer calculations were performed using Macaulay2 \cite{M2} and SageMath \cite{Sage}.

Data sharing not applicable to this article as no datasets were generated or analysed during the current study.

\section{Concurrent vertex maps}\label{section:concurrent maps}

\subsection{Notations}
%In the whole paper should assume $1/r\le {u^-}/{v^-}\le r$. 

First we introduce some notations:
\begin{itemize}
\item For $i,j\in\mathbb{Z}$, let $[i,j]_\mathbb{R}$ be the closed interval between $i,j$, and let $[i,j]_\mathbb{Z}=[i,j]_\mathbb{R}\cap\mathbb{Z}$; we also use similar notations for open intervals and half-open half-closed intervals.

%\item For a minor $D$, we denote the corresponding square submatrix $\tilde{D}$.

\item We think the $r$ matrices $X^{(1)},\dots, X^{(r)}$ as ``pages'' in a ``book". The $k$-th page contains $X^{(k)}$, and we sometimes view it as a rectangle region ${\rm Page}_k:=[1,m_{\target(h_k)}]_\mathbb{R}\times[1,m_{\source(h_k)}]_\mathbb{R}$; a lattice point $(i,j)$ on ${\rm Page}_k$ corresponds to the variable $x_{ij}^{(k)}$. 

\noindent (Warning!) In this paper, we use the following (unusual) coordinate system to be consistent with the labelling of entries in a matrix:
\begin{center}
\begin{tikzpicture}[scale=1]
    % Draw axes
    \draw [<->,thick] (0,-1) node (yaxis) [right] {$x$}
        |- (1,0) node (xaxis) [below] {$y$};
\end{tikzpicture}
\end{center}
Given a matrix $A$ and a point $P=(i,j)$ in the lattice of integer points corresponding to $A$, we denote the $x$ and $y$ coordinates of $P$ as $(P)^A_x=i$, $(P)^A_y=j$.  If $A$ is clear from the context, we denote $(P)^A_x$ as $P_x$ and $(P)^A_y$ as $P_y$.

\item  Define 
$$L:=\{(i,j,k)\in\mathbb{Z}^3 \ | \  1\le k\le r, (i,j)\in [1,m_{\target(h_k)}]_\mathbb{Z}\times [1,m_{\source(h_k)}]_\mathbb{Z}\}. $$
For $\gamma\in {\rm V}_\mathcal{Q}$, 
define an injective map
$$\phi_{\gamma}: [1,a_\gamma]_\mathbb{Z}\times[1,b_\gamma]_\mathbb{Z}\to L, \quad 
(p,q)\mapsto (i,j,k) \textrm{ if the $(p,q)$-entry of $A_\gamma$ is $x_{ij}^{(k)}$}.$$ 
Note that the following two induced maps are both bijective:
$$\phi_{\rm target}: 
\bigcup_{\alpha\in {\rm V}_{\rm target}}
\Big(\alpha,[1,a_\alpha]_\mathbb{Z}\times[1,b_\alpha]_\mathbb{Z}\Big)\to L,
\quad 
(\alpha,i,j)\mapsto \phi_{\alpha}(i,j)
$$
$$\phi_{\rm source}:
\bigcup_{\beta\in {\rm V}_{\rm source}}
\Big(\beta, [1,a_\beta]_\mathbb{Z}\times[1,b_\beta]_\mathbb{Z}\Big)\to L,
\quad
(\beta,i,j)\mapsto\phi_{\beta}(i,j)
$$
So if $h_k$ is an arrow $\beta\to \alpha$, then both $\phi^{-1}_\alpha(i,j,k)$ and $\phi^{-1}_\beta(i,j,k)$ are uniquely defined.

\end{itemize}

\subsection{Straight road maps}
Give a lattice point $P\in\mathbb{Z}^2$ (note that we are not using the usual coordinate system), we say that a point $Q=P+(i,j)$ is:  

-- {\bf east} of $P$ if $i=0, j>0$ (and similarly for {\bf west}, {\bf south}, {\bf north}); 

-- {\bf weakly east} of $P$ if $i=0, j\ge 0$ (and similarly for {\bf weakly west/south/north}); 

-- {\bf NE} of $P$ if $i<0, j>0$ (and similarly for {\bf NW}, {\bf SE}, {\bf SW}); 

-- {\bf weakly NE} of $P$ if $i\le 0, j\ge 0$ (and similarly for  {\bf weakly NW/SE/SW}). 

We define a {\bf SW-NE path} $P\rightsquigarrow Q$ to be a lattice path from its SW endpoint $P$ to its NE endpoint $Q$, with steps either to east along vector $(0,1)$ or to north along vector $(-1,0)$. If we reverse the orientation then we get a {\bf NE-SW path} $Q\rightsquigarrow P$. By a  path we mean either a SW-NE path or a NE-SW path. The length of a path refers to the number of length 1 edges in the path; a path has length 0 if its two endpoints coincide. 

A {\bf corner} of a lattice path $H$ is a vertex $R$ in the path that is adjacent to both a horizontal and a vertical edge of $H$. 
A corner $R$ is called a {\bf NW corner} of $H$ if both points $R+(1,0)$ and $R+(0,1)$ are in $H$; otherwise both $R-(1,0)$ and $R-(0,1)$ must be in $H$, in which case $R$ is called a {\bf SE corner} of $H$.

%Note that a path is uniquely determined by the set of vertices it contains. 
%We call the set of lattice points of a path the {\bf dotted path}. 
A collection $\{H_i\}_{i=1}^u$ of $u$ paths is said to be {\bf nonintersecting} if no vertex is contained in two paths; it is said to be connecting $(X_i)_{i=1}^u$ and $(Y_i)_{i=1}^u$ if $H_i$ has endpoints $X_i$ and $Y_i$ for each $i$.

The restriction of a path $H$ to ${\rm Page}_k$ is denoted $H|_k$. The restriction of a point $P$ to ${\rm Page}_k$ is denoted $P|_k$.  Analogously, for $C\subseteq L$, denote $C|_k$ the restriction of $C$ to ${\rm Page}_k$.

\begin{definition}\label{definition:main}
A {\bf road map} $(\{H^\alpha_{p}\}, \{V^\beta_{q}\})=\big(\{H^\alpha_{p}\}_{\alpha\in {\rm V}_{\rm target},  1\le p\le u_\alpha}, \{V^\beta_{q}\}_{\beta \in {\rm V}_{\rm source}, 1\le q\le u_\beta}\big)$ of rank $\uu$ consists of:

\noindent -- a collection of $u_\alpha$ nonintersecting paths $\{H^\alpha_{p}\}_{p=1}^{u_\alpha}$ (called horizontal paths) of $A_\alpha$ connecting $\big((a_\alpha-u_\alpha+p,1)\big)_{p=1}^{u_\alpha}$ and $\big((p,b_\alpha)\big)_{p=1}^{u_\alpha}$ for $\alpha\in {\rm V}_{\rm target}$, and 

\noindent -- a collection of $u_\beta$ nonintersecting paths $\{V^\beta_{q}\}_{q=1}^{u_\beta}$ (called vertical paths) of $A_\beta$ connecting $\big((1,b_\beta-u_\beta+q)\big)_{q=1}^{u_\beta}$ and $\big((a_\beta,q)\big)_{q=1}^{u_\beta}$ for $\beta\in {\rm V}_{\rm source}$. 

A road map is {\bf straight}  if each corner of a horizontal path lies in a vertical path, and each corner of a vertical path lies in a horizontal path. 
%More precisely: for each $1\le k\le r$, if a point $P$ in is a corner of $H^{\target(h_k)}_p$, then $P|_k\in \bigcup_q V^{\source(h_k)}_q|_k$; if $Q$ is a corner of $V^{\source(h_k)}_q$, then $Q|_k\in \bigcup_p H^{\target(h_k)}_p|_k$.
\end{definition}

Note that a horizontal path defined above may contain vertical edges. We say that a path is \emph{straight horizontal} if it contains no vertical edge;  a path is \emph{straight vertical} if it contains no horizontal edge. 
For example, if for some $\alpha\in V_{\rm target}$, $u_\alpha=a_\alpha$, then the horizontal paths $H^\alpha_p$ are straight horizontal and thus have no corners.

%We have the following useful characterization of straight road maps. 
\begin{lemma}\label{lem:straight 2 conditions}
A road map $(\big(\{H^\alpha_{p}\}, \{V^\beta_{q}\})$ is straight if and only if all of the following conditions hold: 

\noindent {\rm(a)} For each arrow $h_k$ in $\mathcal{Q}$, the intersection $H^{\target(h_k)}_{p}|_k \cap V^{\source(h_k)}_q|_k$ is a connected path $P^k_{pq}\rightsquigarrow Q^k_{pq}$ where $P^k_{pq}$ (resp.~$Q^k_{pq}$) is its SW (resp.~NE) endpoint.

\noindent {\rm(b)} 
Let $r_1<\cdots<r_s$ be the indices such that $h_{r_1},\dots,h_{r_s}$ are all the arrows with target $\alpha$. 
Then $H^\alpha_{p}$ is the unique path obtained by connecting the following vertices and paths from SW to NE by straight horizontal paths: 
$(a_\alpha-u_\alpha+p,1)$, 
$P^{r_1}_{p,1} \rightsquigarrow Q^{r_1}_{p,1}$, 
$\dots$,
 $P^{r_1}_{p,u_{\source(h_{r_1})}} \rightsquigarrow Q^{r_1}_{p,u_{\source(h_{r_1})}}$,
$P^{r_2}_{p,1} \rightsquigarrow Q^{r_2}_{p,1}$, 
$\dots$,
 $P^{r_2}_{p,u_{\source(h_{r_2})}} \rightsquigarrow Q^{r_2}_{p,u_{\source(h_{r_2})}}$,
$\dots$,
$P^{r_s}_{p,1} \rightsquigarrow Q^{r_s}_{p,1}$, 
$\dots$,
 $P^{r_s}_{p,u_{\source(h_{r_s})}} \rightsquigarrow Q^{r_s}_{p,u_{\source(h_{r_s})}}$,
 $(p,b_\alpha)$.
(See Figure \ref{fig:straight}.)
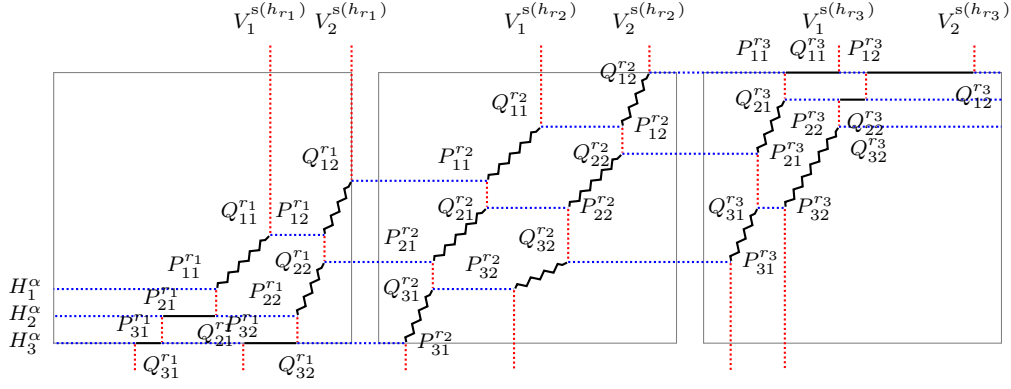
\begin{figure}[ht]
\begin{center}
\begin{tikzpicture}[scale=.72]

\draw  [gray] (0,0.5) rectangle (5.5,5.5);
\draw  [gray] (6,0.5) rectangle (11.5,5.5);
\draw  [gray] (12,0.5) rectangle (17.5,5.5);

\draw [blue,densely dotted,thick](1.5,0.5) node[scale=.1] (v6) {} -- (0,0.5);
\draw [blue,densely dotted,thick](2,1) node[scale=.1]  (v4) {} -- (0,1);
\draw [blue,densely dotted,thick](3,1.5) node[scale=.1]  (v2) {} -- (0,1.5);
\draw [blue,densely dotted,thick](2,0.5) node[scale=.1]  (v5) {} -- (3.5,0.5) node[scale=.1]  (v12) {};
\draw [blue,densely dotted,thick](3,1) node[scale=.1]  (v3) {} -- (4.5,1) node[scale=.1]  (v10) {};
\draw [blue,densely dotted,thick](4,2.5) node[scale=.1]  (v1) {} -- (5,2.5) node[scale=.1]  (v8) {};
\draw [blue,densely dotted,thick](5,2) node[scale=.1]  (v9) {} -- (7,2) node[scale=.1]  (v16) {};
\draw [blue,densely dotted,thick](5.5,3.5) node[scale=.1]  (v7) {} -- (8,3.5) node[scale=.1]  (v14) {}  (4.5,0.5) node[scale=.1]  (v11) {} -- (6.5,0.5) node [scale=.1] (v18) {};
\draw [blue,densely dotted,thick](7,1.5) node[scale=.1]  (v17) {} -- (8.5,1.5) node[scale=.1]  (v24) {};
\draw [blue,densely dotted,thick](9,4.5) node[scale=.1]  (v13) {} -- (10.5,4.5) node [scale=.1] (v20) {};
\draw [blue,densely dotted,thick](11,5.5) node[scale=.1]  (v19) {} -- (13.5,5.5) node[scale=.1]  (v26) {};
\draw [blue,densely dotted,thick](14.5,5.5) node[scale=.1]  (v25) {} -- (15,5.5) node[scale=.1]  (v32) {};
\draw [blue,densely dotted,thick](17,5.5) node[scale=.1]  (v31) {} -- (17.5,5.5);
\draw [blue,densely dotted,thick](8,3) node [scale=.1] (v15) {} -- (9.5,3) node[scale=.1]  (v22) {}  (10.5,4) node[scale=.1]  (v21) {} -- (13,4) node[scale=.1]  (v28) {};
\draw [blue,densely dotted,thick](13.5,5) node [scale=.1] (v27) {} -- (14.5,5) node [scale=.1] (v34) {};
\draw [blue,densely dotted,thick](15,5) node[scale=.1]  (v33) {} -- (17.5,5);
\draw [blue,densely dotted,thick](9.5,2) node[scale=.1]  (v23) {} -- (12.5,2) node [scale=.1] (v30) {}  (13,3) node[scale=.1]  (v29) {} -- (13.5,3) node[scale=.1]  (v36) {};
\draw [blue,densely dotted,thick](14.5,4.5) node[scale=.1]  (v35) {} -- (17.5, 4.5);
\draw [red,densely dotted,thick](4, 6) -- (v1)  (v2) -- (v3)  (v4) -- (v5)  (v6) -- (1.5,0);
\draw [red,densely dotted,thick](5.5, 6) -- (v7)  (v8) -- (v9)  (v10) -- (v11)  (v12) -- (3.5,0);
\draw [red,densely dotted,thick](9, 6) -- (v13)  (v14) -- (v15)  (v16) -- (v17)  (v18) -- (6.5,0);
\draw [red,densely dotted,thick](11,6) -- (v19)  (v20) -- (v21)  (v22) -- (v23)  (v24) -- (8.5,0);
\draw [red,densely dotted,thick](14.5, 6) -- (v25)  (v26) -- (v27)  (v28) -- (v29)  (v30) -- (12.5,0);
\draw [red,densely dotted,thick](17, 6) -- (v31)  (v32) -- (v33)  (v34) -- (v35)  (v36) -- (13.5,0);
\draw [thick] (v6) -- (v5);
\draw [thick] (v4) -- (v3);
\draw [thick] (v12) -- (v11);

\draw [thick, decorate,decoration={zigzag,segment length=2mm, amplitude=0.4mm}] (v2) -- (v1);
\draw [thick, decorate,decoration={zigzag,segment length=2mm, amplitude=0.4mm}] (v10) -- (v9);
\draw [thick, decorate,decoration={zigzag,segment length=2mm, amplitude=0.4mm}] (v8) -- (v7);
\draw [thick, decorate,decoration={zigzag,segment length=2mm, amplitude=0.4mm}] (v14) -- (v13);
\draw [thick, decorate,decoration={zigzag,segment length=2mm, amplitude=0.4mm}](v20) -- (v19);
\draw [thick] (v26) -- (14.5,5.5);
\draw [thick] (v32) -- (v31);
\draw [thick, decorate,decoration={zigzag,segment length=2mm, amplitude=0.4mm}] (v16) -- (v15);
\draw [thick, decorate,decoration={zigzag,segment length=2mm, amplitude=0.4mm}] (v22) -- (v21);
\draw [thick, decorate,decoration={zigzag,segment length=2mm, amplitude=0.4mm}] (v28) -- (v27);
\draw [thick] (v34) -- (v33);
\draw [thick, decorate,decoration={zigzag,segment length=2mm, amplitude=0.4mm}] (v18) -- (v17);
\draw [thick, decorate,decoration={zigzag,segment length=2mm, amplitude=0.4mm}] (v24) -- (v23);
\draw [thick, decorate,decoration={zigzag,segment length=2mm, amplitude=0.4mm}] (v30) -- (v29);
\draw [thick, decorate,decoration={zigzag,segment length=2mm, amplitude=0.4mm}]  (v36) -- (v35);

\draw (v2) node [above left] {\tiny $P^{r_1}_{11}$};
\draw (v1) node [above left] {\tiny $Q^{r_1}_{11}$};

\draw (v7) node [above left] {\tiny $Q^{r_1}_{12}$};
\draw (v8) node [above left] {\tiny $P^{r_1}_{12}$};

\draw (2,0.9) node [above] {\tiny $P^{r_1}_{21}$};
\draw (3,1.1) node [below] {\tiny $Q^{r_1}_{21}$};

\draw (1.5,.4) node [above] {\tiny $P^{r_1}_{31}$};
\draw (v5) node [below] {\tiny $Q^{r_1}_{31}$};

\draw (v9) node [left] {\tiny $Q^{r_1}_{22}$};
\draw (v10) node [above left] {\tiny $P^{r_1}_{22}$};

\draw (v11) node [below] {\tiny $Q^{r_1}_{32}$};
\draw (3.5,.4) node [above] {\tiny $P^{r_1}_{32}$};

\draw (v13) node [above left] {\tiny $Q^{r_2}_{11}$};
\draw (v14) node [above left] {\tiny $P^{r_2}_{11}$};

\draw (v15) node [left] {\tiny $Q^{r_2}_{21}$};
\draw (v16) node [above left] {\tiny $P^{r_2}_{21}$};

\draw (v17) node [left] {\tiny $Q^{r_2}_{31}$};
\draw (v18) node [right] {\tiny $P^{r_2}_{31}$};

\draw (v19) node [left] {\tiny $Q^{r_2}_{12}$};
\draw (v20) node [right] {\tiny $P^{r_2}_{12}$};

\draw (v21) node [left] {\tiny $Q^{r_2}_{22}$};
\draw (v22) node [right] {\tiny $P^{r_2}_{22}$};

\draw (v23) node [above left] {\tiny $Q^{r_2}_{32}$};
\draw (v24) node [above left] {\tiny $P^{r_2}_{32}$};

\draw (v25) node [above left] {\tiny $Q^{r_3}_{11}$};
\draw (v26) node [above left] {\tiny $P^{r_3}_{11}$};

\draw (v27) node [left] {\tiny $Q^{r_3}_{21}$};
\draw (v28) node [right] {\tiny $P^{r_3}_{21}$};

\draw (v29) node [left] {\tiny $Q^{r_3}_{31}$};
\draw (v30) node [right] {\tiny $P^{r_3}_{31}$};

\draw (v31) node [below] {\tiny $Q^{r_3}_{12}$};
\draw (v32) node [above ] {\tiny $P^{r_3}_{12}$};

\draw (v33) node [below] {\tiny $Q^{r_3}_{22}$};
\draw (v34) node [below left] {\tiny $P^{r_3}_{22}$};

\draw (v35) node [below right] {\tiny $Q^{r_3}_{32}$};
\draw (v36) node [right] {\tiny $P^{r_3}_{32}$};

\draw (0,1.5) node [left] {\tiny $H^\alpha_1$};
\draw (0,1) node [left] {\tiny $H^\alpha_2$};
\draw (0,0.5) node [left] {\tiny $H^\alpha_3$};

\draw (4,6) node [above] {\tiny $V^{\mathrm{s}(h_{r_1})}_1$};
\draw (5.5,6) node [above] {\tiny $V^{\mathrm{s}(h_{r_1})}_2$};

\draw (9,6) node [above] {\tiny $V^{\mathrm{s}(h_{r_2})}_1$};
\draw (11,6) node [above] {\tiny $V^{\mathrm{s}(h_{r_2})}_2$};

\draw (14.5,6) node [above] {\tiny $V^{\mathrm{s}(h_{r_3})}_1$};
\draw (17,6) node [above] {\tiny $V^{\mathrm{s}(h_{r_3})}_2$};

\end{tikzpicture}       
\end{center}
\caption{An example of $H^\alpha_p$'s in a straight road map}
\label{fig:straight}
\end{figure}

\noindent {\rm(c)}
Let $r'_1<\cdots<r'_t$ be the indices such that $h_{r'_1},\dots,h_{r'_t}$ are all the arrows with source $\beta$. 
Then $V^\beta_{q}$ 
 is the unique path obtained by connecting the following vertices and paths from NE to SW  by straight vertical paths: 
$(1,b_\beta-u_\beta+q)$, 
$Q^{r'_1}_{1,q} \rightsquigarrow P^{r'_1}_{1,q}$, 
$\dots$,
 $Q^{r'_1}_{u_{\target(h_{r'_1})},q} \rightsquigarrow P^{r'_1}_{u_{\target(h_{r'_1})},q}$,
$Q^{r'_2}_{1,q} \rightsquigarrow P^{r'_2}_{1,q}$, 
$\dots$,
 $Q^{r'_2}_{u_{\target(h_{r'_2})},q} \rightsquigarrow P^{r'_2}_{u_{\target(h_{r'_2})},q}$,
$\dots$,
$Q^{r'_t}_{1,q} \rightsquigarrow P^{r'_t}_{1,q}$, 
$\dots$,
 $Q^{r'_t}_{u_{\target(h_{r'_t})},q} \rightsquigarrow P^{r'_t}_{u_{\target(h_{r'_t})},q}$,
 $(a_\beta,q)$. 
\end{lemma}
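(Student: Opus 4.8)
The plan is to prove Lemma~\ref{lem:straight 2 conditions} by a two-way implication, translating the geometric ``every corner lies on a transverse path'' condition into the structural description (a)--(c). The key observation is that the horizontal and vertical paths have prescribed endpoints on the boundary of each $A_\gamma$, and the only freedom is how they ``turn'' inside; the straightness condition rigidly ties the turning of each family to the other. Throughout I will work page by page, using the restriction notation $H|_k$, $V|_k$, $C|_k$, and the injective maps $\phi_\gamma$, $\phi_\alpha^{-1}$, $\phi_\beta^{-1}$ from \S2.1 to translate between a point as an entry of $A_\alpha$ and as an entry of $A_\beta$ when $h_k:\beta\to\alpha$.

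\medskip

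\noindent\textbf{($\Rightarrow$), assuming straightness.} First I would prove (a). Since $H^{\alpha}_p$ restricted to $\mathrm{Page}_k$ is a (possibly disconnected) union of subpaths of a SW-NE path, and similarly for $V^{\beta}_q|_k$, their intersection is a union of subpaths; I must show it is a single connected subpath $P^k_{pq}\rightsquigarrow Q^k_{pq}$ (possibly of length $0$, possibly empty — I will need to handle the empty case, treating $P^k_{pq},Q^k_{pq}$ as a formal convention or noting the subpath is a single vertex at worst). The straightness condition says every corner of $H^\alpha_p$ lies on \emph{some} vertical path; combined with the fact that vertical paths are nonintersecting and ordered, and that $H^\alpha_p$ enters $\mathrm{Page}_k$ from the west at a fixed height and exits to the east, one argues that $H^\alpha_p|_k$ can only change rows \emph{while} it is traveling along a vertical path, i.e.\ inside the region swept by the $V$'s; between consecutive vertical paths $H^\alpha_p$ must be straight horizontal. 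This forces $H^\alpha_p|_k\cap V^\beta_q|_k$ to be an unbroken run, giving (a). Then (b) follows by reassembling: between the west endpoint $(a_\alpha-u_\alpha+p,1)$ of $A_\alpha$ and the first page's first vertical path, $H^\alpha_p$ is straight horizontal (no corner is allowed there since no $V$ is present), similarly between consecutive pieces $Q^{r_i}_{p,q}\rightsquigarrow$ next piece, and finally to the east endpoint $(p,b_\alpha)$; uniqueness of the reassembled path is immediate because a path is determined by the ordered sequence of its maximal straight segments. Statement (c) is the same argument with the roles of $H$ and $V$, source and target, SW-NE and NE-SW, interchanged, using the block column structure of $A_\beta$ instead of the block row structure of $A_\alpha$.

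\medskip

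\noindent\textbf{($\Leftarrow$), assuming (a)--(c).} Here I would show that every corner of $H^\alpha_p$ lies on a vertical path (and dually). By (b), $H^\alpha_p$ is the concatenation of straight horizontal segments with the subpaths $P^{r_i}_{p,q}\rightsquigarrow Q^{r_i}_{p,q}$. A corner of $H^\alpha_p$ can only occur at a vertex that is an endpoint of some straight-horizontal segment, hence at an endpoint $P^{r_i}_{p,q}$ or $Q^{r_i}_{p,q}$ of one of these subpaths, \emph{or} inside such a subpath. By (a), $P^{r_i}_{p,q}\rightsquigarrow Q^{r_i}_{p,q}=H^\alpha_p|_k\cap V^{\source(h_{r_i})}_q|_k$, so every vertex of that subpath — in particular any corner lying in it, and the endpoints $P^{r_i}_{p,q},Q^{r_i}_{p,q}$ themselves — lies on the vertical path $V^{\source(h_{r_i})}_q$. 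That disposes of all corners of $H^\alpha_p$. The dual statement for corners of $V^\beta_q$ uses (c) and (a) symmetrically. One small point to check: that the reassembled paths in (b) and (c) actually have the required boundary endpoints and are honest nonintersecting families — but this is forced by the endpoint conditions built into (b)/(c) and the fact that the $P^k_{pq},Q^k_{pq}$ come from a genuine road map in the hypothesis.

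\medskip

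\noindent\textbf{Main obstacle.} The technical heart is the ($\Rightarrow$) direction, specifically proving (a): showing that the intersection of a horizontal and a vertical path on a single page is \emph{connected}. The subtlety is that a priori $H^\alpha_p|_k$ could leave $V^\beta_q$, travel straight horizontally, and rejoin it later — I must rule this out. The argument is that any rejoining would require $H^\alpha_p$ to have a corner at the point of re-entry, which by straightness lies on some vertical path $V^{\beta'}_{q'}$; a careful analysis of the relative vertical order of the $V$'s (they are nonintersecting and indexed so that $V^\beta_{q'}$ with larger $q'$ sits weakly to one side) together with the monotonicity of $H^\alpha_p$ shows $q'=q$ and that no straight-horizontal detour off $V^\beta_q$ and back is possible within a single page. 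I also need to be slightly careful about degenerate cases — empty intersections, length-$0$ subpaths, and the boundary situation $u_\alpha=a_\alpha$ (straight horizontal $H^\alpha_p$ with no corners) flagged after Definition~\ref{definition:main} — but in each of these the claimed conditions (a)--(c) either hold vacuously or reduce to the non-degenerate argument.
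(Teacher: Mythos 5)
Your $(\Leftarrow)$ direction matches the paper and is correct: by (b) and (c), every corner of $H^\alpha_p$ or $V^\beta_q$ sits inside one of the pieces $P^{r_i}_{p,q}\rightsquigarrow Q^{r_i}_{p,q}$, which by (a) is exactly $H^\alpha_p|_{r_i}\cap V^{\source(h_{r_i})}_q|_{r_i}$, hence lies on a transverse path. Your reassembly sketch for deriving (b),(c) from (a) once straightness holds is likewise sound. The genuine gap is in your argument for (a) in the $(\Rightarrow)$ direction, and it is not a cosmetic one.

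You claim that a rejoining forces a corner of $H^\alpha_p$ at the point of re-entry, and that the vertical path $V^{\beta}_{q'}$ carrying that corner must satisfy $q'=q$. Neither holds. There need not be a corner precisely at $P'$ (the endpoint of the second component); the relevant corner typically lies strictly between $Q$ and $P'$. And, crucially, the vertical path through that corner is typically \emph{not} $V^\beta_q$: in the generic picture (Figure \ref{fig:proof of straight}, left), $H^\alpha_p$ leaves $V^\beta_q$ at $Q$, runs north, acquires a NW corner $R$ on a \emph{different} vertical path $V^\beta_{q'}$ with $q'<q$, and only later returns east to rejoin $V^\beta_q$ at $P'$. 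If $q'=q$ were forced, the point $R$ would lie in $H^\alpha_p|_k\cap V^\beta_q|_k$ strictly between the two consecutive components --- an instant contradiction with no further work --- but that simply isn't what happens; the corner escapes to a lower-indexed vertical path, and finding a corner on $V^\beta_{q'}$ with $q'<q$ contradicts nothing by itself.

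What is missing is the descent/minimality device that the paper uses. The paper chooses $(p,q)$ \emph{minimal in dictionary order} among all pairs for which $H^\alpha_p|_k\cap V^\beta_q|_k$ is disconnected, then locates the NW corner $R$ of $H^\alpha_p|_k$ in the interior of $Q\stackrel{H}{\rightsquigarrow}P'$ closest to $Q$, shows $R\in V^\beta_{q'}|_k$ with $q'<q$, and then argues that $H^\alpha_p|_k\cap V^\beta_{q'}|_k$ is \emph{also} disconnected --- contradicting minimality. (There is a second, mirror case where $P'\stackrel{V}{\rightsquigarrow}Q$ lies NW rather than SE of $Q\stackrel{H}{\rightsquigarrow}P'$, handled symmetrically.) Your local analysis cannot close the loop without some such minimality or induction on the index $q$ (or on the area enclosed by the two subpaths); as written the argument does not reach a contradiction.
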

\begin{proof}
``$\Leftarrow$''. Assume (a)(b)(c) are satisfied. By (b), any corner of $H^\alpha_p$ lies in $P^{r_k}_{p\ell} \rightsquigarrow Q^{r_k}_{p\ell}$ for some $k,\ell$, therefore lies in a vertical path. Similarly by (c), any corner of $V^\beta_q$ lies in a horizontal path. So the road map is straight. 

``$\Rightarrow$''. Assume the road map is straight.  
We first show that (a) holds. Let $(p,q)$ be the minimum pair in the dictionary order such that $H^\alpha_{p}|_k \cap V^\beta_q|_k$ is not connected.  
Let $P\rightsquigarrow Q$ and $P'\rightsquigarrow Q'$ be the first two SW-most connected components of  this  intersection. 
Denote by $Q\stackrel{H}{\rightsquigarrow} P'$ the subpath of $H^\alpha_p|_k$ from $Q$ to $P'$; by $P'\stackrel{V}{\rightsquigarrow} Q$ the subpath of $V^\beta_q|_k$ from $P'$ to $Q$.
We consider two cases:
Case 1. $P'\stackrel{V}{\rightsquigarrow} Q$ lies to the SE of $Q\stackrel{H}{\rightsquigarrow} P'$ (see Figure \ref{fig:proof of straight} Left).
\begin{figure}[ht]
\begin{center}
\begin{tikzpicture}[scale=.4]
\draw  (0,0) rectangle (10,6);
\draw (3,2) node (v1) {} -- (2.5,2) -- (2.5,1.5) -- (1.5,1.5)  ;
\draw (6.5,4) node (v2) {} -- (6.5,4.5) -- (8,4.5) -- (8,5) -- (8.5,5) -- (8.5,5.5) -- (9,5.5);
\draw [red,dotted, thick] (v1) -- (4.5,2) -- (4.5,2.5) -- (6,2.5) -- (6,3.5) -- (6.5,3.5) -- (v2);
\draw [blue, dotted, thick] (v1) -- (3,3) -- (4,3) -- (4,3.5) -- (5,3.5) -- (5,4) -- (v2);
\draw (1.5,1.5) node [below] {\tiny $P$};
\draw (v1) node [below] {\tiny $Q$};
\draw (v2) node [right] {\tiny $P'$};
\draw (9,5.5) node [below] {\tiny $Q'$};
\draw (3,3) node [below right] {\tiny $R$};
\draw [red, dotted, thick] (3.5,5) -- (3.5,3.5) -- (3,3.5) -- (3,3) -- (2,3) -- (2,2.5) -- (0.5,2.5)  ;
\draw (3.5,4.5) node [red,left] {\tiny $V^\beta_{q'}|_k$};
\draw (7,3) node [red, below] {\tiny $P'\stackrel{V}{\rightsquigarrow} Q$};
\draw (5,4) node [blue,above] {\tiny $Q\stackrel{H}{\rightsquigarrow} P'$};
\begin{scope}[shift={(12,0)}]
\draw  (0,0) rectangle (10,6);
\draw (3,2) node (v1) {} -- (2.5,2) -- (2.5,1.5) -- (1.5,1.5)  ;
\draw (6.5,4) node (v2) {} -- (6.5,4.5) -- (8,4.5) -- (8,5) -- (8.5,5) -- (8.5,5.5) -- (9,5.5);
\draw [blue,dotted, thick] (v1) -- (4.5,2) -- (4.5,2.5) -- (6,2.5) -- (6,3.5) -- (6.5,3.5) -- (v2);
\draw [red, dotted, thick] (v1) -- (3,3) -- (4,3) -- (4,3.5) -- (5,3.5) -- (5,4) -- (v2);
\draw (1.5,1.5) node [below] {\tiny $P$};
\draw (v1) node [below] {\tiny $Q$};
\draw (v2) node [right] {\tiny $P'$};
\draw (9,5.5) node [below] {\tiny $Q'$};
\draw (3,3) node [below right] {\tiny $R$};
\draw [blue, dotted, thick] (3.5,5) -- (3.5,3.5) -- (3,3.5) -- (3,3) -- (2,3) -- (2,2.5) -- (0.5,2.5)  ;
\draw (3.5,4.5) node [red,left] {\tiny $H^\alpha_{q}|_k$};
\draw  (5,4) node [red,above] {\tiny $P'\stackrel{V}{\rightsquigarrow} Q$};
\draw  (7,3) node [blue, below] {\tiny $Q\stackrel{H}{\rightsquigarrow} P'$};
\end{scope}
\end{tikzpicture}       
\end{center}
\caption{Proof of Lemma \ref{lem:straight 2 conditions}. Left: Case 1; Right: Case 2.}
\label{fig:proof of straight}
\end{figure}
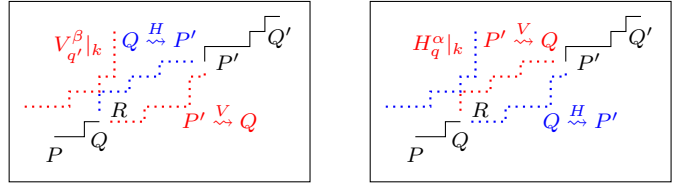
Let $R$ be the NW corner of $H^\alpha_p|_k$ in the interior of $Q\stackrel{H}{\rightsquigarrow} P'$ that is closest to $Q$. Then $R$ lies in $V^{\beta}_{q'}|_k$ for some $q'<q$,
and $H^\alpha_p|_k\cap V^\beta_{q'}|_k$ is not connected,
contradicting the choice of $(p,q)$.  
  %indeed,  the intersection contains $R$ and also a point that is weakly SW of $P$ (because $H^\alpha_p|_k$ reaches the left boundary and $V^\beta_{q'}|_k$ reaches the bottom boundary), but not $P$. This contradicts the choice of $(p,q)$.
%
Case 2.  $P'\stackrel{V}{\rightsquigarrow} Q$ lies to the NW of $Q\stackrel{H}{\rightsquigarrow} P'$ (see Figure \ref{fig:proof of straight} Right). 
The proof is similar to Case 1 so we skip.
%Let $R$ be the NW corner of $V^\beta_q|_k$ in the interior of $P'\stackrel{V}{\rightsquigarrow} Q$ that is closest to $Q$. Then $R$ lies in some $H^{\alpha}_{p'}|_k$ with $p'<p$, and $H^\alpha_{p'}|_k\cap V^\beta_{q}|_k$ is not connected, contradicting the choice of $(p,q)$.
%
This proves (a). 
%
%Next we show (b). The path $H^\alpha_p$ starts at $(a_\alpha-u_\alpha+p,1)$. It cannot make a turn before arriving at $P^{r_1}_{p,1}$ by the definition of straight road map. So the subpath from $(a_\alpha-u_\alpha+p,1)$ to  $P^{r_1}_{p,1}$ is either straight horizontal or straight vertical. The former case is what we expected. In the latter case,  $V^{\source(h_{r_1})}_1|_k$ passes through $P^{r_1}_{p,1}$, so it stays in the left-most column after it going below $P^{r_1}_{p,1}$, therefore passes the point $(a_\alpha-u_\alpha+p,1)$. This forces $P^{r_1}_{p,1}=(a_\alpha-u_\alpha+p,1)$, so it is still what we expected. 
%The path $H^\alpha_p$ then goes along the path $P^{r_1}_{p,1} \rightsquigarrow Q^{r_1}_{p,1}$ and arrives at  $Q^{r_1}_{p,1}$.
%
%After that, consider $Q^{r_1}_{p,1}\stackrel{H}{\rightsquigarrow} P^{r_1}_{p,2}$. It cannot make any turns, so is either straight horizontal or straight vertical. The former case is what we expected. So we assume the latter case, that is, $P^{r_1}_{p,2}$ is to the north of  $Q^{r_1}_{p,1}$.  Denote $\beta=\source(h_{r_1})$; then $P^{r_1}_{p,2}$ lies in $V^\beta_2|_k$, $Q^{r_1}_{p,1}$ lies in $V^\beta_1|_k$.  The path  $V^\beta_2|_k$ must pass through a point weakly left of $Q^{r_1}_{p,1}$, contradicting the fact that  $V^\beta_2|_k$ is on the right side of $V^\beta_1|_k$. 
%
%The rest of (b) is proved in the same way.  
%(c) is proved similarly to (b), so we skip. 
The parts (b) and (c) follow easily. 
\end{proof}

\subsection{Concurrent vertex maps and $\uu$-Compatible sets}\label{subsection:u-compatible sets}
\begin{definition}\label{def:concurrent vertex map}
A subset $C\subseteq L$   is called a {\bf concurrent vertex map} if there exists a straight road map  $(\{H^i_{p}\}, \{V^j_{q}\})$ such that 
\begin{equation}\label{eq:Ck}
 C|_k= \Big( \bigcup_{p=1}^{u_{\target(h_k)}} H^{\target(h_k)}_p|_k\Big)\
\bigcap\
\Big( \bigcup_{q=1}^{u_{\source(h_k)}} V^{\source(h_k)}_q|_k\Big) 
\bigcap\mathbb{Z}^2, \quad \text{ for } k=1,\dots,r.
\end{equation}
\end{definition}

\begin{definition}\label{def:long}
We say that a finite subset $S\subset\mathbb{Z}^2$ is a {\bf diagonal chain} of size $s$ if $S=\{(i_1,j_1),\dots,(i_s,j_s)\}$ satisfies $i_1<i_2<\cdots<i_s$ and $j_1<j_2<\cdots<j_s$. 

In the rest of the definition, fix $C \subseteq L$. 

For each $\alpha\in {\rm V}_{\rm target}$, define a subset $C^\alpha = \phi_\alpha^{-1}(C) \subseteq [1,a_\alpha]_\mathbb{Z}\times [1,b_\alpha]_\mathbb{Z}$.
For $(i,j,k)\in\mathbb{Z}^3$, let $\alpha={\rm t}(h_k)$, $(i,j')=\phi_\alpha^{-1}(i,j,k)$, and define the following constants $r_{ijk}$, $\bar{r}_{ijk}$, ... (later we may 
use $r_{ijk}(C)$, $\bar{r}_{ijk}(C)$, $\dots$ in place of  $r_{ijk}$, $\bar{r}_{ijk}$, $\dots$ to indicate their dependence on $C$; we also use $r_P$ to denote $r_{ijk}$ if $P=(i,j,k)$, and similar for others.)

$r_{ijk} = r^\alpha_{ij'}:=$ max size of diagonal chains in $C^\alpha$ contained in $(-\infty,i-1]_\mathbb{Z}\times (-\infty,j'-1]_\mathbb{Z}$;

$\bar{r}_{ijk} =\bar{r}^\alpha_{ij'} :=\max(r_{ijk},\min(i-1,u_{\alpha}-1-\min(a_\alpha-i,b_\alpha-j')))=$ max size of diagonal chains of $\bar{C}^\alpha:=C^\alpha\cup \{(a_\alpha-u_\alpha+t,-u_\alpha+t) | 1\le t\le u_\alpha\}\cup\{(t,b_\alpha-u_\alpha+t)|1\le t\le u_\alpha\}$ contained in $(-\infty,i-1]_\mathbb{Z}\times(-\infty,j'-1]_\mathbb{Z}$;

$s_{ijk} = s^\alpha_{ij'} :=$ max size of diagonal chains of $C^\alpha$ contained in $[i+1,\infty)_\mathbb{Z}\times [j'+1,\infty)_\mathbb{Z}$;

$\bar{s}_{ijk} = \bar{s}^\alpha_{ij'} :=\max(s_{ijk},\min(a_\alpha-i,u_{\alpha}-\min(i,j')))=$   max size of diagonal chains of $C^\alpha\cup \{(a_\alpha-u_\alpha+t,t) | 1\le t\le u_\alpha\}\cup\{(t,b_\alpha+t)|1\le t\le u_\alpha\}$ contained in $[i+1,\infty)_\mathbb{Z}\times [j'+1,\infty)_\mathbb{Z}$. 

\smallskip

Similarly, for $\beta\in {\rm V}_{\rm source}$,  define a subset $C^\beta = \phi_\beta^{-1}(C) \subseteq [1,a_\beta]_\mathbb{Z}\times [1,b_\beta]_\mathbb{Z}$.
For $(i,j,k)\in\mathbb{Z}^3$, let $\beta={\rm s}(h_k)$, $(i',j)=\phi_\beta^{-1}(i,j,k)$, and 

$r'_{ijk} = r^\beta_{i'j}  :=$ max size of diagonal chains in $C^\beta$ contained in $(-\infty,i'-1]_\mathbb{Z}\times (-\infty,j-1]_\mathbb{Z}$;

$\bar{r}'_{ijk} = \bar{r}^\beta_{i'j} :=\max(r'_{ijk},\min(j-1,u_{\beta}-1-\min(b_\beta-j,a_\beta-i')))$; %=max size of diagonal chains of $C^\beta\cup \{(-u_\beta+t,b_\beta-u_\beta+t) | 1\le t\le u_\beta\}\cup\{(a_\beta-u_\beta+t,t)|1\le t\le u_\beta\}$ contained in $(-\infty,i'-1]_\mathbb{Z}\times(-\infty,j-1]_\mathbb{Z}$;

$s'_{ijk} = s^\beta_{i'j} :=$ max size of diagonal chains of $C^\beta$ contained in $[i'+1,\infty)_\mathbb{Z}\times [j+1,\infty)_\mathbb{Z}$;

$\bar{s}'_{ijk} = \bar{s}^\beta_{i'j} :=\max(s'_{ijk},\min(b_\beta-j,u_{\beta}-\min(i',j)))$.% =max size of diagonal chains of $C^\beta\cup \{(t,b_\beta-u_\beta+t) | 1\le t\le u_\beta\}\cup\{(a_\beta+t,t)|1\le t\le u_\beta\}$ contained in $[i'+1,\infty)_\mathbb{Z}\times [j+1,\infty)_\mathbb{Z}$.

We say that a subset $C\subseteq L$ is {\bf $\uu$-compatible} if for each $\gamma\in {\rm V}_\mathcal{Q}$, $C^\gamma$ does not contain any diagonal chain of size $(u_\gamma+1)$. 
We say that $C\subseteq L$ is {\bf maximal $\uu$-compatible} if $C$ is $\uu$-compatible and there is no $\uu$-compatible $C'\subseteq L$ satisfying $C\subsetneq C'$. 
\end{definition}

\begin{proposition}\label{prop:concurrent equiv}
For $C\subseteq L$, the following are equivalent:

{\rm(i)} $C$ is a concurrent vertex map;

{\rm(ii)} $C$ is maximal $\uu$-compatible; 

{\rm(iii)} $C$ is $\uu$-compatible and 
\begin{equation}\label{eq:cardinality of max C}
|C|= N_{\mathcal{Q},\mm,\uu}:= \sum_k u_{\source(h_k)}u_{\target(h_k)}+\sum_{\alpha\in {\rm V}_{\rm target}}u_\alpha(a_\alpha-u_\alpha)+\sum_{\beta\in {\rm V}_{\rm source}}u_\beta(b_\beta-u_\beta)
\end{equation}

{\rm(iv)} $C$ satisfies the condition that a point $(i,j,k)\in L$ is in $C$ if and only if $r_{ijk}+s_{ijk}<u_{\target(h_k)}$ and $r'_{ijk}+s'_{ijk}<u_{\source(h_k)}$.

{\rm(v)} $C$ satisfies the condition that $\bar{r}_{ijk}+\bar{s}_{ijk}\in\{u_{\target(h_k)}-1,u_{\target(h_k)}\}$ and $\bar{r}'_{ijk}+\bar{s}'_{ijk}\in\{u_{\source(h_k)}-1,u_{\source(h_k)}\}$, and $(i,j,k)\in L$ is in $C$ if and only if $\bar{r}_{ijk}+\bar{s}_{ijk}=u_{\target(h_k)}-1$ and $\bar{r}'_{ijk}+\bar{s}'_{ijk}=u_{\source(h_k)}-1$.

Moreover, if $C$ is a concurrent vertex map, then the corresponding road map is uniquely determined by the following conditions:

 $(i,j)\in H^{\target(h_k)}_p|_k$ if and only if $\bar{r}_{ijk}=p-1$ and $\bar{s}_{ijk}=u_{\target(h_k)}-p$; 
 
 $(i,j) \in V^{\source(h_k)}_q|_k$ if and only if $\bar{r}'_{ijk}=q-1$ and  $\bar{s}'_{ijk}=u_{\source(h_k)}-q$.

%[Need to add the following and its proof: a point $(i,j)$ (may or may not be in $C$) is strictly below $H_p$ and weakly above $H_{p+1}$ if and only if $\bar{a}_{ij}=p$, is strictly to the right of $V_q$ and weakly to the left of $V_{q+1}$ if and only if $\bar{b}_{ij}=q$.]
\end{proposition}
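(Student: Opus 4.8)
The plan is to establish the equivalences (i)--(v) by a cycle of implications, with the combinatorics of diagonal chains (a Greene--Kleitman / Gessel--Viennot type analysis) doing the heavy lifting, and then to deduce the uniqueness of the road map as a corollary of the proof of (i)$\Rightarrow$(iv).

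First I would prove (i)$\Rightarrow$(iv). Suppose $C$ arises from a straight road map $(\{H^\alpha_p\},\{V^\beta_q\})$ via \eqref{eq:Ck}. Using Lemma \ref{lem:straight 2 conditions}, I would analyze a single page ${\rm Page}_k$ with $\alpha=\target(h_k)$, $\beta=\source(h_k)$: the horizontal paths restrict to $u_\alpha$ nonintersecting lattice paths and the vertical paths to $u_\beta$ nonintersecting lattice paths, and $C|_k$ is their common vertex set. The key local computation is that for a point $P=(i,j)\in{\rm Page}_k$, the quantity $r_{ijk}$ (max diagonal chain of $C^\alpha$ strictly NW of $P$) equals the index offset $p-1$ of the lowest horizontal path passing weakly NW of $P$, and $s_{ijk}$ equals $u_\alpha-p'$ for the highest such path; here I would invoke the Lindström--Gessel--Viennot philosophy that a family of $u_\alpha$ nonintersecting paths in a region is detected precisely by the non-existence of diagonal chains of size $u_\alpha+1$ among the points they cover, together with the specific boundary conditions $((a_\alpha-u_\alpha+p,1))$ and $((p,b_\alpha))$ imposed in Definition \ref{definition:main}. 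From this, $P\in C|_k$ (i.e.\ $P$ lies on both a horizontal and a vertical path) translates exactly into $r_{ijk}+s_{ijk}<u_\alpha$ together with $r'_{ijk}+s'_{ijk}<u_\beta$. This is the step I expect to be the main obstacle: carefully matching the path-index bookkeeping to the diagonal-chain statistics, keeping straight the unusual coordinate convention and the block structure of $A_\alpha$ versus $A_\beta$ (a single page sits inside both, horizontally in $A_\alpha$ and vertically in $A_\beta$).

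Next, (iv)$\Rightarrow$(v) is a translation between the ``unbarred'' and ``barred'' statistics: the barred quantities $\bar r,\bar s,\bar r',\bar s'$ are obtained by padding $C^\gamma$ with the forced boundary diagonal points, and a direct computation shows $\bar r_{ijk}+\bar s_{ijk}$ always lies in $\{u_\alpha-1,u_\alpha\}$ with equality to $u_\alpha-1$ exactly when $r_{ijk}+s_{ijk}<u_\alpha$ (and similarly on the source side); this uses only that $C$ is $\uu$-compatible, i.e.\ has no diagonal chain of size $u_\gamma+1$ in any $C^\gamma$. Then (v)$\Rightarrow$(iii): from the characterization in (v) one counts $|C|$ page by page. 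On ${\rm Page}_k$ the membership condition $\bar r_{ijk}+\bar s_{ijk}=u_\alpha-1$ and $\bar r'_{ijk}+\bar s'_{ijk}=u_\beta-1$ cuts out exactly a ``staircase region'' of size $u_\alpha u_\beta$ inside the page (the intersection of $u_\alpha$ horizontal strips and $u_\beta$ vertical strips), and summing the contributions of the common-page points, plus the purely-horizontal overhang on each $A_\alpha$ and the purely-vertical overhang on each $A_\beta$, reproduces exactly $N_{\mathcal{Q},\mm,\uu}$ in \eqref{eq:cardinality of max C}.

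Finally, (iii)$\Rightarrow$(ii) and (ii)$\Rightarrow$(i) close the loop. For (iii)$\Rightarrow$(ii): any $\uu$-compatible $C'$ has $|C'|\le N_{\mathcal{Q},\mm,\uu}$ — this itself needs an argument, but it follows from the Gessel--Viennot bound that a subset of $[1,a_\gamma]\times[1,b_\gamma]$ with no diagonal chain of size $u_\gamma+1$ can be covered by $u_\gamma$ nonintersecting paths with the prescribed endpoints, and stitching such path families across pages shows $C'$ embeds in a concurrent vertex map whose size is $N_{\mathcal{Q},\mm,\uu}$; hence a $\uu$-compatible $C$ of size $N_{\mathcal{Q},\mm,\uu}$ is already maximal. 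For (ii)$\Rightarrow$(i): given maximal $\uu$-compatible $C$, build the road map directly by the displayed formulas — set $(i,j)\in H^\alpha_p|_k$ iff $\bar r_{ijk}=p-1$, $\bar s_{ijk}=u_\alpha-p$, and likewise for the $V^\beta_q$ — then verify (using monotonicity of $\bar r,\bar s$ along rows and columns and the $\uu$-compatibility) that these are well-defined nonintersecting paths with the right endpoints, that they form a straight road map by Lemma \ref{lem:straight 2 conditions}(a)--(c), and that \eqref{eq:Ck} recovers $C$ (invoking maximality to get equality rather than inclusion). The ``Moreover'' clause is then immediate: the formulas for $H^\alpha_p|_k$ and $V^\beta_q|_k$ in terms of $\bar r,\bar s,\bar r',\bar s'$ depend only on $C$, so any straight road map inducing $C$ must be the one just constructed, giving uniqueness.
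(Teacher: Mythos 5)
Your overall plan is reasonable in outline (a cycle of implications, with the heavy lifting done by the interaction between diagonal-chain statistics and nonintersecting paths), but two of your key claims are incorrect as stated, and they are precisely the claims that make the argument go through.

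\textbf{The unbarred $r_{ijk}$ does not give the path index.} In your (i)$\Rightarrow$(iv) step you assert that $r_{ijk}$ ``equals the index offset $p-1$ of the lowest horizontal path passing weakly NW of $P$.'' This is what the \emph{barred} statistic $\bar r_{ijk}$ does, not $r_{ijk}$. Take, for instance, a point $P=\phi_\alpha^{-1}(i,j,k)$ at the SW corner of $A_\alpha$, i.e.\ $(i,j')=(a_\alpha,1)$ with $u_\alpha\ge 2$. This point lies on $H^\alpha_{u_\alpha}$, so the barred value is $\bar r^\alpha_{a_\alpha,1}=u_\alpha-1$; but $r^\alpha_{a_\alpha,1}=0$ because nothing in $C^\alpha$ is strictly NW of $(a_\alpha,1)$. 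The padding with the phantom boundary diagonals $\{(a_\alpha-u_\alpha+t,-u_\alpha+t)\}$ and $\{(t,b_\alpha-u_\alpha+t)\}$ in the definition of $\bar r,\bar s$ is exactly what repairs this near the boundary, and is why the paper proves (i)$\Rightarrow$(v) first (in terms of $\bar r,\bar s$) and only then passes to (iv). If you try to fill in (i)$\Rightarrow$(iv) directly, the ``only if'' direction is indeed easy from $r+s\le \bar r+\bar s=u_\alpha-1$, but the ``if'' direction forces you to analyze exactly when $r\ne\bar r$ or $s\ne\bar s$, and at that point you are reproducing the paper's (v)$\Rightarrow$(iv) case analysis rather than bypassing it.

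\textbf{The barred-sum constraint does not follow from $\uu$-compatibility alone.} In (iv)$\Rightarrow$(v) you claim a ``direct computation shows $\bar r_{ijk}+\bar s_{ijk}$ always lies in $\{u_\alpha-1,u_\alpha\}$ \dots\ this uses only that $C$ is $\uu$-compatible.'' This is false. Take $\mathcal{Q}:2\to 1$, $\mm=(10,10)$, $\uu=(2,2)$, and $C=\emptyset$, which is vacuously $\uu$-compatible. At the interior point $(i,j')=(5,5)$ one computes $\bar r=\max(0,\min(4,\,1-5))=0$ and $\bar s=\max(0,\min(5,\,2-5))=0$, so $\bar r+\bar s=0\notin\{1,2\}$. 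The constraint $\bar r_{ijk}+\bar s_{ijk}\in\{u_\alpha-1,u_\alpha\}$ is a genuine tightness property of concurrent vertex maps; it encodes that the road-map paths tile $[1,a_\alpha]\times(-\infty,b_\alpha]$ into the regions $R_{pq}$, and it requires knowing the path structure, not merely the absence of long diagonal chains. Consequently your step (iv)$\Rightarrow$(v) cannot be a formal bookkeeping translation; either you establish the path structure of $C$ first (which is what the paper does via (i)$\Rightarrow$(v)), or the translation must be supported by that structure.

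There is also a subtler issue with (iii)$\Rightarrow$(ii) in your proposed order: to bound $|C'|\le N_{\mathcal{Q},\mm,\uu}$ for arbitrary $\uu$-compatible $C'$ by ``stitching path families'' you are implicitly using that a maximal $\uu$-compatible set is a concurrent vertex map, i.e.\ (ii)$\Rightarrow$(i), which in your cycle comes later. The paper avoids the circularity by establishing (ii)$\Rightarrow$(i) and (i)$\Rightarrow$(iii) before (iii)$\Rightarrow$(ii). I suggest you adopt the paper's ordering (start from the straight road map, compute the barred statistics cleanly in the two cases of Figure~\ref{fig:straight relabel}, then derive (iv) from (v)), as this sidesteps all three difficulties.
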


Before proving the proposition, we first prove Lemmas \ref{contained in path}, \ref{lemma:u-compatible = existence of road map}, \ref{lem:cd}.

\begin{lemma}\label{contained in path}
 Let $a,b,u$ be positive integers. (See Figure \ref{fig:lemma contained in path}.)
 
{\rm(i)}  Assume $u\le \min(a,b)$. 
Let $X_p=(a-u+p,p)$, $Y_p=(p,b-u+p)$ for $1\le p\le u$.
Let $R_{\rm hex}\subseteq\mathbb{R}^2$ be the closed hexagon region with vertices 
$(1,1)$, $X_1$, $X_u$, $(a,b)$, $Y_u$, $Y_1$, and let $C\subseteq R_{\rm hex}\cap\mathbb{Z}^2$. Then 
``there exits a collection of $u$ nonintersecting paths connecting $(X_p)_{p=1}^u$ and $(Y_p)_{p=1}^u$ whose union contains $C$''
$\Leftrightarrow$ 
``all diagonal chains of $C$ have sizes $\le u$''.

\begin{figure}[ht]
\begin{center}
\begin{tikzpicture}[scale=.3]
\fill[blue!10](0,3)--(3,0)--(12,0)--(12,7)--(9,10)--(0,10)--(0,3);
\draw(1,9) node[]{\tiny$R$};
\draw(0,10) node[left]{\tiny$(1,1)$};
\draw(12,0) node[right]{\tiny$(a,b)$};
\draw  (0,0) rectangle (12,10);
\draw[fill] (0,3) circle(3pt); \draw(0,3) node[left]{\tiny$X_1=(a-u+1,1)$};
\draw[fill] (1,2) circle(3pt); \draw(1,2) node[left]{\tiny$X_2=(a-u+2,2)$};
\draw[fill] (1.7,1.3) circle(2pt);
\draw[fill] (2,1) circle(2pt);
\draw[fill] (2.3,0.7) circle(2pt);
\draw[fill] (3,0) circle(3pt); \draw(3,0) node[below ]{\tiny$X_u=(a,u)$};
\draw[fill] (9,10) circle(3pt); \draw(9,10) node[above]{\tiny$Y_1=(1,b-u+1)$};
\draw[fill] (10,9) circle(3pt); \draw(10,9) node[right]{\tiny$Y_2=(2,b-u+2)$};
\draw[fill] (10.7,8.3) circle(2pt);
\draw[fill] (11,8) circle(2pt);
\draw[fill] (11.3,7.7) circle(2pt);
\draw[fill] (12,7) circle(3pt); \draw(12,7) node[right]{\tiny$Y_u=(u,b)$};
\draw[red,thick] (0,3)-- (1,3)--(1,5) -- (2,5)--(2,7)--(6,7)--(6,10)--(9,10)  ;
\draw[red,thick] (1,2)-- (3,2) -- (3,4)--(5,4)--(5,5)--(8,5)--(8,8)--(10,8)--(10,9)  ;
\draw[red,thick] (3,0)-- (6,0) -- (6,2)--(10,2)--(10,6)--(11,6)--(11,7)--(12,7)  ;
\begin{scope}[shift={(20,0)}]
\fill[blue!10](0,0)--(12,0)--(12,10)--(0,10);--(0,0);
%\draw(1,9) node[]{\tiny$R$};
\draw(0,10) node[left]{\tiny$(1,1)$};
\draw(12,0) node[right]{\tiny$(a,b)$};
\draw  (0,0) rectangle (12,10);
\draw[fill] (0,3) circle(3pt);% \draw(0,3) node[left]{\tiny$X_1=(m-w+1,1)$};
\draw[fill] (0,2) circle(3pt);% \draw(1,2) node[left]{\tiny$X_2=(m-w+2,2)$};
\draw[fill] (0,1.3) circle(2pt);
\draw[fill] (0,1) circle(2pt);
\draw[fill] (0,0.7) circle(2pt);
\draw[fill] (0,0) circle(3pt);% \draw(3,0) node[below ]{\tiny$X_w=(m,w)$};
\draw[fill] (12,10) circle(3pt);% \draw(9,10) node[above]{\tiny$Y_1=(1,n-w+1)$};
\draw[fill] (12,9) circle(3pt);% \draw(10,9) node[right]{\tiny$Y_2=(2,n-w+1)$};
\draw[fill] (12,8.3) circle(2pt);
\draw[fill] (12,8) circle(2pt);
\draw[fill] (12,7.7) circle(2pt);
\draw[fill] (12,7) circle(3pt);% \draw(12,7) node[right]{\tiny$Y_w=(w,n)$};
\draw[fill] (0,3) circle(3pt);
\draw[fill] (1,2) circle(3pt);
\draw[fill] (1.7,1.3) circle(2pt);
\draw[fill] (2,1) circle(2pt);
\draw[fill] (2.3,0.7) circle(2pt);
\draw[fill] (3,0) circle(3pt);=
\draw[fill] (9,10) circle(3pt);
\draw[fill] (10,9) circle(3pt);
\draw[fill] (10.7,8.3) circle(2pt);
\draw[fill] (11,8) circle(2pt);
\draw[fill] (11.3,7.7) circle(2pt);
\draw[fill] (12,7) circle(3pt);
\draw[red,thick] (0,3)-- (1,3)--(1,5) -- (2,5)--(2,7)--(6,7)--(6,10)--(12,10)  ;
\draw[red,thick] (0,2)-- (3,2) -- (3,4)--(5,4)--(5,5)--(8,5)--(8,8)--(10,8)--(10,9)--(12,9)  ;
\draw[red,thick] (0,0)-- (6,0) -- (6,2)--(10,2)--(10,6)--(11,6)--(11,7)--(12,7)  ;
\end{scope}
\end{tikzpicture}       
\end{center}
\caption{Lemma \ref{contained in path}. Left is (i), the shaded hexagon region is $R_{\rm hex}$; Right is (ii).}
\label{fig:lemma contained in path}
\end{figure}
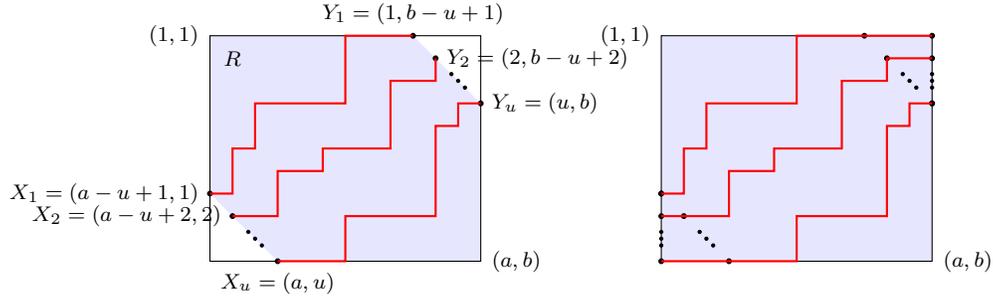

For {\rm(ii)} and {\rm(ii')},  assume $C\subseteq [1,a]_\mathbb{Z}\times[1,b]_\mathbb{Z}$.

{\rm(ii)} If $u < a$, then ``there exists a collection of $u$ nonintersecting paths connecting $\big((a-u+p,1)\big)_{p=1}^u$ and $\big((p,b)\big)_{p=1}^u$ whose union contains $C$'' $\Leftrightarrow$ `` $u\le b$ and all diagonal chains of $C$ have sizes $\le u$''. 
If $u=a$, then (regardless of $b$) there exists a unique collection of $u$ nonintersecting paths connecting $\big((a-u+p,1)\big)_{p=1}^u$ and $\big((p,b)\big)_{p=1}^u$ whose union contains $C$, where each path is straight horizontal.

{\rm(ii')} If $u<b$, then ``there exists a collection of $u$ nonintersecting paths connecting $\big((1,b-u+q)\big)_{q=1}^u$ and $\big((a,q)\big)_{q=1}^u$ whose union contains $C$'' $\Leftrightarrow$ `` $u\le a$ and all diagonal chains of $C$ have sizes $\le u$''. 
If $u=b$, then (regardless of $a$) there exists a unique collection of $u$ nonintersecting paths connecting $\big((1,b-u+q)\big)_{q=1}^u$ and $\big((a,q)\big)_{q=1}^u$ whose union contains $C$, where each path is straight vertical.
\end{lemma}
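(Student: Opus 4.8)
\textit{Plan of proof.}
The underlying mechanism is the interplay between diagonal chains and monotone paths. Along any SW-NE path the $x$-coordinate is weakly decreasing and the $y$-coordinate weakly increasing, so a single path contains no diagonal chain of size $\ge 2$; hence in a nonintersecting family of $u$ paths, every diagonal chain of the union has at most $u$ elements. This settles all of the ``$\Rightarrow$'' implications at once. The degenerate cases $u=a$ in (ii) and $u=b$ in (ii') are also immediate: there the only monotone paths with the prescribed endpoints are the $a$ straight rows, respectively the $b$ straight columns, which are automatically nonintersecting and cover everything.

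The substance is the ``$\Leftarrow$'' direction of (i). First I would enlarge $C$ to a set $\widehat C\subseteq R_{\rm hex}\cap\mathbb Z^2$ still free of diagonal chains of size $>u$ and maximal with that property; since a nonintersecting family covering $\widehat C$ covers $C$, we may assume $C=\widehat C$ is maximal. Next I would run, in this one-page situation, the rank-function bookkeeping of Definition~\ref{def:long}: to each $P\in R_{\rm hex}\cap\mathbb Z^2$ attach $\bar r(P)$, the largest size of a diagonal chain of $C\cup\{(a-u+t,-u+t):1\le t\le u\}$ lying strictly SW of $P$, and $\bar s(P)$, the corresponding quantity strictly NE of $P$ for the set $C\cup\{(a-u+t,t):1\le t\le u\}\cup\{(t,b+t):1\le t\le u\}$. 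Concatenating an optimal SW chain, the point $P$ itself when $P\in C$, and an optimal NE chain yields a diagonal chain of the relevant augmented set, so $\bar r(P)+\bar s(P)\le u$ always and $\le u-1$ whenever $P\in C$; maximality of $C$ supplies the reverse inequality $\bar r(P)+\bar s(P)\ge u-1$ via an exchange argument (if it failed at some $P\notin C$, then $C\cup\{P\}$ would still be free of long diagonal chains). Thus $\bar r+\bar s\in\{u-1,u\}$ everywhere, with equality to $u-1$ precisely on $C$ together with the boundary points $X_t,Y_t$.

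Now set
$$H_p:=\bigl\{P\in R_{\rm hex}\cap\mathbb Z^2 \ :\ \bar r(P)=p-1\ \text{and}\ \bar s(P)=u-p\bigr\},\qquad 1\le p\le u,$$
and check: (a) each of $\bar r,\bar s$ changes by at most $1$ across a unit step and is monotone in the evident direction, so the locus $\{\bar r=p-1,\ \bar s=u-p\}$ is a single monotone SW-NE lattice path; (b) $\bar r(X_p)=p-1$, $\bar s(X_p)=u-p$, and likewise $\bar r(Y_p)=p-1$, $\bar s(Y_p)=u-p$ (short computations using $u\le\min(a,b)$), so $H_p$ runs from $X_p$ to $Y_p$; (c) the $H_p$ are pairwise disjoint, because $\bar r(P)$ already records which of them $P$ lies on; (d) if $P\in C$ then $\bar r(P)+\bar s(P)=u-1$, hence $P\in H_{\bar r(P)+1}$, so $\bigcup_p H_p\supseteq C$. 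This produces the required family, and the same computation shows it is the unique one pinned down by $\bar r,\bar s$ — the ``moreover'' clause that later feeds into Proposition~\ref{prop:concurrent equiv}.

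For (ii) and (ii') I would not repeat the construction but reduce to (i). Write $C=C_{\rm hex}\sqcup C_{\rm tri}$, where $C_{\rm hex}=C\cap R_{\rm hex}$ and $C_{\rm tri}$ is the part of $C$ inside the two corner triangles cut off from $[1,a]_\mathbb Z\times[1,b]_\mathbb Z$ to form $R_{\rm hex}$; apply (i) to $C_{\rm hex}$, then extend the $p$-th path by the forced straight segment along row $a-u+p$ from column $1$ to $X_p=(a-u+p,p)$ and by the one along row $p$ from $Y_p=(p,b-u+p)$ to column $b$. These added segments lie in pairwise distinct rows, so disjointness persists, and a short check shows they absorb exactly $C_{\rm tri}$; this gives the converse in (ii). For the extra hypothesis ``$u\le b$'' on the right of (ii), note that each path from $(a-u+p,1)$ to $(p,b)$ has exactly $a-u$ north steps and $b-1$ east steps, hence $a-u+b$ vertices; disjointness of the $u$ paths inside the $a\times b$ grid forces $u(a-u+b)\le ab$, i.e.\ $u(b-u)\le a(b-u)$, which together with $u<a$ forces $b\ge u$. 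Statement (ii') is symmetric. The step I expect to be the main obstacle is (a)--(b): verifying by hand that the level sets of the rank function are genuine monotone lattice paths landing on the prescribed endpoints, and that the exchange argument really does yield $\bar r+\bar s\ge u-1$ throughout the hexagon — precisely what the ``$\min$'' corrections built into $\bar r,\bar s$ are arranged to guarantee. Everything else is routine once that bookkeeping is in place.
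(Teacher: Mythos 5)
Your ``$\Rightarrow$'' direction, the two degenerate cases, and the reduction of (ii) and (ii') to (i) are all correct and match or elaborate on the paper's (the paper just says the reduction ``follows easily,'' whereas you spell out both the forced straight segments and a clean counting argument $u(a-u+b)\le ab$ for the side condition $u\le b$; that counting argument is a nice addition).

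The genuine issue is in the ``$\Leftarrow$'' direction of (i), where your route diverges substantially from the paper's and leaves a real hole. You pass to a maximal $\widehat C$ and then try to read the paths off as the double level sets $H_p=\{\bar r=p-1,\ \bar s=u-p\}$ of \emph{both} rank functions. Step (a) asserts that because $\bar r$ and $\bar s$ are monotone with unit jumps, each $H_p$ is a single monotone lattice path; you yourself flag this as the main obstacle, and indeed monotonicity plus unit jumps does not by itself give connectedness of a level set — that needs the global constraint $\bar r+\bar s\in\{u-1,u\}$, and even granting that, one must still argue that the intersection of the two level bands is a single staircase rather than a union of several. Filling this in essentially reproves the ``Moreover'' part of Proposition~\ref{prop:concurrent equiv}, which the paper later deduces \emph{from} this lemma; you are not literally circular since you propose to prove (a)--(b) from scratch, but you are fronting a significantly harder statement than the lemma itself requires. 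The paper instead uses a one-sided ``SE-most construction'': it enlarges $C$ only by the endpoints $X_p,Y_p$ (with a short verification that this preserves the diagonal-chain bound), uses only the single rank function $r_{ij}=$ max diagonal chain in the open SW quadrant, and defines $H_p$ to be the set of $(i,j)\in R_{\rm hex}$ with $r_{ij}=p-1$ and $r_{i+1,j+1}=p$, i.e.\ the SE diagonal boundary of the NW-closed sublevel set $\{r\le p-1\}$. Because $\{r\le p-1\}$ is NW-closed and $r$ has unit jumps, that boundary is automatically a monotone staircase from $X_p$ to $Y_p$, and disjointness for different $p$ is immediate since $r$ labels which path a point lies on. This sidesteps the two-function bookkeeping entirely and avoids the connectedness gap. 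If you want to keep your two-function picture you would need to actually prove: (1) $\bar r+\bar s\ge u-1$ everywhere (your exchange argument is fine for this), (2) $\bar r+\bar s\le u$ everywhere — which requires knowing that the \emph{union} of the two augmented sets still has no diagonal chain of size $>u$, something not obvious since you're adding both virtual boundary families to $C$ at once — and (3) the connectedness claim. None of these is hard, but they are not the ``routine bookkeeping'' the plan suggests, and the paper's single-rank construction is a genuinely cleaner path to the same conclusion.
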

\begin{proof}
(ii) and (ii') follow easily from (i). To prove (i):
``$\Rightarrow$'': %assume that there exits a collection of $u$ nonintersecting paths connecting $(X_p)_{p=1}^u$ and $(Y_p)_{p=1}^u$ whose union contains $C$. 
For any diagonal chain $D\subseteq C$, different points in $D$ lie in different paths, thus $|D|\le  u$; 
``$\Leftarrow$'': assume $C\subseteq R_{\rm hex}$ such that all diagonal chains of $C$ have sizes $\le u$. Without loss of generality, we can further assume that $C$ contains all $X_p$, $Y_p$ for $1\le p\le u$. Indeed, if a diagonal chain $D$  in $C\cup\{X_p\}$ has size  $> u$, then it cannot contain the point $X_p$, otherwise $D$ contains at most $p-1$ points to the NW to $X_p$ and at most $u-p$ points to the SE of $X_p$ which implies $|D|\le (p-1)+(u-p)+1=u$.  

Let $r_{ij}:=$ max size of diagonal chains in $C\cap \big([1,i-1]\times [1,j-1]\big)$ for $i,j\in\mathbb{Z}$.
Then we get the desired nonintersecting paths by the following:

%\smallskip

\noindent  {\bf SE-most construction:} 
for $p=1,\dots,u$, define the path $H_p$ to be the SE boundary of the set $\{(i,j)\in R_{\rm hex} : r_{ij}=p-1\}$, that is, a point $(i,j)\in R_{\rm hex}$ is in $H_p$ if and only if ``$r_{ij}=p-1$ and $r_{i+1,j+1}=p$''.

Intuitively, $H_p$ is the unique path satisfying the following:
(i) it joins $X_p$ and $Y_p$ such that the maximal size of a diagonal chain in $C$ to the  NW of $H_p$ is $p-1$; 
(ii) any other path satisfying (i) does not contain any point to the SE of $H_p$. 
\end{proof}

\begin{remark}
%(a) The SE-most construction can be equivalently described as follows: 

%First construct $H_1$: starting from $X_1$ and always walk east unless there is a point of $C$ in the north, in which case walk to that point; keep walking like this until it reaches $Y_1$;

%Then construct $H_2$: starting from $X_2$ and always walk east unless there is a point of $C\setminus H_1$ in the north, in which case walk to that point; keep walking like this until it reaches $Y_2$;

%$\vdots$

%Finally construct $H_u$: starting from $X_u$ and always walk east unless there is a point of $C\setminus (H_1\cup \cdots\cup H_{u-1})$ in the north, in which case walk to that point; keep walking like this until it reaches $Y_u$.

%(b) 
Let $s_{ij}=$ max size of diagonal chains in $C\cap ([i+1,a] \times [j+1,b])$ for $i,j\in\mathbb{Z}$. 
There is also a
{\bf NW-most construction:} for $p=1,\dots,u$, define the path $H_p$ to the NW boundary of $\{(i,j)\in R_{\rm hex} : s_{ij}=p-1\}$, that is, a point $(i,j)\in R_{\rm hex} $ is in $H_p$ if and only if ``$s_{ij}=p-1$ and $s_{i-1,j-1}=p$''.

\end{remark}

\begin{lemma} \label{lemma:u-compatible = existence of road map}
For a subset $C \subseteq L$, the following are equivalent:

{\rm(i)} $C$ is $\uu$-compatible;

{\rm(ii)} there exists a (not necessarily straight) road map $(\{H^\alpha_{p}\}, \{V^\beta_{q}\})$ such that
\begin{equation}\label{eq:C in HV}
C|_k\subseteq\Big( \bigcup_{p=1}^{u_{\target(h_k)}} H^{\target(h_k)}_p|_k\Big)\
\bigcap\
\Big( \bigcup_{q=1}^{u_{\source(h_k)}} V^{\source(h_k)}_q|_k\Big), \quad \textrm{ for all } k.
\end{equation}
%In this case, we can use the {\bf SE-most construction} to create such a road map as follows: 
 
%$H^\alpha_p\cap\mathbb{Z}^2=\{(i,j')\in [1,a_\alpha]_\mathbb{Z}\times [1,b_\alpha]_\mathbb{Z} \ | \  \bar{r}_{ij'}=p-1, \bar{r}_{i+1,j'+1}=p\}$ where $\bar{r}_{ij'}:=\bar{r}_{ijk}$ with $(i,j,k)=\phi_\alpha(i,j')$; 

%$V^\beta_p\cap\mathbb{Z}^2=\{(i',j)\in [1,a_\alpha]_\mathbb{Z}\times [1,b_\alpha]_\mathbb{Z} \ | \  \bar{r}_{i'j}=q-1, \bar{r}_{i'+1,j+1}=q\}$ where $\bar{r}_{i'j}:=\bar{r}_{ijk}$ with $(i,j,k)=\phi_\beta(i',j)$.

\end{lemma}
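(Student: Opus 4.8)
\textbf{Proof plan for Lemma \ref{lemma:u-compatible = existence of road map}.}

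The plan is to prove the two implications separately, in each case reducing to the single-matrix statement of Lemma \ref{contained in path}. Recall that the map $\phi_\gamma$ identifies the lattice rectangle $[1,a_\gamma]_\mathbb{Z}\times[1,b_\gamma]_\mathbb{Z}$ with a subset of $L$, and that the induced maps $\phi_{\rm target}$ and $\phi_{\rm source}$ are bijective; so specifying the paths $\{H^\alpha_p\}$ on each $A_\alpha$ and $\{V^\beta_q\}$ on each $A_\beta$ really is the same as specifying lattice paths inside each of these rectangles, independently over the different target and source vertices. This independence is what lets me treat one matrix at a time.

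For the direction (ii) $\Rightarrow$ (i): suppose a road map exists satisfying \eqref{eq:C in HV}. Fix $\gamma\in{\rm V}_\mathcal{Q}$; I must show $C^\gamma$ contains no diagonal chain of size $u_\gamma+1$. Say $\gamma=\alpha\in{\rm V}_{\rm target}$ (the source case is symmetric). Unwinding the definition of $\phi_\alpha$ and $A_\alpha=[X^{(r_1)}|\cdots|X^{(r_s)}]$, the union $\bigcup_p H^\alpha_p$ viewed inside $[1,a_\alpha]_\mathbb{Z}\times[1,b_\alpha]_\mathbb{Z}$ is exactly a union of $u_\alpha$ nonintersecting paths of the type in Lemma \ref{contained in path}(ii), and \eqref{eq:C in HV} together with the bijectivity of $\phi_{\rm target}$ forces $C^\alpha$ to be contained in this union (one checks the page-by-page containments \eqref{eq:C in HV} assemble into the single containment $C^\alpha\subseteq\bigcup_p H^\alpha_p$ because every entry of $A_\alpha$ sits on exactly one page). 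By the ``$\Rightarrow$'' half of Lemma \ref{contained in path}(ii) (or the trivial $u=a$ case), all diagonal chains of $C^\alpha$ have size $\le u_\alpha$, which is the $\uu$-compatibility condition at $\alpha$.

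For the direction (i) $\Rightarrow$ (ii): assume $C$ is $\uu$-compatible. For each $\alpha\in{\rm V}_{\rm target}$, the set $C^\alpha\subseteq[1,a_\alpha]_\mathbb{Z}\times[1,b_\alpha]_\mathbb{Z}$ has all diagonal chains of size $\le u_\alpha$; if $u_\alpha<a_\alpha$ we have moreover $u_\alpha\le b_\alpha$ (since $u_\alpha\le\min(a_\alpha,b_\alpha)$ by \eqref{eq:condition on u}), so by the ``$\Leftarrow$'' half of Lemma \ref{contained in path}(ii) there exist $u_\alpha$ nonintersecting paths $\{H^\alpha_p\}$ of $A_\alpha$ with the prescribed endpoints whose union contains $C^\alpha$; if $u_\alpha=a_\alpha$, take the unique straight-horizontal family. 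Do the symmetric thing for each $\beta\in{\rm V}_{\rm source}$ using Lemma \ref{contained in path}(ii') to get $\{V^\beta_q\}$. This defines a road map (no straightness is claimed or needed). Finally, for each arrow $h_k$ with $\alpha=\target(h_k)$, $\beta=\source(h_k)$, restricting the containments $C^\alpha\subseteq\bigcup_p H^\alpha_p$ and $C^\beta\subseteq\bigcup_q V^\beta_q$ to ${\rm Page}_k$ and pushing forward along $\phi_\alpha$, $\phi_\beta$ yields $C|_k\subseteq\bigl(\bigcup_p H^{\alpha}_p|_k\bigr)\cap\bigl(\bigcup_q V^{\beta}_q|_k\bigr)$, which is \eqref{eq:C in HV}.

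I expect the only real subtlety to be bookkeeping: being careful that the page-by-page data $C|_k$ reassembles correctly into $C^\gamma$ under $\phi_\gamma$ — i.e., that $C^\alpha = \bigsqcup_k \bigl(\text{image of } C|_k \text{ in the appropriate block of } A_\alpha\bigr)$ — and that a diagonal chain of $C^\gamma$ is precisely what the single-matrix lemma controls. Everything else is a direct citation of Lemma \ref{contained in path}, applied once per vertex of $\mathcal{Q}$, with the degenerate cases $u_\gamma=a_\gamma$ or $u_\gamma=b_\gamma$ handled by the explicit ``unique straight family'' clauses there.
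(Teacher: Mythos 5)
Your proof is correct and follows essentially the same route as the paper's: both directions reduce to Lemma \ref{contained in path}(ii) and (ii') applied independently at each vertex $\gamma\in{\rm V}_\mathcal{Q}$, using the bijectivity of $\phi_{\rm target}$, $\phi_{\rm source}$ to pass between the page-by-page condition \eqref{eq:C in HV} and the per-matrix containments $C^\gamma\subseteq\bigcup H^\gamma_p$ (resp. $\bigcup V^\gamma_q$). The only difference is that you spell out the bookkeeping and the degenerate case $u_\gamma=a_\gamma$ (or $b_\gamma$) more explicitly; the paper's proof is terser but makes the same argument.
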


\begin{proof}
(ii)$\Rightarrow$(i): assume such a road map $(\{H^\alpha_{p}\}, \{V^\beta_{q}\})$ exists. Then for $\alpha\in {\rm V}_{\rm target}$, $C^\alpha$ is contained in a collection of nonintersecting paths $\{H^\alpha_p\}_{p=1}^{u_\alpha}$, so Lemma \ref{contained in path} (ii) implies that $C^\alpha$ does not contain any diagonal chain of size $(u_\alpha+1)$; similarly,  for $\beta\in {\rm V}_{\rm source}$, Lemma \ref{contained in path} (ii') implies that $C^\beta$ does not contain any diagonal chain of size $(u_\beta+1)$. Thus $C$ is ${\bf u}$-compatible. 
(i)$\Rightarrow$(ii): 
 assume $C$ is  ${\bf u}$-compatible. 
 By Lemma \ref{contained in path},  
for $\alpha\in {\rm V}_{\rm target}$,  $C^\alpha$ is contained in a collection of nonintersecting paths $\{H^\alpha_p\}_{p=1}^{u_\alpha}$; 
for $\beta\in {\rm V}_{\rm source}$,  $C^\beta$ is contained in a collection of nonintersecting paths $\{V^\beta_q\}_{q=1}^{u_\beta}$. 
This gives a desired road map $(\{H^\alpha_{p}\}, \{V^\beta_{q}\})$.
 \end{proof}

\begin{lemma}[Straightening Lemma]\label{lem:cd}
Let $u,a,b$ be positive integers, $u\le \min(a-1,b)$. Assume  $c_{pq}, d_{pq}\in [1,a]_\mathbb{Z}$  (for $1\le p\le u$ and $1\le q\le b$) satisfy the following inequalities for all $p,q$ (whenever the inequalities make sense):
$d_{pq}\ge c_{pq}, \quad c_{pq}\ge d_{p,q+1}, \quad c_{pq}>d_{p-1,q}.$ 
Then there exist integers $c'_{pq}$, $d'_{pq}$  ($1\le p\le u$ and $1\le q\le b$) satisfying 

{\rm(i)} $d'_{pq}\ge c'_{pq}, \quad c'_{pq}>d'_{p-1,q}$ for all $p,q$ (whenever the inequalities make sense);

{\rm(ii)} $c'_{pq}=d'_{p,q+1}$ for 
$1\le p\le u$ and $1\le q\le b-1$;

{\rm(iii)} $d'_{p1}=a-u+p$ and $c'_{pb}=p$ for $1\le p\le u$;

{\rm(iv)}   
$\bigcup_p[c_{pq},d_{pq}]_\mathbb{Z} \subseteq \bigcup_p[c'_{pq},d'_{pq}]_\mathbb{Z}$
for $q=1,\dots,b$;
moreover, if the `` $\subseteq$'' is actually `` $=$'' for all $q$, then $c_{pq}=c'_{pq}$, $d_{pq}=d'_{pq}$ for all $p,q$.
\end{lemma}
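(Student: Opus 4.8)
The plan is to prove the Straightening Lemma by an explicit iterative ``push to the boundary'' procedure, processing the columns from the two ends toward nowhere in particular — actually, it is cleaner to fix a target shape column by column. First I would record the combinatorial meaning of the hypotheses: for each fixed $q$, the intervals $[c_{pq},d_{pq}]_\mathbb{Z}$ ($p=1,\dots,u$) are pairwise disjoint and ordered ($d_{p-1,q}<c_{pq}$), so their union $U_q:=\bigcup_p[c_{pq},d_{pq}]_\mathbb{Z}$ is a disjoint union of $u$ nonempty ``blocks'' inside $[1,a]_\mathbb{Z}$; the inequality $c_{pq}\ge d_{p,q+1}$ says the $p$-th block in column $q+1$ ends weakly before the $p$-th block in column $q$ begins. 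The goal shape (ii)+(iii) describes exactly the union of $u$ nonintersecting SW-NE lattice paths from $\big((a-u+p,1)\big)_p$ to $\big((p,b)\big)_p$: condition (ii) $c'_{pq}=d'_{p,q+1}$ glues consecutive columns into connected vertical segments, and (iii) pins the endpoints. So the lemma is really Lemma \ref{contained in path}(ii) made algorithmic and monotone: given the point set $C=\bigsqcup_q (U_q\times\{q\})$ with all diagonal chains of size $\le u$ (which the hypotheses guarantee), produce the SE-most (or NW-most) family of paths containing it, encoded by the $c'_{pq},d'_{pq}$.

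The key steps, in order: (1) Define $c'_{pq},d'_{pq}$ directly via the SE-most construction of Lemma \ref{contained in path}: set $r_{ij}:=$ max size of a diagonal chain of $C':=C\cup\{(a-u+p,0),(p,b+1):1\le p\le u\}$ (the augmented set forcing the endpoints, as in the proof of Lemma \ref{contained in path}) lying weakly NW of $(i-1,j-1)$, let $H_p$ be the SE boundary of $\{(i,j):r_{ij}=p-1\}$, and for each column $q$ let $[c'_{pq},d'_{pq}]_\mathbb{Z}$ be the (necessarily connected, by the SW-NE path structure) intersection $H_p\cap(\mathbb{Z}\times\{q\})$. (2) Verify (i): disjointness and ordering of blocks in a column is immediate from the paths $H_p$ being nonintersecting and ordered (a standard consequence of the construction / Lindström–Gessel–Viennot-type argument already implicit in Lemma \ref{contained in path}). (3) Verify (ii): that $c'_{pq}=d'_{p,q+1}$ is exactly the statement that $H_p$ has a single lattice point in each column and moves east by steps of length one, which holds because $H_p$ is an SW-NE path that, by construction as the SE boundary of a down-set, never skips a column — here I would use that the augmented endpoints make every column $1\le q\le b$ occur. (4) Verify (iii): the endpoints $X_p=(a-u+p,1)$ and $Y_p=(p,b)$ lie on $H_p$ by the standard endpoint argument from the proof of Lemma \ref{contained in path} (a diagonal chain through $X_p$ has $\le p-1$ points NW and $\le u-p$ points SE). (5) Verify (iv): $\bigcup_p[c_{pq},d_{pq}]_\mathbb{Z}\subseteq\bigcup_p[c'_{pq},d'_{pq}]_\mathbb{Z}$ because the paths $H_p$ are defined to contain $C$; for the ``moreover'' rigidity clause, observe that if equality holds in every column then the original intervals already form $u$ nonintersecting SW-NE paths with the prescribed endpoints, and the SE-most such family is unique, forcing $c_{pq}=c'_{pq}$, $d_{pq}=d'_{pq}$.

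I expect the main obstacle to be step (3) together with the rigidity half of step (5): one must argue carefully that the SE-most paths $H_p$ produced by the $r_{ij}$-construction really are ``straight between columns'' in the sense of (ii), i.e.\ that consecutive columns share the gluing value $c'_{pq}=d'_{p,q+1}$ with no horizontal gap and no vertical overshoot — this is where the hypothesis $c_{pq}\ge d_{p,q+1}$ (monotonicity of blocks between adjacent columns) and $u\le a-1$ (so that there is ``room'' and the paths are genuine SW-NE paths rather than degenerate) get used, and it is easy to state loosely but needs the disjointness/ordering bookkeeping to be spelled out. An alternative, perhaps cleaner, route that avoids re-deriving the path construction is a direct induction: define $d'_{p1}:=a-u+p$ and $c'_{pb}:=p$ by fiat (this can only enlarge the unions, since $c_{p1}>d_{p-1,1}\ge\cdots$ forces $[c_{p1},d_{p1}]\subseteq[?,a-u+p]$ after also enlarging to the left — one checks $d_{p1}\le a-u+p$ fails in general, so instead enlarge $d'_{p1}$ to $\max(d_{p1},a-u+p)$ and propagate), then sweep $q=1\to b$ setting $c'_{pq}:=\max(c_{pq}, d'_{p,q+1}\text{-to-be})$... — but this forward/backward dependency is awkward, so I would favor the SE-most construction and simply cite Lemma \ref{contained in path} and its proof for the existence, uniqueness, and containment properties, reducing the present lemma to translating ``SW-NE path'' into the inequalities (i)–(iii) and reading off (iv).
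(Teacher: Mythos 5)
Your existence argument takes exactly the paper's route: form the point set $C$ from the intervals, observe that every diagonal chain of $C$ has size $\le u$ (the paper uses the broken lines $L_p$, one per $p$, each hitting at most one point of a chain), invoke Lemma~\ref{contained in path}(ii) (SE-most construction) to get the nonintersecting paths, and read $c'_{pq},d'_{pq}$ off the columns of the paths. Properties (i)--(iii) and the containment in (iv) then come for free from the lattice-path structure; your worry in step (3) about ``no horizontal gap'' is unfounded, since any SW--NE lattice path from $(a-u+p,1)$ to $(p,b)$ automatically visits every column $1\le q\le b$ exactly once in a contiguous interval and satisfies $c'_{pq}=d'_{p,q+1}$. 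So that part of your plan is fine and matches the paper.

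The real gap is in the ``moreover'' clause of (iv), and your sketch does not close it. You write: ``observe that if equality holds in every column then the original intervals already form $u$ nonintersecting SW-NE paths with the prescribed endpoints, and the SE-most such family is unique, forcing $c_{pq}=c'_{pq}$, $d_{pq}=d'_{pq}$.'' Both halves of this sentence beg the question. First, nothing in the hypotheses says the unprimed data forms a path family: a priori one can have $c_{pq}>d_{p,q+1}$ strictly (a vertical gap) or $d_{p1}<a-u+p$. Equality of unions $\bigcup_p[c_{pq},d_{pq}]=\bigcup_p[c'_{pq},d'_{pq}]$ for every $q$ must be \emph{used} to rule this out, and the mechanism is precisely the telescoping inequality the paper proves:
\[
\sum_{p,q}(d_{pq}-c_{pq})=\sum_p\Bigl(d_{p1}-c_{pb}-\sum_{q=1}^{b-1}(c_{pq}-d_{p,q+1})\Bigr)\le\sum_p(d_{p1}-c_{pb})\le (a-u)u,
\]
with equality iff (ii) and (iii) hold for the unprimed family. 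Column-wise equality of unions forces the sums of cardinalities to agree, hence forces equality throughout, and only then does one know the unprimed intervals stack up into genuine paths. Second, even granting both families are path families with the same endpoints and the same union, your appeal to ``the SE-most such family is unique'' is the wrong uniqueness: you need that \emph{any} path family with the prescribed endpoints and a prescribed union is unique, not that the SE-most choice among families covering a set is unique. This is established by the paper's induction on $q$: $d_{p1}=a-u+p=d'_{p1}$ pins column $1$ up to the single free value $c_{11}=\min U_1$, then $d_{pq}=c_{p,q-1}=c'_{p,q-1}=d'_{pq}$ propagates, and $c_{pq}$ is recovered from $U_q$ and the disjointness ordering ($c_{pq}=\min(U_q\cap[d_{p-1,q}+1,a])$). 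In short: your plan coincides with the paper's for existence, but the rigidity half needs the counting argument and the column-by-column reconstruction, neither of which your ``observe'' actually supplies.
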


\begin{proof}
Define
$C :=\bigcup_{p=1}^a\bigcup_{q=1}^b  \{(i,q) : i\in [c_{pq},d_{pq}]_\mathbb{Z} \}\subseteq [1,a]_\mathbb{Z}\times[1,b]_\mathbb{Z}.$
We claim that there exists a collection of $u$ nonintersecting paths $\{H_p\}_{p=1}^u$ connecting $\big((a-u+p,1)\big)_{p=1}^u$ and $\big((p,b)\big)_{p=1}^u$ whose union contains $C$. 
Indeed,  Let $D$ be a diagonal chain of $C$. For every $p\in[1,u]$, let $L_p$ be the broken line connecting $(d_{p1},1)\to (c_{p1},1) \to 
(d_{p2},2)\to (c_{p2},2)\to \cdots \to (d_{pb},b)\to (c_{pb},b)$.
Each $L_p$ contains at most one point in $D$. Therefore $D$ has at most $u$ points. Then the claim follows from Lemma \ref{contained in path} (ii). 
Now let $[c'_{pq},d'_{pq}]$ be the closed interval (possibly degenerated to a point) that is the intersection of $H_p$ and the vertical line $y=q$. Then obviously $c'_{pq},d'_{pq}$ satisfy (i)--(iv) (except the ``moreover'' part). 

Next, we prove the ``moreover'' part in (iv). Consider the sum:
$$\aligned
\sum_{p,q}(d_{pq}-c_{pq})
&=\sum_p\Big(d_{p1}-c_{pb}-\sum_{q=1}^{b-1}(c_{pq}-d_{p,q+1})\Big)
\le\sum_p\Big(d_{p1}-c_{pb}\Big)\\
&
\le \Big((a-u+1)+\cdots+a\Big)-\Big(1+\cdots+u)\big)=(a-u)u,
\endaligned
$$
where the two equalities hold if and only if for all $p,q$, $c_{pq}=d_{p,q+1}, \;  d_{p1}=a-u+p, \; c_{pb}=p. $
As a consequence,
$
\sum_{p,q}(d'_{pq}-c'_{pq})
=(a-u)u.
$
Suppose `` $\subseteq$'' in (iv)  is actually `` $=$'' for all $q$, that is,
\begin{equation}\label{eq:subset is equal}
S_q:= \bigcup_p[c_{pq},d_{pq}]_\mathbb{Z} = \bigcup_p[c'_{pq},d'_{pq}]_\mathbb{Z}.
\end{equation}
So 
$\sum_{p=1}^u(d_{pq}-c_{pq}+1)|=\sum_{p=1}^u(d'_{pq}-c'_{pq}+1) \text{ for every }q, $
thus
%$$\sum_{p=1}^u(d_{pq}-c_{pq})=\sum_{p=1}^u(d'_{pq}-c'_{pq}),  \text{ for every }q.$$ 
$\sum_{p,q}(d_{pq}-c_{pq})=\sum_{p,q}(d'_{pq}-c'_{pq})=(a-u)u.$
Then $(\{c_{pq}\},\{d_{pq}\})$ also satisfies (ii) and (iii). 
Now we prove $c_{pq}=c'_{pq}$ and $d_{pq}=d'_{pq}$ by induction on $q$: 
for the base case $q=1$, we have $d_{p1}=a-u+p=d'_{p1}$ for all $p$; $c_{p1}=c'_{p1}=a-u+p$ for $p>1$; taking the minimum integer on the two sides of \eqref{eq:subset is equal} for $q=1$ gives $c_{11}=c'_{11}$. 
for the inductive step, let $q\ge2$, assume $c_{pq'}=c'_{pq'}$ and $d_{pq'}=d'_{pq'}$ for $q'< q$ and all $p$, we want to show that 
$c_{pq}=c'_{pq}$ and $d_{pq}=d'_{pq}$ for all $p$. Indeed, $d_{pq}=c_{p,q-1}=c'_{p,q-1}=d'_{pq}$; 
taking the minimum integer on the two sides of \eqref{eq:subset is equal} gives $c_{1q}=c'_{1q}$; for $p>1$,  
$c_{pq}=\min(S_q\cap[d_{p-1,q}+1,a]) 
=\min(S_q\cap[c_{p-1,q-1}+1,a])
=\min(S_q\cap[c'_{p-1,q-1}+1,a])
=\min(S_q\cap[d'_{p-1,q}+1,a]) 
=c'_{pq}$.
This completes the induction.
\end{proof}

Now we are ready to prove the main proposition in this section. 
\begin{proof}[Proof of Proposition \ref{prop:concurrent equiv}]
We shall prove the statements in the following order: (i)$\Rightarrow$ (v) and ``Moreover part",  (v) $\Rightarrow$ (iv) $\Rightarrow$ (ii) $\Rightarrow$(i)$\Rightarrow$(iii)$\Rightarrow$(ii).  

\noindent(i)$\Rightarrow$(v) and ``Moreover part": the idea of the proof is to use the geometry of a straight road map given in Lemma \ref{lem:straight 2 conditions}, where the rectangle has been divided into curly hexagon shapes $R_{pq}$ as illustrated in Figure \ref{fig:straight relabel}. The proof studies two cases depending on whether the point $(i,j')=\phi_\alpha^{-1}(i,j,k)$ lies on a horizontal path; in each case the values of $\bar{r}_{ijk} (=\bar{r}^\alpha_{ij'}), \bar{s}_{ijk} (=\bar{s}^\alpha_{ij'}), \dots$ are explicitly determined and the proof follows. 

Let $C$ be a concurrent vertex map.
By definition, there exists a straight road map $(\{H^\alpha_{p}\}, \{V^\beta_{q}\})$ satisfying \eqref{eq:Ck}.
Fix $\alpha\in {\rm V}_{\rm target}$. For convenience, denote
\begin{equation}\label{eq:convenient notation for H and V}
\begin{aligned}
&\textrm{$V^{\source(h_{r_1})}_1$, $V^{\source(h_{r_1})}_2$,$\dots$, $V^{\source(h_{r_1})}_{u_{\source(h_{r_i})}}$, $V^{\source(h_{r_2})}_1$, $V^{\source(h_{r_2})}_2$, $\dots$,$V^{\source(h_{r_2})}_{u_{\source(h_{r_2})}}$,
$V^{\source(h_{r_s})}_1$, $V^{\source(h_{r_s})}_2$, $\dots$,$V^{\source(h_{r_s})}_{u_{\source(h_{r_s})}}$}\\
&\textrm{as
$V^\alpha_1,V^\alpha_2,\dots,V^\alpha_{v_\alpha}$}
\end{aligned}
\end{equation}
 in that order, where  $r_1<\cdots<r_s$ are all indices such that $h_{r_1},\dots,h_{r_s}$ are all arrows with target $\alpha$.
 We also denote $V^\alpha_q$ to be the vertical line $y=q$ for $q\le0$. 
\begin{equation}\label{eq:P to Q}
\textrm{Denote $P_{pq}\rightsquigarrow Q_{pq}$ to be the subpath $H^\alpha_p\cap V^\alpha_q$.}
\end{equation} 
(See Figure \ref{fig:straight relabel} and compare the labeling therein with the original labeling in Figure \ref{fig:straight}.) Note that the region $[1,a_\alpha]_\mathbb{R}\times(-\infty,b_\alpha]_\mathbb{R}$ has been divided into regions $R_{pq}$ bouded by $H^\alpha_p$, $H^\alpha_{p+1}$, $V^\alpha_q$, $V^\alpha_{q+1}$ for $0\le p\le u_\alpha$ and $q\le v_\alpha$. So in general $R_{pq}$ has a ``curly hexagon'' shape with boundary $P_{pq}Q_{pq}P_{p,q+1}Q_{p+1,q+1}P_{p+1,q+1}Q_{p+1,q}$, and has degenerate shape in the obvious sense for $p=0$ or $p=u_\alpha$,  for $q\le 0$ or $q=v_\alpha$. 
\begin{figure}[h]
\begin{center}
\begin{tikzpicture}[scale=.8]

\draw  [gray] (0,0.5) rectangle (5.5,5.5);
\draw  [gray] (6,0.5) rectangle (11.5,5.5);
\draw  [gray] (12,0.5) rectangle (17.5,5.5);

\draw [blue,densely dotted,thick](1.5,0.5) node[scale=.1] (v6) {} -- (-1.6,0.5);
\draw [blue,densely dotted,thick](2,1) node[scale=.1]  (v4) {} -- (-1.6,1);
\draw [blue,densely dotted,thick](3,1.5) node[scale=.1]  (v2) {} -- (-1.6,1.5);
\draw [blue,densely dotted,thick](2,0.5) node[scale=.1]  (v5) {} -- (3.5,0.5) node[scale=.1]  (v12) {};
\draw [blue,densely dotted,thick](3,1) node[scale=.1]  (v3) {} -- (4.5,1) node[scale=.1]  (v10) {};
\draw [blue,densely dotted,thick](4,2.5) node[scale=.1]  (v1) {} -- (5,2.5) node[scale=.1]  (v8) {};
\draw [blue,densely dotted,thick](5,2) node[scale=.1]  (v9) {} -- (7,2) node[scale=.1]  (v16) {};
\draw [blue,densely dotted,thick](5.5,3.5) node[scale=.1]  (v7) {} -- (8,3.5) node[scale=.1]  (v14) {}  (4.5,0.5) node[scale=.1]  (v11) {} -- (6.5,0.5) node [scale=.1] (v18) {};
\draw [blue,densely dotted,thick](7,1.5) node[scale=.1]  (v17) {} -- (8.5,1.5) node[scale=.1]  (v24) {};
\draw [blue,densely dotted,thick](9,4.5) node[scale=.1]  (v13) {} -- (10.5,4.5) node [scale=.1] (v20) {};
\draw [blue,densely dotted,thick](11,5.5) node[scale=.1]  (v19) {} -- (13.5,5.5) node[scale=.1]  (v26) {};
\draw [blue,densely dotted,thick](14.5,5.5) node[scale=.1]  (v25) {} -- (15,5.5) node[scale=.1]  (v32) {};
\draw [blue,densely dotted,thick](17,5.5) node[scale=.1]  (v31) {} -- (17.5,5.5);
\draw [blue,densely dotted,thick](8,3) node [scale=.1] (v15) {} -- (9.5,3) node[scale=.1]  (v22) {}  (10.5,4) node[scale=.1]  (v21) {} -- (13,4) node[scale=.1]  (v28) {};
\draw [blue,densely dotted,thick](13.5,5) node [scale=.1] (v27) {} -- (14.5,5) node [scale=.1] (v34) {};
\draw [blue,densely dotted,thick](15,5) node[scale=.1]  (v33) {} -- (17.5,5);
\draw [blue,densely dotted,thick](9.5,2) node[scale=.1]  (v23) {} -- (12.5,2) node [scale=.1] (v30) {}  (13,3) node[scale=.1]  (v29) {} -- (13.5,3) node[scale=.1]  (v36) {};
\draw [blue,densely dotted,thick](14.5,4.5) node[scale=.1]  (v35) {} -- (17.5, 4.5);
\draw [red,densely dotted,thick](4, 6) -- (v1)  (v2) -- (v3)  (v4) -- (v5)  (v6) -- (1.5,0);
\draw [red,densely dotted,thick](5.5, 6) -- (v7)  (v8) -- (v9)  (v10) -- (v11)  (v12) -- (3.5,0);
\draw [red,densely dotted,thick](9, 6) -- (v13)  (v14) -- (v15)  (v16) -- (v17)  (v18) -- (6.5,0);
\draw [red,densely dotted,thick](11,6) -- (v19)  (v20) -- (v21)  (v22) -- (v23)  (v24) -- (8.5,0);
\draw [red,densely dotted,thick](14.5, 6) -- (v25)  (v26) -- (v27)  (v28) -- (v29)  (v30) -- (12.5,0);
\draw [red,densely dotted,thick](17, 6) -- (v31)  (v32) -- (v33)  (v34) -- (v35)  (v36) -- (13.5,0);
\draw [thick] (v6) -- (v5);
\draw [thick] (v4) -- (v3);
\draw [thick] (v12) -- (v11);

\draw [thick, decorate,decoration={zigzag,segment length=2mm, amplitude=0.4mm}] (v2) -- (v1);
\draw [thick, decorate,decoration={zigzag,segment length=2mm, amplitude=0.4mm}] (v10) -- (v9);
\draw [thick, decorate,decoration={zigzag,segment length=2mm, amplitude=0.4mm}] (v8) -- (v7);
\draw [thick, decorate,decoration={zigzag,segment length=2mm, amplitude=0.4mm}] (v14) -- (v13);
\draw [thick, decorate,decoration={zigzag,segment length=2mm, amplitude=0.4mm}](v20) -- (v19);
\draw [thick] (v26) -- (14.5,5.5);
\draw [thick] (v32) -- (v31);
\draw [thick, decorate,decoration={zigzag,segment length=2mm, amplitude=0.4mm}] (v16) -- (v15);
\draw [thick, decorate,decoration={zigzag,segment length=2mm, amplitude=0.4mm}] (v22) -- (v21);
\draw [thick, decorate,decoration={zigzag,segment length=2mm, amplitude=0.4mm}] (v28) -- (v27);
\draw [thick] (v34) -- (v33);
\draw [thick, decorate,decoration={zigzag,segment length=2mm, amplitude=0.4mm}] (v18) -- (v17);
\draw [thick, decorate,decoration={zigzag,segment length=2mm, amplitude=0.4mm}] (v24) -- (v23);
\draw [thick, decorate,decoration={zigzag,segment length=2mm, amplitude=0.4mm}] (v30) -- (v29);
\draw [thick, decorate,decoration={zigzag,segment length=2mm, amplitude=0.4mm}]  (v36) -- (v35);

\draw (v2) node [above left] {\tiny $P_{11}$};
\draw (v1) node [above left] {\tiny $Q_{11}$};

\draw (v7) node [above left] {\tiny $Q_{12}$};
\draw (v8) node [above left] {\tiny $P_{12}$};

\draw (2,0.9) node [above] {\tiny $P_{21}$};
\draw (3,1.1) node [below] {\tiny $Q_{21}$};

\draw (1.5,.4) node [above] {\tiny $P_{31}$};
\draw (v5) node [below] {\tiny $Q_{31}$};

\draw (v9) node [left] {\tiny $Q_{22}$};
\draw (v10) node [above left] {\tiny $P_{22}$};

\draw (v11) node [below] {\tiny $Q_{32}$};
\draw (3.5,.6) node [below] {\tiny $P_{32}$};

\draw (v13) node [above left] {\tiny $Q_{13}$};
\draw (v14) node [above left] {\tiny $P_{13}$};

\draw (v15) node [left] {\tiny $Q_{23}$};
\draw (v16) node [above left] {\tiny $P_{23}$};

\draw (v17) node [left] {\tiny $Q_{33}$};
\draw (v18) node [right] {\tiny $P_{33}$};

\draw (v19) node [left] {\tiny $Q_{14}$};
\draw (v20) node [right] {\tiny $P_{14}$};

\draw (v21) node [left] {\tiny $Q_{24}$};
\draw (v22) node [right] {\tiny $P_{24}$};

\draw (v23) node [above left] {\tiny $Q_{34}$};
\draw (v24) node [above left] {\tiny $P_{34}$};

\draw (v25) node [above left] {\tiny $Q_{15}$};
\draw (v26) node [above left] {\tiny $P_{15}$};

\draw (v27) node [left] {\tiny $Q_{25}$};
\draw (v28) node [right] {\tiny $P_{25}$};

\draw (v29) node [left] {\tiny $Q_{35}$};
\draw (v30) node [right] {\tiny $P_{35}$};

\draw (v31) node [below] {\tiny $Q_{16}$};
\draw (v32) node [above ] {\tiny $P_{16}$};

\draw (v33) node [below] {\tiny $Q_{26}$};
\draw (v34) node [below left] {\tiny $P_{26}$};

\draw (v35) node [below right] {\tiny $Q_{36}$};
\draw (v36) node [right] {\tiny $P_{36}$};

\draw (-1.5,1.5) node [left] {\tiny $H^\alpha_1$};
\draw (-1.5,1) node [left] {\tiny $H^\alpha_2$};
\draw (-1.5,0.5) node [left] {\tiny $H^\alpha_3$};

\draw (4,6) node [above] {\tiny $V^\alpha_1$};
\draw (5.5,6) node [above] {\tiny $V^\alpha_2$};

\draw (9,6) node [above] {\tiny $V^\alpha_3$};
\draw (11,6) node [above] {\tiny $V^\alpha_4$};

\draw (14.5,6) node [above] {\tiny $V^\alpha_5$};
\draw (17,6) node [above] {\tiny $V^\alpha_6$};

\draw (.7,4.5) node [above] {\color{red}\tiny $R_{00}$};
\draw (.7,1) node [above] {\color{red}\tiny $R_{10}$};
\draw (.7,0.5) node [above] {\color{red}\tiny $R_{20}$};

\draw (4.5,4.5) node [above] {\color{red}\tiny $R_{01}$};
\draw (4,1.5) node [above] {\color{red}\tiny $R_{11}$};
\draw (3.7,0.5) node [above] {\color{red}\tiny $R_{21}$};

\draw (7,4.5) node [above] {\color{red}\tiny $R_{02}$};
\draw (6.5,2.6) node [above] {\color{red}\tiny $R_{12}$};
\draw (5.5,1) node [above] {\color{red}\tiny $R_{22}$};

\draw (10,4.7) node [above] {\color{red}\tiny $R_{03}$};
\draw (9.2,3.6) node [above] {\color{red}\tiny $R_{13}$};
\draw (8.2,2) node [above] {\color{red}\tiny $R_{23}$};
\draw (7.5,0.8) node [above] {\color{red}\tiny $R_{33}$};

\draw (12,4.5) node [above] {\color{red}\tiny $R_{14}$};
\draw (11,2.5) node [above] {\color{red}\tiny $R_{24}$};
\draw (10,0.8) node [above] {\color{red}\tiny $R_{34}$};

\draw (14,5) node [above] {\color{red}\tiny $R_{15}$};
\draw (13.7,4) node [above] {\color{red}\tiny $R_{25}$};
\draw (13,0.8) node [above] {\color{red}\tiny $R_{35}$};

\draw (16,5) node [above] {\color{red}\tiny $R_{16}$};
\draw (16,4.5) node [above] {\color{red}\tiny $R_{26}$};
\draw (16,0.8) node [above] {\color{red}\tiny $R_{36}$};

%extra vertical lines
\draw [red,densely dotted,thick](-0.5,0)--(-0.5,6) (-1,0)--(-1,6) (-1.5,0)--(-1.5,6);
\draw (-.3,6) node [above] {\tiny $V^\alpha_0$};
\draw (-1,6) node [above] {\tiny $V^\alpha_{-1}$};
\draw (-1.7,6) node [above] {\tiny $V^\alpha_{-2}$};
\draw (0.3,3) node [above right] {\tiny $P_{10}(=Q_{10})$};\draw [-stealth](0.3,3) -- (-0.5,1.5);
\draw (0.3,2.5) node [above right] {\tiny $P_{20}(=Q_{20})$};\draw [-stealth](0.3,2.5) -- (-0.5,1);
\draw (0.3,2) node [above right] {\tiny $P_{30}(=Q_{30})$};\draw [-stealth](0.3,2) -- (-0.5,0.5);
\draw (-.2,5.7) node [red, right] {\tiny $R_{0,-1}$};\draw [-stealth](-.15,5.7) -- (-0.75,5.7);

\end{tikzpicture}       
\end{center}
\caption{An example of the notation in \eqref{eq:convenient notation for H and V}.}
\label{fig:straight relabel}
\end{figure}
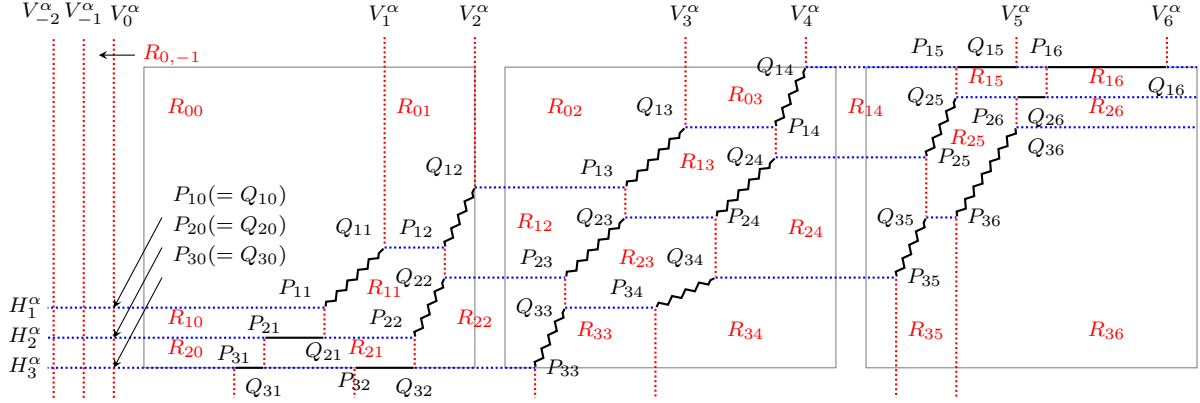
Any point $(i,j')\in [1,a_\alpha]_\mathbb{Z}\times[1,b_\alpha]_\mathbb{Z}$ must fall into one of the following two cases:

Case 1. $(i,j')$ lies strictly between $H^\alpha_p$ and $H^\alpha_{p+1}$ for some $p$. Assume it lies in the closed region $R_{pq}$ for some $q$. Recall the definition of $\bar{C}^\alpha$ in Definition \ref{def:long}.
It is clear (see Figure \ref{fig:straight relabel}) that $\bar{r}^\alpha_{ij'}= p$ 
%because we can find a diagonal chain in $\bar{C}^\alpha \cap \big( (-\infty,i-1]_\mathbb{Z}\times(-\infty,j'-1]_\mathbb{Z}\big)$ of size $p$ (by choosing a suitable point from each $P_{p-t,q-t}\rightsquigarrow Q_{p-t,q-t}$ for $t=1,\dots,p$), and there does not exist longer diagonal chains in $\bar{C}$, because any such a chain contains at most one point in each of $H^\alpha_1,\dots,H^\alpha_p$. Similarly, 
and $\bar{s}^\alpha_{ij'}=u_\alpha-p$. Thus $\bar{r}^\alpha_{ij'}+\bar{s}^\alpha_{ij'}=u_\alpha$.

Case 2. If $(i,j')$ lies on $H^\alpha_p$ for some $p$. Then $\bar{r}^\alpha_{ij'}= p-1$, $\bar{s}^\alpha_{ij'}=u_\alpha-p$, thus $\bar{r}^\alpha_{ij'}+\bar{s}^\alpha_{ij'}=u_\alpha-1$.

This proves that 
$\bar{r}_{ijk}+\bar{s}_{ijk}
=\bar{r}^\alpha_{ij'}+\bar{s}^\alpha_{ij'}
=u_{\target(h_k)}-1$ or $u_{\target(h_k)}$, and the former equality holds if and only if $(i,j')$ (determined by $\phi_\alpha(i,j')=(i,j,k)$) is on a horizontal path, that is, $(i,j)$ is in $\bigcup_p H^{\target(h_k)}_p|_k$. 

The condition for $V^{\source(h_k)}_q|_k$ can be proved similarly. Therefore (i) implies (v) and ``Moreover part". 

\smallskip

\noindent (v)$\Rightarrow$(iv): assume (v) holds. The idea of the proof is to use the fact that $\bar{r}_{ijk}\ge r_{ijk}$, and that the if $\bar{r}_{ijk}\neq r_{ijk}$, then we have an explicit expression of $\bar{r}_{ijk}$; so we can prove (iv) on a case-by-case basis. 

For the ``only if'' part: 
if $(i,j,k)\in C$, then  
 $r_{ijk}+s_{ijk}\le \bar{r}_{ijk}+\bar{s}_{ijk} =u_{\target(h_k)}-1<u_{\target(h_k)}$; similarly, $r'_{ijk}+s'_{ijk}<u_{\source(h_k)}$.
 
For the ``if'' part: we prove the contrapositive. Assuming that $(i,j,k)\notin C$, we need to show that it is impossible to have both $r_{ijk}+s_{ijk}<u_{\target(h_k)}$ and $r'_{ijk}+s'_{ijk}<u_{\source(h_k)}$. Suppose both inequalities hold. 
By (v), we can assume $\bar{r}_{ijk}+\bar{s}_{ijk}=u_\alpha$ (where $\alpha=\target(h_k)$) without loss of generality. Then either $r_{ijk}\neq \bar{r}_{ijk}$, or $s_{ijk}\neq \bar{s}_{ijk}$. Consider 3 cases:

Case 1. If $r_{ijk}\neq \bar{r}_{ijk}$ and $s_{ijk}=\bar{s}_{ijk}$. Then 
$\bar{r}_{ijk}=\min(i-1,u_{\alpha}-1-\min(a_\alpha-i,b_\alpha-j'))\le u_{\alpha}-1-\min(a_\alpha-i,b_\alpha-j')$. On the other hand, the distance from $(i,j')$ to the bottom (resp.~right)  side of the rectangle $[1,a_\alpha]_\mathbb{Z}\times[1,b_\alpha]_\mathbb{Z}$ is $a_\alpha-i$ (resp.~$b_\alpha-j'$), so $s_{ijk}\le\min(a_\alpha-i,b_\alpha-j')$. Then
$\bar{r}_{ijk}+\bar{s}_{ijk}=\bar{r}_{ijk}+s_{ijk}\le u_{\alpha}-1$, a contradiction.

Case 2. If $r_{ijk}=\bar{r}_{ijk}$ and $s_{ijk}\neq\bar{s}_{ijk}$. The proof is similar to Case 1.

Case 3. If $r_{ijk}\neq \bar{r}_{ijk}$ and $s_{ijk}\neq\bar{s}_{ijk}$. Then 
%$\bar{r}_{ijk}=\min(i-1,u_{\alpha}-1-\min(a_\alpha-i,b_\alpha-j'))$ and
%$\bar{s}_{ijk}=\min(a_\alpha-i,u_{\alpha}-\min(i,j'))$.
%So
\begin{equation}\label{eq:bar r+s}
\bar{r}_{ijk}+\bar{s}_{ijk}=\min(i-1,u_{\alpha}-1-\min(a_\alpha-i,b_\alpha-j'))+\min(a_\alpha-i,u_{\alpha}-\min(i,j'))
\end{equation}
We prove the contradiction that \eqref{eq:bar r+s} is $\neq u_\alpha$ by considering three cases separately:

\noindent If $i\le j'$, then $\bar{r}_{ijk}+\bar{s}_{ijk}\le (i-1)+(u_{\alpha}-\min(i,j'))=(i-1)+(u_{\alpha}-i)=u_\alpha-1$;

\noindent If $a_\alpha-i\le b_\alpha-j'$, then $\bar{r}_{ijk}+\bar{s}_{ijk}\le (u_\alpha-1-(a_\alpha-i))+(a_\alpha-i)=u_\alpha-1$;

\noindent If $i>j'$ and $a_\alpha-i> b_\alpha-j'$, then
$\bar{r}_{ijk}+\bar{s}_{ijk}\le (u_\alpha-1-(b_\alpha-j'))+(u_\alpha-j')=2u_\alpha-1-b_\alpha=u_\alpha-1-(b_\alpha-u_\alpha)\le u_\alpha-1$, where the last inequality uses the assumption \eqref{eq:condition on u}.

%This contradicts our assumption. So (v)$\Rightarrow$(iv).

\smallskip

\noindent (iv)$\Rightarrow$(ii): Assume (iv) holds. We shall first show that $C$ is $\uu$-compatible, then show that it is maximal $\uu$-compatible. 

To show that $C$ is $\uu$-compatible: assume for some $\gamma\in {\rm V}_\mathcal{Q}$ that $C^\gamma$ contains a diagonal chain $D$ of size $\ge u_\gamma+1$, let $(i',j')$ be the SE-most point in $D$ and denote $(i,j,k)=\phi_\gamma(i',j')$. Thus $(i,j,k)\in C$. On the other hand, $r_{ijk}+s_{ijk}\ge r_{ijk}\ge u_\gamma$, which implies $(i,j,k)\notin C$, a contradiction. 

To show that $C$ is maximal $\uu$-compatible: if $C$ is not maximal $\uu$-compatible, then there exists a $\uu$-compatible $C'$ satisfying $C\subsetneq C'\subseteq L$. Let $(i,j,k)\in C'\setminus C$. Note that $r_{ijk}$ (resp.~$s_{ijk},r'_{ijk},s'_{ijk}$) are the same for $C$ and $C'$. But then $r_{ijk}+s_{ijk}<u_{\target(h_k)}$ and $r'_{ijk}+s'_{ijk}<u_{\source(h_k)}$, contradicting (iv).

\smallskip

\noindent (ii)$\Rightarrow$(i): Assume $C$ is maximal $\uu$-compatible. The idea to show that $C$ is a concurrent vertex map is by showing that the corresponding road map is straight; to do that, we shall verify the conditions (a)--(c) in Lemma \ref{lem:straight 2 conditions}.

By Lemma \ref{lemma:u-compatible = existence of road map}, there exists a road map $(\{H^\alpha_{p}\}, \{V^\beta_{q}\})$ such that \eqref{eq:C in HV} holds; furthermore we can assume that the road map is obtained by SE-most construction.
By the maximality of $C$, \eqref{eq:C in HV} becomes an equality:
\begin{equation}\label{eq:C = HV}
C|_k = \Big( \bigcup_{p=1}^{u_{\target(h_k)}} H^{\target(h_k)}_p|_k\Big)\
\bigcap\
\Big( \bigcup_{q=1}^{u_{\source(h_k)}} V^{\source(h_k)}_q|_k\Big), \quad \textrm{ for all } k.
\end{equation}
 It suffices to prove that $(\{H^\alpha_{p}\}, \{V^\beta_{q}\})$ is straight; or equivalently, to prove the conditions (a)--(c) in Lemma \ref{lem:straight 2 conditions}.
 
To prove condition (a): assume it does not hold.  The idea is that one can construct a diagonal chain $Z_1,\dots,Z_p$ such that the $\phi_\alpha$-image of one of these points added to $C$ gives a $\uu$-compatible set $C'\supsetneq C$, contradicting the maximality of $C$. Below is the detail. 

 Similar to the proof of Lemma \ref{lem:straight 2 conditions} (a), assume $(p,q)$ is the minimal pair in the dictionary order such that $H^\alpha_{p}|_k \cap V^\beta_q|_k$ is not connected. Let $P\rightsquigarrow Q$ and $P'\rightsquigarrow Q'$ be the SW-most two connected subpaths of  $H^\alpha_{p}|_k \cap V^\beta_q|_k$. Without loss of generality we assume $P'\stackrel{V}{\rightsquigarrow} Q$ lies to the SE of $Q\stackrel{H}{\rightsquigarrow} P'$ (this is Case 1 in the Proof of Lemma \ref{lem:straight 2 conditions}). 
Observe that no interior lattice point of $Q\stackrel{H}{\rightsquigarrow} P'$ is in $C|_k$, because if $R$ is such a point, then $R$ lies in some $V_{q'}$ with $q'<q$, and thus $(p,q')<(p,q)$ and $H^\alpha_p|_k\cap V^\beta_{q'}|_k$ is not connected, contradicting the choice of $(p,q)$.

Now focus on $A_\alpha$. Since $P'$ is to the NE of $Q$, $Q\stackrel{H}{\rightsquigarrow} P'$ has a unique highest NW corner $Y$. In the previous paragraph we observe that $Y\notin \phi_\alpha^{-1}(C)$. Since the road map is obtained by SE-most construction, we must have $Y=Y_p=(p,b_\alpha-u_\alpha+p)$, whose distance to the right side of $A_\alpha$ is $u_\alpha-p$. So there are at most $u_\alpha-p$ vertical paths on the right side of $Y$. On the other hand, the vertical paths 
 $V^\alpha_q,V^\alpha_{q+1},\dots,V^\alpha_{v_\alpha}$ are on the right side of $Y$. So 
\begin{equation}\label{eq:sq1<=up}
v_\alpha-q+1\le u_\alpha-p
\end{equation}
It follows that $q-p-1\ge v_\alpha-u_\alpha\ge0$ by \eqref{eq:condition on u}. So $q\ge p+1\ge2$.
For each $1\le p'\le p$, let $Z_{p'}$ be the rightmost point in the intersection of the horizontal line $x=p'$ with $V^\alpha_{p'+q-p-1}$ (see Figure \ref{fig:Z}). 
Since $Z_1,\dots,Z_p$ are mutually in SE-NW position and all of them are above $H^\alpha_p$, there exists an integer $p_0$ such that $1\le p_0\le p$ and $Z_{p_0}$ is not in $\cup_{p'=1}^{p-1}H^\alpha_{p'}$, so is not in $\phi_\alpha^{-1}(C)$. %, or equivalently, not in any of the $p-1$ horizontal paths $H^\alpha_1,\dots,H^\alpha_{p-1}$ (here we use the obvious fact that $H^\alpha_p, H^\alpha_{p+1},\dots$ are below those $Z_{p'}$'s). Ineed, this follows immediately from the fact that any horizontal path contains at most one of $Z_{p'}$'s, because any two $Z_{p'}$'s are in SE-NW position. 
\begin{figure}[ht]
\begin{center}
\begin{tikzpicture}[scale=.6]
%\draw  (0,0) rectangle (10,6);
\draw (3,2) node (v1) {} -- (2.5,2) -- (2.5,1.5) -- (1.5,1.5)  ;
\draw (6.5,4) node (v2) {};
\draw [red,   dotted, thick] (v1) -- (4.5,2) -- (4.5,2.5) -- (6,2.5) -- (6,3.5) -- (6.5,3.5) -- (v2);
\draw [blue, dotted, thick] (v1) -- (3,3) -- (4,3) -- (4,3.5) -- (5,3.5) -- (5,4) -- (v2);
\draw (1.5,1.5) node [below] {\tiny $P$};
\draw (v1) node [below] {\tiny $Q$};
\draw (6.5,3.8) node [right] {\tiny $P'$}; \draw[fill]  (v2) circle(2pt);
\draw (7,5) node [above] {\tiny $Y=(p,b_\alpha-u_\alpha+p)$}; \draw[fill]  (5,4) circle(2pt);\draw [-stealth](6,5) -- (5.2,4.2);

\draw (6,2.5) node [red, below] {\tiny $V^\alpha_q$};
\draw (3.5,3) node [blue,above] {\tiny $H^\alpha_p$};

\draw (2.4,4) node [right] {\tiny $Z_p$}; \draw[fill]  (2.5,4) circle(2pt);
\draw [red, dotted, thick] (2.5,5.5) -- (2.5,4) -- (2,4) -- (2,3) -- (1,3) -- (1,2.5) -- (0.5,2.5)  ;
\draw (3.5,5.8) node [red,left] {\tiny $V^\alpha_{q-1}$};

\draw (0.4,4.5) node [right] {\tiny $Z_{p-1}$}; \draw[fill]  (0.5,4.5) circle(2pt);
\draw [red, dotted, thick] (0.5,5.5) -- (0.5,4.5) -- (0,4.5) -- (0,3.5) -- (-1,3.5) -- (-1,3)  ;
\draw (1.5,5.8) node [red,left] {\tiny $V^\alpha_{q-2}$};

\draw (-0.6,5) node [right] {\tiny $Z_{p-2}$}; \draw[fill]  (-0.5,5) circle(2pt);
\draw [red, dotted, thick] (-.5,5.5) -- (-.5,5) -- (-1,5) -- (-1,4) -- (-2,4) -- (-2,3)  ;
\draw (0,5.8) node [red,left] {\tiny $V^\alpha_{q-3}$};

\end{tikzpicture}       
\end{center}
\caption{Proof of (ii)$\Rightarrow$(i), condition (a)}
\label{fig:Z}
\end{figure}
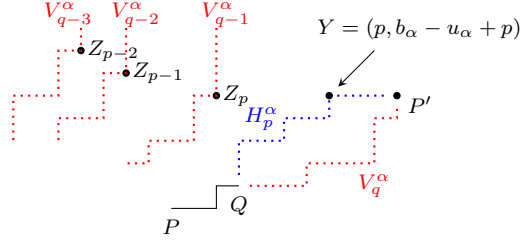
We claim that $C':=C\cup\{\phi_\alpha(Z_{p_0})\}$ is ${\bf u}$-compatible thus contradicting the maximality of $C$. Indeed, 
for $\beta\in {\rm V}_{\rm source}$: because ${C'}^\beta$ is contained in $\bigcup_{q=1}^{u_\beta} V^\beta_q$, it does not contain any diagonal chain of size $(u_\beta+1)$;
for $\alpha\in {\rm V}_{\rm target}$: it suffices to show that if $D$ is a diagonal chain in ${C'}^{\alpha}$ containing 
$\phi_\alpha(Z_{p_0})$, then the size of $D$ is $\le u_\alpha$. 
Since $\phi_\alpha(Z_{p_0})$ has $x$-coordinate $p_0$, there are at most $p_0-1$ points of $D$ that are NW of $\phi_\alpha(Z_{p_0})$;
meanwhile, since  $\phi_\alpha(Z_{p_0})$ is on $V^\alpha_{p_0+q-p-1}$, 
% any point of $D$ that is weakly SE of $\phi_\alpha(Z_{p_0})$ must be in $\bigcup_{q'=p_0+q-p-1}^{v_\alpha} V^\alpha_{q'}$, and no two points of $D$ lie in the same $V^\alpha_{q'}$, so 
there are at most $v_\alpha-(p_0+q-p-1)$ points of $D$ that are SE of $\phi_\alpha(Z_{p_0})$. Thus the size of $D$ is $\le (p_0-1)+1+(v_\alpha-(p_0+q-p-1))=v_\alpha-(q-p-1)\le u_\alpha$.
This proves (a). 

 To prove condition (b): we shall give an explicit construction of $H_p^\alpha$ using Lemma  \ref{lem:cd}.
More precisely, the straightening process is described as follows. Denote the $x$-coordinate of $P_{pq}$ by $d_{pq}$, the $x$-coordinate of $Q_{pq}$ by $c_{pq}$. 
 Let $c'_{pq}, d'_{pq}$ be obtained from the Straightening Lemma \ref{lem:cd}. Let $P'_{pq}$ be the leftmost point on $V^\alpha_q$ whose $x$-coordinate is $d'_{pq}$, let $Q'_{pq}$ be the rightmost point on $V^\alpha_q$ whose $x$-coordinate is $c'_{pq}$. Construct ${H'_p}^\alpha$ to be the path 
$$(a_\alpha-u_\alpha+p,1)\to P'_{p1}\stackrel{V}{\rightsquigarrow} Q'_{p1}\to P'_{p2}\stackrel{V}{\rightsquigarrow} Q'_{p2}\to\cdots\to P'_{p,v_\alpha}\stackrel{V}{\rightsquigarrow} Q'_{p,v_\alpha}\to (p,b)$$
where ``$\to$'' denote straight horizontal paths. Define $C'$ to be the right side of \eqref{eq:C in HV} with $H^\alpha_p$ replaced by ${H'_p}^\alpha$ for each $\alpha$. Then $C'$ is also ${\bf u}$-compatible and $C\subseteq C'$; moreover, if $C=C'$, then Lemma \ref{lem:cd} implies  $c_{pq}=c'_{pq}, d_{pq}=d'_{pq}$ for all $p,q$. In particular, $c_{pq}=d_{p,q+1}$ for all $p,q$ whenever the equality makes sense. Then $Q_{pq}$ and $P_{p,q+1}$ have the same $x$-coordinate. 
This proves (b). 

The proof of condition (c) is similar, so we skip.
\smallskip

(i)$\Rightarrow$(iii). The idea to prove \eqref{eq:cardinality of max C} is to give an explicit expression for the number of lattice points in each connected path $P^k_{pq}\rightsquigarrow Q^k_{pq}$, and add them up for all $(p,q)$. 
More precisely, given a concurrent vertex map $C$ induced from a straight road map $(\{H^\alpha_p\},\{V^\beta_q\})$, the number of lattice points in $P^k_{pq}\rightsquigarrow Q^k_{pq}$ is: 
%equal to 1 plus the number of edges therein:
\begin{equation}\label{eq:1+diff+diff}
1+\big| (P^k_{pq})_x - (Q^k_{pq})_x \big|
+\big| (P^k_{pq})_y - (Q^k_{pq})_y \big|
\end{equation}
It is easy to see that, the sums of the above three terms for all $(p,q)$ are 
$\sum_k u_{\source(h_k)}u_{\target(h_k)}$, 
$\sum_{\alpha\in {\rm V}_{\rm target}} u_\alpha(a_\alpha-u_\alpha)$,
$\sum_{\beta\in {\rm V}_{\rm source}} u_\beta(b_\beta-u_\beta)$, respectively. 
This proves \eqref{eq:cardinality of max C}. The fact that $C$ is $\uu$-compatible follows from Lemma \ref{lemma:u-compatible = existence of road map}.

(iii)$\Rightarrow$(ii).  Assume $C$ is ${\bf u}$-compatible and $|C|=N_{\mathcal{Q},\mm,\uu}$. Let $C'$ be maximal ${\bf u}$-compatible set that contains $C$. Since it is already proved that (ii)$\Rightarrow$(i)$\Rightarrow$(iii),  we must have $|C'|=N_{\mathcal{Q},\mm,\uu}=|C|$, which implies $C'=C$ and thus $C$ is maximal. 
 \end{proof}

\begin{proof}[Proof of Theorem \ref{thm:prime decomposition of I}]
The natural generators of ${\rm init }(I_{\mathcal{Q},\mm,\uu})$ are $(u_\gamma+1)$-diagonal chains in $A_\gamma$ for all $\gamma$. So $C\subseteq L$ is in $\Delta_{\mathcal{Q},\mm,\uu}$ exactly when $C$ does not contain any
$(u_\gamma+1)$-diagonal chain in $A_\gamma$ for any $\gamma$, or in other words, when $C$ is ${\bf u}$-compatible. Then $C$ is a facet of  $\Delta_{\mathcal{Q},\mm,\uu}$ if and only if $C$ is maximal ${\bf u}$-compatible. So the theorem follows from the statement (i)$\Leftrightarrow$(ii) of Proposition \ref{prop:concurrent equiv}. 
\end{proof}

\section{Properties of the simplicial complex $\Delta_{\mathcal{Q},\mm,\uu}$}
\subsection{$C_{\max}(S)$}
In this subsection we will define $C_{\max}(S)$ and study its properties. 
In (a) of the definition below, we use notation $H^\alpha_p, V^\alpha_q, P_{pq},Q_{pq}$ introduced in \eqref{eq:convenient notation for H and V} and \eqref{eq:P to Q}. 
In (b), we use the mirrored version of  \eqref{eq:convenient notation for H and V} and \eqref{eq:P to Q}, in particular,
$r'_1<\cdots<r'_t$ are all the indices such that $h_{r'_1},\dots,h_{r'_t}$ are all the arrows with source $\beta$.

\begin{definition}
Let $C$ be a concurrent vertex map.

(a) A point $(i,j,k)\in C$ is called a {\bf horizontal NW corner} if $(i,j,k)=\phi_\alpha(P)$ where $P$ is a NW corner of a path $H^\alpha_p$ for some $\alpha\in {\rm V}_{\rm target}$ and $1\le p\le u_\alpha$.
Such a corner is said to be {\bf non-essential}  if  the corresponding $P$ satisfies $P=Q_{p,p+v_\alpha-u_\alpha}$ for some $1\le p\le u_\alpha$; otherwise the corner is said to be {\bf essential}.

\smallskip

(b) A point $(i,j,k)\in C$ is called a {\bf vertical NW corner} if 
$(i,j,k)=\phi_\beta(P)$ where $P$ is a NW corner of a path $V^\beta_q$ for some $\beta\in {\rm V}_{\rm source}$ and $1\le q\le u_\beta$.
Such a corner is said to be {\bf non-essential}  if  the corresponding $P$ satisfies $P=P_{q+v_\beta-u_\beta,q}$ for some $1\le q\le u_\beta$; otherwise the corner is said to be {\bf essential}.
\end{definition}
Note the points $Q_{p,p+v_\alpha-u_\alpha}$ for $p=u_\alpha,u_{\alpha-1},\dots$ are just the rightmost $Q$ on the lowest horizontal path,  the 2nd rightmost $Q$ on the 2nd lowest horizontal path, $\dots$, respectively.  
Also note that if $(i,j,k)$ is both a horizontal and a vertical NW corner, then it is an essential horizontal NW corner and also an essential vertical NW corner. 
%Indeed, assume the statement is false, then without loss of generality we assume it is a non-essential horizontal NW corner, and denote $(i,j,k)=\phi_\alpha(P)=\phi_\beta(P')$ where $\alpha,\beta$ are the target and source of $h_k$, respectively. Then $P=Q_{p,p+v_\alpha-u_\alpha}$ which is on the vertical path $V^\alpha_{p+v_\alpha-u_\alpha}$ but is not a NW corner of that path. This contradicts our assumption. 

\begin{example}
In Figure \ref{fig:essential corner} we give an example to illustrate essential (denoted by $\bullet$) and non-essential horizontal NW corners (denoted by $\circ$). Here $v_\alpha=6$, $u_\alpha=3$, so by definition, a non-essential horizontal NW corner is a NW corner on $H^\alpha_p$ that is not $Q_{p,p+3}$ for $1\le p\le 3$.  In particular, $Q_{14}$ and $Q_{36}$ are non-essential horizontal NW corner (and that $Q_{25}$ is not a NW corner), and all other horizontal NW corners are essential. 
 
\begin{figure}[h]
\begin{center}
\begin{tikzpicture}[scale=.8]

\draw  [gray] (0,0.5) rectangle (5.5,5.5);
\draw  [gray] (6,0.5) rectangle (11.5,5.5);
\draw  [gray] (12,0.5) rectangle (17.5,5.5);

\draw [blue,densely dotted,thick](1.5,0.5) node[scale=.1] (v6) {} -- (-1.6,0.5);
\draw [blue,densely dotted,thick](2,1) node[scale=.1]  (v4) {} -- (-1.6,1);
\draw [blue,densely dotted,thick](3,1.5) node[scale=.1]  (v2) {} -- (-1.6,1.5);
\draw [blue,densely dotted,thick](2,0.5) node[scale=.1]  (v5) {} -- (3.5,0.5) node[scale=.1]  (v12) {};
\draw [blue,densely dotted,thick](3,1) node[scale=.1]  (v3) {} -- (4.5,1) node[scale=.1]  (v10) {};
\draw [blue,densely dotted,thick](4,2.5) node[scale=.1]  (v1) {} -- (5,2.5) node[scale=.1]  (v8) {};
\draw [blue,densely dotted,thick](5,2) node[scale=.1]  (v9) {} -- (7,2) node[scale=.1]  (v16) {};
\draw [blue,densely dotted,thick](5.5,3.5) node[scale=.1]  (v7) {} -- (8,3.5) node[scale=.1]  (v14) {}  (4.5,0.5) node[scale=.1]  (v11) {} -- (6.5,0.5) node [scale=.1] (v18) {};
\draw [blue,densely dotted,thick](7,1.5) node[scale=.1]  (v17) {} -- (8.5,1.5) node[scale=.1]  (v24) {};
\draw [blue,densely dotted,thick](9,4.5) node[scale=.1]  (v13) {} -- (10.5,4.5) node [scale=.1] (v20) {};
\draw [blue,densely dotted,thick](11,5.5) node[scale=.1]  (v19) {} -- (13.5,5.5) node[scale=.1]  (v26) {};
\draw [blue,densely dotted,thick](14.5,5.5) node[scale=.1]  (v25) {} -- (15,5.5) node[scale=.1]  (v32) {};
\draw [blue,densely dotted,thick](17,5.5) node[scale=.1]  (v31) {} -- (17.5,5.5);
\draw [blue,densely dotted,thick](8,3) node [scale=.1] (v15) {} -- (9.5,3) node[scale=.1]  (v22) {}  (10.5,4) node[scale=.1]  (v21) {} -- (13,4) node[scale=.1]  (v28) {};
\draw [blue,densely dotted,thick](13.5,5) node [scale=.1] (v27) {} -- (14.5,5) node [scale=.1] (v34) {};
\draw [blue,densely dotted,thick](15,5) node[scale=.1]  (v33) {} -- (17.5,5);
\draw [blue,densely dotted,thick](9.5,2) node[scale=.1]  (v23) {} -- (12.5,2) node [scale=.1] (v30) {}  (13,3) node[scale=.1]  (v29) {} -- (13.5,3) node[scale=.1]  (v36) {};
\draw [blue,densely dotted,thick](14.5,4.5) node[scale=.1]  (v35) {} -- (17.5, 4.5);
\draw [red,densely dotted,thick](4, 6) -- (v1)  (v2) -- (v3)  (v4) -- (v5)  (v6) -- (1.5,0);
\draw [red,densely dotted,thick](5.5, 6) -- (v7)  (v8) -- (v9)  (v10) -- (v11)  (v12) -- (3.5,0);
\draw [red,densely dotted,thick](9, 6) -- (v13)  (v14) -- (v15)  (v16) -- (v17)  (v18) -- (6.5,0);
\draw [red,densely dotted,thick](11,6) -- (v19)  (v20) -- (v21)  (v22) -- (v23)  (v24) -- (8.5,0);
\draw [red,densely dotted,thick](14.5, 6) -- (v25)  (v26) -- (v27)  (v28) -- (v29)  (v30) -- (12.5,0);
\draw [red,densely dotted,thick](17, 6) -- (v31)  (v32) -- (v33)  (v34) -- (v35)  (v36) -- (13.5,0);
\draw [thick] (v6) -- (v5);
\draw [thick] (v4) -- (v3);
\draw [thick] (v12) -- (v11);

\draw [thick] (v2) --(3.5,1.5)--(3.5,2.5)-- (v1);
\draw[fill=black] (3.5,2.5) circle (.6ex);

\draw [thick] (v10) --(4.5,2)-- (v9);
\draw[fill=black] (4.5,2) circle (.6ex);

\draw [thick] (v8) --(5,3.5)-- (v7);
\draw[fill=black] (5,3.5) circle (.6ex);

\draw [thick] (v14)--(8,4)--(9,4) -- (v13);
\draw[fill=black] (9,4.5) circle (.6ex);
\draw[fill=black] (8,4) circle (.6ex);

\draw[thick](v20)--(10.5,5)--(11,5)--(v19);
\draw[fill=black] (10.5,5) circle (.6ex);

\draw [thick] (v26) -- (14.5,5.5);

\draw [thick] (v32) -- (v31);

\draw [thick] (v16) --(8,2)-- (v15);
\draw[fill=black] (8,3) circle (.6ex);

\draw [thick] (v22) --(9.5,3.5)--(10.5,3.5)-- (v21);
\draw[fill=black] (10.5,4) circle (.6ex);
\draw[fill=black] (9.5,3.5) circle (.6ex);

\draw[thick] (v28)--(13,5)--(v27);
\draw[fill=black] (13,5) circle (.6ex);

\draw [thick] (v34) -- (v33);

\draw [thick] (v18) -- (7,0.5)-- (v17);
\draw[fill=black] (7,1.5) circle (.6ex);

\draw [thick] (v24) --(9,1.5)--(9,2)-- (v23);
\draw[fill=black] (9,2) circle (.6ex);

\draw [thick] (v30) -- (12.5,3)-- (v29);
\draw[fill=black] (12.5,3) circle (.6ex);

\draw[thick] (v35)--(14.5,4)--(14.5,3.5)--(14,3.5)--(14,3)--(v36); 
\draw[fill=black] (14,3.5) circle (.6ex);

\draw [fill=white](v19) circle (.6ex);
\draw (v19) node [above left] {\tiny $Q_{14}$};

\draw[black] (v27) circle (.2ex);
\draw (13.75,5.1) node [below] {\tiny $Q_{25}$};

\draw[fill=white] (v35) circle (.6ex);
\draw (v35) node [below right] {\tiny $Q_{36}$};

\draw (-1.5,1.5) node [left] {\tiny $H^\alpha_1$};
\draw (-1.5,1) node [left] {\tiny $H^\alpha_2$};
\draw (-1.5,0.5) node [left] {\tiny $H^\alpha_3$};

\draw (4,6) node [above] {\tiny $V^\alpha_1$};
\draw (5.5,6) node [above] {\tiny $V^\alpha_2$};

\draw (9,6) node [above] {\tiny $V^\alpha_3$};
\draw (11,6) node [above] {\tiny $V^\alpha_4$};

\draw (14.5,6) node [above] {\tiny $V^\alpha_5$};
\draw (17,6) node [above] {\tiny $V^\alpha_6$};

\end{tikzpicture}       
\end{center}
\caption{An example of essential horizontal NW corners.}
\label{fig:essential corner}
\end{figure}
\end{example}

\begin{lemma}\label{lem:no corner}
{\rm(a)} Assume $\alpha \in{\rm V}_{\rm target}$, $1\le p\le u_\alpha$. Then $(P_{pq'})^{A_\alpha}_x=p$ for $q'\ge p+1+v_\alpha-u_\alpha$,  
$(Q_{pq'})^{A_\alpha}_x=p$ for $q'\ge p+v_\alpha-u_\alpha$.
Consequently, for $q'\ge p+1+v_\alpha-u_\alpha$,
 the subpath $P_{pq'}{\rightsquigarrow}Q_{pq'}$ contains no horizontal NW corner, 
and $P_{pq'}+(1,0)= Q_{p+1,q'}$. 

{\rm(b)}  Assume $\beta \in{\rm V}_{\rm source}$, $1\le q\le u_\beta$. Then 
 $(Q_{p'q})^{A_\beta}_y=q$ for $p'\ge q+1+v_\beta-u_\beta$, 
 $(P_{p'q})^{A_\beta}_y=q$ for $p'\ge q+v_\beta-u_\beta$.
Consequently, for $p'\ge q+1+v_\beta-u_\beta$, 
the subpath $Q_{p'q}{\rightsquigarrow}P_{p'q}$ contains no vertical NW corner;
and $Q_{p'q}+(0,1)=P_{p',q+1}$.
\end{lemma}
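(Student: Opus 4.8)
By symmetry (interchanging the roles of source/target, $x$/$y$, and the horizontal/vertical paths via the mirrored conventions in \eqref{eq:convenient notation for H and V} and \eqref{eq:P to Q}), it suffices to prove part (a); part (b) then follows by the same argument applied to $A_\beta$. So I will focus on (a), fixing $\alpha\in{\rm V}_{\rm target}$ and $1\le p\le u_\alpha$.

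The first step is the computation of the $x$-coordinates. Recall from Lemma \ref{lem:straight 2 conditions}(b) (in the relabeled notation of Figure \ref{fig:straight relabel}) that $H^\alpha_p$ is the concatenation, from SW endpoint $(a_\alpha-u_\alpha+p,1)$ to NE endpoint $(p,b_\alpha)$, of the subpaths $P_{pq}\rightsquigarrow Q_{pq}$ joined by straight horizontal segments, where $q$ ranges over $1,\dots,v_\alpha$ and the $Q_{pq},P_{p,q+1}$ have equal $x$-coordinate (this last fact was established in the proof of (ii)$\Rightarrow$(i), condition (b), via the Straightening Lemma). Since along a SW--NE path the $x$-coordinate is weakly decreasing as one moves NE, and the path terminates at $(p,b_\alpha)$ whose $x$-coordinate is $p$, the $x$-coordinates of all vertices weakly past a given point are $\ge p$. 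I will argue that the $x$-coordinate actually reaches its minimum value $p$ no later than the segment $V^\alpha_{p+v_\alpha-u_\alpha}$: indeed, the $u_\alpha-p$ vertical paths $V^\alpha_{p+v_\alpha-u_\alpha+1},\dots,V^\alpha_{v_\alpha}$ all have their NE endpoints on the top edge strictly to the right of $Y_p=(p,b_\alpha-u_\alpha+p)$, and by the analysis in Case 1/Case 2 of the proof of Proposition \ref{prop:concurrent equiv} (the "curly hexagon" decomposition $R_{pq}$), $H^\alpha_p$ must pass through $Y_p$, so for $q'\ge p+v_\alpha-u_\alpha$ the point $Q_{pq'}$ lies weakly NE of $Y_p$ on $H^\alpha_p$, forcing $(Q_{pq'})^{A_\alpha}_x = p$. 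For $P_{pq'}$ with $q'\ge p+1+v_\alpha-u_\alpha$: $P_{pq'}$ has the same $x$-coordinate as $Q_{p,q'-1}$, and $q'-1\ge p+v_\alpha-u_\alpha$, so $(P_{pq'})^{A_\alpha}_x = p$ as well.

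The consequences are then immediate. If both endpoints $P_{pq'}$ and $Q_{pq'}$ of the subpath $P_{pq'}\rightsquigarrow Q_{pq'}$ have $x$-coordinate $p$ (true for $q'\ge p+1+v_\alpha-u_\alpha$), then the whole subpath, being a SW--NE path whose endpoints share the minimal $x$-coordinate $p$, is straight horizontal along the line $x=p$; hence it has no corner at all, in particular no horizontal NW corner. Finally, $P_{pq'}$ lies on $H^\alpha_p$ with $x$-coordinate $p$, so $P_{pq'}+(1,0)$ has $x$-coordinate $p+1$; since $P_{pq'}$ and $Q_{p+1,q'}$ both lie on the vertical path $V^\alpha_{q'}$ (they are the SW-endpoints of the intersections of $V^\alpha_{q'}$ with $H^\alpha_p$ and $H^\alpha_{p+1}$ respectively), and consecutive horizontal paths $H^\alpha_p$, $H^\alpha_{p+1}$ are nonintersecting with $H^\alpha_{p+1}$ lying to the SW, one checks that on $V^\alpha_{q'}$ the two entry points are exactly one unit apart in the $x$-direction once both have reached the "straight" regime, giving $P_{pq'}+(1,0)=Q_{p+1,q'}$.

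The main obstacle I anticipate is the careful bookkeeping in the first step: pinning down exactly which index $q'$ makes $(Q_{pq'})^{A_\alpha}_x$ drop to $p$ requires combining the forced passage of $H^\alpha_p$ through $Y_p=(p,b_\alpha-u_\alpha+p)$ (from the hexagon decomposition) with a counting of vertical paths to the right of $Y_p$ — essentially re-deriving inequality \eqref{eq:sq1<=up} in a clean "equality-at-the-extreme" form. The rest is routine once the coordinates are known, and the $P_{pq'}+(1,0)=Q_{p+1,q'}$ claim is just the observation that two adjacent nonintersecting straight-horizontal stretches at heights $x=p$ and $x=p+1$ are joined along each $V^\alpha_{q'}$ by a single north step.
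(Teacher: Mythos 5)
Your proposal takes a genuinely different route from the paper. The paper establishes the coordinate formulas by a clean backward induction on $p$: it records the base case $(Q_{u_\alpha,v_\alpha})^{A_\alpha}_x=u_\alpha$ from Lemma~\ref{lem:straight 2 conditions}(b), and then in the inductive step compares $P_{pq'}$ with $Q_{p+1,q'}$ along the common vertical path $V^\alpha_{q'}$, using only that $H^\alpha_p$ and $H^\alpha_{p+1}$ are disjoint and that $x$-coordinates on a SW--NE path weakly decrease. You instead anchor the argument on the ``apex'' point $Y_p=(p,b_\alpha-u_\alpha+p)$, asserting that $H^\alpha_p$ passes through it and that the vertical paths $V^\alpha_{p+v_\alpha-u_\alpha+1},\dots,V^\alpha_{v_\alpha}$ have NE endpoints strictly to the right.

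The first of those two anchors is actually true (one can check directly from the ``Moreover'' part of Proposition~\ref{prop:concurrent equiv} that $\bar r^\alpha_{Y_p}=p-1$ and $\bar s^\alpha_{Y_p}=u_\alpha-p$, because the NW region of $Y_p$ has only $p-1$ rows and the SE region only $u_\alpha-p$ columns), but it is not established anywhere in the paper, and your citation of ``the analysis in Case 1/Case 2 of the proof of Proposition~\ref{prop:concurrent equiv}'' does not supply it --- those cases compute $\bar r,\bar s$ given a straight road map rather than establishing that $Y_p$ lies on $H^\alpha_p$; the use of $Y=Y_p$ in the proof of (ii)$\Rightarrow$(i) relies specifically on the SE-most construction. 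So you would need to state and prove this as its own lemma. The second anchor (about the NE endpoints of the $V^\alpha_q$ in the relabeled coordinates of $A_\alpha$) is also left unjustified, and it is not obviously deducible since those endpoints depend on how the vertical paths of the various $A_\beta$ cross the blocks; the boundary case $q'=p+v_\alpha-u_\alpha$ would need separate treatment as well. Finally, in your derivation of $P_{pq'}+(1,0)=Q_{p+1,q'}$ you call $Q_{p+1,q'}$ a SW-endpoint, but $Q$ denotes NE endpoints; the correct reasoning, which the paper uses, is that $(P_{pq'})^{A_\alpha}_x=p$, $(Q_{p+1,q'})^{A_\alpha}_x=p+1$, and the segment of $V^\alpha_{q'}$ joining them lies strictly between the two horizontal paths with no integer row in between, forcing a single north step. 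In sum, the consequences you deduce from the coordinate formulas are fine, but the coordinate formulas themselves are obtained by an approach with two unproven intermediate claims, and the paper's backward induction avoids both entirely.
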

\begin{proof} 
(a) We prove the first statement by backward induction on $p$. For the base case $p=u_\alpha$, the assertion 
$(Q_{u_\alpha,v_\alpha})^{A_\alpha}_x= u_\alpha$ follows from Lemma \ref{lem:straight 2 conditions} (b).  
For the inductive step, assume the statement is true for $p+1$. Then for $q'\ge p+1+v_\alpha-u_\alpha$,  $(Q_{p+1,q'})^{A_\alpha}_x=p+1$, thus $(P_{pq'})^{A_\alpha}_x\le (Q_{p+1,q'})^{A_\alpha}_x-1= p$. On the other hand, $(P_{pq'})^{A_\alpha}_x\ge p$ because it is on the path $H^\alpha_p$. So $(P_{pq'})^{A_\alpha}_x=p$ for $q'\ge p+1+v_\alpha-u_\alpha$. 
Next, $(Q_{pq'})^{A_\alpha}_x = (P_{p,q'+1})^{A_\alpha}_x=p$ for $q'+1\ge p+1+v_\alpha-u_\alpha$, that is, for $q'\ge p+v_\alpha-u_\alpha$.
Consequently for $q'\ge p+1+v_\alpha-u_\alpha$, $(P_{pq'})^{A_\alpha}_x = (Q_{p,q'-1})^{A_\alpha}_x=p$, thus the subpath of $H^\alpha_p$ from $Q_{p,q'-1}$ to the right endpoint $(p,v_\alpha)$ must be contained in the horizontal line $x=p$, therefore the subpath $P_{pq'}{\rightsquigarrow}Q_{pq'}$ contains no horizontal NW corner; meanwhile, since $(P_{pq'})^{A_\alpha}_x=p$ and $(Q_{p+1,q'})^{A_\alpha}_x=p+1$,  we have $P_{pq'}+(1,0)= Q_{p+1,q'}$. 

(b) is just the transpose of (a) thus can be proved similarly.
\end{proof}

\begin{definition}\label{df:order}
(i) Define a total ordering $<_T$ on $L$ as follows: $(i,j,k)<_T (i',j',k')$ if and only if
($k<k'$), or  ($k=k'$ and $i<i'$), or ($k=k'$ and $i=i'$ and $j<j'$). 

(ii) Given two concurrent vertex maps $C=(P_1,\dots,P_N)$ and $C'=(P'_1,\dots,P'_N)$, where $P_1<_T P_2<_T\cdots<_T P_N$ and  $P'_1<_T P'_2<_T\cdots<_T P'_N$, define $C<_TC'$ if there is an integer $1\le t\le N$ such that $P_t<_T P'_t$ and $P_s=P'_s$ for all $s>t$. 

(iii) For any $\uu$-compatible subset $S\subseteq L$, define $C_{\max}(S)$ as follows: 
first initialize $S':=S$, 
then for $P$ running from the largest vertex in $L$ to the smallest under the order $<_T$, redefine $S':=S'\cup\{P\}$ if $S'\cup \{P\}$ is $\uu$-compatible.
The final $S'$ is $C_{\max}(S)$.
Similarly, to define $C_{\min}(S)$, we first initialize $S':=S$, 
then for $P$ running from the smallest vertex in $L$ to the largest, redefine $S':=S'\cup\{P\}$ if $S'\cup \{P\}$ is $\uu$-compatible.
The final $S'$ is $C_{\min}(S)$.
 \end{definition}

Note that the order $<_T$ is ``opposite'' to the order defined in \eqref{eq: favorite order} in the following sense: 
$$\textrm{$x^{(k)}_{ij}>x^{(k')}_{i'j'}$ if and only if $(i,j,k)<_T (i',j',k')$.}$$
Also note that $C_{\max}(S)$ is maximal $\uu$-compatible, therefore is a concurrent vertex map. 
%Indeed, if $C_{\max}(S)\cup\{P\}$ is $\uu$-compatible, then $P$ should be added to $S'$ in the construction of $C_{\max}(S)$.

%In most part of the paper we only discuss properties of $C_{\max}(S)$. The properties of $C_{\min}(S)$ are similar so we omit.

\begin{lemma}\label{lem:larger C'}
Let $(i,j,k)\in C$ be an essential (horizontal or vertical) NW corner. 
Then there is a unique triple $(i',j',k')\notin C$, such that $(i,j,k)<_T (i',j',k')$, and $C'=(C\setminus\{(i,j,k)\})\cup\{(i',j',k')\}$ is also a concurrent vertex map and  $C <_T C'$. 
As a consequence, for a $\uu$-compatible subset $S\subseteq L$,  all essential NW corners of $C_{\max}(S)$ are contained in $S$.
\end{lemma}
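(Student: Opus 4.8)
The vertical and horizontal cases are proved by the same argument (with $A_\beta$, ${\rm V}_{\rm source}$ and the vertical paths in place of $A_\alpha$, ${\rm V}_{\rm target}$ and the horizontal paths; note that moving toward the SE on the stacked matrix $A_\beta$ also increases the $<_T$-value, either by passing to a later page or, within a page, by increasing the row), so I treat only a horizontal NW corner. Let $\mathcal{R}=(\{H^\gamma_p\},\{V^\gamma_q\})$ be the straight road map realizing $C$, which is unique by Proposition \ref{prop:concurrent equiv}; put $\alpha=\target(h_k)$ and $P:=\phi_\alpha^{-1}(i,j,k)$, a NW corner of $H^\alpha_p$. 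In the relabeling \eqref{eq:convenient notation for H and V}--\eqref{eq:P to Q} the point $P$ lies on some $V^\alpha_q$, and since $\mathcal{R}$ is straight, $P$ is also a NW corner of $V^\alpha_q$ (or an endpoint of $P_{pq}\rightsquigarrow Q_{pq}$, handled the same way). Essentiality means $P\ne Q_{p,\,p+v_\alpha-u_\alpha}$.

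For existence, my plan is to modify $\mathcal{R}$ by a cascade of corner flips down the SE-diagonal from $P$: flip the NW corner $P$ of $H^\alpha_p$ to an SE corner at $P+(1,1)$; if $P+(1,1)$ was occupied, a short argument using that the paths are nonintersecting and $\mathcal{R}$ is straight forces it to be a NW corner of $H^\alpha_{p+1}$ lying on a vertical path, so flip that corner too, pushing it to $P+(2,2)$, and continue, adjusting the affected vertical paths accordingly. This terminates, since $H^\alpha_{p+s}$ exists only for $p+s\le u_\alpha$, so once the diagonal passes below $H^\alpha_{u_\alpha}$ the next point lies on no horizontal path and is free; and essentiality, via Lemma \ref{lem:no corner}, is what keeps the first free point $P+(t,t)$ inside $[1,a_\alpha]_\mathbb{Z}\times[1,b_\alpha]_\mathbb{Z}$ rather than falling off the boundary as it would for the non-essential corner $Q_{p,\,p+v_\alpha-u_\alpha}$. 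The outcome is a new straight road map $\mathcal{R}'$, so its concurrent vertex map $C':=(C\setminus\{(i,j,k)\})\cup\{(i',j',k')\}$, with $(i',j',k'):=\phi_\alpha(P+(t,t))$, is a genuine concurrent vertex map by Proposition \ref{prop:concurrent equiv}; alternatively one can argue $C'$ directly, checking that it is $\uu$-compatible (the only diagonal chains that could be created run through $P+(t,t)$ on $A_\alpha$ or on the relevant source matrix, and the cascade structure bounds their sizes by $u_\alpha$) and that $|C'|=|C|=N_{\mathcal{Q},\mm,\uu}$, so that Proposition \ref{prop:concurrent equiv}(iii) applies. Since $P+(t,t)$ is strictly SE of $P$ on $A_\alpha$, we get $(i,j,k)<_T(i',j',k')$; hence $(i',j',k')\notin C$ and, reading both sets in decreasing $<_T$-order, $C <_T C'$.

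For uniqueness, note that this $(i',j',k')$ is the $<_T$-largest element of $V_{\mathrm{add}}$, the set of $v\notin C\setminus\{(i,j,k)\}$ for which $(C\setminus\{(i,j,k)\})\cup\{v\}$ is $\uu$-compatible, because $C_{\max}(C\setminus\{(i,j,k)\})$ adjoins precisely that element, and $C\setminus\{(i,j,k)\}$ having $N_{\mathcal{Q},\mm,\uu}-1$ points, every concurrent vertex map containing it adjoins exactly one point of $V_{\mathrm{add}}$ (Proposition \ref{prop:concurrent equiv}(iii)). So it remains to show $(i,j,k)$ is the second-largest element of $V_{\mathrm{add}}$, i.e. that no valid addition other than $(i',j',k')$ is $>_T(i,j,k)$; this I would obtain by running the cascade in reverse: in the straight road map realizing $(C\setminus\{(i,j,k)\})\cup\{v\}$ for a valid $v>_T(i,j,k)$, straightness and Lemma \ref{lem:no corner} force $H^\alpha_p$ and its neighbouring paths to be routed so that $v=\phi_\alpha(P+(t,t))$. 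Granting the claim, the final assertion follows at once: if $S\subseteq L$ is $\uu$-compatible and some essential NW corner $(i,j,k)$ of $C_{\max}(S)$ were not in $S$, apply the claim with $C=C_{\max}(S)$ to get $(i',j',k')\notin C_{\max}(S)$ with $(i,j,k)<_T(i',j',k')$ and $C'=(C_{\max}(S)\setminus\{(i,j,k)\})\cup\{(i',j',k')\}$ a concurrent vertex map; in the greedy construction of $C_{\max}(S)$ the vertex $(i',j',k')$ is examined before $(i,j,k)$, and at that moment the partial set $S'$ satisfies $S'\subseteq C_{\max}(S)\setminus\{(i,j,k)\}\subseteq C'$, so $S'\cup\{(i',j',k')\}\subseteq C'$ is $\uu$-compatible and the greedy rule would have added $(i',j',k')$ --- a contradiction.

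I expect the main obstacle to be the bookkeeping in the existence step: verifying that the cascade of corner flips really produces a \emph{straight} road map (no corner left off a transversal path, no two paths forced to cross, and the degenerate sub-cases --- $P$ an endpoint of $P_{pq}\rightsquigarrow Q_{pq}$, or $P$ sitting across a page boundary --- benign), and pinning down exactly how essentiality enters through Lemma \ref{lem:no corner}; together with the reverse argument needed for uniqueness.
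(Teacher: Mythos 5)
Your diagonal cascade captures what the paper calls \emph{Case 1} of its proof (the situation where $\phi_\alpha^{-1}(i,j,k)$ is a NW corner of both a horizontal path $H^\alpha_p$ \emph{and} a vertical path $V^\alpha_q$). There, the paper likewise flips NW corners to SE corners along the SE diagonal until the first free slot $P+(t+1,t+1)$ is reached, and the picture matches your description. Your deduction of the final ``consequence'' from the main claim (via the greedy construction of $C_{\max}(S)$) is also correct.

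The gap is the case you set aside with ``(or an endpoint of $P_{pq}\rightsquigarrow Q_{pq}$, handled the same way).'' It is \emph{not} handled the same way, and it is the bulk of the paper's argument. Concretely: if $P=\phi_\alpha^{-1}(i,j,k)$ is a NW corner of $H^\alpha_p$ but not of any vertical path, then straightness forces $P=Q_{pq}$ with the vertical edge $P\to P+(1,0)$ lying in $H^\alpha_p$ and $V^\alpha_q$ running straight through $P$ without turning. In that configuration the correct replacement point is \emph{not} on the SE diagonal through $P$: the paper's Case 2 shows the new point is $(i',j',k')=\phi_\alpha\big(P_{p+t,q+t+1}+(1,0)\big)$, where $t$ is the smallest nonnegative integer with $P_{p+t,q+t+1}+(1,0)\notin\phi_\alpha^{-1}(C)$, and the modification of the road map is a cascade of \emph{row} shifts (moving a NW corner $R_{p+t',q+t'}$ of $H^\alpha_{p+t'}$ to a SE corner $P_{p+t',q+t'+1}+(1,0)$ along that row), not a diagonal one. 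Whenever the straight horizontal run of $H^\alpha_p$ between $Q_{pq}$ and $P_{p,q+1}$ has length $\ge2$, the point $P_{p,q+1}+(1,0)$ is strictly east of $P+(1,1)$, so $(i',j',k')\ne\phi_\alpha(P+(s,s))$ for any $s$. By the uniqueness asserted in the lemma, that means your candidate $\phi_\alpha(P+(s,s))$ cannot be the right one in this case; equivalently, the set $(C\setminus\{(i,j,k)\})\cup\{\phi_\alpha(P+(s,s))\}$ either fails to be a concurrent vertex map or (if it were one) would contradict uniqueness. The same defect propagates to your uniqueness step, where you appeal to a ``reverse cascade'' to force $v=\phi_\alpha(P+(t,t))$: in the $Q_{pq}$ situation the paper's uniqueness argument instead pins down, by induction, a forced row-by-row routing of $H''_{p+t'}$ through $R_{p+t',q+t'}+(1,0)\to P_{p+t',q+t'+1}+(1,0)\to P_{p+t',q+t'+1}$, landing on the off-diagonal point above. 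So the proposal as written establishes the lemma only for corners that are simultaneously horizontal and vertical NW corners; the remaining case needs the separate ladder-move argument the paper gives.
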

\begin{proof}
Without loss of generality, assume $(i,j,k)$ is an essential horizontal NW corner. Let $\alpha=\target(h_k)$. We consider two cases.

Case 1: if $(i,j,k)$ is a vertical NW corner. Let $t$ be the smallest nonnegative integer such that $(i',j',k'):=(i+t+1,j+t+1,k)$ is either not a horizontal NW corner or not a vertical NW corner (see Fig \ref{fig:essentialAB} left). It is easy to see that $(i+t+1,j+t+1)$ is a valid point in ${\rm Page}_k$, and $(i',j',k')\notin C$.
%Note that $(i+t+1,j+t+1)$ is a valid point in ${\rm Page}_k$ because a point in the bottom row of ${\rm Page}_k$ cannot be a horizontal NW corner, and a point in the rightmost column of  ${\rm Page}_k$ cannot be a vertical NW corner.
%Here we used the fact that $(i+t+1,j+t+1)$ is a valid point in ${\rm Page}_k$ (that is, $i+t+1\le m_{\target(h_k)}$ and  $j+t+1\le m_{\source(h_k)}$) because a point in the bottom row of ${\rm Page}_k$ cannot be a horizontal NW corner, and a point in the rightmost column of  ${\rm Page}_k$ cannot be a vertical NW corner.
%We claim that $(i',j',k')\notin C$. Indeed, let $p,q$ be the integers determined by the condition that $\phi_\alpha^{-1}(i,j,k) \in H^\alpha_p\cap V^\alpha_q$. Note note that for each $0\le t'\le t$, $(i+t',j+t',k)$ is both a horizontal and a vertical NW corner; thus the three points $\phi_\alpha^{-1}(i+t'+1,j+t',k)$, $\phi_\alpha^{-1}(i+t',j+t',k)$, $\phi_\alpha^{-1}(i+t',j+t'+1,k)$ must lie in the same horizontal path $H^\alpha_{p+t'}$ and the same vertical path $V^\alpha_{q+t'}$. If $(i',j',k')\in C$, then $\phi_\alpha^{-1}(i',j',k')$ must lie in both $H^\alpha_{p+t'+1}$ and $V^\alpha_{q+t'+1}$, but then it must be a NW corner of both, contradicting the choice of $t$. 
Define $C'=(C\setminus\{(i,j,k)\})\cup\{(i',j',k')\}$, and modify each $H^\alpha_{p+t'+1}$ into $H'_{p+t'+1}$ (resp., modify each $V^\alpha_{q+t'+1}$ into $V'_{q+t'+1}$) by replacing its NW corner $\phi_\alpha^{-1}(i+t',j+t',k)$  by a SE corner $\phi_\alpha^{-1}(i+t'+1,j+t'+1,k)$, as indicated in Figure \ref{fig:essentialAB} right. This new road map corresponds to $C'$, thus $C'$ is a concurrent vertex maps. It is obvious that  $(i,j,k)<_T (i',j',k')$ and $C<_T C'$. 
%Since $C'$ is contained in the road map obtained from the above modification,  $C'$ must be $\uu$-compatible. Furthermore $C'$ is maximal $\uu$-compatible since $|C'|=|C|$. So $C'$ is a concurrent vertex map. 

\begin{figure}[ht]
\begin{center}
\begin{tikzpicture}[scale=.4]
      \node at (0,0){$\bullet$};
      \node at (1,0) {$\bullet$}; 
      \node at (0,1) {$\bullet$}; \node at (-.1,1.5)[anchor=east]{\tiny$\phi_\alpha^{-1}(i,j,k)$};
      \node at (1,1) {$\bullet$};
      \draw (0, 0) -- (0,1) (0,1)--(1,1);
      \draw [decorate, decoration={  zigzag}]  (0,0) -- (-2,-2);
      \draw [decorate, decoration={  zigzag}]  (3,3) -- (1,1);
      \node at (-2,-2)[anchor=east]{\tiny$H^\alpha_p\cap V^\alpha_q$};
    \begin{scope}[shift={(1,-1)}]
      \node at (0,0){$\bullet$};
      \node at (1,0) {$\ddots$};
      \node at (1,1) {$\bullet$};
      \draw (0, 0) -- (0,1) (0,1)--(1,1);
      \draw [decorate, decoration={  zigzag}]  (0,0) -- (-2,-2);
      \draw [decorate, decoration={  zigzag}]  (3,3) -- (1,1);
      \node at (-2,-2)[anchor=east]{\tiny$H^\alpha_{p+1}\cap V^\alpha_{q+1}$};
      \end{scope}
    \begin{scope}[shift={(3,-3)}]
      \node at (0,0){$\bullet$};
      \node at (1,0) {+}; \node at (1,0)[anchor=north west]{\tiny$\phi_\alpha^{-1}(i',j',k')$};
      \node at (0,1) {$\bullet$};
      \node at (1,1) {$\bullet$}; %\node at (-.1,1)[anchor = east]{\tiny$\phi_\alpha^{-1}(i+t,j+t,k)$};
      \draw (0, 0) -- (0,1) (0,1)--(1,1);
      \draw [decorate, decoration={  zigzag}]  (0,0) -- (-2,-2);
      \draw [decorate, decoration={  zigzag}]  (3,3) -- (1,1);
      \node at (-2,-2)[anchor=east]{\tiny$H^\alpha_{p+t}\cap V^\alpha_{q+t}$};
      \end{scope}
      \node at (9, -1) {$\Longrightarrow$};
     \begin{scope}[shift={(16,0)}]
      \node at (0,0){$\bullet$};
      \node at (1,0) {$\bullet$}; \node at (-.1,1.5)[anchor=east]{\tiny$\phi_\alpha^{-1}(i,j,k)$};
      \node at (0,1) {+};
      \node at (1,1) {$\bullet$};
      \draw (0,0) -- (1,0) (1,0)--(1,1);
      \draw [decorate, decoration={  zigzag}]  (0,0) -- (-2,-2);
      \draw [decorate, decoration={  zigzag}]  (3,3) -- (1,1);
      \node at (-2,-2)[anchor=east]{\tiny$H'_{p}\cap V'_{q}$};
    \begin{scope}[shift={(1,-1)}]
      \node at (0,0){$\bullet$};
      \node at (1,0) {$\bullet$};
      \node at (1,1) {$\bullet$};
      \draw (0,0) -- (1,0) (1,0)--(1,1);
      \draw [decorate, decoration={  zigzag}]  (0,0) -- (-2,-2);
      \draw [decorate, decoration={  zigzag}]  (3,3) -- (1,1);
      \node at (-2,-2)[anchor=east]{\tiny$H'_{p+1}\cap V'_{q+1}$};
      \end{scope}
    \begin{scope}[shift={(3,-3)}]
      \node at (0,0){$\bullet$};
      \node at (1,0) {$\bullet$};
      \node at (0,1) {$\ddots$}; \node at (1,0)[anchor=north west]{\tiny$\phi_\alpha^{-1}(i',j',k')$};
      \node at (1,1) {$\bullet$};
      \draw (0,0) -- (1,0) (1,0)--(1,1);
      \draw [decorate, decoration={  zigzag}]  (0,0) -- (-2,-2);
      \draw [decorate, decoration={  zigzag}]  (3,3) -- (1,1);
      \node at (-2,-2)[anchor=east]{\tiny$H'_{p+t}\cap V'_{q+t}$};
      \end{scope}
    \end{scope}  
 \end{tikzpicture}
 \caption{Lemma \ref{lem:larger C'} Case 1. Left: before modification; Right: after modification}\label{fig:essentialAB}
\end{center}
\end{figure}

Next, we show the uniqueness of $(i',j',k')$. Assume $(i'',j'',k'')$ is a point in $C''$ 
satisfying the conditions that  $(i,j,k)<_T (i'',j'',k'')$ and $C''=(C\setminus\{(i,j,k)\})\cup\{(i'',j'',k'')\}$ is a concurrent vertex map.
The first condition implies that $\phi_\alpha^{-1}(i'',j'',k'')$ does not lie weakly NW of $\phi_\alpha^{-1}(i,j,k)$. 
Let $\{H''_p\}_{p=1}^{u_\alpha}$ and $\{V''_q\}_{q=1}^{v_\alpha}$ be the horizontal and vertical paths in $A_\alpha$ corresponding to $C''$. 
By the ``Moreover'' part of Proposition \ref{prop:concurrent equiv}, since  $\phi_\alpha^{-1}(i+1,j,k)$ is on $H^\alpha_p$, we have $\bar{r}_{i+1,j,k}(C)=p-1$; since $\phi_\alpha^{-1}(i'',j'',k'')$ does not lie weakly NW of $\phi_\alpha^{-1}(i,j,k)$, $C$ and $C''$ agree at every point weakly NW of $\phi_\alpha^{-1}(i,j,k)$ except the point $\phi_\alpha^{-1}(i,j,k)$ itself, and 
$\bar{r}_{i+1,j,k}(C'')=\bar{r}_{i+1,j,k}(C)=p-1$. 
%Again by the ``Moreover'' part of Proposition \ref{prop:concurrent equiv}, 
Since  $(i+1,j,k)$ is in $C$, it is also in $C''$, thus $\phi_\alpha^{-1}(i+1,j,k)$ is on $H''_p$ because $\bar{r}_{i+1,j,k}(C'')=p-1$. 
Similarly, $\phi_\alpha^{-1}(i,j+1,k)$ is on $H''_p$. If $\phi_\alpha^{-1}(i,j,k)$ is also on $H''_p$, then $(i,j,k)$ must be a horizontal NW corner, and therefore must be in $C''$, a contradiction. 
So $\phi_\alpha^{-1}(i,j,k)$ is not on $H''_p$.
This implies $\phi_\alpha^{-1}(i+1,j+1,k)$ is on $H''_p$.
Using the same argument and by induction on $t'$, we see that $\phi_\alpha^{-1}(i+t'+1,j+t',k)$, $\phi_\alpha^{-1}(i+t'+1,j+t'+1,k)$, $\phi_\alpha^{-1}(i+t',j+t'+1,k)$ are on the same horizontal path $H''_{p+t'}$ for every $0\le t'\le t$. In particular, $\phi_\alpha^{-1}(i',j',k')$ on $H''_{p+t}$. Similarly, $\phi_\alpha^{-1}(i',j',k')$ is on $V''_{q+t}$. Thus $(i',j',k')$ is in $C''$, which implies $(i'',j'',k'')=(i',j',k')$.

Case 2: if $(i,j,k)$ is not a vertical NW corner. 
This is possible only when $\phi_\alpha^{-1}(i,j,k)=Q_{pq}$ for some $p,q$ and the vertical edge connecting the vertices $\phi_\alpha^{-1}(i,j,k)$ and $\phi_\alpha^{-1}(i+1,j,k)$ is in $H^\alpha_p$. 
By Lemma \ref{lem:no corner}, $q\le p+v_\alpha-u_\alpha$. If the equality holds, then $(i,j,k)$ is a non-essential horizontal NW corner, contradicting our assumption. So $q\le p+v_\alpha-u_\alpha-1$, thus for all $t\le u_\alpha-p$,  $q+t+1\le (p+v_\alpha-u_\alpha-1)+(u_\alpha-p)+1= v_\alpha$. 
Let $t\le u_\alpha-p$ be the smallest nonnegative integer such that $P_{p+t,q+t+1}+(1,0)\notin \phi_\alpha^{-1}(C)$. 
Then for $0\le t'<t$, we must have $P_{p+t',q+t'+1}+(1,0)=Q_{p+t'+1,q+t'+1}$. 
For $0\le t'\le t$, 
denote $R_{p+t',q+t'}$ to be the leftmost vertex on $P_{p+t'}\rightsquigarrow Q_{p+t'}$ with $x$-coordinate $p+t'$.  % (that is, of the same height as $Q_{p+t'}$).
The fact that $Q_{pq}$ is a horizontal NW corner forces $R_{p+t',q+t'}$ to also be a horizontal NW corner for each $0\le t'\le t$. 
In particular, $R_{p+t,q+t}$ is a horizontal NW corner, and therefore the $x$-coordinate of $R_{p+t,q+t}$ is $<a_\alpha$.  
\begin{figure}[ht]
\begin{center}
\begin{tikzpicture}[scale=.45]
      \node at (0,0){$\bullet$};
      \node at (0,1) {$\bullet$};      \node at (4.5,2)[anchor=south]{\tiny$\phi_\alpha^{-1}(i,j,k)=Q_{pq}=R_{pq}$};\draw [-stealth](1,2) -- (0.2,1.2);
      \node at (4,1) {$\bullet$};  \node at (4.2,1)[anchor=south]{\tiny$P_{p,q+1}$};
      \draw (0, 0) -- (0,1);
      \draw[blue, thick, dotted](0,1)--(4,1); 
      \draw[red, thick, dotted](0,1)--(0,2.5);
       \draw [decorate, decoration={  zigzag}]  (0,0) -- (-1,-1);
   \node at (-.5,-.7)[anchor=north]{\tiny$H^\alpha_p$};
    \begin{scope}[shift={(2,-1)}]
      \node at (0,0){$\bullet$};
      \node at (0,1){$\bullet$}; \node at (0.2,1.1)[anchor=north west]{\tiny$R_{p+1,q+1}$};\draw [-stealth](0.5,0.5) -- (0.2,0.8);
      \node at (1,1){$\bullet$};
      \node at (2,1){$\bullet$}; \node at (3.5,1.8)[anchor=south west]{\tiny$Q_{p+1,q+1}$};\draw [-stealth](3.6,2.1) -- (2.2,1.2);
      \node at (4,1) {$\bullet$};   \node at (3.9,.9)[anchor=south west]{\tiny$P_{p+1,q+2}$};
      \draw (0, 0) -- (0,1)--(2,1);
      \draw[blue, thick, dotted](2,1)--(4,1); 
      \draw[red, thick, dotted](2,1)--(2,2);
       \draw [decorate, decoration={  zigzag}]  (0,0) -- (-2,-2);
   \node at (-1.3,-1.7)[anchor=north]{\tiny$H^\alpha_{p+1}$};
      \end{scope}
    \begin{scope}[shift={(5,-2)}]
      \node at (0,0){$\bullet$};
      \node at (0,1){$\bullet$};\node at (0.2,1)[anchor=north west]{\tiny$R_{p+2,q+2}$};\draw [-stealth](0.5,0.5) -- (0.2,0.8);
      \node at (1,1){$\bullet$};\node at (1,.8)[anchor=south west]{\tiny$Q_{p+2,q+2}$};
      \node at (2.5,-.5){$\ddots$};
      \draw (0, 0) -- (0,1)--(1,1);
      \draw[blue, thick, dotted](1,1)--(2,1); 
      \draw[red, thick, dotted](1,1)--(1,2);
       \draw [decorate, decoration={  zigzag}]  (0,0) -- (-2,-2);
   \node at (-1.3,-1.7)[anchor=north]{\tiny$H^\alpha_{p+2}$};
    \end{scope}
    \begin{scope}[shift={(9,-4)}]
      \node at (0,0){$\bullet$};
      \node at (0,1){$\bullet$};\node at (0.2,1)[anchor=north west]{\tiny$R_{p+t,q+t}$};\draw [-stealth](0.5,0.5) -- (0.2,0.8);
      \node at (1,1){$\bullet$};\node at (.75,1)[anchor=south west]{\tiny$Q_{p+t,q+t}$};
      \node at (4,1) {$\bullet$};   \node at (3.5,1)[anchor=south west]{\tiny$P_{p+t,q+t+1}$};
      \node at (4,0){$+$}; \node at (3.5,0)[anchor=north west]{\tiny$P'=\phi_\alpha^{-1}(i',j',k')$};
      \draw (0, 0) -- (0,1)--(1,1);
      \draw[blue, thick, dotted](1,1)--(4,1); 
      \draw[red, thick, dotted](1,1)--(1,2);
      \draw[red, thick, dotted](4,0)--(4,1);
       \draw [decorate, decoration={  zigzag}]  (0,0) -- (-2,-2);
   \node at (-1.3,-1.7)[anchor=north]{\tiny$H^\alpha_{p+t}$};
      \end{scope}
\node at (15, -1) {$\Longrightarrow$};
\begin{scope}[shift={(18,0)}]
      \node at (0,0){$\bullet$};  \node at (0,1)[anchor=south west]{\tiny$Q_{pq}$};
      \node at (0,1) {$+$};     %\node at (0,1)[anchor=south east]{\tiny$(i,j,k)$};
      \node at (4,1) {$\bullet$};
       \draw[blue, thick, dotted] (0,0)--(2,0);       
      \draw[red, thick, dotted] (0,0)--(0,2.5);
       \draw [decorate, decoration={  zigzag}]  (0,0) -- (-1,-1);
   \node at (-.5,-.7)[anchor=north]{\tiny$H'_p$};
    \begin{scope}[shift={(2,-1)}]
      \node at (0,0){$\bullet$}; %\node at (0,0)[anchor=north west]{\tiny$Q'_{p+1,q+1}$};
      \node at (0,1){$\bullet$};% \node at (0,1)[anchor=north west]{\tiny$P'_{p,q+1}$};
      \node at (1,1){$\bullet$};
      \node at (2,1){$\bullet$}; 
      \node at (4,1) {$\bullet$};
      \draw  (0,1)--(2,1)  (2,1)--(2,2);
       \draw[blue, thick, dotted](0,0)--(3,0); 
      \draw[red, thick, dotted](0,0)--(0,1);
       \draw [decorate, decoration={  zigzag}]  (0,0) -- (-2,-2);
   \node at (-1.3,-1.7)[anchor=north]{\tiny$H'_{p+1}$};
      \end{scope}
    \begin{scope}[shift={(5,-2)}]
      \node at (0,0){$\bullet$}; %\node at (0,0)[anchor=north west]{\tiny$Q'_{p+2,q+2}$};
      \node at (0,1){$\bullet$}; %\node at (0,1)[anchor=north west]{\tiny$P'_{p+1,q+2}$};
      \node at (1,1){$\bullet$};
      \node at (2.5,-.5){$\ddots$};
      \draw (0,1)--(1,1)--(1,2);
       \draw[blue, thick, dotted](0,0)--(2,0); 
      \draw[red, thick, dotted](0, 0) -- (0,1);
       \draw [decorate, decoration={  zigzag}]  (0,0) -- (-2,-2);
   \node at (-1.3,-1.7)[anchor=north]{\tiny$H'_{p+2}$};
      \end{scope}
    \begin{scope}[shift={(9,-4)}]
      \node at (0,0){$\bullet$}; %\node at (0,0)[anchor=north west]{\tiny$Q'_{p+t,q+t}$};
      \node at (0,1){$\bullet$}; %\node at (0,1)[anchor=north west]{\tiny$P'_{p+t-1,q+t}$};
      \node at (1,1){$\bullet$};
      \node at (4,1) {$\bullet$}; 
      \node at (4,0){$\bullet$}; \node at (3.5,0)[anchor=north west]{\tiny$P'$};
      \draw  (0,1)--(1,1) (4,0)--(4,1);
      \draw[blue, thick, dotted](4,0)--(0,0); 
      \draw[red, thick, dotted](0,0)--(0,1);
      \draw[red, thick, dotted](1,1)--(1,2);
       \draw [decorate, decoration={  zigzag}]  (0,0) -- (-2,-2);
     \node at (-1.3,-1.7)[anchor=north]{\tiny$H'_{p+t}$};
      \end{scope}
\end{scope}
\end{tikzpicture}
 \caption{Lemma \ref{lem:larger C'} Case 2. Left: before modification; right: after modification.}\label{fig:essentialA}
\end{center}
\end{figure}
Now, let $P'=P_{p+t,q+t+1}+(1,0)$, $(i',j',k')=\phi_\alpha(P')$, $C'=(C\setminus\{(i,j,k)\})\cup\{(i',j',k')\}$, keep all vertical paths unchanged, and modify each horizontal path $H^\alpha_{p+t'+1}$ into $H'_{p+t'+1}$ by replacing its subpath 
$R_{p+t',q+t'}+(1,0)\to R_{p+t',q+t'}\to P_{p+t',q+t'+1}$  
by a subpath
$R_{p+t',q+t'}+(1,0)\to P_{p+t',q+t'+1}+(1,0)\to P_{p+t',q+t'+1}$, as shown in Figure \ref{fig:essentialA}. It is easy to see that $C'$ is a concurrent vertex map corresponding to the modified road map, and $C<_T C'$.
%Since $C'$ is contained in the road map obtained after above modification, we see that $C'$ is $\uu$-compatible. Furthermore $C'$ is maximal $\uu$-compatible since $|C'|=|C|$. So $C'$ is a concurrent vertex map. It is obvious that  $(i,j,k)<_T (i',j',k')$ and $C<_T C'$. 

Next, we show the uniqueness of $(i',j',k')$. Assume $(i'',j'',k'')$ is a point in $C''$ satisfying the conditions $(i,j,k)<_T (i'',j'',k'')$ and $C''=(C\setminus\{(i,j,k)\})\cup\{(i'',j'',k'')\}$ is a concurrent vertex map. 
Then $P'':=\phi_\alpha^{-1}(i'',j'',k'')$ does not lie weakly NW to $Q_{pq}=\phi_\alpha^{-1}(i,j,k)$. 
Let $\{H''_p\}_{p=1}^{u_\alpha}$ and $\{V''_q\}_{q=1}^{v_\alpha}$ be the horizontal and vertical paths in $A_\alpha$ corresponding to $C''$. 
We now prove the following claim by induction.

Claim: for $0\le t'\le t$, $H''_{p+t'}$ contains the subpath $R_{p+t',q+t'}+(1,0)\to P_{p+t',q+t'+1}+(1,0)\to P_{p+t',q+t'+1}.$

%For $t'=0$, we already noted that the vertex $R_{pq}+(1,0)$ is in $H''_{p}$. The path $H''_{p}$ goes east from $R_{pq}+(1,0)$ until it reaches a SE corner $(x_0,y_0)$. 
%Since the value of $\bar{r}_{\phi_\alpha(P_{p,q+1})}$ does not decrease when replacing $C$ by $C''$, $H''_p$ must pass through a point weakly west of $P_{p,q+1}$; in other words, $y_0 \le (P_{p,q+1})_y$, the $y$-coordinate of $P_{p,q+1}$. We consider three cases: 
For $t'=0$, let $x_0:=(R_{pq})_0$. Then $H''_p$ must pass $R_{pq}+(1,0)$, go east to a SE corner $(x_0,y_0)$, turn north to $(x_0-1,y_0)$ with $(R_{pq})_y<y_0\le (P_{p,q+1})_y$. 
If $y_0= (P_{p,q+1})_y$ then we are done. 
If $(R_{pq})_y< y_0 < (R_{p+1,q+1})_y $, then $(x_0,y_0)$ is a SE corner and $(x_0-1,y_0)$ is a NW corner of $H''_p$, so both are in $\phi_\alpha^{-1}(C''\setminus C$), a contradiction.
So we can assume $(R_{p+1,q+1})_y \le y_0 < (P_{p,q+1})_y$. Then $(x_0,y_0)\in \phi_\alpha^{-1}(C\cap C'')$. 
Since $(x_0,y_0)$ is in $H''_p$ and $H_{p+1}$, we have $\bar{s}_{\phi_\alpha(x_0,y_0)}(C'')
=
\bar{s}_{\phi_\alpha(x_0,y_0)}(C)
+1
$, thus $P''$ lies SE of $(x_0,y_0)$. But then $\bar{r}_{\phi_\alpha(P_{p,q+1})}(C'')=\bar{r}_{\phi_\alpha(P_{p,q+1})}(C)=p-1$, thus $H''_p$ contains $P_{p,q+1}$, the vertex $(x_0-1,y_0)$ is a NW corner of $H''_p$. So both $P''$ and $(x_0-1,y_0)$ are in $\phi_\alpha^{-1}(C''\setminus C)$, a contradiction. 

%Otherwise consider two cases: 
%(i) if $(R_{p+1,q+1})_y \le y_0 < (P_{p,q+1})_y$: then $(x_0,y_0)\in \phi_\alpha^{-1}(C\cap C'')$. 
%Since $(x_0,y_0)$ is in $H''_p$ and $H_{p+1}$, we have $\bar{s}_{\phi_\alpha(x_0,y_0)}(C'')
%=
%\bar{s}_{\phi_\alpha(x_0,y_0)}(C)
%+1
%$, thus $P''$ lies SE of $(x_0,y_0)$. But then $\bar{r}_{\phi_\alpha(P_{p,q+1})}(C'')=\bar{r}_{\phi_\alpha(P_{p,q+1})}(C)=p-1$, thus $H''_p$ contains $P_{p,q+1}$, the vertex $(x_0-1,y_0)$ is a NW corner of $H''_p$, which is in $C''\setminus C=\{P''\}$. This implies $P''=(x_0-1,y_0)$, contradicting the fact that $P''$ lies SE of $(x_0,y_0)$.  %See Figure \ref{fig:t'} Left.

%(ii) if $(R_{pq})_y< y_0 < (R_{p+1,q+1})_y $: then $\phi_\alpha(x_0,y_0)\in C''\setminus C$, so is $P''=(x_0,y_0)$. Meanwhile, $H''_p$ must  pass through $P_{p,q+1}$, so must have a NW corner $(x_0-1,y_0)$. But then $(x_0-1,y_0)\neq P''$ is also in $C''\setminus C$, a contradiction. 

For $t'>0$, the proof is similar: let $x_0=(R_{p+t',q+t'})_x$.  Then $H''_{p+t'}$ must pass $R_{p+t',q+t'}+(1,0)$, go east to a SE corner $(x_0,y_0)$, turn north to $(x_0-1,y_0)$ with $(R_{p+t',q+t'})_y<y_0\le (P_{p+t',q+t'+1})_y$. 
If $y_0= (P_{p+t',q+t'+1})_y$ then we are done. 
If $(R_{p+t',q+t'})_y< y_0 < (R_{p+t'+1,q+t'+1})_y $, then $(x_0,y_0)$ is a SE corner and $(x_0-1,y_0)$ is a NW corner of $H''_p$, so the former is in $\phi_\alpha^{-1}(C''\setminus C)$ and the latter is in $\phi_\alpha^{-1}(C)$ thus is in the horizontal segment $R_{p+t',q+t'}\to Q_{p+t',q+t'}$, which is a subpath of $H''_{p+t'-1}$ by inductive hypothesis. So $(x_0-1,y_0)$ is in both $H''_{p+t'}$ and $H''_{p+t'-1}$, a contradiction. 
So we can assume $(R_{p+t'+1,q+t'+1})_y \le y_0 < (P_{p+t',q+t'+1})_y$. Then $(x_0,y_0)\in \phi_\alpha^{-1}(C\cap C'')$. 
Since $(x_0,y_0)$ is in $H''_p$ and $H_{p+1}$, we have $\bar{s}_{\phi_\alpha(x_0,y_0)}(C'')
=
\bar{s}_{\phi_\alpha(x_0,y_0)}(C)
+1
$, thus $P''$ lies SE of $(x_0,y_0)$. But then $\bar{r}_{\phi_\alpha(P_{p,q+1})}(C'')=\bar{r}_{\phi_\alpha(P_{p,q+1})}(C)=p-1$, thus $H''_p$ contains $P_{p,q+1}$, the vertex $(x_0-1,y_0)$ is a NW corner of $H''_p$. So $(x_0-1,y_0)$ lies in $\phi_\alpha^{-1}(C)$, thus is contained in the horizontal segment $R_{p+t',q+t'}Q_{p+t',q+t'}$, which is a subpath of $H''_{p+t'-1}$ by the inductive hypothesis. So $(x_0-1,y_0)$ has to lie in both $H''_{p+t'}$ and $H''_{p+t'-1}$, a contradiction. 

Now the claim is proved. In particular, when $t'=t$, the claim says that $H''_{p+t}$ contains $P_{p+t,q+t+1}+(1,0)=P'$. So $(i',j',k')$ is in a horizontal path induced from $C''$. Similarly, it is in a vertical path induced from $C''$. So it is in $C''$. This implies the uniqueness. 
\end{proof}

\begin{proposition}\label{prop:C(S)>=C}
Let $C$ be a concurrent vertex map and $S$ be a subset of $C$. Then $C_{\max}(S)\ge_T C$, and the equality holds if and only $S$ contains all the essential horizontal and vertical NW corners of $C$. 
Similarly, $C_{\min}(S)\le_T C$, and the equality holds if and only $S$ contains all the essential horizontal and vertical SE corners of $C$. 
\end{proposition}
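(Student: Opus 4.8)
The plan is to prove the $C_{\max}$ statements and then obtain the $C_{\min}$ statements by the central reflection of the ambient rectangles, which interchanges horizontal/vertical NW corners with SE corners, reverses the order $<_T$, and swaps $C_{\max}$ with $C_{\min}$. For the $C_{\max}$ part I would first establish the inequality $C_{\max}(S)\ge_T C$ and then treat the two directions of the ``if and only if'' separately.

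\emph{The inequality.} I would argue $C_{\max}(S)\ge_T C$ for any $\uu$-compatible $S\subseteq C$ directly from the greedy process defining $C_{\max}(S)$. Suppose $C_{\max}(S)<_T C$ and let $v$ be the $<_T$-largest element of the symmetric difference $C\,\triangle\,C_{\max}(S)$; comparing the sorted sequences of the two sets shows $v\in C\setminus C_{\max}(S)$ and that $C$ and $C_{\max}(S)$ contain exactly the same elements that are $>_T v$. Hence when the greedy reaches $v$ its current set is $\bigl(C\cap\{w:w>_T v\}\bigr)\cup\bigl(S\cap\{w:w\le_T v\}\bigr)$, which is contained in $C$, so adjoining $v$ keeps it $\uu$-compatible (a subset of the $\uu$-compatible set $C$), forcing $v\in C_{\max}(S)$ --- a contradiction. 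Running the same argument with an arbitrary concurrent vertex map $D\supseteq S$ in place of $C$ shows that $C_{\max}(S)$ is the $<_T$-largest concurrent vertex map containing $S$; in particular $S_0\subseteq S\subseteq L$ (both $\uu$-compatible) forces $C_{\max}(S)\le_T C_{\max}(S_0)$.

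\emph{The ``only if'' direction, and a reduction for ``if''.} If $C_{\max}(S)=C$, the consequence of Lemma \ref{lem:larger C'} applied to the $\uu$-compatible set $S$ says precisely that every essential horizontal and vertical NW corner of $C=C_{\max}(S)$ lies in $S$; so that direction is immediate. For the converse, suppose $S$ contains all essential NW corners of $C$ and let $S_0\subseteq S$ be the set of \emph{all} these corners. By the monotonicity above together with $C_{\max}(S)\ge_T C$ and $C_{\max}(S_0)\ge_T C$, it suffices to prove $C_{\max}(S_0)=C$. Assume not, so $D:=C_{\max}(S_0)>_T C$; since $D\supseteq S_0$ already contains every essential NW corner of $C$, it is enough to exhibit one essential NW corner of $C$ not in $D$.

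\emph{The core of the ``if'' direction.} Let $v=(i,j,k)$ be the $<_T$-largest element of $C\,\triangle\,D$; then $v\in D\setminus C$ and $C,D$ carry the same elements strictly above $v$. Since $v\notin C$, Proposition \ref{prop:concurrent equiv}(iv) gives, say for $\alpha=\target(h_k)$ with $(i,j')=\phi_\alpha^{-1}(v)$, a diagonal chain of $C^\alpha$ through $\phi_\alpha^{-1}(v)$ of size $\ge u_\alpha+1$, which we may split into a maximal NW-part of size $r_v(C)$ and a maximal SE-part of size $s_v(C)$ with $r_v(C)+s_v(C)\ge u_\alpha$ (the source case is symmetric). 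Every point of the SE-part has row $>i$ and column $>j'$, hence lies in a block of $A_\alpha$ at or after the block of column $j'$, so its $\phi_\alpha$-image is $>_T v$ and therefore lies in $D$; thus $s_v(D)\ge s_v(C)$, and since $v\in D$ gives $r_v(D)+s_v(D)<u_\alpha$ we get $r_v(D)<r_v(C)$. Consequently the maximal NW-chain of $C^\alpha$ cannot lie entirely in $D^\alpha$, so $C\setminus D\neq\emptyset$. It remains to check that the $<_T$-largest element of $C\setminus D$ is an essential NW corner of $C$: because $C$ and $D$ agree above it, the ``Moreover'' part of Proposition \ref{prop:concurrent equiv} pins down the local structure of the horizontal and vertical paths of $C$ at that point via the functions $\bar r,\bar s,\bar r',\bar s'$, showing it is a NW corner, while Lemma \ref{lem:no corner} rules out its being one of the non-essential trailing corners $Q_{p,p+v_\alpha-u_\alpha}$. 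This contradiction completes the proof, and the $C_{\min}$ statements follow by the reflection symmetry.

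\emph{Main obstacle.} The genuinely delicate step is the last one: verifying that the distinguished element of $C\setminus D$ is an \emph{essential} NW corner --- that it is a corner at all, and that it is not a trailing non-essential corner. This is where the fine geometry of straight road maps (Lemmas \ref{lem:straight 2 conditions} and \ref{lem:no corner}) and the $\bar r/\bar s$ bookkeeping of Proposition \ref{prop:concurrent equiv} must be used in earnest; the remainder is routine manipulation of the total order $<_T$ and of diagonal chains.
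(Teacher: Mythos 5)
Your treatment of the inequality $C_{\max}(S)\ge_T C$ and of the ``only if'' direction is correct and matches the spirit of the paper. You also correctly observe that the ``if'' direction reduces to $C_{\max}(S_0)=C$ for $S_0$ the set of all essential NW corners, and your bookkeeping with $r_v,s_v$ at the $<_T$-largest point $v\in D\setminus C$ (giving $s_v(D)\ge s_v(C)$ and $r_v(D)<r_v(C)$, hence $C\setminus D\neq\emptyset$) is sound. The $C_{\min}$ half by central reflection is also fine.

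The genuine gap is the last step, which you yourself flag as delicate but then treat as if it were a formality. You assert that the $<_T$-largest element $w$ of $C\setminus D$ is an essential NW corner of $C$, and the stated reason --- ``because $C$ and $D$ agree above it'' --- is false. You have already produced an element of $D\setminus C$ (namely $v$) strictly above $w$, so $C$ and $D$ do \emph{not} agree on the interval between $w$ and $v$. Once that fails, the ``Moreover'' part of Proposition~\ref{prop:concurrent equiv} no longer lets you read off the local path structure of $C$ at $w$ from data that you control, and there is no argument at all showing $w$ is a corner of either a horizontal or a vertical path, let alone an \emph{essential} one. Citing Lemma~\ref{lem:no corner} to exclude the trailing non-essential corners $Q_{p,p+v_\alpha-u_\alpha}$ does not help until you have first established that $w$ is a NW corner and located it on a specific path. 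In effect the whole weight of the ``if'' direction rests on an unproved claim.

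For comparison, the paper does not try to show that the largest element of $C\setminus D$ is an essential corner. Instead, starting from the same $v$, it walks west and north through $C$'s road map to build a diagonal chain of NW corners $T^\alpha_{p_0},\dots,T^\alpha_1$ (and a companion chain $T^\beta_{q_1},\dots,T^\beta_1$ on the source side), none of which need lie in $C\setminus D$. The contradiction is then extracted through a four-case analysis: if all $T$'s are essential or unconstructable one forces $\bar r_v(D)\ge \bar r_v(C)$ and $\bar r'_v(D)\ge\bar r'_v(C)$ simultaneously, contradicting $v\notin C$; if some $T$ is a trailing corner one uses the boundary broken lines $\mathrm{Bound}_1,\mathrm{Bound}_2$ and Lemma~\ref{lem:no corner} to get a geometric contradiction, and the mixed case (both sides trailing) is handled by a separate counting argument with the offsets $D_1,D_2$. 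Your phrase ``the source case is symmetric'' also glosses over the fact that the target and source conditions interact nontrivially --- this is precisely the content of the paper's Case~4, where the block structure forces $r_s=r'_t=k$ and $p_0-q_1=u_\alpha-u_\beta$. To complete your argument you would either need to prove the unsupported claim about $w$, or replace it with something like the paper's corner-chain construction; as written, the proof is incomplete at its most important step.
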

\begin{proof}
By symmetry, we only prove the first statement. Note that
$C_{\max}(S)\ge_T C$ is obvious, so we only need to prove the condition for the equality to hold. 
%``$\Rightarrow$'': we prove the contrapositive. Without lost of generality assume that $S$ does not contain an essential horizontal NW corner $(i,j,k)$ of $C$, so $S\subseteq C\setminus\{(i,j,k)\}$. By Lemma \ref{lem:larger C'}, there exists $C'=(C\setminus\{(i,j,k)\})\cup\{(i',j',k')\}>_TC$, so $C'$ contains $S$ and thus $C_{\max}(S)\ge_T C'>_T C$. 
``$\Rightarrow$'' follows immediately from Lemma \ref{lem:larger C'}. For ``$\Leftarrow$'', assume that $S$ contains all the essential horizontal and vertical NW corners of $C$, and assume the contrary that $C':=C_{\max}(S)>_T C$. 
Let
$$\textrm{$(i,j,k)$ be the largest point in $C'\setminus C$ under the order ``$<_T$''.}$$
Note that a point $(i',j',k')$ satisfying $(i',j',k')>_T (i,j,k)$ lies in $C'$ if and only if it lies in $C$. 
Then the choice of $(i,j,k)$ guarantees 
\begin{equation}\label{eq:s=s}
\bar{s}_{ijk}(C')=\bar{s}_{ijk}(C).
\end{equation}

%Denote $p_0=\bar{r}_{ijk}(C)$, $q_0=\bar{s}_{ijk}(C)$. 
Let $\alpha={\rm t}(h_k)$, $\beta={\rm s}(h_k)$, $p_0=\bar{r}_{ijk}(C)$. By Proposition \ref{prop:concurrent equiv},
in $A_\alpha$,  $\phi_\alpha^{-1}(i,j,k)$ is strictly below $H_{p_0}^\alpha$ (if $p_0>0$) and weakly above $H_{p_0+1}^\alpha$ (if $p_0<u_\alpha-1$). Let $q_0$ be the integer in the range $0\le q_0\le v_\alpha-1$ such that $\phi_\alpha^{-1}(i,j,k)$ is strictly to the right of $V_{q_0}^\alpha$ (if $q_0>0$) and weakly to the left of $V_{q_0+1}^\alpha$ (if $q_0<v_\alpha-1$). 
We shall construct s sequence of points 
\begin{equation}\label{eq:sequence S}
S^\alpha_{p_0},T^\alpha_{p_0},S^\alpha_{p_0-1},T^\alpha_{p_0-1},\dots,S^\alpha_{1}, T^\alpha_1
\textrm{ in }A_\alpha
\end{equation}
 recursively as follows.
Start from $\phi_\alpha^{-1}(i,j,k)$, walk west (for at least 1 unit) until reaching path $H^\alpha_{p_0}$ at point $S^\alpha_{p_0}$, then walk north along $H^\alpha_{p_0}$ (for at least one unit) until reaching a NW corner $T^\alpha_{p_0}$; then for $p=p_0, p_0-1,\dots,2$, once $T^\alpha_p$ is constructed, walk west from it (for at least 1 unit) until reaching path $H^\alpha_{p-1}$ at point $S^\alpha_{p-1}$,  then walk north along $H^\alpha_{p-1}$ (for at least 1 unit) until reaching a NW corner which we denote $T^\alpha_{p-1}$. (See Figure \ref{fig:sequence of points}).
\begin{figure}[h]
\begin{center}
\begin{tikzpicture}[scale=.8]

\draw  [gray] (0,0.5) rectangle (5.5,5.5);
\draw  [gray] (6,0.5) rectangle (11.5,5.5);
\draw  [gray] (12,0.5) rectangle (17.5,5.5);

\draw [blue,densely dotted,thick](1.5,0.5) node[scale=.1] (v6) {} -- (0,0.5);
\draw [blue,densely dotted,thick](2,1) node[scale=.1]  (v4) {} -- (0,1);
\draw [blue,densely dotted,thick](3,1.5) node[scale=.1]  (v2) {} -- (0,1.5);
\draw [blue,densely dotted,thick](2,0.5) node[scale=.1]  (v5) {} -- (3.5,0.5) node[scale=.1]  (v12) {};
\draw [blue,densely dotted,thick](3,1) node[scale=.1]  (v3) {} -- (4.5,1) node[scale=.1]  (v10) {};
\draw [blue,densely dotted,thick](4,2.5) node[scale=.1]  (v1) {} -- (5,2.5) node[scale=.1]  (v8) {};
\draw [blue,densely dotted,thick](5,2) node[scale=.1]  (v9) {} -- (7,2) node[scale=.1]  (v16) {};
\draw [blue,densely dotted,thick](5.5,3.5) node[scale=.1]  (v7) {} -- (8,3.5) node[scale=.1]  (v14) {}  (4.5,0.5) node[scale=.1]  (v11) {} -- (6.5,0.5) node [scale=.1] (v18) {};
\draw [blue,densely dotted,thick](7,1.5) node[scale=.1]  (v17) {} -- (8.5,1.5) node[scale=.1]  (v24) {};
\draw [blue,densely dotted,thick](9,4.5) node[scale=.1]  (v13) {} -- (10.5,4.5) node [scale=.1] (v20) {};
\draw [blue,densely dotted,thick](11,5.5) node[scale=.1]  (v19) {} -- (13.5,5.5) node[scale=.1]  (v26) {};
\draw [blue,densely dotted,thick](14.5,5.5) node[scale=.1]  (v25) {} -- (15,5.5) node[scale=.1]  (v32) {};
\draw [blue,densely dotted,thick](17,5.5) node[scale=.1]  (v31) {} -- (17.5,5.5);
\draw [blue,densely dotted,thick](8,3) node [scale=.1] (v15) {} -- (9.5,3) node[scale=.1]  (v22) {}  (10.5,4) node[scale=.1]  (v21) {} -- (13,4) node[scale=.1]  (v28) {};
\draw [blue,densely dotted,thick](13.5,5) node [scale=.1] (v27) {} -- (14.5,5) node [scale=.1] (v34) {};
\draw [blue,densely dotted,thick](15,5) node[scale=.1]  (v33) {} -- (17.5,5);
\draw [blue,densely dotted,thick](9.5,2) node[scale=.1]  (v23) {} -- (12.5,2) node [scale=.1] (v30) {}  (13,3) node[scale=.1]  (v29) {} -- (13.5,3) node[scale=.1]  (v36) {};
\draw [blue,densely dotted,thick](14.5,4.5) node[scale=.1]  (v35) {} -- (17.5, 4.5);
\draw [red,densely dotted,thick](4, 6) -- (v1)  (v2) -- (v3)  (v4) -- (v5)  (v6) -- (1.5,0);
\draw [red,densely dotted,thick](5.5, 6) -- (v7)  (v8) -- (v9)  (v10) -- (v11)  (v12) -- (3.5,0);
\draw [red,densely dotted,thick](9, 6) -- (v13)  (v14) -- (v15)  (v16) -- (v17)  (v18) -- (6.5,0);
\draw [red,densely dotted,thick](11,6) -- (v19)  (v20) -- (v21)  (v22) -- (v23)  (v24) -- (8.5,0);
\draw [red,densely dotted,thick](14.5, 6) -- (v25)  (v26) -- (v27)  (v28) -- (v29)  (v30) -- (12.5,0);
\draw [red,densely dotted,thick](17, 6) -- (v31)  (v32) -- (v33)  (v34) -- (v35)  (v36) -- (13.5,0);
\draw [thick] (v6) -- (v5);
\draw [thick] (v4) -- (v3);
\draw [thick] (v12) -- (v11);

\draw [thick, decorate,decoration={zigzag,segment length=2mm, amplitude=0.4mm}] (v2) -- (v1);
\draw [thick, decorate,decoration={zigzag,segment length=2mm, amplitude=0.4mm}] (v10) -- (v9);
%\draw [thick, decorate,decoration={zigzag,segment length=2mm, amplitude=0.4mm}] (v8) -- (v7);
\draw [thick] (v8) -- (5.1,2.5)--(5.1,2.7)--(5.3,2.7)--(5.3, 3.2)--(5.5,3.2)-- (v7);
\draw [thick, decorate,decoration={zigzag,segment length=2mm, amplitude=0.4mm}] (v14) -- (v13);
\draw [thick, decorate,decoration={zigzag,segment length=2mm, amplitude=0.4mm}](v20) -- (v19);
\draw [thick] (v26) -- (14.5,5.5);
\draw [thick] (v32) -- (v31);
%\draw [thick, decorate,decoration={zigzag,segment length=2mm, amplitude=0.4mm}] (v16) -- (v15);
\draw [thick] (v16) -- (7,2.2)--(7.3,2.2)--(7.3, 2.7)--(7.7,2.7)--(7.7,2.8)--(8,2.8)-- (v15);
\draw [thick, decorate,decoration={zigzag,segment length=2mm, amplitude=0.4mm}] (v22) -- (v21);
\draw [thick, decorate,decoration={zigzag,segment length=2mm, amplitude=0.4mm}] (v28) -- (v27);
\draw [thick] (v34) -- (v33);
\draw [thick, decorate,decoration={zigzag,segment length=2mm, amplitude=0.4mm}] (v18) -- (v17);
\draw [thick, decorate,decoration={zigzag,segment length=2mm, amplitude=0.4mm}] (v24) -- (v23);
\draw [thick, decorate,decoration={zigzag,segment length=2mm, amplitude=0.4mm}] (v30) -- (v29);
\draw [thick, decorate,decoration={zigzag,segment length=2mm, amplitude=0.4mm}]  (v36) -- (v35);

\draw (v2) node [above left] {\color{gray}\tiny $P_{11}$};
\draw (v1) node [above left] {\color{gray}\tiny $Q_{11}$};

\draw (v7) node [above] {\color{gray}\tiny $Q_{12}$};
\draw (v8) node [above left] {\color{gray}\tiny $P_{12}$};

\draw (2,0.9) node [above] {\color{gray}\tiny $P_{21}$};
\draw (3,1.1) node [below] {\color{gray}\tiny $Q_{21}$};

\draw (1.5,.4) node [above] {\color{gray}\tiny $P_{31}$};
\draw (v5) node [below] {\color{gray}\tiny $Q_{31}$};

\draw (v9) node [left] {\color{gray}\tiny $Q_{22}$};
\draw (v10) node [above left] {\color{gray}\tiny $P_{22}$};

\draw (v11) node [below] {\color{gray}\tiny $Q_{32}$};
\draw (3.5,.6) node [below] {\color{gray}\tiny $P_{32}$};

\draw (v13) node [above left] {\color{gray}\tiny $Q_{13}$};
\draw (v14) node [above left] {\color{gray}\tiny $P_{13}$};

\draw (v15) node [right] {\color{gray}\tiny $Q_{23}$};
\draw (v16) node [above left] {\color{gray}\tiny $P_{23}$};

\draw (v17) node [left] {\color{gray}\tiny $Q_{33}$};
\draw (v18) node [right] {\color{gray}\tiny $P_{33}$};

\draw (v19) node [left] {\color{gray}\tiny $Q_{14}$};
\draw (v20) node [right] {\color{gray}\tiny $P_{14}$};

\draw (v21) node [left] {\color{gray}\tiny $Q_{24}$};
\draw (v22) node [right] {\color{gray}\tiny $P_{24}$};

\draw (v23) node [below right] {\color{gray}\tiny $Q_{34}$};
\draw (v24) node [below right] {\color{gray}\tiny $P_{34}$};

\draw (v25) node [above left] {\color{gray}\tiny $Q_{15}$};
\draw (v26) node [above left] {\color{gray}\tiny $P_{15}$};

\draw (v27) node [left] {\color{gray}\tiny $Q_{25}$};
\draw (v28) node [right] {\color{gray}\tiny $P_{25}$};

\draw (v29) node [left] {\color{gray}\tiny $Q_{35}$};
\draw (v30) node [right] {\color{gray}\tiny $P_{35}$};

\draw (v31) node [below] {\color{gray}\tiny $Q_{16}$};
\draw (v32) node [above ] {\color{gray}\tiny $P_{16}$};

\draw (v33) node [below] {\color{gray}\tiny $Q_{26}$};
\draw (v34) node [below left] {\color{gray}\tiny $P_{26}$};

\draw (v35) node [below right] {\color{gray}\tiny $Q_{36}$};
\draw (v36) node [right] {\color{gray}\tiny $P_{36}$};

\draw (0,1.5) node [left] {\tiny $H^\alpha_1$};
\draw (0,1) node [left] {\tiny $H^\alpha_2$};
\draw (0,0.5) node [left] {\tiny $H^\alpha_3$};

\draw (4,6) node [above] {\tiny $V^\alpha_1$};
\draw (5.5,6) node [above] {\tiny $V^\alpha_2$};

\draw (9,6) node [above] {\tiny $V^\alpha_3$};
\draw (11,6) node [above] {\tiny $V^\alpha_4$};

\draw (14.5,6) node [above] {\tiny $V^\alpha_5$};
\draw (17,6) node [above] {\tiny $V^\alpha_6$};

%draw the sequence of points
\fill[red] (11,2.2) circle(1.5pt) (7.3,2.2) circle(1.5pt)  (7.3,2.7) circle(1.5pt) (5.3,2.7) circle (1.5pt) (5.3,3.2) circle (1.5pt);
\node at (11,2.2)[anchor=south]{\color{red}\small$\phi_\alpha^{-1}(i,j,k)$};
\draw[red, thick, dotted](11,2.2)--(7.3,2.2) (7.3,2.7)--(5.3,2.7); 
\draw (7.1,2.3) node [below right] {\color{red}\small $S^\alpha_{2}$};
\draw (7.5,3) node [left] {\color{red}\small $T^\alpha_{2}$};
\draw (5.1,2.4) node [right] {\color{red}\small $S^\alpha_{1}$};
\draw (5.4,2.9) node [above left] {\color{red}\small $T^\alpha_{1}$};

%draw the sequence of points
\fill[blue] (10.5,5.5) circle(1.5pt) (10.5,4.5) circle(1.5pt) (13,4) circle(1.5pt)  (13.5,3) circle(1.5pt) (17.5,3) circle(1.5pt);
\draw[blue, very thick](10.5,5.5)--(10.5,4.5) -- (13,4)--(13.5,3)--(17.5,3); 
\draw (10.5,5.5) node [above] {\color{blue}\tiny $(1,(P_{14})_y)$};
\draw (17.5,3) node [above left] {\color{blue}\tiny $((P_{36)})_x,b_\alpha)$};
\draw (17.5,3) node [below left] {\color{blue}\small ${\rm Bound}_1$};

\end{tikzpicture}       
\end{center}
\caption{An example of the sequence in \eqref{eq:sequence S} with $p_0=2$. The blue broken line is ${\rm Bound}_1$.}
\label{fig:sequence of points}
\end{figure}
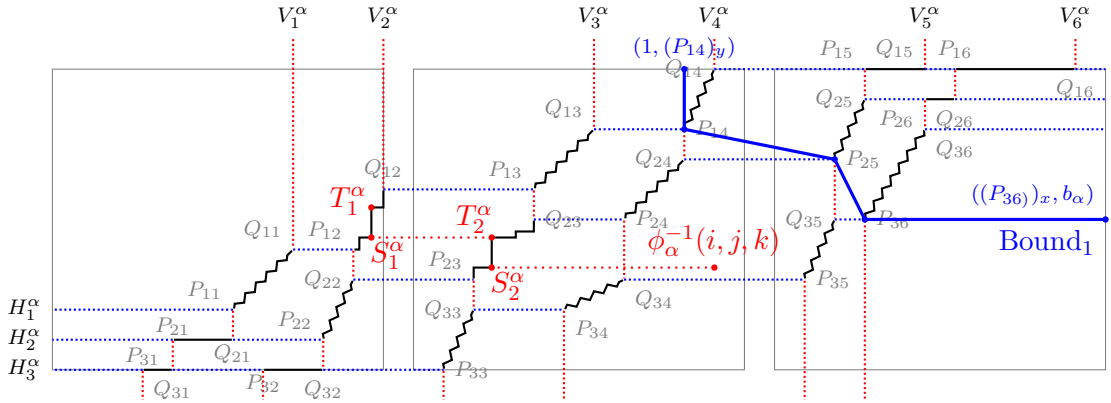
If some point in the sequence \eqref{eq:sequence S} can be constructed, we call it constructable; otherwise we call it unconstructable. 
%Obviously, if $S^\alpha_p$ is constructable, so is $T^\alpha_p$; if a point is unconstructable, so are all the points after it.  In the following claim, we will show that either all points are constructable, or none of them are.

\noindent{\bf Claim.} (i) ``Some point in \eqref{eq:sequence S} is unconstructable'' $\Leftrightarrow$  (ii) ``$S^\alpha_{p_0}$ is unconstructable'' $\Leftrightarrow$ (iii) ``$i=a_\alpha-u_\alpha+p_0+1$''.

\noindent Proof of Claim. For (ii)$\Rightarrow$(iii), note that  $S^\alpha_{p_0}$ is unconstructable if walking west from $\phi_\alpha^{-1}(i,j,k)$ will never reach $H^\alpha_{p_0}$. 
In this case, the $x$-coordinate of $\phi_\alpha^{-1}(i,j,k)$ is greater than the $x$-coordinate of the left endpoint of $H^\alpha_{p_0}$, that is, $i>a_\alpha-u_\alpha+p_0$. 
Since $\phi_\alpha^{-1}(i,j,k)$ is weakly above $H_{p_0+1}^\alpha$, 
the $x$-coordinate of $\phi_\alpha^{-1}(i,j,k)$ is no larger than the $x$-coordinate of the left endpoint of $H^\alpha_{p_0+1}$, that is, $i\le a_\alpha-u_\alpha+p_0+1$.  
So we must have $i= a_\alpha-u_\alpha+p_0+1$. 
It is obvious that (iii)$\Rightarrow$(ii)$\Rightarrow$(i). 
For (i)$\Rightarrow$(ii), consider the case  $S^\alpha_{p}$ is constructable but $S^\alpha_{p-1}$ is unconstructable for some $2\le p\le p_0$. 
Then the $x$-coordinate of $S_p^\alpha$ is $> a_\alpha-u_\alpha+p$, 
the $x$-coordinate of $S_{p+1}^\alpha$ is $> a_\alpha-u_\alpha+(p+1)$,
$\dots$, 
the $x$-coordinate of $S_{p_0}^\alpha$ is $i> a_\alpha-u_\alpha+p_0$. But then $S_{p_0}^\alpha$ is unconstructable, a contradiction to the assumption that $S^\alpha_{p}$ is constructable. So this case will never happen. This completes the proof of the Claim.

Assume the sequence of points in \eqref{eq:sequence S} are constructable. Then $T^\alpha_{p_0},\dots,T^\alpha_1$ are horizontal NW corners of $C$ and form a diagonal chain in $A_\alpha$.   
It is easy to see that:
\begin{equation}\label{eq:TonHV}
	\textrm{$S^\alpha_p$ and $T^\alpha_p$ are on %the subpath 
	$H^\alpha_p\cap V^\alpha_{e_p}$ for $1\le p\le p_0$, where $1\le e_1<e_2<\cdots<e_{p_0}\le q_0$.} 
\end{equation}

Similarly, let $q_1=\bar{r}'_{ijk}(C)$. The vertex $\phi_\beta^{-1}(i,j,k)$ is strictly to the right of $V_{q_1}^\beta$, weakly to the left of $V_{q_1+1}^\beta$, strictly below $V_{p_1}^\beta$, weakly above  $V_{p_1+1}^\beta$, for some $p_1$.
We construct points $S^\beta_{q_1}, T^\beta_{q_1},S^\beta_{q_1-1}, T^\beta_{q_1-1},\dots,S^\beta_{1},T^\beta_{1}$ in $A_\beta$ similarly to \eqref{eq:sequence S}  by starting from $\phi_\beta^{-1}(i,j,k)$ and walking north and west iteratively.
Like the above Claim, we have 
 ``some of those points is unconstructable'' $\Leftrightarrow$  ``$S^\beta_{q_1}$ is unconstructable'' $\Leftrightarrow$ $j=b_\beta-u_\beta+q_1+1$.
Assume those points are constructable. Then $T^\beta_{q_1},\dots,T^\beta_1$ are vertical NW corners of $C$ and form a diagonal chain in $A_\beta$, and 
\begin{equation}\label{eq:TonHV2}
\textrm{$S^\beta_q$ and $T^\beta_q$ are on %the subpath 
$H^\beta_{f_q}\cap V^\beta_{q}$ for $1\le q\le q_1$, where $1\le f_1<f_2<\cdots<f_{q_1}\le p_1$.} 
\end{equation}

We make the following easy observation whose proof is skipped. (See Figure \ref{fig:sequence of points} for an example of ${\rm Bound}_1$.) 
\smallskip

\noindent {\bf Observation} (a): $T^\alpha_{p_0},\dots,T^\alpha_1$ are essential horizontal NW corners of $C$ if $\phi_\alpha^{-1}(i,j,k)$ lies on the SW side of the following broken line in $A_\alpha$: 
$${\rm Bound}_1 := (1,(P_{1,1+v_\alpha-u_\alpha})_y)\to P_{1,1+v_\alpha-u_\alpha}\to P_{2,2+v_\alpha-u_\alpha}\to \cdots \to P_{u_\alpha v_\alpha}\to ((P_{u_\alpha v_\alpha})_x,b_\alpha).$$ 
\noindent {\bf Observation} (b): $T^\beta_{q_1},\dots,T^\beta_1$ are all essential vertical NW corners of $C$ if $\phi_\beta^{-1}(i,j,k)$ is to the NE of the following broken line in $A_\beta$:
$${\rm Bound}_2=((Q'_{1+v_\beta-u_\beta})_x,1)\to Q'_{1+v_\beta-u_\beta,1}\to Q'_{2+v_\beta-u_\beta,2}\to \cdots \to Q'_{v_\beta u_\beta}\to (a_\beta,(Q'_{v_\beta u_\beta})_y).$$

Also observe that 
$$\textrm{``$T^\alpha_{p_0},\dots, T^\alpha_{1}$ are essential or unconstructable'' $\Rightarrow$ $\bar{r}_{ijk}(C')\ge p_0$.}$$ 
Indeed, 
if they are essential, then by the assumption that $S$ contains all the essential NW corners of $C$, $\phi_\alpha^{-1}(S)$ must contain $T^\alpha_1,\dots, T^\alpha_{p_0}$, thus $\bar{r}_{ijk}(C')\ge p_0$;
if they are unconstructable, then $i=a_\alpha-u_\alpha+p_0+1$, and by definition,
$\bar{r}_{ijk}(C')=\max(r_{ijk},\min(i-1,u_{\alpha}-1-\min(a_\alpha-i,b_\alpha-j')))\ge
\min(i-1,u_{\alpha}-1-\min(a_\alpha-i,b_\alpha-j'))
\ge \min(i-1,u_{\alpha}-1-(a_\alpha-i))
= \min(a_\alpha-u_\alpha+p_0,p_0)=p_0$.
Similarly we have
$$\textrm{``$T^\beta_1,\dots, T^\beta_{q_1}$ are essential or unconstructable'' $\Rightarrow$ $\bar{r}'_{ijk}(C')\ge q_1$.}$$
To finish the proof of Proposition \ref{prop:C(S)>=C}, we discuss the following 4 cases separately. 

\smallskip

\noindent Case 1: every point of $T^\alpha_1,\dots, T^\alpha_{p_0}$ and $T^\beta_1,\dots,T^\beta_{q_1}$ is either essential or unconstructable. 
As we have seen,  $\bar{r}_{ijk}(C')\ge p_0=\bar{r}_{ijk}(C)$.
Together with \eqref{eq:s=s},  
we have $\bar{r}_{ijk}(C)+\bar{s}_{ijk}(C)\le \bar{r}_{ijk}(C')+\bar{s}_{ijk}(C')=u_\alpha-1$. 
Thus the equality $\bar{r}_{ijk}(C)+\bar{s}_{ijk}(C)=u_\alpha-1$ holds by Proposition \ref{prop:concurrent equiv}, and $\bar{r}_{ijk}(C')=p_0$. 
%and $\bar{r}_{ijk}(C') = p_0$. 
Similarly, $\bar{r}'_{ijk}(C)+\bar{s}'_{ijk}(C)= u_\beta-1$ and $\bar{r}'_{ijk}(C')=q_1$. Thus $(i,j,k)\in C$ by Proposition \ref{prop:concurrent equiv}, contradicting the assumption $(i,j,k)\notin C$. 

\smallskip

\noindent Case 2: $T^\beta_1,\dots,T^\beta_{q_1}$ are essential or unconstructable, but for some $1\le a\le p_0$, $T^\alpha_a$ is non-essential (so $T^\alpha_a=Q_{a,a+v_\alpha-u_\alpha}$). Then $\bar{r}'_{ijk}(C)+\bar{s}'_{ijk}(C)= u_\beta-1$ and $\bar{r}'_{ijk}(C')=q_1$ as we have shown in Case 1. If $\bar{r}_{ijk}(C)+\bar{s}_{ijk}(C)=u_\alpha-1$ then $(i,j,k)\in C$, contradicting our assumption. So $\bar{r}_{ijk}(C)+\bar{s}_{ijk}(C)=u_\alpha$. 
By Proposition \ref{prop:concurrent equiv}, 
 $(i,j) \in V^{\beta}_{q_1+1}|_k$, but $(i,j)\notin H^\alpha_p|_k$ for any $p$. So there are $p,q$ such that $\phi_\alpha^{-1}(i,j,k)$ is on a path $V^\alpha_q$, and strictly between the paths $H^\alpha_p$ and $H^\alpha_{p+1}$; therefore $\phi_\alpha^{-1}(i,j,k)$ lies in the interior of a vertical line segment $P_{p,q}Q_{p+1,q}$. 
By Observation (a), $\phi^{-1}_\alpha(i,j,k)$ is weakly on the NE side of ${\rm Bound}_1$. So the whole segment $P_{p,q}Q_{p+1,q}$ is weakly on the NE side of  ${\rm Bound}_1$. By the definition of  ${\rm Bound}_1$, we have $q\ge p+1+v_\alpha-u_\alpha$. By Lemma \ref{lem:no corner}, the segment $P_{p,q}Q_{p+1,q}$ has length 1, which contradicts the fact that $\phi_\alpha^{-1}(i,j,k)$ lies in the interior of $P_{p,q}Q_{p+1,q}$.

\smallskip

\noindent{Case 3}: $T^\alpha_1,\dots,T^\alpha_{p_0}$ are essential or unconstructable, but for some $1\le b\le q_1$, $T^\beta_b$ is non-essential. The proof is similar to Case 2.

\smallskip

\noindent{Case 4}: $T^\alpha_a=Q_{a,a+v_\alpha-u_\alpha}$ is non-essential for some $a$ such that $1\le a\le p_0$, and $T^\beta_b=P_{b+v_\beta-u_\beta,b}$ is non-essential for some $b$ such that $1\le b\le q_1$. Without loss of generality, assume $a$ and $b$ are the maximal choices. 
Define $D_1$ to be the number of horizontal paths in $A_\beta$ in those bocks above ${\rm Page}_k$, 
$D_2$ to be the number of vertical paths in $A_\alpha$ in those blocks to the left of ${\rm Page}_k$, that is, 
$$D_1=p_1-p_0=\sum_{r'_i<k} u_{\target(h_{r'_i})}, \quad D_2=q_0-q_1=\sum_{r_i<k} u_{\source(h_{r_i})}.$$
We also have
$H^\alpha_p|_k=H^\beta_{p+D_1}|_k$ for $1\le p\le u_\alpha$,
and
$V^\beta_q|_k=H^\alpha_{q+D_2}|_k$ for $1\le q\le u_\beta$.
Since $\phi_\alpha^{-1}(i,j,k)$ is on the weakly NE side of ${\rm Bound}_1$, $\phi_\beta^{-1}(i,j,k)$ is on the weakly SE side of ${\rm Bound}_2$, we have
\begin{equation}\label{eq:q0>=}
q_0\ge p_0+v_\alpha-u_\alpha, \quad p_1\ge q_1+v_\beta-u_\beta.
\end{equation}
so
$\sum_{r'_i<k} u_{\target(h_{r'_i})}+\sum_{r_i<k} u_{\source(h_{r_i})} = D_1+D_2=(p_1-p_0)+(q_0-q_1)\ge v_\alpha-u_\alpha+v_\beta-u_\beta.$ 
Since $\alpha={\target(h_{k})}$, $\beta={\source(h_{k})}$, $v_\alpha=\sum u_{\source(h_{r_i})}$,  $v_\beta=\sum u_{\target(h_{r'_i})}$, we have
$\sum_{r'_i\le k} u_{\target(h_{r'_i})}+\sum_{r_i\le k} u_{\source(h_{r_i})}\ge \sum u_{\source(h_{r_i})}+\sum u_{\target(h_{r'_i})},$ 
$\sum_{r'_i> k} u_{\target(h_{r'_i})}+\sum_{r_i> k} u_{\source(h_{r_i})}\le 0.$
Since $u_\gamma>0$ for any $\gamma$, there should be no summand at all in the above two sums. This implies: 
$r_s=r'_t=k$ (that is, ${\rm Page}_k$ is the rightmost block of $A_\alpha$, and is the bottommost block of $A_\beta$), 
and
% the inequalities in \eqref{eq:q0>=}   become equalities, that is,
$q_0 = p_0+v_\alpha-u_\alpha, \quad p_1 = q_1+v_\beta-u_\beta$, 
$D_1=p_1-p_0 = v_\beta-u_\alpha$, $D_2=q_0-q_1=v_\alpha-u_\beta$. 
Then $p_0+v_\alpha-u_\alpha=q_0=q_1+v_\alpha-u_\beta$, thus 
\begin{equation}\label{eq:p0-q1}
p_0-q_1=u_\alpha-u_\beta.
\end{equation}

%Next, we claim that either $T^\alpha_a$ is on page $k$ of $A_\alpha$, or $T^\beta_b$ is on page $k$ of $A_\beta$.

Without loss of generality, assume $p_0-a\le q_1-b$. Let $b'=a+q_1-p_0$, then $b\le b'\le q_1$ and $p_0-a=q_1-b'$. 
We claim that the following holds for $d=0,\dots,p_0-a$:

(i) the subpath $P_{p_0-d,q_0-d}\rightsquigarrow Q_{p_0-d,q_0-d}$ is in the last block of $A_\alpha$, and

(ii) both $T^\alpha_{p_0-d}$ and $\phi_\alpha^{-1}\phi_\beta(T^\beta_{q_1-d})$ are on  $P_{p_0-d,q_0-d}\rightsquigarrow Q_{p_0-d,q_0-d}$, and 

(iii) the point $T^\alpha_{p_0-d}$ is weakly SW of the point $\phi_\alpha^{-1}\phi_\beta(T^\beta_{q_1-d})$. 

To show (i), note that 
$q_0-d=(p_0+v_\alpha-u_\alpha)-d\ge v_\alpha-u_\alpha+a
= v_\alpha-u_\alpha+(p_0-q_1+b')\stackrel{\eqref{eq:p0-q1}}{=}
 v_\alpha-u_\alpha+(u_\alpha-u_\beta+b')=v_\alpha-u_\beta+b'>v_\alpha-u_\beta$. %(because $b'\ge b\ge1$). 
 Then (i) must hold because $A_\alpha$ has $v_\alpha$ vertical paths, $u_\beta$ of which are in the last block.
 Therefore  the subpath $P_{p_0-d,q_0-d}\rightsquigarrow Q_{p_0-d,q_0-d}$, which is contained in the $(q_0-d)$-th vertical path $V^\alpha_{q_0-d}$, is in the last block. 

To show (ii): since $T^\alpha_a=Q_{a,a+v_\alpha-u_\alpha}$, we have $e_a=a+v_\alpha-u_\alpha$, where $e_a$ is  defined in \eqref{eq:TonHV}. Since $a+v_\alpha-u_\alpha=e_a<e_{a+1}<\cdots<e_{p_0}\le q_0=p_0+v_\alpha-u_\alpha$,  for $a\le p\le p_0$ we must have $e_p=p+v_\alpha-u_\alpha$, and $T^\alpha_{p}$ must lie on 
$P_{p,p+v_\alpha-u_\alpha}\rightsquigarrow Q_{p,q+v_\alpha-u_\alpha}$,
equivalently, 
$T^\alpha_{p_0-d}$ lies on  $P_{p_0-d,q_0-d}\rightsquigarrow Q_{p_0-d,q_0-d}$ for $d=0,\dots,p_0-a$.  %This proves half of (ii). 
Similarly, recall that $f_q$ is  defined in \eqref{eq:TonHV2}; for $b\le q\le q_1$, we must have $f_q=q+v_\beta-u_\beta$, and $T^\beta_q$ must lie on 
$P'_{q+v_\beta-u_\beta,q}\rightsquigarrow Q'_{q+v_\beta-u_\beta,q}$. 

Now consider those $q$ in the range $b'\le q\le q_1$. 
Note that $H^\beta_{q+v_\beta-u_\beta}$ is in the last block of $A_\beta$ because 
$q+v_\beta-u_\beta\ge b'+v_\beta-u_\beta = (a+q_1-p_0)+v_\beta-u_\beta \stackrel{\eqref{eq:p0-q1}}{=}
(a+u_\beta-u_\alpha)+v_\beta-u_\beta=a+v_\beta-u_\alpha>v_\beta-u_\alpha=D_1$.
So $P'_{q+v_\beta-u_\beta,q}\rightsquigarrow Q'_{q+v_\beta-u_\beta,q}$ is in the last block of $A_\beta$. 
Apply $\phi_\alpha^{-1}\phi_\beta$, we see that
$\phi_\alpha^{-1}\phi_\beta(T^\beta_q)$ is on 
$\phi_\alpha^{-1}\phi_\beta(P'_{q+v_\beta-u_\beta,q}\rightsquigarrow Q'_{q+v_\beta-u_\beta,q})
=(P_{q+v_\beta-u_\beta-D_1,q+D_2}\rightsquigarrow Q_{q+v_\beta-u_\beta-D_1,q+D_2})
$.
Let $d=q_1-q$, then the indices in the last expression become
$(q+v_\beta-u_\beta-D_1,q+D_2)
=(q_1-d+v_\beta-u_\beta-(v_\beta-u_\alpha),q_1-d+v_\alpha-u_\beta)
=(p_0-u_\alpha+u_\beta-d+v_\beta-u_\beta-(v_\beta-u_\alpha),q_0-v_\alpha+u_\beta-d+v_\alpha-u_\beta)
=(p_0-d,q_0-d)
$. Thus
 $\phi_\alpha^{-1}\phi_\beta(T^\beta_{q_1-d})$ is indeed on  $P_{p_0-d,q_0-d}\rightsquigarrow Q_{p_0-d,q_0-d}$. This proves (ii). 
 
To show (iii) by induction on $d$: for $d=0$, since $S^\alpha_{p_0}$ is to the west of $\phi_\alpha^{-1}(i,j,k)$, 
$\phi_\alpha^{-1}\phi_\beta(S^\beta_{q_1})$ is to the north of $\phi_\alpha^{-1}(i,j,k)$, we see that
$S^\alpha_{p_0}$ is to the SW of $\phi_\alpha^{-1}\phi_\beta(S^\beta_{q_1})$, and therefore 
$T^\alpha_{p_0}$ is weakly SW to $\phi_\alpha^{-1}\phi_\beta(T^\beta_{q_1})$. 
Now assume $T^\alpha_{p_0-(d-1)}$ is weakly SW of $\phi_\alpha^{-1}\phi_\beta(T^\beta_{q_1-(d-1)})$, similar to 
the above argument for $d=0$, we see that
$S^\alpha_{p_0-d}$ is SW of $\phi_\alpha^{-1}\phi_\beta(S^\beta_{q_1-d})$, and therefore 
$T^\alpha_{p_0-d}$ is weakly SW of $\phi_\alpha^{-1}\phi_\beta(T^\beta_{q_1-d})$. 

%This completes the proof of (i)--(iii).

Now using (i)--(iii) for $d=p_0-a$, we see that the four points 
$$(S,T,T',S')=(S^\alpha_a, T^\alpha_a,   \phi_\alpha^{-1}\phi_\beta(T^\beta_{a+q_1-p_0}), \phi_\alpha^{-1}\phi_\beta(T^\beta_{a+q_1-p_0}) )$$ 
are all on the path $P_{a,a+q_0-p_0}\rightsquigarrow Q_{a,a+q_0-p_0}$, and $S$ is to the south of $T$, $T$ is weakly SW of $T'$, $T'$ is to the west of $S'$. But $T=Q_{a,a+q_0-p_0}$ is the NE endpoint of the above path, so $T'=T$ and there is no place for $S'$, a contradiction. (See Figure \ref{fig:Case 4}.)
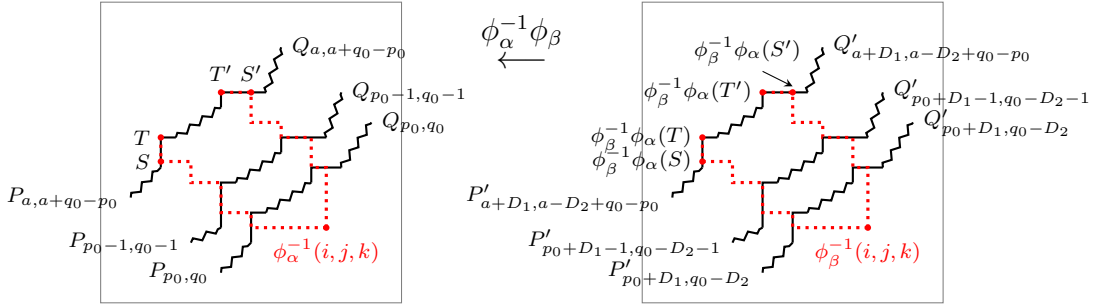
\begin{figure}[h]
\begin{center}
\begin{tikzpicture}[scale=.4]

\draw  [gray] (0,0) rectangle (10,10);

\draw (4,1) node [left] {\tiny $P_{p_0,q_0}$};
\draw [thick, decorate,decoration={zigzag,segment length=2mm, amplitude=0.4mm}] (4,1) -- (5,2);
\draw [thick] (5,2) -- (5,3)--(5.5,3);
\draw [thick, decorate,decoration={zigzag,segment length=2mm, amplitude=0.4mm}] (5.5,3) -- (7,4);
\draw [thick] (7,4) -- (7,4.5)--(8,4.5);
\draw [thick, decorate,decoration={zigzag,segment length=2mm, amplitude=0.4mm}] (8,4.5) -- (9,6);
\draw (9,6) node [right] {\tiny $Q_{p_0,q_0}$};

\draw (3,2) node [left] {\tiny $P_{p_0-1,q_0-1}$};
\draw [thick, decorate,decoration={zigzag,segment length=2mm, amplitude=0.4mm}] (3,2) -- (4,2.5);
\draw [thick] (4,2.5) -- (4,4)--(4.5,4);
\draw [thick, decorate,decoration={zigzag,segment length=2mm, amplitude=0.4mm}] (4.5,4) -- (6,5);
\draw [thick] (6,5) -- (6,5.5)--(7.5,5.5);
\draw [thick, decorate,decoration={zigzag,segment length=2mm, amplitude=0.4mm}] (7.5,5.5) -- (8,7);
\draw (8,7) node [right] {\tiny $Q_{p_0-1,q_0-1}$};

\draw (1,3.5) node [left] {\tiny $P_{a,a+q_0-p_0}$};
\draw [thick, decorate,decoration={zigzag,segment length=2mm, amplitude=0.4mm}] (1,3.5) -- (2,4.5);
\draw [thick] (2,4.5) -- (2,5.5)--(2.5,5.5);
\draw [thick, decorate,decoration={zigzag,segment length=2mm, amplitude=0.4mm}] (2.5,5.5) -- (4,6.5);
\draw [thick] (4,6.5) -- (4,7)--(5.5,7);
\draw [thick, decorate,decoration={zigzag,segment length=2mm, amplitude=0.4mm}] (5.5,7) -- (6,8.5);
\draw (6,8.5) node [right] {\tiny $Q_{a,a+q_0-p_0}$};

%draw the sequence of points
\fill[red] (7.5,2.5) circle(3pt) ;
\node at (7.5,2.5)[anchor=north]{\color{red}\tiny$\phi_\alpha^{-1}(i,j,k)$};
\draw[red, very thick, dotted](7.5,2.5)--(5,2.5)--(5,3)--(4,3)--(4,4)--(3,4)--(3,4.7)--(2,4.7)--(2,5.5); 
\draw[red, very thick, dotted](7.5,2.5)--(7.5,4.5)--(7,4.5)--(7,5.5)--(6,5.5)--(6,6)--(5,6)--(5,7)--(4,7); 

\fill[red] (2,4.7) circle(3pt)  (2,5.5) circle(3pt)  (5,7) circle(3pt)  (4,7) circle(3pt) ;
\draw (2,4.7) node [left] {\tiny $S$};
\draw (2,5.5) node [left] {\tiny $T$};
\draw (4,7) node [above] {\tiny $T'$};
\draw (5,7) node [above] {\tiny $S'$};

\node at (14, 9) {$\phi_\alpha^{-1}\phi_\beta$};
\node at (14, 8) {$\longleftarrow$};

\begin{scope}[shift={(18,0)}]
\draw  [gray] (0,0) rectangle (10,10);

\draw (4,1) node [left] {\tiny $P'_{p_0+D_1,q_0-D_2}$};
\draw [thick, decorate,decoration={zigzag,segment length=2mm, amplitude=0.4mm}] (4,1) -- (5,2);
\draw [thick] (5,2) -- (5,3)--(5.5,3);
\draw [thick, decorate,decoration={zigzag,segment length=2mm, amplitude=0.4mm}] (5.5,3) -- (7,4);
\draw [thick] (7,4) -- (7,4.5)--(8,4.5);
\draw [thick, decorate,decoration={zigzag,segment length=2mm, amplitude=0.4mm}] (8,4.5) -- (9,6);
\draw (9,6) node [right] {\tiny $Q'_{p_0+D_1,q_0-D_2}$};

\draw (3,2) node [left] {\tiny $P'_{p_0+D_1-1,q_0-D_2-1}$};
\draw [thick, decorate,decoration={zigzag,segment length=2mm, amplitude=0.4mm}] (3,2) -- (4,2.5);
\draw [thick] (4,2.5) -- (4,4)--(4.5,4);
\draw [thick, decorate,decoration={zigzag,segment length=2mm, amplitude=0.4mm}] (4.5,4) -- (6,5);
\draw [thick] (6,5) -- (6,5.5)--(7.5,5.5);
\draw [thick, decorate,decoration={zigzag,segment length=2mm, amplitude=0.4mm}] (7.5,5.5) -- (8,7);
\draw (8,7) node [right] {\tiny $Q'_{p_0+D_1-1,q_0-D_2-1}$};

\draw (1,3.5) node [left] {\tiny $P'_{a+D_1,a-D_2+q_0-p_0}$};
\draw [thick, decorate,decoration={zigzag,segment length=2mm, amplitude=0.4mm}] (1,3.5) -- (2,4.5);
\draw [thick] (2,4.5) -- (2,5.5)--(2.5,5.5);
\draw [thick, decorate,decoration={zigzag,segment length=2mm, amplitude=0.4mm}] (2.5,5.5) -- (4,6.5);
\draw [thick] (4,6.5) -- (4,7)--(5.5,7);
\draw [thick, decorate,decoration={zigzag,segment length=2mm, amplitude=0.4mm}] (5.5,7) -- (6,8.5);
\draw (6,8.5) node [right] {\tiny $Q'_{a+D_1,a-D_2+q_0-p_0}$};

%draw the sequence of points
\fill[red] (7.5,2.5) circle(3pt) ;
\node at (7.5,2.5)[anchor=north]{\color{red}\tiny$\phi_\beta^{-1}(i,j,k)$};
\draw[red, very thick, dotted](7.5,2.5)--(5,2.5)--(5,3)--(4,3)--(4,4)--(3,4)--(3,4.7)--(2,4.7)--(2,5.5); 
\draw[red, very thick, dotted](7.5,2.5)--(7.5,4.5)--(7,4.5)--(7,5.5)--(6,5.5)--(6,6)--(5,6)--(5,7)--(4,7); 

\fill[red] (2,4.7) circle(3pt)  (2,5.5) circle(3pt)  (5,7) circle(3pt)  (4,7) circle(3pt) ;
\draw (2,4.7) node [left] {\tiny $\phi_\beta^{-1}\phi_\alpha(S)$};
\draw (2,5.5) node [left] {\tiny $\phi_\beta^{-1}\phi_\alpha(T)$};
\draw (4,7) node [left] {\tiny $\phi_\beta^{-1}\phi_\alpha(T')$};
\draw (3.5,7.5) node [above] {\tiny $\phi_\beta^{-1}\phi_\alpha(S')$};\draw [-stealth](4,7.7) -- (5,7.2);

\end{scope}
\end{tikzpicture}       
\end{center}
\caption{Case 4. Left: last block of $A_\alpha$; Right: last block of in $A_\beta$. The points on the right side coincide with the points on the left side under the map $\phi_\alpha^{-1}\phi_\beta$.}
\label{fig:Case 4}
\end{figure}
So in all four cases we get the desired contradiction. This completes the proof of Proposition \ref{prop:C(S)>=C}.
\end{proof}

\subsection{Chute moves}

Recall the classical chute moves of pipe dreams (for example, see \cite[\S16.1]{MS}), where a ``$+$'' is moved from the NE corner to the SW corner, as shown in Figure \ref{fig: classical chute move}. 
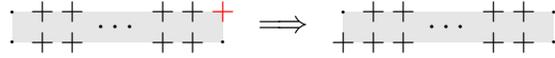
\begin{figure}[ht]
\begin{tikzpicture}[scale=.4]
    \begin{scope}
      \fill[gray!20](0,0)--(7,0)--(7,1)--(0,1)--(0,0);
      \node at (0,1) {.}; \node at (1,1){+}; \node at (2,1){+}; \node at (5,1){+}; \node at (6,1){+};\node[red]at (7,1) {+};
      \node at (0,0) {.};\node at (1,0){+};\node at (2,0){+};\node at (5,0){+};\node at (6,0){+};\node at (7,0) {.};
      \node at (3.5,0.5) {$\cdots$};
      \node at (9, 0.5) {$\Longrightarrow$};
      \end{scope}
    \begin{scope}[shift={(11,0)}]
      \fill[gray!20](0,0)--(7,0)--(7,1)--(0,1)--(0,0);
      \node at (0,1) {.}; \node at (1,1){+}; \node at (2,1){+}; \node at (5,1){+}; \node at (6,1){+};\node at (7,1) {.};
      \node at (0,0) {+};\node at (1,0){+};\node at (2,0){+};\node at (5,0){+};\node at (6,0){+};\node at (7,0) {.};
      \node at (3.5,0.5) {$\cdots$};
      \end{scope}
 \end{tikzpicture}
 \caption{Classical chute move.}
 \label{fig: classical chute move}
 \end{figure}
Chute moves can be used to generate all reduced pipe dreams.
\iffalse
\begin{example}
{\small
Pipe dreams that corresponds to facets of the Stanley-Reisner complex of  $I_2^{(3,3)}$.
 (dimension = \# of dots in a pipe dream, multiplicity = \# of pipe dreams) The left is the maximum under $>_T$; the right is the minimum. A chute move decreases an element under $>_T$}. 
\end{example}

\begin{figure}[h]\setlength{\unitlength}{1.2pt}
\begin{picture}(40,30)
\put(0,0){$\cdot$}\put(10,0){$\cdot$}\put(20,0){$\cdot$}
\put(0,10){$+$}\put(10,10){$+$}\put(20,10){$\cdot$}
\put(0,20){$+$}\put(10,20){$+$}\put(20,20){$\cdot$}
\end{picture}
\begin{picture}(40,30)
\put(0,0){$\cdot$}\put(10,0){$\cdot$}\put(20,0){$+$}
\put(0,10){$+$}\put(10,10){$\cdot$}\put(20,10){$\cdot$}
\put(0,20){$+$}\put(10,20){$+$}\put(20,20){$\cdot$}
\end{picture}
\begin{picture}(40,30)
\put(0,0){$\cdot$}\put(10,0){$+$}\put(20,0){$+$}
\put(0,10){$\cdot$}\put(10,10){$\cdot$}\put(20,10){$\cdot$}
\put(0,20){$+$}\put(10,20){$+$}\put(20,20){$\cdot$}
\end{picture}
\begin{picture}(40,30)
\put(0,0){$\cdot$}\put(10,0){$\cdot$}\put(20,0){$+$}
\put(0,10){$+$}\put(10,10){$\cdot$}\put(20,10){$+$}
\put(0,20){$+$}\put(10,20){$\cdot$}\put(20,20){$\cdot$}
\end{picture}
\begin{picture}(40,30)
\put(0,0){$\cdot$}\put(10,0){$+$}\put(20,0){$+$}
\put(0,10){$\cdot$}\put(10,10){$\cdot$}\put(20,10){$+$}
\put(0,20){$+$}\put(10,20){$\cdot$}\put(20,20){$\cdot$}
\end{picture}
\begin{picture}(40,30)
\put(0,0){$\cdot$}\put(10,0){$+$}\put(20,0){$+$}
\put(0,10){$\cdot$}\put(10,10){$+$}\put(20,10){$+$}
\put(0,20){$\cdot$}\put(10,20){$\cdot$}\put(20,20){$\cdot$}
\end{picture}
\end{figure}
\fi
We shall modify the definition of chute move described in \cite[Definition 16.6]{MS}, and use it to generate all concurrent vertex maps from a special one (see Proposition \ref{prop:PDC to Cmax}). 
\begin{definition}\label{df:chute}
Let $C\subseteq L$. We denote the points in $C$ by ``$\cdot$''  and points in $L\setminus C$ by ``$+$''.
For $\alpha\in V_{\rm target}$, a {\bf horizontal chutable rectangle} is a $2\times r$ block inside $A_\alpha$ consisting of  ``$+$'' and ``$\cdot$'' such that $r\ge 2$, and the only ``$\cdot$'' in the block are its NE, SE, SW corners, that is, there is a rectangle $[i,i+1]_\mathbb{R}\times[j,j+r-1]_\mathbb{R}\subseteq [1, a_\alpha]_\mathbb{R}\times[1, b_\alpha]_\mathbb{R}$ such that 
$$\phi_\alpha^{-1}(C)\cap \big( [i,i+1]_\mathbb{R}\times[j,j+r-1]_\mathbb{R}\big) =\{(i+1,j), (i,j+r-1), (i+1,j+r-1)\}.$$
Applying a {\bf horizontal chute move} to $C$ at $(i+1,j+r-1)$ (or $\phi_\alpha^{-1}(i+1,j+r-1)$) is accomplished by placing a ``$\cdot$'' in the NW corner of a horizontal chutable rectangle and removing the ``$\cdot$'' from the SE corner of that rectangle; in other words, 
$$C'= C  \cup \{\phi_\alpha((i,j)\}  \setminus \{\phi_\alpha(i+1,j+r-1)\}.$$
where $i,j,r$ are as above.
We denote the above horizontal chute move by $C\to C'$.  Reversely, we say that $C'\dashrightarrow C$ is an {\bf inverse horizontal chute move}. %
We say that $(i+1,j+r-1)$ (or $\phi_\alpha(i+1,j+r-1)$) is {\bf horizontal chute movable}. 
Similarly, we can define a {\bf vertical chutable rectangle}, a {\bf vertical chute move}, an {\bf inverse vertical chute move}, and {\bf vertical chute movable}. (See Figure \ref{fig: horizontal  chute move}.)
\begin{figure}[ht]
\begin{center}
\begin{tikzpicture}[scale=.5]
    \begin{scope}
      \fill[gray!20](0,0)--(3,0)--(3,1)--(0,1)--(0,0);
      \node at (0,0){.};
      \node [red] at (0,1) {+};
      \node at (1,0) {+};
      \node at (1,1) {+};
      \node at (2,0) {+};
      \node at (2,1) {+};
      \node at (3,0) {.};
      \node at (3,1) {.};
      \draw [red, <->,>=stealth] (0.2, 0.8) -- (2.8,0.2);
      \node at (4.5, 0.5) {$\longrightarrow$};
      \end{scope}
    \begin{scope}[shift={(6,0)}]
      \fill[gray!20](0,0)--(3,0)--(3,1)--(0,1)--(0,0);
      \node at (0,0){.};
      \node at (0,1) {.};
      \node at (1,0) {+};
      \node at (1,1) {+};
      \node at (2,0) {+};
      \node at (2,1) {+};
      \node at (3,0) {+};
      \node at (3,1) {.};
      \end{scope}
  \begin{scope}[shift={(14,0)}]
    \begin{scope}
      \fill[gray!20](0,0)--(0,3)--(1,3)--(1,0)--(0,0);
      \node at (0,0){.};
      \node  at (1,0) {.};
      \node at (0,1) {+};
      \node at (1,1) {+};
      \node at (0,2) {+};
      \node at (1,2) {+};
      \node [red] at (0,3) {+};
      \node at (1,3) {.};
      \draw [red, <->,>=stealth] (0.8, 0.2) -- (0.2,2.8);
      \node at (2.5, 0.5) {$\longrightarrow$};
      \end{scope}
    \begin{scope}[shift={(4,0)}]
      \fill[gray!20](0,0)--(0,3)--(1,3)--(1,0)--(0,0);
      \node at (0,0){.};
      \node at (1,0) {+};
      \node at (0,1) {+};
      \node at (1,1) {+};
      \node at (0,2) {+};
      \node at (1,2) {+};
      \node at (0,3) {.};
      \node at (1,3) {.};
      \end{scope}
  \end{scope}    
 \end{tikzpicture}
 \end{center}
 \caption{Left: a horizontal chutable rectangle and a horizontal chute move; Right:  a vertical chutable rectangle and a vertical chute move.}
 \label{fig: horizontal  chute move}
 \end{figure}
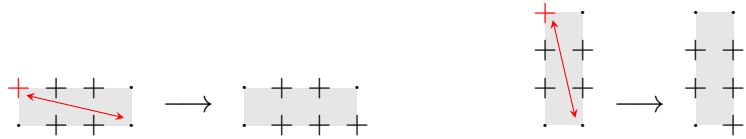

%Similarly, for For $\beta\in V_{\rm source}$, a {\bf vertical chutable rectangle} is a $r\times 2$ block inside $A_\beta$ consisting of ``$+$'' and ``$\cdot$'' such that $r\ge 2$, and the only ``$\cdot$'' in the block are its NE, SE, SW corners, that is, there is a rectangle $[i,i+r-1]\times[j,j+1]$ inside $[1\times a_\beta]\times[1\times b_\beta]$ such that 
%$$\phi_\beta^{-1}(C)\cap \big( [i,i+r-1]\times[j,j+1]\big) =\{(i+r-1,j), (i,j+1), (i+r-1,j+1)\}.$$
%Applying a {\bf vertical chute move} to $C$ at $(i+r-1,j+1)$ (or $\phi_\beta^{-1}(i+r-1,j+1)$) is accomplished by placing a ``$\cdot$'' in the NW corner of a vertical  chutable rectangle and removing the ``$\cdot$'' from the SE corner of that rectangle; in other words, we obtain $C'$ satisfying
%$$\phi^{-1}_\beta(C')=\big( \phi^{-1}_\beta(C)\setminus \{(i+r-1,j+1)\} \big) \cup\{(i,j)\}$$ where $i,j,r$ are as above.
%We denote the above vertical chute move by $C\to C'$.  Reversely, we say that $C'\dashrightarrow C$ is an {\bf inverse vertical chute move}.
%We say that $(i+r-1,j+1)$ (or $\phi_\beta(i+r-1,j+1)$) is {\bf vertical chute movable}. 

\end{definition}

\begin{remark}
(i) Note that if $P\in C$ is chute movable, then there is no ambiguity in determining which chute move to apply. Indeed, if both horizontal and vertical chute moves can be applied to $C$ at $P$, then the chutable rectangle is a $2\times 2$ block and the horizontal chute move coincides with the vertical one.

(ii) A horizontal (resp.~vertical) move defined here is analog to a chute move (resp.~ladder move) defined by Bergeron and Sara Billey; see \cite{Bergeron-Billey} and \cite[Def 3.7.2]{Knutson-Miller}.

(iii) Under the order ``$<_T$'', a (horizontal or vertical) chute move decreases an element, and
an inverse (horizontal or vertical) chute move increases an element.
\end{remark}

\begin{lemma}\label{lemma:chute move keep compatibility}
A (horizontal or vertical) chute move sends a $\uu$-compatible subset of $L$ to a $\uu$-compatible subset of $L$. In particular, it sends a concurrent vertex map $C$ to a concurrent vertex map $C'$ where $C' <_T C$. The analog holds for an inverse (horizontal or vertical) chute move, except that $C'$ satisfies $C'>_T C$ instead.
%Similarly, an inverse horizontal (resp. vertical) chute move sends a $\uu$-compatible subset of $L$ to a $\uu$-compatible subset of $L$. In particular, it sends a concurrent vertex map $C$ to a concurrent vertex map $C'$ where $C' >_T C$.
\end{lemma}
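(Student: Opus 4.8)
The plan is to reduce everything to the statement about $\uu$-compatibility and then invoke Proposition~\ref{prop:concurrent equiv} together with the cardinality characterization~\eqref{eq:cardinality of max C}. First I would check the ``$\uu$-compatible $\to$ $\uu$-compatible'' claim for a horizontal chute move $C\to C'$, where (in the notation of Definition~\ref{df:chute}) $C'=C\cup\{\phi_\alpha(i,j)\}\setminus\{\phi_\alpha(i+1,j+r-1)\}$ for a horizontal chutable rectangle in $A_\alpha$. For any $\gamma\in{\rm V}_\mathcal{Q}$ with $\gamma\ne\alpha$, we have $(C')^\gamma=C^\gamma$ on every page except ${\rm Page}_k$ for arrows $h_k$ incident to $\alpha$, and even there only the single coordinate changes; a diagonal chain in $(C')^\gamma$ of size $u_\gamma+1$ would have to use $\phi_\alpha(i,j)$ (otherwise it lies in $C^\gamma$), but then replacing that point by $\phi_\alpha(i+1,j+r-1)$ — which is weakly SE of it and also in $C$ — yields a diagonal chain of the same size in $C^\gamma$, contradicting $\uu$-compatibility of $C$. (Here one uses that $i+1>i$ and $j+r-1\ge j+1>j$, so the swap preserves the strictly-increasing-in-both-coordinates condition when the chain is extended on the SE side, and one must check the two sub-cases according to whether the chain already contains $\phi_\alpha(i+1,j+r-1)$ or a point strictly SE of it.) For $\gamma=\alpha$ itself the argument is the cleanest: any diagonal chain of $(C')^\alpha$ containing $(i,j)$ has at most $(\text{points NW of }(i,j)) + 1 + (\text{points SE of }(i,j))$ entries, and since $(i+1,j+r-1)\in C$ lies SE of $(i,j)$, swapping it in gives a chain of $C^\alpha$ of size $\ge$ that of the original, again a contradiction. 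The vertical case is the transpose.

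Next I would upgrade this to the statement about concurrent vertex maps. If $C$ is a concurrent vertex map then by Proposition~\ref{prop:concurrent equiv}(iii) we have $|C|=N_{\mathcal{Q},\mm,\uu}$; a chute move removes one point and adds one point, so $|C'|=|C|=N_{\mathcal{Q},\mm,\uu}$, and since $C'$ is $\uu$-compatible, Proposition~\ref{prop:concurrent equiv}(iii)$\Rightarrow$(i) shows $C'$ is again a concurrent vertex map. The inequality $C'<_T C$ is immediate from the observation already recorded in Remark~(iii): a horizontal (resp.\ vertical) chute move replaces $\phi_\alpha(i+1,j+r-1)$ by $\phi_\alpha(i,j)$, and $(i,j)$ is weakly NW of $(i+1,j+r-1)$ with $(i,j)\ne(i+1,j+r-1)$, so in the total order $<_T$ on $L$ the added point is strictly smaller than the removed point, while every point of $C$ strictly $>_T$ the removed point is unchanged; hence the largest point where $C$ and $C'$ differ lies in $C$, giving $C'<_T C$. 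For the inverse chute move, the same computation applies with the roles of $C$ and $C'$ exchanged, so $C'>_T C$; and the $\uu$-compatibility direction needs nothing new because an inverse chute move $C'\dashrightarrow C$ is by definition the statement that $C$ is obtained from $C'$ by a forward chute move, so the first paragraph already covers it in reverse.

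I expect the only genuinely nontrivial point to be the diagonal-chain bookkeeping in the first paragraph for $\gamma\ne\alpha$: one must be careful that the coordinate change happens simultaneously on \emph{all} pages of the arrows incident to $\alpha$ when viewed inside $A_\beta$ for a source vertex $\beta$ adjacent to $\alpha$, and confirm that the swap-in point stays strictly SE of everything it needs to in the relevant rectangular array $[1,a_\gamma]_\mathbb{Z}\times[1,b_\gamma]_\mathbb{Z}$ under the embedding $\phi_\gamma^{-1}$. Since $\phi_\gamma^{-1}$ is an isometric embedding of each page that preserves both coordinate orders, a diagonal chain in $(C')^\gamma$ restricted to the affected page is a diagonal chain in the array, and the monotone swap $(i,j)\leftrightarrow(i+1,j+r-1)$ on that page is again monotone after applying $\phi_\gamma^{-1}$; so the bound goes through. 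Everything else is a direct appeal to Proposition~\ref{prop:concurrent equiv} and Remark~(iii), so no further obstacle is anticipated.
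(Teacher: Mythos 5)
The central swap argument does not go through as stated. For a forward horizontal chute move $C\to C'$ you replace the added point $\phi_\alpha(i,j)$ in a size-$(u_\gamma+1)$ chain $D\subseteq (C')^\gamma$ by the removed SE corner $\phi_\alpha(i+1,j+r-1)$. This requires the successor $P_{\ell+1}$ of $\phi_\gamma^{-1}\phi_\alpha(i,j)$ in $D$ to lie strictly SE of $\phi_\gamma^{-1}\phi_\alpha(i+1,j+r-1)$, but nothing forces that. For instance $P_{\ell+1}$ may lie on row $i+1$ with column $>j+r-1$, or on a row $>i+1$ with column $\le j+r-1$; either point is outside the chutable rectangle and hence a perfectly admissible dot of $C'$, yet is not strictly SE of $(i+1,j+r-1)$, so the replacement fails to produce a diagonal chain. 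Your parenthetical note indicates you saw the constraint, but you never say what to do when it fails, and the $\gamma=\alpha$ version of the argument has exactly the same defect. The paper's proof avoids the SE corner entirely: it swaps in one of the other two rectangle corners, $\phi_\alpha(i,j+r-1)$ or $\phi_\alpha(i+1,j)$, both of which lie in $C$, and shows that at least one of these two swaps always succeeds; if neither does, then the adjacent chain element is forced into the interior of the chutable rectangle, which is all $+$'s, a contradiction. That two-target case analysis is the real content of the lemma and is missing from your argument.

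Two further issues. When $\gamma\ne\alpha$ the candidate replacement corner may fall in a different block of $A_\gamma$ or not lie in $A_\gamma$ at all; the paper's Case~2 shows that in this situation the page structure forces one specific swap to work because the visible part of the rectangle's row lies inside a single page and its interior consists of $+$'s. Your appeal to $\phi_\gamma^{-1}$ being an isometric embedding of each page does not engage with this at all. Separately, the claim that the inverse chute move ``needs nothing new because \dots the first paragraph already covers it in reverse'' is logically backward: the forward argument proves ``$C$ $\uu$-compatible $\Rightarrow$ $C'$ $\uu$-compatible,'' whereas the inverse direction needs the converse ``$C'$ $\uu$-compatible $\Rightarrow$ $C$ $\uu$-compatible,'' which is a different implication requiring its own (similar but not identical) case analysis. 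The remaining parts of your proposal --- the reduction to $\uu$-compatibility, the cardinality argument via Proposition~\ref{prop:concurrent equiv}, and the $<_T$ comparison --- are correct and match the paper.
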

\begin{proof}
We only prove that if an inverse horizontal chute move sends a $\uu$-compatible subset $C$ to $C'$, then $C'$ is $\uu$-compatible; the  rest can be proved similarly. 
Let $\alpha,i,j,r$ be defined in Definition \ref{df:chute}. 
%We claim that $C'$ is $\uu$-compatible.
We assume by contradiction that there exists $\gamma\in {\rm V}_\mathcal{Q}$ such that ${C'}^\gamma$ contains a diagonal chain $D= \{ P_1,\dots,P_{u_\gamma+1}\}$. 
Since $D$ cannot be a diagonal chain in $C^\gamma$,  the point $\phi_\gamma^{-1}\phi_\alpha(i+1,j+r-1)$  must coincide with a point $P_\ell$ in $D$. 
Assume $P_\ell$ is in ${\rm Page}_k$ and the arrow $h_k$ is $\beta\to\alpha$. 
Define 
$P''_\ell=\phi_\gamma^{-1}\phi_\alpha(i,j+r-1)$,
and define (if possible)
$P'_\ell=\phi_\gamma^{-1}\phi_\alpha(i+1,j)$, 
$P'''_\ell=\phi_\gamma^{-1}\phi_\alpha(i,j)$.
We claim that either $D\cup \{P'_\ell\} \setminus \{P_\ell\}$ or $D\cup \{P''_\ell\} \setminus \{P_\ell\}$ is a diagonal chain in $C$ of size $(u_\gamma+1)$, contradicting the assumption that $C$ is a concurrent vertex map. We prove the claim by considering the following two cases separately.
\begin{figure}[ht]
\begin{tikzpicture}[scale=.35]
    \begin{scope}
      \fill[gray!20](0,0)--(7,0)--(7,1)--(0,1)--(0,0);
      \node at (0,1) {+}; \node at (1,1){+}; \node at (2,1){+}; \node at (5,1){+}; \node at (6,1){+};\node at (7,1) {.};
      \node at (0,0) {.};\node at (1,0){+};\node at (2,0){+};\node at (5,0){+};\node at (6,0){+};\node at (7,0) {.};
      \node at (3.5,0.5) {$\cdots$};
      \node [left] at (-.2,1.3) {$P'''_\ell$};
      \node [left] at (-.2,-.3) {$P'_\ell$};
      \node [right] at (7,1.3) {$P''_\ell$};
      \node [right] at (7,-.3) {$P_\ell$};
      \end{scope}
    \begin{scope}[shift={(14,-2)}]
      \draw  [gray] (-.5,-1.5) rectangle (7.5,2.5);
      \draw  [gray] (-.5,2.5) rectangle (7.5,7);
      \fill[gray!20](3,4)--(7.5,4)--(7.5,5)--(3,5)--(3,4);
      \fill[gray!20](-.5,0)--(1,0)--(1,1)--(-.5,1)--(-.5,0);
      \node at (3,5) {+}; \node at (4,5){+}; \node at (5,5){+}; \node at (6,5){+}; \node at (7,5){+};\node at (0,1) {+}; \node at (1,1) {.};
      \node at (3,4) {.};\node at (4,4){+};\node at (5,4){+};\node at (6,4){+};\node at (7,4){+};\node at (0,0) {+};\node at (1,0){.};
      \node [left] at (3,5.2) {$P'''_\ell$};
      \node [left] at (2.8,4) {$P'_\ell$};      
      \node [right] at (1,1.2) {$P''_\ell$};
      \node [right] at (1,0) {$P_\ell$};
      \node at (0,2) {.}; \node[left] at (0,2) {$P_{\ell-1}$};
%      \node at (6,3){.};\node [left] at (6,3) {$P_{\ell+1}$};
      \end{scope}
 \end{tikzpicture}
 \caption{Lemma \ref{lemma:chute move keep compatibility}. Pictures of $C'$. Left: Case 1; Right: Case 2.}
 \label{fig: chute_case12}
 \end{figure}
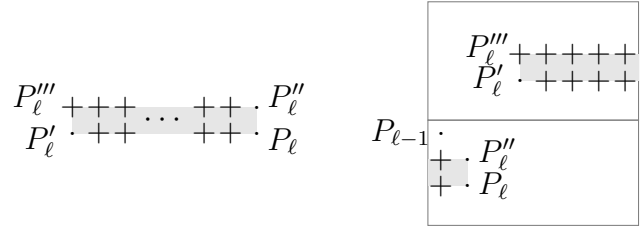

Case 1: either $\gamma=\alpha$, or ``$\gamma=\beta$ and $P'''_\ell$ is defined and lies in the same block of $A_\beta$ with $P_\ell$''.
Then $D\cup \{P'_\ell\} \setminus \{P_\ell\}$  is a diagonal chain if  $P_{\ell-1}$ is NW of $P'_\ell$; 
$D\cup \{P''_\ell\} \setminus \{P_\ell\}$ is a diagonal chain if $P_{\ell-1}$ is NW of $P''_\ell$. 
If none of the these two conditions hold, then $P_{\ell-1}$, which is a point in  $\phi_\alpha^{-1}(C')$, must be weakly SE of $P'''_\ell$ and (strongly) NW of $P_\ell$, which gives the desired contradiction because such a point does not exist. See Figure \ref{fig: chute_case12} Left. 

Case 2: $\gamma=\beta$, and either $P'''_\ell$ is undefined or  $P'''_\ell$ and $P_\ell$ lie in different blocks of $A_\beta$.  
In this case, $\phi_\beta^{-1}(C')$ has no point to the left of $P''_\ell$.
Note that $D\cup \{P''_\ell\} \setminus \{P_\ell\}$ is a diagonal chain in $C^\gamma$ of size $(u_\gamma+1)$;
%Indeed, it suffices to show that  $(P_{\ell-1})_x \le  (P_\ell)_x-2$. 
otherwise, $(P_{\ell-1})_x = (P_\ell)_x-1$, thus $P_{\ell-1}$ is a point in $\phi_\beta^{-1}(C')$ and lies to the left of $P''_\ell$, a contradiction. 
 See Figure \ref{fig: chute_case12} Right. 
\end{proof}

\begin{definition}
	We define the  {\bf initial concurrent vertex map} to be $C_{\rm max}(\emptyset)$. 
\end{definition}
%We have the following equivalent characterizations of  $C_{\rm max}(\emptyset)$. 
\begin{lemma}\label{lem:initial C} 
Let $C$ be a concurrent vertex map. The following are equivalent.

{\rm(i)} $C=C_{\rm max}(\emptyset)$;

{\rm(ii)} $C$ has no essential NW corners;

{\rm(iii)} $C$ is not inverse chute movable;

{\rm(iv)} for each $k$ with $h_k:\beta\to\alpha$,  define 
$S_{k}^\alpha := \sum_{\beta' \stackrel{>k}{\to} \alpha} u_{\beta'},$
where the sum is over all arrows $h_{k'}:\beta'\to \alpha$ satisfying $k'>k$ and $\target(h_k)=\alpha$; define
$S_{k}^\beta := \sum_{\alpha'\stackrel{>k}{\leftarrow}\beta} u_{\alpha'},$
where the sum is over all arrows $h_{k'}:\beta\to \alpha'$ satisfies the condition that $k'>k$ and $\source(h_k)=\beta$;
then 
\begin{equation}\label{eq:Ck equiv conditions}
\aligned
C|_k=
&\Big( (\textrm{the bottom $u_\alpha$ rows}) \bigcup (\textrm{the rightmost $[u_\alpha-S_k^\alpha]_+$ columns})\Big)\\
&\bigcap 
\Big( (\textrm{the bottom $[u_\beta-S_k^\beta]_+$ rows}) \bigcup (\textrm{the rightmost $u_\beta$ columns})\Big).
\endaligned
\end{equation}
It can be pictured in two cases as shown in Figure \ref{fig:C_k}. 

%Equivalently (see Figure \ref{fig:C_k}), 

%(Case 1) if $(u_\alpha-[u_\beta-S_k^\beta]_+)(u_\beta-[u_\alpha-S_k^\alpha]_+)\ge0$, then $(i,j)\in C|_k$ if and only if `` $i>a_\alpha-\min(u_\alpha,[u_\beta-S_k^\beta]_+)$'', or `` $j>b_\beta-\min(u_\beta,[u_\alpha-S_k^\alpha]_+)$'', or ``~$ i>a_\alpha-\max(u_\alpha,[u_\beta-S_k^\beta]_+)$ and $j>b_\beta-\max(u_\beta,[u_\alpha-S_k^\alpha]_+) $'';

%(Case 2) if $u_\alpha-[u_\beta-S_k^\beta]_+$ and  $u_\beta-[u_\alpha-S_k^\alpha]_+$ either have the opposite signs or at least one of them is zero, then $(i,j)\in C|_k$ if and only if `` $i>a_\alpha-\min(u_\alpha,[u_\beta-S_k^\beta]_+)$'', or ``~$j>b_\beta-\min(u_\beta,[u_\alpha-S_k^\alpha]_+)$''.

%(Note that when Case 1 and Case 2 overlap, the descriptions of $C|_k$ are consistent.)
\begin{figure}[ht]
\begin{center}
\begin{tikzpicture}[scale=.5]
    \begin{scope}[shift={(0,0)}]
      \draw[draw=black,fill=white] (0,0) rectangle ++(6,6);
      \draw[fill=gray!30] (0,0)--(0,1.5)--(3,1.5)--(3,3)--(4,3)--(4,6)--(6,6)--(6,0);
      \draw[dashed] (3,0)--(3,1.5) (4,3)--(6,3);
      \draw[decorate,decoration={brace,mirror,raise=3pt,amplitude=4pt}, thick]
    (3,0)--(6,0)  node [midway,yshift=-0.6cm] {\tiny $\max(u_\beta,[u_\alpha-S_k^\alpha]_+)$};
      \draw[decorate,decoration={brace,mirror,raise=3pt,amplitude=4pt}, thick]
    (6,6)--(4,6)  node [midway,yshift=.6cm] {\tiny $\min(u_\beta,[u_\alpha-S_k^\alpha]_+)$};
      \draw[decorate,decoration={brace,mirror,raise=3pt,amplitude=4pt}, thick]
    (0,1.5)--(0,0)  node [midway,yshift=-0.4cm] {};
      \node[label={[label distance=0.5cm,text depth=-1ex,rotate=90]right: {\tiny $\min(u_\alpha,[u_\beta-S_k^\beta]_+)$}}] at (-1.5,-1) {};
      \draw[decorate,decoration={brace,mirror, raise=3pt,amplitude=4pt}, thick]
    (6,0)--(6,3)  node [midway,xshift=0.8cm] {};
      \node[label={[label distance=0.5cm,text depth=-1ex,rotate=90]right: {\tiny $\max(u_\alpha,[u_\beta-S_k^\beta]_+)$}}] at (6.5,-.5) {};
      \node at(3,-2.5) {{\rm Case 1}};
      \end{scope}
    \begin{scope}[shift={(12,0)}]
      \draw[draw=black,fill=white] (0,0) rectangle ++(6,6);
      \draw[fill=gray!30] (0,0)--(0,1.5)--(4,1.5)--(4,6)--(6,6)--(6,0);
      \draw[dashed] (4,0)--(4,1.5)--(6,1.5);
      \draw[decorate,decoration={brace,mirror,raise=3pt,amplitude=4pt}, thick]
    (6,6)--(4,6)  node [midway,yshift=.6cm] {\tiny $\min(u_\beta,[u_\alpha-S_k^\alpha]_+)$};
      \draw[decorate,decoration={brace,mirror,raise=3pt,amplitude=4pt}, thick]
    (0,1.5)--(0,0)  node [midway,yshift=-0.4cm] {};
      \node[label={[label distance=0.5cm,text depth=-1ex,rotate=90]right: {\tiny $\min(u_\alpha,[u_\beta-S_k^\beta]_+)$}}] at (-1.5,-1) {};
      \node at(3,-2.5) {\rm Case 2};
      \end{scope}
\end{tikzpicture}
\caption{Lemma \ref{lem:initial C} (iv) $C|_k$ consists of the lattice points in the gray region. Case 1 is when $(u_\alpha-[u_\beta-S_k^\beta]_+)(u_\beta-[u_\alpha-S_k^\alpha]_+)\ge0$. Case 2 is when $(u_\alpha-[u_\beta-S_k^\beta]_+)(u_\beta-[u_\alpha-S_k^\alpha]_+)\le 0$. 
}
\label{fig:C_k}
\end{center}
\end{figure}
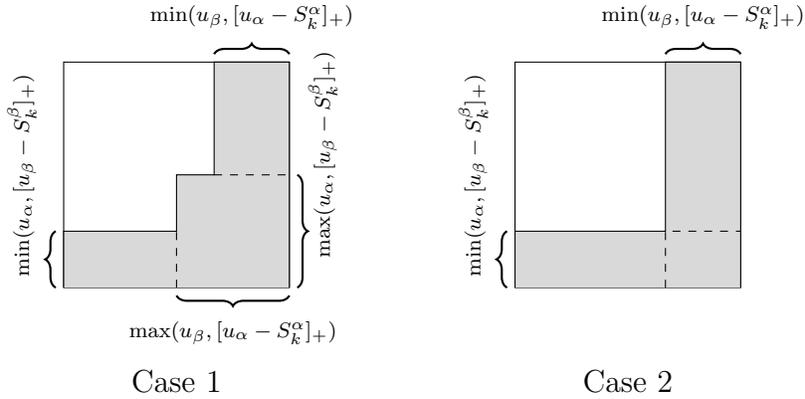

{\rm(v)} The straight road map $(\{H^\alpha_{p}\}, \{V^\beta_{q}\})$ corresponding to $C$ is as follows.
For $\alpha\in V_{\rm target}$ and $1\le p\le u_\alpha$, let $k$ be the unique integer such that the target of $h_k$ is $\alpha$, and 
\begin{equation}\label{eq:k quniquely determined}
(S_k^\alpha=) \sum_{\beta' \stackrel{>k}{\to} \alpha} u_{\beta'}<u_\alpha-p+1\le \sum_{\beta' \stackrel{\ge k}{\to} \alpha} u_{\beta'}.
\end{equation}
Define
$j^\alpha_p := p-u_\alpha+\sum_{\beta' \stackrel{\le k}{\to} \alpha} b_{\beta'}+  \sum_{\beta' \stackrel{> k}{\to} \alpha} u_{\beta'}.$
	Then $H_p^\alpha$ is the path obtained by connecting vertices $(a_\alpha-u_\alpha+p,1)$, $(a_\alpha-u_\alpha+p,j^\alpha_p)$, $(p, j^\alpha_p)$, $(p,b_\alpha)$. 
%In particular, on ${\rm Page}_k$ for $k=r_1,r_2,\dots,r_s$ as defined in Lemma \ref{lem:straight 2 conditions} (b), the bottom $S^\alpha_k$ and top $[u_\alpha-S^\alpha_k-u_{\source(h_k)}]_+$ paths are straight horizontal. 
%and those in between are from SW to NE obtained by SE-most construction. 
(See Figure \ref{fig:H and V} Left.)

For $\beta\in V_{\rm source}$ and $1\le q\le u_\beta$, let $k$ be the unique integer such that the source of $h_k$ is $\beta$, and
$$(S_k^\beta=) \sum_{\alpha'\stackrel{>k}{\leftarrow}\beta} u_{\alpha'}<u_\beta-q+1\le \sum_{\alpha'\stackrel{\ge k}{\leftarrow}\beta} u_{\alpha'}.$$
Define $j^\beta_q :=q-u_\beta+\sum_{\alpha' \stackrel{\le k}{\leftarrow} \beta} a_{\alpha'}+  \sum_{\alpha' \stackrel{> k}{\leftarrow} \beta} u_{\alpha'}.$ 
Then $V_q^\beta$ is the path obtained by connecting vertices $(1,b_\beta-u_\beta+q)$, $(j^\beta_q, b_\beta-u_\beta+q)$, $(j^\beta_q,q)$, $(a_\beta,q)$. 
%In particular, on ${\rm Page}_k$ for $k=r'_1,r'_2,\dots,r'_t$ as defined in Lemma \ref{lem:straight 2 conditions} (c), the rightmost $S^\beta_k$ and leftmost $[u_\beta-S^\beta_k-u_{\target(h_k)}]_+$ paths are straight vertical. 
%and those in between are from NE to SW obtained by SE-most construction. 
(See Figure \ref{fig:H and V} Right.)

\begin{figure}[ht]
\begin{center}
\begin{tikzpicture}[scale=.35]
\begin{scope}[shift={(0,13)}]
\draw  [gray] (2,0.5) rectangle (5.5,5.5);
\draw  [gray] (6,0.5) rectangle (11.5,5.5);
\draw  [gray] (12,0.5) rectangle (15.5,5.5);

\draw [blue,densely dotted,very thick] (2,2)--(11,2)--(11,4.5)--(15.5,4.5);

\draw (2,2) node [above] {\tiny $(a_\alpha-u_\alpha+p,1)$};

\draw (8,1) node [above] {\tiny $H^\alpha_p$};

\draw (11,2) node [right] {\tiny $(a_\alpha-u_\alpha+p,j^\alpha_p)$};

\draw (11,4.5) node [above] {\tiny $(p,j_p^\alpha)$};

\draw (15.5,4.5) node [right] {\tiny $(p,b_\alpha)$};

\begin{scope}[shift={(20,0)}]
\draw  [gray] (0,0.5) rectangle (5.5,3.5);
\draw  [gray] (0,4) rectangle (5.5,7.5);
\draw  [gray] (0,8) rectangle (5.5,10.5);

\draw [red,densely dotted,very thick](4,10.5)--(4,4.5)--(1.5,4.5)--(1.5,0.5);

\draw (4,10.5) node [above] {\tiny $(1,b_\beta-u_\beta+q)$};

\draw (4,9) node [left] {\tiny $V^\beta_q$};

\draw (4,4.5) node [right] {\tiny $(j^\beta_q, b_\beta-u_\beta+q)$};

\draw (1.5,4.5) node [above] {\tiny $(j_q^\beta,q)$};

\draw (2,0.5) node [below] {\tiny $(a_\beta,q)$};
\end{scope}
\end{scope}

\begin{scope}
\draw  [gray] (2,0.5) rectangle (5.5,5.5);
\draw  [gray] (6,0.5) rectangle (11.5,5.5);
\draw  [gray] (12,0.5) rectangle (15.5,5.5);

\draw [blue,densely dotted,very thick](2,0.5)--(15.5,0.5)--(15.5,3);
\draw [blue,densely dotted,very thick](2,1)--(15,1)--(15,3.5)--(15.5,3.5);
\draw [blue,densely dotted,very thick](2,1.5)--(11.5,1.5)--(11.5,4)--(15.5,4);
\draw [blue,densely dotted,very thick](2,2)--(11,2)--(11,4.5)--(15.5,4.5);
\draw [blue,densely dotted,very thick](2,2.5)--(5.5,2.5)--(5.5,5)--(15.5,5);
\draw [blue,densely dotted,very thick](2,3)--(5,3)--(5,5.5)--(15.5,5.5);

\draw (2,3) node [left] {\tiny $H^\alpha_1$};
\draw (2,0.5) node [left] {\tiny $H^\alpha_{u_\alpha}$};
\end{scope}

\begin{scope}[shift={(2,6)}]
\draw  [gray] (0,0.5) rectangle (5.5,5.5);

\draw [blue,densely dotted,very thick](0,0.5)--(5.5,0.5);
\draw [blue,densely dotted,very thick](0,1)--(5.5,1);
\draw [blue,densely dotted,very thick](0,1.5)--(5.5,1.5)--(5.5,4);
\draw [blue,densely dotted,very thick](0,2)--(5,2)--(5,4.5)--(5.5,4.5);
\draw [blue,densely dotted,very thick](0,5)--(5.5,5);
\draw [blue,densely dotted,very thick](0,5.5)--(5.5,5.5);
      \draw[decorate,decoration={brace,mirror, raise=3pt,amplitude=4pt}, thick]
    (5.7,0.5)--(5.7,1)  node [midway,xshift=0.8cm] {};
      \node[label={[label distance=0.5cm,text depth=-1ex]right: $S_k^\alpha$}] at (5,0.8) {};
      \draw[decorate,decoration={brace,mirror, raise=3pt,amplitude=4pt}, thick]
    (5.7,5)--(5.7,5.5)  node [midway,xshift=0.8cm] {};
      \node[label={[label distance=0.5cm,text depth=-1ex]right: $[u_\alpha-S_k^\alpha-u_{\source(h_k)}]_+$}] at (5,5.3) {};

\end{scope}

\begin{scope}[shift={(20,0)}]
\draw  [gray] (0,0.5) rectangle (5.5,3.5);
\draw  [gray] (0,4) rectangle (5.5,7.5);
\draw  [gray] (0,8) rectangle (5.5,10.5);

\draw [red,densely dotted,very thick](5.5,10.5)--(5.5,0.5)--(3,0.5);
\draw [red,densely dotted,very thick](5,10.5)--(5,1)--(2.5,1)--(2.5,0.5);
\draw [red,densely dotted,very thick](4.5,10.5)--(4.5,4)--(2,4)--(2,0.5);
\draw [red,densely dotted,very thick](4,10.5)--(4,4.5)--(1.5,4.5)--(1.5,0.5);
\draw [red,densely dotted,very thick](3.5,10.5)--(3.5,8)--(1,8)--(1,0.5);
\draw [red,densely dotted,very thick](3,10.5)--(3,8.5)--(0.5,8.5)--(0.5,0.5);
\draw [red,densely dotted,very thick](2.5,10.5)--(2.5,9)--(0,9)--(0,0.5);

\draw (2.5,10.5) node [above] {\tiny $V^\beta_1$};

\draw (5.5,10.5) node [above] {\tiny $V^\beta_{u_\beta}$};

\end{scope}

\end{tikzpicture}       
\end{center}
\caption{Lemma \ref{lem:initial C} (v), $H^\alpha_p$ and $V^\beta_q$. Left Top: a horizontal path $H^\alpha_p$; Left Middle: all horizontal paths in ${\rm Page}_k$; Left Bottom: all horizontal paths in $A_\alpha$; Right Top: a vertical path $V^\beta_q$; Right Bottom: all vertical paths in $A_\beta$.}
\label{fig:H and V}
\end{figure}
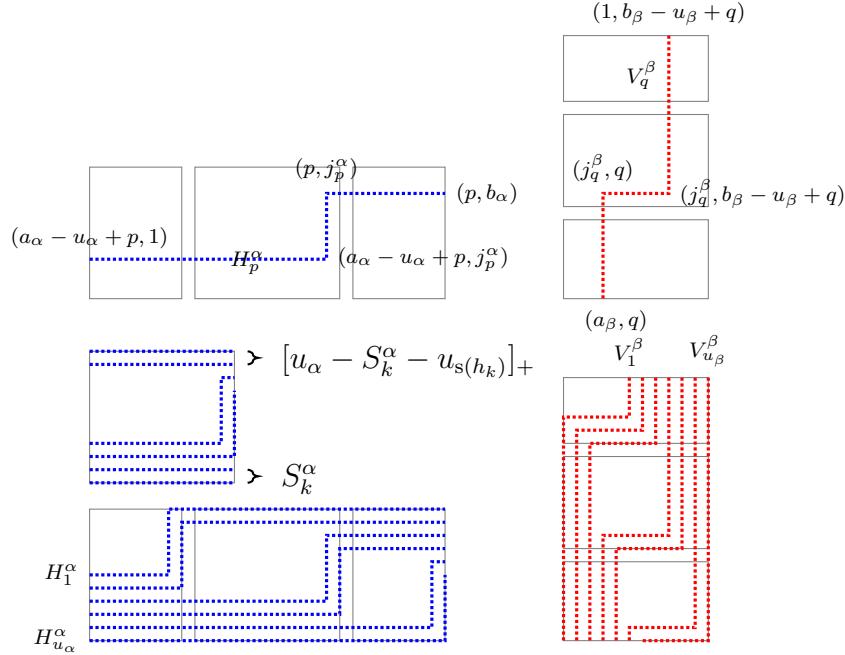

\end{lemma}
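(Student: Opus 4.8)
The plan is to prove the chain of equivalences (i)$\Leftrightarrow$(ii)$\Leftrightarrow$(iii)$\Leftrightarrow$(iv)$\Leftrightarrow$(v) by first establishing the two ``combinatorial'' equivalences (ii)$\Leftrightarrow$(iii) and (i)$\Leftrightarrow$(ii) using the machinery already in place, then nailing down the explicit descriptions in (iv) and (v) by a direct geometric analysis of the resulting road map. The starting point is the observation that a concurrent vertex map $C$ is inverse chute movable exactly when it has a chutable rectangle, and a short local analysis shows that the NW corner created by an inverse chute move is precisely an essential NW corner of the new map, while conversely every essential NW corner of $C$ sits at the SE corner of a chutable rectangle for $C$ (one uses Lemma~\ref{lem:no corner} to rule out the non-essential corners, since those lie on horizontal/vertical runs of length forced by that lemma). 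This gives (ii)$\Leftrightarrow$(iii). For (i)$\Leftrightarrow$(ii): if $C=C_{\max}(\emptyset)$ then by Proposition~\ref{prop:C(S)>=C} applied with $S=\emptyset$, the equality $C_{\max}(\emptyset)=_T C$ would force $\emptyset$ to contain all essential NW corners of $C$, so $C$ has none; conversely if $C$ has no essential NW corners, then by Lemma~\ref{lem:larger C'} there is no concurrent vertex map strictly $>_T C$ obtained by a single-point swap at a corner, and combined with Lemma~\ref{lemma:chute move keep compatibility} (inverse chute moves strictly increase under $<_T$ and are the only way to increase, via (iii)) one concludes $C$ is the $<_T$-maximum, which is $C_{\max}(\emptyset)$ by definition.

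Next I would derive the formula \eqref{eq:Ck equiv conditions} in (iv) from (ii). Knowing that $C$ has no essential NW corners, every horizontal path $H^\alpha_p$ can only turn at a non-essential NW corner, i.e.\ at one of the points $Q_{p,p+v_\alpha-u_\alpha}$, together with whatever SE corners are forced; by Lemma~\ref{lem:no corner} this pins down $H^\alpha_p$ to be a single ``staircase with one step'': it runs straight east along row $a_\alpha-u_\alpha+p$ until some column, jumps up to row $p$, and continues east. The column at which the jump occurs is determined by the requirement that the intersection with each page $\mathrm{Page}_k$ fill out exactly the prescribed rows-and-columns pattern, and counting how many of the $u_\alpha$ horizontal paths have already ``turned up'' before reaching $\mathrm{Page}_k$ gives the quantity $[u_\alpha-S_k^\alpha]_+$; the mirror count for vertical paths gives $[u_\beta-S_k^\beta]_+$. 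Intersecting the two patterns on $\mathrm{Page}_k$ yields exactly the gray region of Figure~\ref{fig:C_k}, with the two cases of the figure corresponding to the sign of $(u_\alpha-[u_\beta-S_k^\beta]_+)(u_\beta-[u_\alpha-S_k^\alpha]_+)$. This argument is reversible: a $C$ whose restriction to each page is given by \eqref{eq:Ck equiv conditions} is $\uu$-compatible (no diagonal chain of size $u_\gamma+1$ — this is a direct count on the L-shaped region) and visibly has no essential NW corners, giving (iv)$\Leftrightarrow$(ii).

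Finally, (v) is just the bookkeeping that makes (iv) explicit at the level of the road map rather than the vertex map. Given the single-step-staircase shape of each $H^\alpha_p$ established above, I would locate the turning column: the path $H^\alpha_p$ is the one that ``turns up'' as the $(u_\alpha-p+1)$-st horizontal path counting from the bottom, so it turns inside the page $\mathrm{Page}_k$ determined by the inequality \eqref{eq:k quniquely determined}, and within that page the exact column is obtained by adding the widths $b_{\beta'}$ of all earlier pages plus the $S_k^\alpha$ columns already used up by later-indexed arrows and subtracting the offset $u_\alpha-p$; this is precisely the stated $j^\alpha_p$. The vertical paths are handled by the transpose of this computation. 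One then checks that this road map is straight — each corner of $H^\alpha_p$ is a $Q_{p,p+v_\alpha-u_\alpha}$, which lies on a vertical path by construction, and symmetrically — and that it induces $C$ via \eqref{eq:Ck}, so by Proposition~\ref{prop:concurrent equiv} it is \emph{the} road map of $C$; uniqueness of the road map closes the loop. The main obstacle I anticipate is not any single deep step but the careful handling of the boundary/degenerate cases in the counting for (iv) and (v) — in particular getting the truncations $[\,\cdot\,]_+$ and the two cases of Figure~\ref{fig:C_k} exactly right when $S_k^\alpha$ is large, and keeping the index shifts in $j^\alpha_p$ consistent with the (unusual) matrix-style coordinate convention fixed in \S\ref{section:concurrent maps}. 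Lemma~\ref{lem:no corner} does most of the heavy lifting here, so the proof should reduce to invoking it repeatedly and assembling the counts.
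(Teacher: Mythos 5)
Your chain of equivalences (i)$\Leftrightarrow$(ii)$\Leftrightarrow$(iii)$\Leftrightarrow$(iv)$\Leftrightarrow$(v) differs from the paper, which instead closes the cycle (iv)$\Rightarrow$(i)$\Rightarrow$(iii)$\Rightarrow$(ii)$\Rightarrow$(v)$\Leftrightarrow$(iv). Your treatment of (i)$\Leftrightarrow$(ii) via Proposition~\ref{prop:C(S)>=C} with $S=\emptyset$ is sound (and is in fact more direct than the paper's route), and your derivation of (iv), (v) from (ii) — that absence of essential NW corners forces each $H^\alpha_p$ to be a single one-step staircase, pinned down by Lemma~\ref{lem:no corner} — is in the same spirit as the paper's proof of (ii)$\Rightarrow$(v).

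The step (ii)$\Leftrightarrow$(iii), however, has a real gap. Unwinding Definition~\ref{df:chute}, ``$C$ is inverse chute movable'' means there exists a $2\times r$ rectangle whose intersection with $C$ consists of exactly the NW, NE, and SW corners, so that the NW dot can be slid to the SE corner. Your converse claim, ``every essential NW corner of $C$ sits at the SE corner of a chutable rectangle for $C$,'' is about \emph{forward} chutability — a rectangle with dots at NE, SE, SW — and even if true it only produces a map $C'<_T C$, not a witness that $C$ is the image of a forward chute move. It therefore cannot establish (ii)$\Rightarrow$(iii). More fundamentally, an essential NW corner does not in general sit adjacent to the point where an inverse chute move first applies: in the paper's Case~2 of Lemma~\ref{lem:larger C'} / Lemma~\ref{lem:essential move vs move} (Figure~\ref{fig:essentialA}), the sequence of inverse chute moves starts at $R_{p+t,q+t}$, which can be arbitrarily far from the essential NW corner $Q_{pq}$. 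So the implication from an essential NW corner to inverse chute movability requires the full apparatus of Lemmas~\ref{lem:larger C'} and~\ref{lem:essential move vs move}, which construct and decompose a global modification of the road map; it is not a ``short local analysis.'' Similarly, the other half of your local claim (``the NW corner created by an inverse chute move is precisely an essential NW corner of the new map'') is stated with the orientation reversed: an inverse chute move $C\dashrightarrow C''$ \emph{removes} the NW dot, so no NW corner is created in $C''$. What you need (and what the paper avoids by instead proving (i)$\Rightarrow$(iii) via the $<_T$-maximality of $C_{\max}(\emptyset)$ and Lemma~\ref{lemma:chute move keep compatibility}) is that a \emph{forward} chute move into $C$ leaves an essential NW corner in $C$; that statement, too, needs a nonlocal argument in the presence of adjacent horizontal paths. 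To repair the proposal, replace the (ii)$\Leftrightarrow$(iii) step by the paper's pair of arguments: (iii)$\Rightarrow$(ii) via the case analysis of Lemmas~\ref{lem:larger C'} and~\ref{lem:essential move vs move}, and (i)$\Rightarrow$(iii) from the monotonicity of inverse chute moves under $<_T$.
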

\begin{proof}
We first show that the road map $(\{H^\alpha_{p}\}, \{V^\beta_{q}\})$ given in (v) is straight.  Fix an arrow $h_k:\beta\to\alpha$. 
Note that 
if  $u_\alpha-S^\alpha_k\le u_\beta$, then $\cup_p H^\alpha_p$ covers  the bottom $u_\alpha$ rows and the rightmost $[u_\alpha-S^\alpha_k]_+=\min([u_\alpha-S^\alpha_k]_+,u_\beta)$ columns of ${\rm Page}_k$;  
if $u_\alpha-S^\alpha_k > u_\beta$, then $\cup_p H^\alpha_p$ covers the top $u_\alpha -S^\alpha_k - u_\beta$ rows, the bottom $u_\beta+S^\alpha_k$ rows, and the rightmost $u_\beta=\min([u_\alpha-S^\alpha_k]_+,u_\beta)$ columns of ${\rm Page}_k$.
Consequently, $\cup_p H^\alpha_p$ covers the bottom $\min(u_\alpha,u_\beta+S^\alpha_k)$ rows; if the corners of $H^\alpha_p$ are on ${\rm Page}_k$, they must lie in the rightmost $\min([u_\alpha-S^\alpha_k]_+,u_\beta)$ columns. 
%Similarly, 
%$\cup_q V^\beta_q$ covers the following lattice points on ${\rm Page}_k$: 
%
%\noindent -- if $u_\beta\le u_\alpha+S^\beta_k$: the rightmost $u_\beta$ rows and the bottom $[u_\beta-S^\beta_k]_+$ rows;
%
%\noindent -- if $u_\beta > u_\alpha+S^\beta_k$: the leftmost $u_\beta- u_\alpha-S^\beta_k$ columns, the rightmost $u_\alpha+S^\beta_k$ columns, and the bottom $u_\alpha$ rows.
%
%\noindent In particular, if the NW and SE corners of $V^\beta_q$ are in ${\rm Page}_k$, they must lie in the bottom $\min([u_\beta-S^\beta_k]_+,u_\alpha)$ rows of ${\rm Page}_k$. 
Similarly, $\cup_q V^\beta_q$ covers the rightmost  $\min(u_\beta,u_\alpha+S^\beta_k)$ columns; if the corners of $V^\beta_q$ are on ${\rm Page}_k$, they must lie in the bottom  $\min([u_\beta-S^\beta_k]_+,u_\alpha)$ rows. 
To show the corners of $H^\alpha_p$ are contained in vertical paths, it suffices to show that $\min([u_\alpha-S^\alpha_k]_+,u_\beta)\le \min( u_\beta, u_\alpha+S^\beta_k)$, which is obviously true. 
Similarly, any corner of a vertical path lies in a horizontal path. Thus the road map is straight.

\noindent(iv)$\Leftrightarrow$(v). Since a concurrent vertex map uniquely determines the  road map by Proposition \ref{prop:concurrent equiv}, we just need to show that $C$ given in (iv) is induced by the road map given in (v). The proof is easy so we skip.

%If  $u_\alpha\le u_\beta+S^\alpha_k$ and $u_\beta\le u_\alpha+S^\beta_k$: in this case \eqref{eq:Ck equiv conditions} obviously holds.

%If  $u_\alpha> u_\beta+S^\alpha_k$ (then $u_\beta <u_\alpha-S^\alpha_k \le u_\alpha+S^\beta_k$): this belongs to Case 2 in (iv). In this case, $(\cup_p H^\alpha_p)|_k$ is in the top $u_\alpha- u_\beta-S^\alpha_k$ rows, the bottom $u_\beta+S^\alpha_k$ rows, and the rightmost $u_\beta$ columns; 
%$\cup_q V^\beta_q$  is in the rightmost $u_\beta$ rows and the bottom $[u_\beta-S^\beta_k]_+$ rows. Note that $u_\beta+S^\alpha_k\ge [u_\beta-S^\beta_k]_+$. So $(\cup_p H^\alpha_k)|_k\cap (\cup_q V^\beta_q)|_k$ is 
% in the rightmost $u_\beta$ rows and the bottom $[u_\beta-S^\beta_k]_+$ rows. This is exactly (Case 2) in Figure \ref{fig:C_k}, because $\min(u_\alpha,[u_\beta-S_k^\beta]_+)=[u_\beta-S_k^\beta]_+$, $\min(u_\beta,[u_\alpha-S_k^\alpha]_+)=u_\beta$. 

%If  $u_\beta> u_\alpha+S^\beta_k$: the proof is similar to the previous case.

%Therefore  (iv)$\Leftrightarrow$(v). 

\noindent (v)$\Rightarrow$(ii). We shall show that the NW corner $(p,j^\alpha_p)$ of $H^\alpha_p$ is $Q_{p,p+v_\alpha-u_\alpha}$, so is non-essential. 
Assume  $(p,j^\alpha_p)$ is in ${\rm Page}_k$. Then it lies in the $q'$-th column from the right in ${\rm Page}_k$, where
$q'=\sum_{\beta' \stackrel{\le k}{\rightarrow} \alpha}b_{\beta'}-j_p^\alpha+1$. 
Let $\beta=\source(h_k)$ and $V^\beta_q$ be the $q'$-th vertical path in $A_\beta$ counting from the right side.  
So 
$q=u_\beta+1-q'=u_\beta-\sum_{\beta' \stackrel{\le k}{\rightarrow} \alpha}b_{\beta'}+j_p^\alpha
=u_\beta-\sum_{\beta' \stackrel{\le k}{\rightarrow} \alpha}b_{\beta'}+(p-u_\alpha+\sum_{\beta' \stackrel{\le k}{\to} \alpha} b_{\beta'}+  \sum_{\beta' \stackrel{> k}{\to} \alpha} u_{\beta'})
=p-u_\alpha+  \sum_{\beta' \stackrel{\ge k}{\to} \alpha} u_{\beta'}
$.  

We claim that the point $\phi_\beta^{-1}\phi_\alpha(p,j^\alpha_p) = (\sum_{\alpha' \stackrel{< k}{\leftarrow} \beta} a_{\alpha'}+p,b_\beta-q'+1)$ is contained in $V^\beta_q$. 
Indeed, note that in $A_\beta$, the corners of $V^\beta_q$ (whose $x$-coordinate is $j_q^\beta$) are weakly below the horizontal line $x=\sum_{\alpha' \stackrel{< k}{\leftarrow} \beta} a_{\alpha'}+p$ (which contains the point $\phi_\beta^{-1}\phi_\alpha(p,j^\alpha_p)$) because
$j_q^\beta-(\sum_{\alpha' \stackrel{< k}{\leftarrow} \beta} a_{\alpha'}+p)
=(q-u_\beta+\sum_{\alpha' \stackrel{\le k}{\leftarrow} \beta} a_{\alpha'}+  \sum_{\alpha' \stackrel{> k}{\leftarrow} \beta} u_{\alpha'})
-\sum_{\alpha' \stackrel{< k}{\leftarrow} \beta} a_{\alpha'}-p
=q-u_\beta + a_\alpha+ \sum_{\alpha' \stackrel{> k}{\leftarrow} \beta} u_{\alpha'}-p
%&=(p-u_\alpha+  \sum_{\beta' \stackrel{\ge k}{\to} \alpha} u_{\beta'}) -u_\beta + a_\alpha+  \sum_{\alpha' \stackrel{> k}{\leftarrow} \beta} u_{\alpha'}-p\\
=a_\alpha-u_\alpha + \sum_{\beta' \stackrel{> k}{\to} \alpha} u_{\beta'} +  \sum_{\alpha' \stackrel{> k}{\leftarrow} \beta} u_{\alpha'} 
\ge a_\alpha-u_\alpha\ge0. 
$
Since $V^\beta_q$ contains all the points whose $x$-coordinate is $\le j^\beta_q$ and $y$-coordinate is $b_\beta-q'+1$, it contains the point $\phi_\beta^{-1}\phi_\alpha(p,j^\alpha_p)$. this proves the claim, which then implies that $(p,j_p^\alpha)$ is contained in $\phi_\alpha^{-1}(V_q^\beta|_k)$. 

Next, we show that 
\begin{equation}\label{eq:V=V}
V^\alpha_{p+v_\alpha-u_\alpha} = \phi_\alpha^{-1}(V_q^\beta|_k)\textrm{ with }q=p-u_\alpha+  \sum_{\beta' \stackrel{\ge k}{\to} \alpha} u_{\beta'}.
\end{equation}
Indeed, $V_q^\beta|_k$ is the $q$-th vertical path in ${\rm Page}_k$, and there are another
$\sum_{\beta' \stackrel{< k}{\rightarrow} \alpha}u_{\beta'}$ vertical path in the blocks left to ${\rm Page}_k$ in $A_\alpha$, so the index of the vertical path $\phi_\alpha^{-1}(V_q^\beta|_k)$ in $A_\alpha$ is
$q+\sum_{\beta' \stackrel{< k}{\rightarrow} \alpha}u_{\beta'}=
(p-u_\alpha+  \sum_{\beta' \stackrel{\ge k}{\to} \alpha} u_{\beta'})+\sum_{\beta' \stackrel{< k}{\rightarrow} \alpha}u_{\beta'}
=p+v_\alpha-u_\alpha$. 
Therefore, $(p,j_p^\alpha)=Q_{p,p+v_\alpha-u_\alpha}$, thus the NW corner of $H^\alpha_p$ is nonessential. 
Similarly we can show that the NW corner of a vertical path is non-essential. Thus (v)$\Rightarrow$(ii).

\noindent (ii)$\Rightarrow$(v). Let $C$ be a concurrent vertex map without essential NW corners, $(\{H^\alpha_{p}\}, \{V^\beta_{q}\})$ be the corresponding road map. Each $H^\alpha_p$ contains exactly one NW corner:  $Q_{p,p+v_\alpha-u_\alpha}$. So $H^\alpha_p$ consists of only three segments:  two horizontal segments connected by a vertical segment. Each $V^\beta_q$ satisfies a similar condition. 

Now fix $\alpha$ and $p$. Assume  $Q_{p,p+v_\alpha-u_\alpha}$ is in ${\rm Page}_k$.
Then $h_k$ must have target $\alpha$, so we denote $h_k:\beta\to \alpha$. 
By \eqref{eq:V=V}, $V^\alpha_{p+v_\alpha-u_\alpha} = \phi_\alpha^{-1}(V_q^\beta|_k)$ where $p, q$ satisfy $p-q = u_\alpha -  \sum_{\beta' \stackrel{\ge k}{\to} \alpha} u_{\beta'}$. %Note that $k$ is uniquely determined by $\alpha$ and $p$ because of \eqref{eq:k quniquely determined}. 
Note that %$V^\alpha_{p+v_\alpha-u_\alpha}$ goes through $Q_{p,p+v_\alpha-u_\alpha}$ vertically without turn, that is, 
$Q_{p,p+v_\alpha-u_\alpha}$ is not a corner of $V^\alpha_{p+v_\alpha-u_\alpha}$; otherwise 
$Q_{p,p+v_\alpha-u_\alpha}$ must be a SE corner of $V^\alpha_{p+v_\alpha-u_\alpha}$
as well as a NW corner of  $H^\alpha_p$, which is impossible. Consequently, $V^\beta_q$ goes through  $\phi_\beta^{-1}\phi_\alpha(Q_{p,p+v_\alpha-u_\alpha})$ vertically without turn. So the horizontal segment of  $V^\beta_q$ is either strictly below or strictly above
$\phi_\beta^{-1}\phi_\alpha(Q_{p,p+v_\alpha-u_\alpha})$. 
We claim that the latter cannot hold.   
Otherwise, %the NW corner of $V^\beta_q$ is above the horizontal line containing $\phi_\beta^{-1}\phi_\alpha(Q_{p,p+v_\alpha-u_\alpha})$; that is, 
in $A_\beta$, $(P'_{q+v_\beta-u_\beta,q})_y < (\phi^{-1}_\beta\phi_\alpha( Q_{p,p+v_\alpha-u_\alpha}))_y$, so the path $H^\beta_{q+v_\beta-u_\beta}$ is above the path $\phi_\beta^{-1}(H^\alpha_p|_k) = H^\beta_{p+\sum_{\alpha' \stackrel{< k}{\leftarrow} \beta} u_{\alpha'}}$. 
Thus %their indices must satisfy the inequality
$q+v_\beta-u_\beta < p+\sum_{\alpha' \stackrel{< k}{\leftarrow} \beta} u_{\alpha'}.$
Since $v_\beta=\sum_{\alpha' \leftarrow \beta} u_{\alpha'}$, the above inequality implies
$p-q > \sum_{\alpha' \stackrel{\ge k}{\leftarrow} \beta} u_{\alpha'} -u_\beta$, 
thus
$u_\alpha - \sum_{\beta' \stackrel{\ge k}{\to} \alpha} u_{\beta'} = p-q >  \sum_{\alpha' \stackrel{\ge k}{\leftarrow} \beta} u_{\alpha'} -u_\beta$, 
which implies a contradiction that $\sum_{\alpha' \stackrel{ >k }{\leftarrow} \beta} u_{\alpha'} + 
 \sum_{\beta' \stackrel{ > k}{\to} \alpha} u_{\beta'}  
< 0$.

The claim implies that the $y$-coordinate of  $Q_{p,p+v_\alpha-u_\alpha}$ is uniquely determined, since $\phi_\beta^{-1}\phi_\alpha(Q_{p,p+v_\alpha-u_\alpha})$ is on the  vertical line $y=b_\beta-u_\beta+q$. In other words, each $H^\alpha_p$ is uniquely determined. Similarly, each $V^\beta_q$ is uniquely determined. Thus the road map has to be the unique one given by (v). 
%So condition (ii) uniquely determines $(\{H^\alpha_{p}\}, \{V^\beta_{q}\})$, which has to be the one given in (v).
%This proves (ii)$\Rightarrow$(v). Moreover, now we know that (iv) gives the unique $C$ satisfying (ii). 

\noindent (iv)$\Rightarrow$(i). Indeed, by Lemma \ref{lem:larger C'},  $C_{\rm max}(\emptyset)$ has no essential NW corners, so it satisfies (ii). By the proof of (ii)$\Rightarrow$(v), $C_{\rm max}(\emptyset)$  coincides with $C$ defined in (iv).

\noindent (iii)$\Rightarrow$(ii). We show the contrapositive, that is, if $C$ has an essential NW corner, then it is inverse chute movable. Indeed, consider the two cases given in Lemma \ref{lem:larger C'}. 
In Case 1 (see Figure \ref{fig:essentialAB}), the point $\phi_\alpha^{-1}(i',j',k')-(1,1)$ in $C$ can be inverse chute moved to $\phi_\alpha^{-1}(i',j',k')$. 
In Case 2 (see Figure \ref{fig:essentialA}), the point $R_{p+t,q+t}$ in $C$ can be inverse chute moved to $R_{p+t,q+t}+(1,1)$ if $R_{p+t,q+t}\neq Q_{p+t,q+t}$, or to $\phi_\alpha^{-1}(i',j',k')$ if $R_{p+t,q+t} = Q_{p+t,q+t}$. 

\noindent  (i)$\Rightarrow$(iii). Indeed, if $C$ is inverse chute movable, then after a inverse chute move we get $C'>_T C_{\rm max}(\emptyset)$, contradicting the definition of $C_{\rm max}(\emptyset)$.

Now we have proved that (iv)$\Leftrightarrow$(v)$\Leftrightarrow$(ii)$\Leftarrow$(iii)$\Leftarrow$(i)$\Leftarrow$(iv).
This completes the proof of Lemma \ref{lem:initial C}.
\end{proof}

\begin{remark}
(a) We describe the initial concurrent vertex map $C_{\max}(\emptyset)$ for double determinantal ideals  $I^{(r)}_{m,n,u,v}$ for $\mathcal{Q}:2\stackrel{r}{\to}1$ which has $r$ arrows from $2$ to $1$. Assume without loss of generality that $u\ge v$. 
It is easy to check that the region for $C_{\max}(\emptyset)$ given in Lemma \ref{lem:initial C} (iv) becomes Figure \ref{fig:P0}.

\begin{figure}[ht]
\begin{center}
\begin{tikzpicture}[scale=.35]
    \begin{scope}[shift={(2,0)}]
      \draw[draw=black] (0,0) rectangle ++(4,6);
      \draw[fill=gray!30] (2.3,0)--(4,0)--(4,4)--(2.3,4)--(2.3,0);
      \draw[decorate,decoration={brace,mirror,raise=3pt,amplitude=4pt}, thick]
    (2.3,0)--(4,0)  node [midway,yshift=-0.4cm] {${v}$};
      \draw[decorate,decoration={brace,mirror, raise=3pt,amplitude=4pt}, thick]
    (4,0)--(4,4)  node [midway,xshift=0.4cm] {${u}$};
      \node at(1,-3) {Pages $1,\dots,r-\lfloor \frac{{u}}{{v}}\rfloor-1$.};
      \end{scope}
    \begin{scope}[shift={(12,0)}]
      \draw[draw=black] (0,0) rectangle ++(4,6);
      \draw[fill=gray!30] (2.3,0)--(4,0)--(4,6)--(3,6)--(3,4)--(2.3,4)--(2.3,0);
      \draw[dashed] (3,4)--(4,4);
      \draw[decorate,decoration={brace,mirror,raise=3pt,amplitude=4pt}, thick]
    (2.3,0)--(4,0)  node [midway,yshift=-0.4cm] {${v}$};
      \draw[decorate,decoration={brace,mirror, raise=3pt,amplitude=4pt}, thick]
    (4,0)--(4,4)  node [midway,xshift=0.4cm] {${u}$};
      \draw[decorate,decoration={brace,raise=3pt,amplitude=4pt}, thick]
    (3,6)--(4,6)  node [midway,yshift=0.5cm] {$u{\rm \, mod\, }v$};;
      \node at(2,-3) {Page $r-\lfloor \frac{{u}}{{v}}\rfloor$.};
      \end{scope}
    \begin{scope}[shift={(24,0)}]
      \draw[draw=black] (0,0) rectangle ++(4,6);
      \draw[fill=gray!30] (2.3,0)--(4,0)--(4,6)--(2.3,6)--(2.3,0);
      \draw[decorate,decoration={brace,mirror,raise=3pt,amplitude=4pt}, thick]
    (2.3,0)--(4,0)  node [midway,yshift=-0.4cm] {${v}$};
      \node at(2,-3) {Pages $r-\lfloor \frac{{u}}{{v}}\rfloor+1,\dots,r-1$.};
      \end{scope}
    \begin{scope}[shift={(34,0)}]
      \draw[draw=black] (0,0) rectangle ++(4,6);
      \draw[fill=gray!30] (0,0)--(0,1.7)--(2.3,1.7)--(2.3,6)--(4,6)--(4,0)--(0,0);
      \draw[dashed] (2.3,0)--(2.3,1.7)--(4,1.7);
      \draw[decorate,decoration={brace,mirror,raise=3pt,amplitude=4pt}, thick]
    (2.3,0)--(4,0)  node [midway,yshift=-0.4cm] {${v}$};
      \draw[decorate,decoration={brace,mirror, raise=3pt,amplitude=4pt}, thick]
    (4,0)--(4,1.7)  node [midway,xshift=0.4cm] {${v}$};
      \node at(2,-3){Page $r$.};
      \end{scope}
\end{tikzpicture}
\caption{$C_{\max}(\emptyset)$ for a double determinantal ideal (the grey region).}\label{fig:P0}
\end{center}
\end{figure}

(b) In the special case of (a) when $u=v$, Figure \ref{fig:P0} specializes to Figure \ref{fig:P0,u=v}. 
\begin{figure}[ht]
\begin{center}
\begin{tikzpicture}[scale=.35]
    \begin{scope}[shift={(0,0)}]
      \draw[draw=black] (0,0) rectangle ++(4,6);
      \draw[fill=gray!30] (2.3,0)--(4,0)--(4,1.7)--(2.3,1.7)--(2.3,0);
      \draw[decorate,decoration={brace,mirror,raise=3pt,amplitude=4pt}, thick]
    (2.3,0)--(4,0)  node [midway,yshift=-0.4cm] {$v$};
      \draw[decorate,decoration={brace,mirror, raise=3pt,amplitude=4pt}, thick]
    (4,0)--(4,1.7)  node [midway,xshift=0.4cm] {$v$};
      \node at(2,-3) {First $(r-1)$ pages};
      \end{scope}
    \begin{scope}[shift={(10,0)}]
      \draw[draw=black,fill=white] (0,0) rectangle ++(4,6);
      \draw[fill=gray!30] (0,0)--(0,1.7)--(2.3,1.7)--(2.3,6)--(4,6)--(4,0)--(0,0);
      \draw[dashed] (2.3,0)--(2.3,1.7)--(4,1.7);
      \draw[decorate,decoration={brace,mirror,raise=3pt,amplitude=4pt}, thick]
    (2.3,0)--(4,0)  node [midway,yshift=-0.4cm] {$v$};
      \draw[decorate,decoration={brace,mirror, raise=3pt,amplitude=4pt}, thick]
    (4,0)--(4,1.7)  node [midway,xshift=0.4cm] {$v$};
      \node at(2,-3) {Last page};
      \end{scope}
\end{tikzpicture}
\caption{$C_{\max}(\emptyset)$ for a double determinantal ideal in the special case $u=v$.}
\label{fig:P0,u=v}
\end{center}
\end{figure}

\end{remark}

\begin{lemma}\label{lem:essential move vs move}
If $(i,j,k)\in C$ is an essential horizontal (resp.~vertical)  NW corner and $C'$ is obtained as in Lemma \ref{lem:larger C'}, then $C'$ can be obtained from $C$ by a sequence of inverse horizontal (resp.~vertical) chute moves. A similar statement holds for an essential SE corner. 
\end{lemma}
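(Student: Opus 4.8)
The plan is to analyze the two cases from Lemma \ref{lem:larger C'} separately, and in each case exhibit the passage from $C$ to $C'$ as a concatenation of inverse horizontal chute moves along the diagonal staircase that appears in the figures. Throughout, I will only discuss the horizontal case with an essential horizontal NW corner $(i,j,k)=\phi_\alpha(P)$; the vertical case is the transpose, and the SE-corner statement follows by the symmetry already used in Proposition \ref{prop:C(S)>=C}.

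First I would handle Case 2 of Lemma \ref{lem:larger C'}, where $P=Q_{pq}$ is a horizontal NW corner that is not a vertical NW corner. Recall from that proof that one has the points $R_{p+t',q+t'}$ (the leftmost vertex on $P_{p+t'}\rightsquigarrow Q_{p+t'}$ with $x$-coordinate $p+t'$) for $0\le t'\le t$, and $C'$ is obtained by replacing, in each $H^\alpha_{p+t'}$, the subpath $R_{p+t',q+t'}+(1,0)\to R_{p+t',q+t'}\to P_{p+t',q+t'+1}$ by $R_{p+t',q+t'}+(1,0)\to P_{p+t',q+t'+1}+(1,0)\to P_{p+t',q+t'+1}$. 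The key observation is that this single replacement, at level $p+t'$, is exactly an inverse horizontal chute move: the relevant $2\times r$ block in $A_\alpha$ has rows $x=p+t'$ and $x=p+t'+1$ and columns running from the $y$-coordinate of $R_{p+t',q+t'}$ through that of $P_{p+t',q+t'+1}$; the only points of $C$ in this block are its SW, NE, and (for $t'=0$) the relevant corner $P$, i.e. $P_{p+t',q+t'+1}$ and its east neighbor on row $p+t'+1$ and $R_{p+t',q+t'}$, so it is a horizontal chutable rectangle and the inverse chute move slides the ``$\cdot$'' at its SE corner up to its NW corner. Here I must be careful that the $2\times r$ block is actually chutable (no stray ``$\cdot$'' inside), which is precisely what the ``no interior lattice point of $Q\stackrel{H}{\rightsquigarrow}P'$ is in $C$'' type arguments in the proof of Lemma \ref{lem:larger C'} Case 2 supply, together with Lemma \ref{lem:no corner}. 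Performing these inverse chute moves in the order $t'=0,1,\dots,t$ (or the reverse — I would check which order keeps each intermediate configuration chutable) transforms $C$ into $C'$, and each intermediate object is $\uu$-compatible (hence a concurrent vertex map) by Lemma \ref{lemma:chute move keep compatibility}.

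Next I would treat Case 1 of Lemma \ref{lem:larger C'}, where $P$ is both a horizontal and a vertical NW corner. There $C'$ is obtained by replacing the NW corner $\phi_\alpha^{-1}(i+t',j+t',k)$ of $H^\alpha_{p+t'+1}$ by a SE corner $\phi_\alpha^{-1}(i+t'+1,j+t'+1,k)$, for $0\le t'\le t$, which moves a single unit box diagonally down. Since $P$ is a vertical NW corner, each of these replacements at a fixed level is a chute move on a $2\times 2$ block; by Remark (i) after Definition \ref{df:chute} a $2\times 2$ horizontal chute move and vertical chute move coincide, so each step is (in particular) an inverse horizontal chute move. Again I would order the $t+1$ moves appropriately and invoke Lemma \ref{lemma:chute move keep compatibility} to see that each intermediate set is a concurrent vertex map; and the endpoint is $C'$ by comparing with the explicit road map modification in Figure \ref{fig:essentialAB}.

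The main obstacle I anticipate is bookkeeping: verifying that at each stage the relevant $2\times r$ (or $2\times 2$) block really is a chutable rectangle, i.e. that after performing the first few moves no new ``$\cdot$'' has appeared inside the next block and the three corner points are exactly as required. This is a local check that follows from the descriptions of $H^\alpha_{p+t'}$ and the positions of $R_{p+t',q+t'}$, $P_{p+t',q+t'+1}$ in the proof of Lemma \ref{lem:larger C'}, but it requires choosing the order of moves consistently (bottom-up versus top-down) and matching indices; I expect no conceptual difficulty beyond that. A final remark: since an inverse horizontal chute move strictly increases a concurrent vertex map under $<_T$ (Lemma \ref{lemma:chute move keep compatibility}) and $C<_T C'$ with $C,C'$ differing in exactly one point, the sequence is automatically nontrivial, which is a useful sanity check.
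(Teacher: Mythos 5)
Your overall strategy matches the paper's: exhibit the passage from $C$ to $C'$ as a concatenation of inverse chute moves, handling the two cases of Lemma \ref{lem:larger C'} separately. Case 1 (a $2\times 2$ chute move at each level $t'$) is essentially right. However, in Case 2 there is a genuine gap. You claim that the entire level-$t'$ replacement — moving the subpath $R_{p+t',q+t'}+(1,0)\to R_{p+t',q+t'}\to P_{p+t',q+t'+1}$ down to $R_{p+t',q+t'}+(1,0)\to P_{p+t',q+t'+1}+(1,0)\to P_{p+t',q+t'+1}$ — is realized by a \emph{single} inverse horizontal chute move on a $2\times r$ block whose only ``$\cdot$''s are at the SW, NE, SE corners. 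This fails whenever $R_{p+t',q+t'}\neq Q_{p+t',q+t'}$: all the consecutive lattice points $R_{p+t',q+t'}, R_{p+t',q+t'}+(0,1),\dots,Q_{p+t',q+t'}$ lie on both $H^\alpha_{p+t'}$ and $V^\alpha_{q+t'}$, hence are in $C$, and they sit in the interior of the top row of your proposed rectangle — so it is not a chutable rectangle and no single inverse chute move covers the replacement. The paper instead performs a \emph{sequence} of individual inverse chute moves at each level, applied to the points $R_{p+t',q+t'}, R_{p+t',q+t'}+(0,1),\dots,Q_{p+t',q+t'}$ one at a time (proceeding from level $t'=t$ down to $t'=0$, with only the single move at $Q_{pq}$ at level $0$ since $R_{pq}=Q_{pq}$ there). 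You also leave the order of levels ``to be checked''; the paper's proof depends on taking them from deepest ($t'=t$) to shallowest ($t'=0$), since each level's moves clear the obstructions needed for the next. Finally, a small but real slip: you describe the inverse chute move as sliding the ``$\cdot$'' from the SE corner up to the NW corner — that is the forward chute move. The inverse move goes from NW to SE, consistent with the fact that passing from $C$ to $C'>_T C$ moves a point to a more SE position.
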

\begin{proof}
We only prove the first statement. Use the notations in the proof of Lemma \ref{lem:larger C'}. Without loss of generality, assume $(i,j,k)$ is an essential horizontal NW corner.  %We construct a sequence of horizontal chute moves to get $C'$ from $C$ as follows.
If $(i,j,k)$ is a vertical NW corner (see Fig \ref{fig:essentialAB}): we obtain $C'$ from $C$ by applying inverse horizontal chute moves to $(i'-1,j'-1,k), (i'-2,j'-2,k), \dots, (i,j,k)$ in order.
Otherwise (see Fig \ref{fig:essentialA}): we obtain $C'$ from $C$ by applying inverse horizontal moves to the following points in order: 
$R_{p+t,q+t}, R_{p+t,q+t}+(0,1),\dots, Q_{p+t,q+t}$; 
$R_{p+t-1,q+t-1}, R_{p+t-1,q+t-1}+(0,1), \dots, Q_{p+t-1,q+t-1}$; 
$\dots$; 
$R_{p+1,q+1}, R_{p+1,q+1}+(0,1),\dots, Q_{p+1,q+1}$; 
$Q_{pq}$. 
\end{proof}

\begin{proposition}\label{prop:PDC to Cmax}
Every concurrent vertex map $C\neq C_{\max}(\emptyset)$ can be obtained from $C_{\max}(\emptyset)$ by a sequence of (horizontal or vertical) chute moves. A similar statement holds for $C_{\min}(\emptyset)$ with chute moves replaced by inverse chute moves. 
\end{proposition}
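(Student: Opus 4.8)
The plan is to prove that every concurrent vertex map $C\neq C_{\max}(\emptyset)$ can be reached from $C_{\max}(\emptyset)$ by a sequence of chute moves, arguing by induction on the rank of $C$ in the total order $<_T$ on concurrent vertex maps. Since the set of concurrent vertex maps is finite and $C_{\max}(\emptyset)$ is its unique $<_T$-maximum, it suffices to show: for every concurrent vertex map $C<_T C_{\max}(\emptyset)$, there is a concurrent vertex map $\widetilde C$ with $C<_T\widetilde C$ such that $C$ is obtained from $\widetilde C$ by a single chute move (equivalently, $\widetilde C$ is obtained from $C$ by a single inverse chute move). Granting this descent step, one repeatedly applies it starting from $C$, producing a strictly $<_T$-increasing chain of concurrent vertex maps $C<_T\widetilde C<_T\widetilde{\widetilde C}<_T\cdots$; by finiteness this chain terminates, and by Lemma~\ref{lem:initial C}(iii) the only concurrent vertex map that is not inverse chute movable is $C_{\max}(\emptyset)$, so the chain ends there. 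Reversing the chain gives the desired sequence of (forward) chute moves from $C_{\max}(\emptyset)$ to $C$.

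The descent step is exactly the contrapositive direction (iii)$\Rightarrow$(ii) already established inside the proof of Lemma~\ref{lem:initial C}: if $C\neq C_{\max}(\emptyset)$ then by Lemma~\ref{lem:initial C}(i)$\Leftrightarrow$(iii) $C$ is inverse chute movable, so some inverse (horizontal or vertical) chute move produces a concurrent vertex map $C'$; and by Lemma~\ref{lemma:chute move keep compatibility}, $C'$ is again a concurrent vertex map with $C'>_T C$. So the induction runs without any new work beyond invoking these lemmas. Concretely: set $C_0=C$, and while $C_i\neq C_{\max}(\emptyset)$, use Lemma~\ref{lem:initial C} to find an inverse chute move $C_i\dashrightarrow C_{i+1}$ with $C_{i+1}$ a concurrent vertex map satisfying $C_{i+1}>_T C_i$ (Lemma~\ref{lemma:chute move keep compatibility}). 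The sequence $C_0<_T C_1<_T\cdots$ is strictly increasing in a finite poset, hence reaches $C_N=C_{\max}(\emptyset)$ for some $N$; then $C_{\max}(\emptyset)=C_N\to C_{N-1}\to\cdots\to C_0=C$ is a sequence of forward chute moves.

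For the second assertion, the plan is entirely symmetric: replace $<_T$ by $>_T$, $C_{\max}$ by $C_{\min}$, essential NW corners by essential SE corners, and run the mirror image of Lemma~\ref{lem:initial C} (whose SE-analogue follows by the same symmetry used throughout the paper between the SE-most and NW-most constructions). A concurrent vertex map $C\neq C_{\min}(\emptyset)$ is chute movable (not inverse chute movable in the mirrored sense), each chute move lowers it under $<_T$ and keeps it a concurrent vertex map by Lemma~\ref{lemma:chute move keep compatibility}, and finiteness forces the descending chain to terminate at $C_{\min}(\emptyset)$; reversing yields the sequence of inverse chute moves.

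The only genuine subtlety — and the one place one must be slightly careful — is making sure the two ingredients line up: namely that ``inverse chute movable'' in Lemma~\ref{lem:initial C}(iii) really does guarantee the existence of an inverse chute move landing on a concurrent vertex map strictly above $C$, rather than merely on some $\uu$-compatible set. This is handled because Lemma~\ref{lemma:chute move keep compatibility} asserts precisely that a chute move (and hence its inverse) takes a concurrent vertex map to a concurrent vertex map, together with the stated change in $<_T$-order; so there is no gap. Thus the proof is a short assembly of Lemma~\ref{lem:initial C} and Lemma~\ref{lemma:chute move keep compatibility} plus the finiteness of the $<_T$-ordered set of concurrent vertex maps, and I do not anticipate any real obstacle beyond writing this bookkeeping cleanly.
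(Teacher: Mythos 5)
Your proposal is correct and follows exactly the paper's own argument: iteratively apply inverse chute moves (which Lemma~\ref{lemma:chute move keep compatibility} guarantees stay within concurrent vertex maps and strictly increase under $<_T$), conclude by finiteness and Lemma~\ref{lem:initial C} that the process terminates at $C_{\max}(\emptyset)$, then reverse the chain; the second statement is handled by the same mirror symmetry. You have simply written out the bookkeeping that the paper's one-line proof leaves implicit.
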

\begin{proof}
We only prove the first statement. Start with $C$ and iteratively apply inverse chute movables if possible, we will get $C_{\max}(\emptyset)$ by Lemma \ref{lem:initial C}. The proposition is proved by reversing the process.
\end{proof}

\subsection{Vertex-decomposable}
First recall the definition of (pure) vertex-decomposable (see \cite[Def 1.8.5]{Knutson-Miller} or \cite[Definition 16.41]{MS}). Given a vertex $v$ in a simplicial complex $\Delta$, we define  the deletion operator  ${\rm del}_v\Delta=\{G\in\Delta | v\notin G\}$ and 
the link operator  ${\rm link}_v\Delta =\{G\in\Delta| v\notin G, G\cup\{v\}\in\Delta\}$.
(Note that ${\rm del}_v\Delta$ and ${\rm link}_v\Delta$ are undefined if $v$ is not a vertex in $\Delta$.) 
Then $\Delta$ is called vertex-decomposable if $\Delta$ is pure and either (i) $\Delta=\{\emptyset\}$, or (ii) for some vertex $v\in\Delta$, both ${\rm del}_v \Delta$ and ${\rm link}_v\Delta$ are vertex-decomposable.

\begin{proposition}\label{prop:vertex decomposable}
The simplicial complex  $\Delta_{\mathcal{Q},\mm,\uu}$  is vertex-decomposable.
Consequently, $\Delta_{\mathcal{Q},\mm,\uu}$ is pure, shellable, and the bipartite determinantal ideal $I_{\mathcal{Q},\mm,\uu}$ is Cohen-Macaulay.
\end{proposition}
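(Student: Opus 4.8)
The plan is to prove that $\Delta_{\mathcal{Q},\mm,\uu}$ is vertex-decomposable by induction on the number of vertices of $L$ (equivalently, on $|L|=\sum_k m_{\target(h_k)}m_{\source(h_k)}$), using the faces/facets description from Theorem \ref{thm:prime decomposition of I} together with the structure theory of concurrent vertex maps developed in \S\ref{subsection:u-compatible sets} and in \S\ref{subsection: C_max} and \S\ref{subsection: chute moves}. The base case is $L=\emptyset$, where $\Delta=\{\emptyset\}$ is vertex-decomposable by definition. For the inductive step, I will pick a specific \emph{shedding vertex} $v$ and verify that both ${\rm del}_v\Delta_{\mathcal{Q},\mm,\uu}$ and ${\rm link}_v\Delta_{\mathcal{Q},\mm,\uu}$ are again Stanley--Reisner complexes of bipartite determinantal ideals (or, more precisely, of the initial ideals of such, so that the inductive hypothesis applies), or at least are vertex-decomposable by a direct argument. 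Purity is automatic from Proposition \ref{prop:concurrent equiv}, since every facet is a concurrent vertex map and all concurrent vertex maps have the same cardinality $N_{\mathcal{Q},\mm,\uu}$ by part (iii).

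The natural choice of shedding vertex is the $<_T$-largest vertex of $L$ that is not a ``corner of the book'', or more concretely a vertex $v=(i,j,k)$ sitting in the bottom-right of some page so that deleting the corresponding row or column of a variable matrix $X^{(k)}$ again produces a bipartite determinantal ideal. I expect the cleanest option is: let $v$ be the vertex corresponding to the top-left entry $x^{(k)}_{11}$ of a page $k$ whose matrix $X^{(k)}$ can be shrunk — that is, $v=\phi_\gamma(1,1)$ for a suitable $\gamma$. Then ${\rm del}_v\Delta$ should be identified with $\Delta_{\mathcal{Q},\mm',\uu}$ for $\mm'$ obtained by decreasing one entry $m_\delta$ by $1$ (removing a row or column from the appropriate block), and ${\rm link}_v\Delta$ should be identified with $\Delta_{\mathcal{Q},\mm'',\uu''}$ where again one dimension shrinks and possibly some $u_\gamma$ drops by $1$ — the point being that forcing $v\in C$ constrains the surrounding road map, effectively ``using up'' one unit of rank locally. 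Both descriptions rely on the $\uu$-compatibility criterion: $C$ is a face iff no $C^\gamma$ contains a diagonal chain of size $u_\gamma+1$, so deleting a row/column of $A_\gamma$ (for ${\rm del}_v$) or prescribing that the new corner point lies in $C$ and shifting indices (for ${\rm link}_v$) translates transparently into the $\uu$-compatibility condition for a smaller quiver datum.

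The main obstacle will be making the link computation honest. Requiring $v\in C$ does not merely delete a row/column; it propagates a diagonal-chain constraint through the concurrent vertex map, and I will need to use Lemma \ref{contained in path}, Lemma \ref{lemma:u-compatible = existence of road map}, and the explicit corner analysis in Proposition \ref{prop:concurrent equiv}(iv)--(v) to show that the restriction of a $\uu$-compatible $C$ containing $v$ to $L\setminus\{v\}$ is exactly a $\uu'$-compatible set on the smaller $L'$, with $\uu'$ differing from $\uu$ only in a controlled way (decreasing the rank of whichever $A_\gamma$'s pass through $v$). I anticipate a case split according to whether $v$ is a ``boundary'' vertex of each incident block — when $v$ lies at a page corner, the bookkeeping is simplest, so choosing $v$ at the northwest corner of a carefully chosen page (the one that is the leftmost block of $A_\alpha$ and topmost block of $A_\beta$ for some arrow $h_k:\beta\to\alpha$, or an extreme page for a quiver with leaves) will be essential. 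Once the two sub-complexes are identified as $\Delta$'s of genuine bipartite determinantal data with strictly smaller $|L|$, both are vertex-decomposable by the inductive hypothesis, and we are done; shellability then follows from the standard fact that vertex-decomposable implies shellable (\cite[Def 1.8.5]{Knutson-Miller}), and Cohen--Macaulayness of $I_{\mathcal{Q},\mm,\uu}$ follows since ${\rm init}(I_{\mathcal{Q},\mm,\uu})=I_{\Delta_{\mathcal{Q},\mm,\uu}}$ is shellable hence Cohen--Macaulay, and Cohen--Macaulayness passes from the initial ideal to the ideal itself by the Gr\"obner degeneration (Theorem \ref{general}).

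As a fallback if the ``delete a row/column'' identification of ${\rm del}_v\Delta$ and ${\rm link}_v\Delta$ with honest $\Delta_{\mathcal{Q}',\mm',\uu'}$'s turns out to be too rigid (e.g.\ when $u_\gamma=\min(a_\gamma,b_\gamma)$ for some $\gamma$, the degenerate cases flagged after \eqref{eq:condition on u}), I would instead argue vertex-decomposability of ${\rm link}_v\Delta$ and ${\rm del}_v\Delta$ \emph{directly} from the concurrent-vertex-map model: use $C_{\max}$, $C_{\min}$ and the chute-move connectivity (Proposition \ref{prop:PDC to Cmax}) to produce a recursive shedding order on the facets, mirroring the pipe-dream arguments of Knutson--Miller. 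Either way, the combinatorial heavy lifting has been front-loaded into Proposition \ref{prop:concurrent equiv} and Lemma \ref{lem:initial C}, and the remaining work is the inductive identification of the two sub-complexes.
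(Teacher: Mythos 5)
The main route you propose---identifying ${\rm del}_v\Delta$ and ${\rm link}_v\Delta$ with $\Delta_{\mathcal{Q}',\mm',\uu'}$ for smaller quiver data---has a genuine gap. The operator ${\rm del}_v$ removes a single \emph{vertex} from the complex, not an entire row or column of a matrix: all the other variables in the row and column of $v$ remain vertices of ${\rm del}_v\Delta$ and still participate in the diagonal-chain constraints. Already the cardinalities clash: ${\rm del}_v\Delta$ lives on $|L|-1$ vertices, whereas $\Delta_{\mathcal{Q},\mm',\uu}$ with $m_\delta$ decreased by one lives on $|L|$ minus the full length of the removed row or column. On the link side, requiring $v\in C$ with $v$ at the NW corner $(1,1)$ of $A_\gamma$ forces the submatrix on $[2,a_\gamma]\times[2,b_\gamma]$ to avoid diagonal chains of size $u_\gamma$, while the whole array minus $v$ still only needs to avoid chains of size $u_\gamma+1$; that mixed bound is a ladder-type condition, not a $\Delta_{\mathcal{Q}'',\mm'',\uu''}$ for any bipartite quiver datum. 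So the recursion does not close up inside the class you want to induct on. Finally, you correctly note that $\Delta_{\mathcal{Q},\mm,\uu}$ itself is pure by Proposition~\ref{prop:concurrent equiv}(iii), but the hard part is purity of ${\rm del}_v\Delta$, ${\rm link}_v\Delta$, and all iterated sub-complexes --- that is not automatic and needs an argument.

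Your fallback is the right approach, and it is what the paper actually does: rather than identifying the sub-complexes with smaller determinantal complexes, the paper applies $F_{v_\ell}\cdots F_{v_1}$ (each $F_{v_i}\in\{{\rm del}_{v_i},{\rm link}_{v_i}\}$) in the $<_T$-order, writes $\Delta_{\ell,S}=\{C_2\subseteq\{v_{\ell+1},\dots,v_{|L|}\}:C_2\cup S\text{ is }\uu\text{-compatible}\}$, and proves each $\Delta_{\ell,S}$ is pure directly. Purity is obtained by showing every facet of $\Delta_{\ell,S}$ can be carried to the single reference facet $C_{\max}(S)\cap\{v_{\ell+1},\dots,v_{|L|}\}$ by inverse chute moves (the paper's Claim~$\bigstar$, together with Lemmas~\ref{lem:larger C'} and~\ref{lem:essential move vs move} and Proposition~\ref{prop:C(S)>=C}), without ever needing the intermediate complexes to be determinantal. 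To fix your proposal you would need to abandon the quiver-shrinking identifications and carry out that chute-move argument in full.
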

\begin{proof}
Write $L=\{v_1,\dots,v_{|L|}\}$ such that $v_1<_T v_2 <_T \dots <_T v_{|L|}$.
We claim that for any $0\le \ell \le |L|$ and  any subset $S\subseteq\{v_1,\dots,v_\ell\}$, the simplicial complex $\Delta_{\ell,S}:=F_{v_\ell}\cdots F_{v_2}F_{v_1} \Delta_{\mathcal{Q},\mm,\uu}$ is pure if it defined, 
where
$$F_{v_i}
:=
\begin{cases}
&{\rm link}_{v_i}, \textrm{ if $v_i\in S$};\\
&{\rm del}_{v_i}, \textrm{ if $v_i \notin S$}.\\
\end{cases}
$$ 
The claim implies that all the well-defined $\Delta_{\ell,S}$ are vertex-decomposable, in particular (when $\ell=0$) $\Delta_{\mathcal{Q},\mm,\uu}$ is vertex-decomposable. 
Note that $\Delta_{\ell,S}$ is well-defined if and only if $v_i\in F_{v_{i-1}}\cdots F_{v_1}\Delta_{\mathcal{Q},\mm,\uu}$ for $1\le i\le \ell$. 
The idea to prove the claim is that, for each well-defined $\Delta_{\ell,S}$, we construct a facet $C_{\ell,S}$ such that any facet of $\Delta_{\ell,S}$ can be converted into  $C_{\ell,S}$ by repeatedly applying inverse chute moves. Thus all facets of $\Delta_{\ell,S}$ have the same dimension, which implies that 
$\Delta_{\ell,S}$ is pure.

To prove the claim, first note that the faces of $\Delta_{\ell,S}$ have the following easy description: 
\begin{equation}\label{eq:DLS}
\Delta_{\ell,S}=\big\{C_2\subseteq\{v_{\ell+1},\dots,v_{|L|}\} \, \big| \, C_2\cup S \textrm{ is $\uu$-compatible}\big\}.
\end{equation}
Indeed this can be proved by induction on $\ell$. For the base case $\ell=0$, we have $S=\emptyset$, the faces of $\Delta_{\ell,S}=\Delta_{\mathcal{Q},\mm,\uu}$ are exactly those $\uu$-compatible sets. For the inductive step, assume $\ell>0$ and the \eqref{eq:DLS} holds for $\ell-1$. We consider two cases separately. 
If $v_\ell\in S$, then $\Delta_{\ell,S}={\rm link}_{v_\ell}\Delta_{\ell-1,S\setminus\{v_\ell\}}
=\{G  |  v_\ell \notin G, G\cup\{v_\ell\} \in   \Delta_{\ell-1,S\setminus\{v_\ell\}} \}
=\{G  |  v_\ell \notin G, G\cup\{v_\ell\}\subseteq \{v_\ell,\dots,v_{|L|}\}, G\cup\{v_\ell\}\cup (S\setminus\{v_\ell\}) \textrm{ is $\uu$-compatible}
\}
=\{G\subseteq\{v_{\ell+1},\dots,v_{|L|}\}  |  G\cup S \textrm{ is $\uu$-compatible}
\}
$. 
If $v_\ell\notin S$, then 
$\Delta_{\ell,S}={\rm del}_{v_\ell}\Delta_{\ell-1,S}
=\{G \in \Delta_{\ell-1,S} |  v_\ell \notin G \}
=\{G\subseteq \{v_\ell,\dots,v_{|L|}\}  |  v_\ell \notin G,  G\cup S \textrm{ is $\uu$-compatible}
\}
=\{G\subseteq\{v_{\ell+1},\dots,v_{|L|}\}  |  G\cup S \textrm{ is $\uu$-compatible}
\}
$. 
So the inductive step is done in both cases. 

Note that $\Delta_{\ell,S}$, if well-defined, contains $\emptyset$.
Define $C_{\ell,S} := C_{\max}(S)\cap\{v_{\ell+1},\dots,v_{|L|}\},$
which is obviously a face of $\Delta_{\ell,S}$.
Fix a facet $C_2$ of $\Delta_{\ell,S}$. We shall show that
\begin{equation}\label{eq:C2=C_l,S}
|C_2|=|C_{\ell,S}|.
\end{equation}
%As a consequence, all facets of $\Delta_{\ell,S}$ have the same dimension, thus $\Delta_{\ell,S}$ is pure.
Note that 
\begin{equation}\label{eq:C2=C cap}
C_2=C_{\max}(S\cup C_2)\cap\{v_{\ell+1},\dots,v_{|L|}\}
\end{equation} 
Indeed, let $C := C_{\max}(S\cup C_2)$. The inclusion ``$\subseteq$'' is obvious; 
%true because $C_2\subseteq C$ and $C_2\subseteq\{v_{\ell+1},\dots,v_{|L|}\}$;  then
the right side is a face of $\Delta_{\ell,S}$ containing the facet $C_2$, thus the equality must hold. 
Denote $C_1=C \setminus C_2=C\cap\{v_1,\dots,v_\ell\}$. 
So $C$ is the disjoint union $C_1\cup C_2$, and $S\subseteq C_1$.  
%Also note that $S\subseteq C$ and $S\cap C_2=\emptyset$ imply $S\subseteq C_1$. 

To prove \eqref{eq:C2=C_l,S}, we first reduce to the case when $C$ is not inverse chute movable at any point in $C_2$. Indeed, assume that $C\dashrightarrow C'$ is an inverse chute move that moves $P\in C_2$ to $P'\in C'$.  Then $P'\notin C_2$,  $P'>_T P>_T v_\ell$, $C_1\subseteq C\setminus\{P\}\subset C\setminus\{P\}\cup \{P'\}=C'$. 
Define  
\begin{equation}\label{eq:C'2}
C'_2:=C'\setminus C_1=C_2\setminus\{P\}\cup\{P'\}.
\end{equation}
Then $P'\in C'_2\subseteq\{v_{\ell+1},\dots,v_{|L|}\}$, $C'=C_1\cup C'_2$. 
Without loss of generality, assume $C\dashrightarrow C'$ is a horizontal inverse chute move that occurs in $A_\alpha$. 
The corresponding horizontal chute rectangle $R$ has $\phi_\alpha^{-1}(P)$ as its NW vertex and $\phi_\alpha^{-1}(P')$ as its SE vertex.  
Since $C'_2\cup S\subseteq C'$ is $\uu$-compatible,  $C'_2$ is a face of  $\Delta_{\ell,S}$.

\noindent{\bf Claim $\bigstar$}: If $C_2$ is a facet of $\Delta_{\ell,S}$, then $C'_2$  constructed in \eqref{eq:C'2} is also a facet of $\Delta_{\ell,S}$. 

\noindent Proof of Claim  $\bigstar$. Assume $C'_2$ is not a facet. Then there exists a larger face $C''_2=C'_2\cup\{P'' \}\supsetneq C'_2$ of $\Delta_{\ell,S}$. So $S\cup C'_2\cup\{P''\}$ is $\uu$-compatible. 
We consider three cases depending on the position of $P''$.

Case 1: $\phi_\alpha^{-1}(P'')$ lies outside the horizontal chute rectangle $R$. In this case, 
$S\cup C_2'\cup\{P''\}\to S\cup C_2\cup \{P''\}$ is a horizontal chute move. By Lemma \ref{lemma:chute move keep compatibility}, $S\cup C_2\cup\{P''\}$ is $\uu$-compatible, %Thus $C_2\cup\{P''\}$ is a face of $\Delta_{\ell,S}$, 
contradicting the assumption that $C_2$ is a facet of $\Delta_{\ell,S}$.
%(See Figure \ref{fig: vertex decomposable Case 1.}.)
\iffalse
\begin{figure}[ht]
\begin{center}
\begin{tikzpicture}[scale=.5]
%   \begin{scope}
%   \fill[gray!20](0,0)--(3,0)--(3,1)--(0,1)--(0,0);
%  \node at (0,1) {.};\node at (1,1) {+};\node at (2,1) {+};\node at (3,1) {.};
 %     \node at (0,0){.};\node at (1,0) {+};\node at (2,0) {+};\node at (3,0) {+};      
%      \node at (0, 1.5) {$P$};
%      \fill[gray!20](0,0)--(3,0)--(3,1)--(0,1)--(0,0);
 %     \node at (0,1) {+};\node at (1,1) {+};\node at (2,1) {+};\node at (3,1) {.};
%      \node at (0,0){.};\node at (1,0) {+};\node at (2,0) {+};\node at (3,0) {.};      
 %     \node at (0, 1.5) {$P$};
%      \node at (3, -.7) {$P'$};
%      \node at (4,3) {+}; \node at (4.3, 3.8) {$P''$};      
 %     \node at (2, -2) {$C_1\cup C'_2$};
%      \node at (5.5, 0.5) {$\subset$}; 
%    \end{scope}      
    \begin{scope}[shift={(0,0)}]
      \fill[gray!20](0,0)--(3,0)--(3,1)--(0,1)--(0,0);
      \node at (0,1) {+};\node at (1,1) {+};\node at (2,1) {+};\node at (3,1) {.};
      \node at (0,0){.};\node at (1,0) {+};\node at (2,0) {+};\node at (3,0) {.};      
      \node at (0, 1.5) {$P$};
      \node at (3, -.7) {$P'$};
      \node at (4.3,2) {+}; \node at (5, 2) {$P''$};      
      \node at (8, 0.5) {$\longrightarrow$}; 
    \end{scope}      
    \begin{scope}[shift={(10,0)}]
      \fill[gray!20](0,0)--(3,0)--(3,1)--(0,1)--(0,0);
      \node at (0,1) {.};\node at (1,1) {+};\node at (2,1) {+};\node at (3,1) {.};
      \node at (0,0){.};\node at (1,0) {+};\node at (2,0) {+};\node at (3,0) {+};      
      \node at (0, 1.5) {$P$};
      \node at (3, -.7) {$P'$};
      \node at (4.3,2) {+}; \node at (5, 2) {$P''$};      
    \end{scope}      
 \end{tikzpicture}
 \end{center}
 \caption{Claim $\bigstar$ Case 1. Left: the dots represent points in $S\cup C_2'$; Right: the dots represents points in $S\cup C_2$.}
 \label{fig: vertex decomposable Case 1.}
 \end{figure}
 \fi
 
 Case 2: $\phi_\alpha^{-1}(P'')$ lies in the first row of $R$. We shall show that this case will not occur by proving the statement that $ C_2\cup\{P'\}\ (=C_2'\cup\{P\})$ is a face of $\Delta_{\ell,S}$, which contradicts the assumption that $C_2$ is a facet. %To prove the statement, we need to show that $S\cup C_2\cup\{P'\}$ is $\uu$-compatible.  
The proof is similar to the one of Lemma \ref{lemma:chute move keep compatibility}.
We prove by contradiction by assuming $S\cup C_2\cup\{P'\}(=S\cup C_2'\cup\{P\})$ is not $\uu$-compatible. Then there exists $\gamma\in {\rm V}_\mathcal{Q}$ such that
$D= \{ P_1,\dots,P_{u_\gamma+1}\}\subseteq ( S\cup C_2' \cup\{P\})^\gamma$
is a diagonal chain of size $u_\gamma+1$.
Since $S\cup C_2$ and $S\cup C_2'$ are $\uu$-compatible, 
$\phi_\gamma^{-1}(P')$ and $\phi_\gamma^{-1}(P)$ must both be in $D$.
Assume $\phi_\gamma^{-1} (P')=P_\ell$, and $P'$ is in ${\rm Page}_k$. 
Then $h_k$ must have target $\alpha$ so we denote $h_k:\beta\to\alpha$. 
Note that $\gamma=\alpha$ or $\beta$. 
Write $(i,j)=\phi_\alpha^{-1}(P)$, $(i+1,j+r-1)=\phi_\alpha^{-1}(P')$. 
Define $P''_\ell=\phi_\gamma^{-1}\phi_\alpha(i,j+r-1)$, and if possible, define $P'_\ell=\phi_\gamma^{-1}\phi_\alpha(i+1,j)$, $P'''_\ell = \phi_\gamma^{-1}\phi_\alpha(i,j)$. 
We need to consider two subcases:

--Subcase 1: either $\gamma=\alpha$, or ``$\gamma=\beta$ and $P'''_\ell$ is defined and lies in the same block of $A_\beta$ with $P_\ell$''.
Then $\phi_\gamma^{-1}(P)=P_{\ell-1}$, and $D\setminus\{P_{\ell-1}\} \cup \{\phi_\gamma^{-1}(P'')\}\subseteq (S\cup C'_2\cup\{P''\})^\gamma$ is a diagonal chain of size $u_\gamma+1$ because $\phi_\gamma^{-1}(P'')$ is to the east of $P_{\ell-1}$ and to the NW of $P_\ell$. But that contradicts the assumption that  $S\cup C'_2\cup\{P''\}$ is $\uu$-compatible. 

--Subcase 2: $\gamma=\beta$, and either $P'''_\ell$ is undefined or  $P'''_\ell$  lies in different blocks of $A_\beta$ with $P_\ell$. 
As we have shown in the proof of Lemma \ref{lemma:chute move keep compatibility}, $(P_{\ell-1})_x \neq (P_\ell)_x-1$. Then 
$(P_{\ell-1})_x \le (P_\ell)_x-2$, thus 
 $D\setminus\{P_{\ell}\} \cup \{P''_\ell\}\subseteq (S\cup C_2)^\gamma$ is a diagonal chain of size $u_\gamma+1$, contradicting the assumption that $S\cup C_2$ is $\uu$-compatible.

Case 3: $\phi_\alpha^{-1}(P'')$ lies in the second row of $R$. In this case, let $Q$ be the point determined by $\phi_\alpha^{-1}(Q)=\phi_\alpha^{-1}(P'')-(1,0)$ be the point above $\phi_\gamma^{-1}(P'')$. Then $C_1\cup C_2''=C'\cup\{P''\}\to C'\cup \{P'',Q\}\setminus\{P'\}$ and $C'\cup \{P'',Q\}\setminus\{P'\} \to C'\cup \{P,Q\}\setminus\{P'\}=C\cup\{Q\} $ are both horizontal chute move. By Lemma \ref{lemma:chute move keep compatibility}, $C\cup \{Q\}$ is $\uu$-compatible. Then $C_2\cup\{Q\}$ is a face of $\Delta_{\ell,S}$, contradicting the assumption that $C_2$ is a facet. 
(See Figure \ref{fig: vertex decomposable Case 3.}.)
\begin{figure}[ht]
\begin{center}
\begin{tikzpicture}[scale=.5]
    \begin{scope}[shift={(0,0)}]
      \fill[gray!20](0,0)--(4,0)--(4,1)--(0,1)--(0,0);
      \node at (0,1) {+};\node at (1,1) {+};\node at (2,1) {+};\node at (3,1) {+};\node at (4,1) {.};
      \node at (0,0){.};\node at (1,0) {+};\node at (2,0) {.};\node at (3,0) {+};\node at (4,0) {.};      
      \node at (0, 1.8) {$P$};
      \node at (4, -.7) {$P'$};
      \node at (2,-.7) {$P''$};  
      \node at (2, 1.8) {$Q$};     
      \node at (6,0.5)  {$\longrightarrow$}; 
    \end{scope}

    \begin{scope}[shift={(8,0)}]
      \fill[gray!20](0,0)--(4,0)--(4,1)--(0,1)--(0,0);
      \node at (0,1) {+};\node at (1,1) {+};\node at (2,1) {.};\node at (3,1) {+};\node at (4,1) {.};
      \node at (0,0){.};\node at (1,0) {+};\node at (2,0) {.};\node at (3,0) {+};\node at (4,0) {+};      
      \node at (0, 1.8) {$P$};
      \node at (4, -.7) {$P'$};
      \node at (2,-.7) {$P''$};  
      \node at (2, 1.8) {$Q$};    
      \node at (6,0.5)  {$\longrightarrow$};   
    \end{scope}

    \begin{scope}[shift={(16,0)}]
      \fill[gray!20](0,0)--(4,0)--(4,1)--(0,1)--(0,0);
      \node at (0,1) {.};\node at (1,1) {+};\node at (2,1) {.};\node at (3,1) {+};\node at (4,1) {.};
      \node at (0,0){.};\node at (1,0) {+};\node at (2,0) {+};\node at (3,0) {+};\node at (4,0) {+};      
      \node at (0, 1.8) {$P$};
      \node at (4, -.7) {$P'$};
      \node at (2,-.7) {$P''$};  
      \node at (2, 1.8) {$Q$};      
    \end{scope}

 \end{tikzpicture}
 \end{center}
 \caption{Claim $\bigstar$ Case 3, $C_1\cup C_2''\to C'\cup \{P'',Q\}\setminus\{P'\}\to C\cup\{Q\} $.}
 \label{fig: vertex decomposable Case 3.}
 \end{figure}

This completes the proof of Claim $\bigstar$. 
Thanks for this claim, we can apply inverse chute move repeatedly to $C_2$ until no more inverse chute move can be applied. 
So we only need to prove \eqref{eq:C2=C_l,S} under the assumption that the facet $C_2$ is not inverse chute movable. 
%In fact, we will show that $C_2=C_{\ell,S}$.
%We assert that $C_{\max}(S\cup C_2)=C_{\max}(S)$. Indeed, 
Since $C_2$ is a facet of $\Delta_{\ell,S}$, we have $C_2= C_{\max}(S\cup C_2)\cap\{v_{\ell+1},\dots,v_{|L|}\}$ by \eqref{eq:C2=C cap}. 
Let $T$ be the set of all essential NW corners of $C_{\max}(S\cup C_2)$. 
By Lemma \ref{lem:larger C'}, $T\subseteq S\cup C_2$; on the other hand, $C_2$ contains no inverse movable points, so  by Lemma \ref{lem:essential move vs move}, it contains no essential NW corners. That implies $T\subseteq S$. Now we have a chain of inequalities:
$C_{\max}(S)\ge_T C_{\max}(S\cup C_2)=C_{\max}(T)\ge_T C_{\max}(S)$.
So  $C_{\max}(S\cup C_2)=C_{\max}(S)$.
Therefore $C_2= C_{\max}(S\cup C_2)\cap\{v_{\ell+1},\dots,v_{|L|}\}=C_{\max}(S)\cap\{v_{\ell+1},\dots,v_{|L|}\}=C_{\ell,S}$.  This completes the proof of \eqref{eq:C2=C_l,S}. Therefore $\Delta_{\ell,S}$ is pure, and the proposition follows.
\end{proof}

\begin{corollary}\label{corollary:shelling order}
The increasing order under $<_T$ given in Definition \ref{df:order} (ii) is a shelling order of $\Delta_{\mathcal{Q},\mm,\uu}$.
\end{corollary}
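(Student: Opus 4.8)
\textbf{Proof proposal for Corollary \ref{corollary:shelling order}.}

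The plan is to deduce the corollary from the machinery already in place, namely Proposition \ref{prop:PDC to Cmax}, Proposition \ref{prop:C(S)>=C}, and the purity established in Proposition \ref{prop:vertex decomposable}. Recall that a linear order $C^{(1)} <_T C^{(2)} <_T \cdots <_T C^{(N)}$ on the facets of a pure simplicial complex is a shelling order if for each $t \ge 2$, the intersection $\big(\bigcup_{s<t} \overline{C^{(s)}}\big) \cap \overline{C^{(t)}}$ is a pure subcomplex of $\overline{C^{(t)}}$ of codimension $1$; equivalently, for each facet $C = C^{(t)}$ with $t\ge 2$ there is a set $R(C)$ of vertices of $C$ such that a face $G \subseteq C$ lies in some earlier facet if and only if $R(C)\not\subseteq G$, and moreover each singleton $\{v\}$ with $v\in R(C)$ is such a face (i.e.\ $C\setminus\{v\}$ is contained in an earlier facet). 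So the concrete goal is: for every concurrent vertex map $C \ne C_{\max}(\emptyset)$, exhibit the restriction set $R(C)$ and verify these two properties.

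First I would take $R(C)$ to be the set of all \emph{essential} (horizontal or vertical) NW corners of $C$. The key input is Proposition \ref{prop:C(S)>=C}: for a subset $S\subseteq C$, one has $C_{\max}(S) = C$ if and only if $S$ contains all essential NW corners of $C$, i.e.\ $R(C)\subseteq S$. Combined with Lemma \ref{lem:larger C'} and Lemma \ref{lem:essential move vs move}, an essential NW corner $(i,j,k)\in C$ can be moved via a sequence of inverse chute moves to produce a concurrent vertex map $C' >_T C$ agreeing with $C$ off that corner; in particular $C\setminus\{(i,j,k)\}$ is a face of the earlier facet $C'$. This handles the ``each $v\in R(C)$ works'' direction. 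For the other direction, suppose $G\subseteq C$ is a face with $R(C)\not\subseteq G$; pick an essential NW corner $v\in R(C)\setminus G$. Then $G \subseteq C\setminus\{v\}$, and by the argument just given $C\setminus\{v\}\subseteq C'$ for some $C'>_T C$, so $G$ lies in an earlier facet. Conversely, if $R(C)\subseteq G$, I must show $G$ lies in no earlier facet: since $G$ contains all essential NW corners of $C$, Proposition \ref{prop:C(S)>=C} gives $C_{\max}(G) = C$, so the only facet containing $G$ that is $\ge_T C$ is $C$ itself — but any facet $C''$ containing $G$ satisfies $C'' = C_{\max}(G') $ for $G' \supseteq G$ hence... here one uses that $C_{\max}$ is the $<_T$-largest facet containing its argument, so $C'' \le_T C_{\max}(G) = C$; and if $C'' <_T C$ strictly then actually $C_{\max}(G)\ge_T C'' $, no contradiction yet, so the finish is: $C$ is the $<_T$-maximal facet containing $G$, hence every \emph{earlier} facet containing $G$ would be $<_T C$, but a facet containing $G$ is $\le_T C_{\max}(G) = C$ with equality forced when it contains $G$... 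I would instead argue directly: if $G$ lay in an earlier facet $C''<_T C$, then $G\subseteq C''$ gives $C = C_{\max}(G)\le_T C_{\max}(C'')=C'' <_T C$, a contradiction.

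The main obstacle, and the step I would spend the most care on, is the purity of the intersection subcomplex — equivalently, verifying that \emph{every} maximal face $G$ of $\overline{C}\cap(\bigcup_{s<t}\overline{C^{(s)}})$ has the form $C\setminus\{v\}$ for a single essential NW corner $v$, rather than merely that some such faces exist. Concretely one shows that this intersection subcomplex equals $\{G\subseteq C : R(C)\not\subseteq G\}$, whose maximal faces are exactly the sets $C\setminus\{v\}$ for $v\in R(C)$; purity and codimension-$1$-ness then follow since $\dim \overline{C}$ is constant across facets by Proposition \ref{prop:vertex decomposable}. The containment ``$\subseteq$'' of the intersection subcomplex in $\{G : R(C)\not\subseteq G\}$ is the nontrivial part and is precisely the ``only if'' established above via $C_{\max}(G)=C$; the reverse containment is Lemma \ref{lem:larger C'} plus Lemma \ref{lem:essential move vs move}. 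Once these two inclusions are in hand, the corollary follows immediately, and the existence of \emph{some} shelling is already guaranteed by shellability in Proposition \ref{prop:vertex decomposable}; what this corollary adds is that the explicit order $<_T$ is one such shelling.
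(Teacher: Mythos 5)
There is a genuine gap: your restriction set is dual to the correct one for the \emph{increasing} $<_T$ order. You take $R(C)$ to be the essential NW corners and work with $C_{\max}$; but an essential NW corner $(i,j,k)$ of $C$ produces, via Lemma~\ref{lem:larger C'} and Lemma~\ref{lem:essential move vs move}, a facet $C'>_T C$ containing $C\setminus\{(i,j,k)\}$. Since $C'>_T C$, this $C'$ is a \emph{later} facet in the increasing order, not an earlier one, so $C\setminus\{v\}$ has not been shown to lie in any earlier facet. The inconsistency already shows in your own sentence (``to produce a concurrent vertex map $C'>_T C\dots$ a face of the earlier facet $C'$''), and it propagates to the final step: from $G\subseteq C''$ with $C''$ a facet you concluded $C_{\max}(G)\le_T C_{\max}(C'')$, but the monotonicity is reversed — removing constraints can only enlarge the greedy maximum, so $C_{\max}(G)\ge_T C_{\max}(C'')=C''$, which is perfectly consistent with $C''<_T C$ and gives no contradiction. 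Thus your argument does not show that a face containing all essential NW corners is ``new,'' and in fact such faces need not be new: $C_{\min}(G)$ is a facet containing $G$ with $C_{\min}(G)\le_T C$, and it can be strictly smaller.

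The repair is a systematic sign flip, after which you land exactly on the paper's Proof~2 of Corollary~\ref{corollary:shelling order}. For the increasing order the restriction set $R(C)$ must be the set of essential SE corners, and one works with $C_{\min}$: by the ``Similarly'' half of Proposition~\ref{prop:C(S)>=C}, $C_{\min}(S)=C$ iff $S\supseteq R(C)$; removing an essential SE corner $(i,j,k)$ and taking $C'':=C_{\min}(C\setminus\{(i,j,k)\})$ gives $C''<_T C$ containing $C\setminus\{(i,j,k)\}$, so that face is old; and if $G\supseteq R(C)$ then any facet $C''\supseteq G$ satisfies $C''\ge_T C_{\min}(G)=C$, so $G$ is new. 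The rest of your skeleton (the two inclusions identifying the intersection subcomplex with $\{G\subseteq C: R(C)\not\subseteq G\}$, and purity of the facets of the complex from Proposition~\ref{prop:vertex decomposable}) is then sound and coincides with the paper's second proof. Note the paper also gives an alternative Proof~1 by tracing the shelling through the vertex-decomposition of Proposition~\ref{prop:vertex decomposable} and then invoking symmetry, which you did not attempt.
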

\begin{proof}
We shall give two proofs.

\noindent Proof 1 (as a corollary of Proposition \ref{prop:vertex decomposable}).  From the proof of the previous proposition we get the shelling order by a recursive argument. First shelling 
${\rm del}_{v_1}\Delta_{\mathcal{Q},\mm,\uu}= \{S\in \Delta_{\mathcal{Q},\mm,\uu} | v_1\notin S\}$ 
then shelling 
$\{S\in \Delta_{\mathcal{Q},\mm,\uu} | v_1\in S\}$.
The shelling order of the former is obtained by first shelling
$\{S\in \Delta_{\mathcal{Q},\mm,\uu} | v_1,v_2\notin S\}$
then shelling
$\{S\in \Delta_{\mathcal{Q},\mm,\uu} | v_1\notin S,v_2\in S\}$; 
the shelling order of the latter is obtained by first shelling
$\{S\in \Delta_{\mathcal{Q},\mm,\uu} | v_1\in S,v_2\notin S\}$
then shelling
$\{S\in \Delta_{\mathcal{Q},\mm,\uu} | v_1, v_2\in S\}$.
Following this argument, the shelling order we obtain is the descreasing lex order:
for $C=(P_1,\dots,P_N)$ and $C'=(P'_1,\dots,P'_N)$, where $P_1<_T P_2<_T\cdots<_T P_N$ and  $P'_1<_T P'_2<_T\cdots<_T P'_N$,  $C$ is before $C'$ if there is an integer $1\le t\le N$ such that $P_t >_T P'_t$ and $P_s=P'_s
$ for all $s<t$. 
By symmetry, another valid shelling order is: (following the above notations of $C$ and $C'$) $C$ is before $C'$ if there is an integer $1\le t\le N$ such that $P_t <_T P'_t$ and $P_s=P'_s$ for all $s>t$. This order coincides with the increasing order under $<_T$ given in Definition \ref{df:order} (ii).

\noindent Proof 2 (as a corollary of Proposition \ref{prop:C(S)>=C}). It suffices to show that $\bigcup_{C' <_T C}C'\cap C$ is a union of codimension 1 faces of $C$ for each $C$. 
Alternatively, let $C'<_T C$ and $S=C'\cap C$; it suffices to prove the existence of $C''$ such that $C'' <_T C$, $|C''\cap C|=|C|-1$ and $S\subseteq C''\cap C$. 

First we assert that $S$ does not contain at least one essential SE corner, say $(i,j,k)$, of $C$. Otherwise by Proposition \ref{prop:C(S)>=C}, we have the equality $C_{\min}(S)=C$; then $C_{\min}(S) >_T C'$. But by definition of $C_{\max}(S)$ and the fact that $S\subseteq C'$, we must have $C'\ge_T C_{\min}(S)$, a contraditcion. 

Next, let $C''=C_{\min}(C\setminus\{(i,j,k)\})$. Again by Proposition \ref{prop:C(S)>=C}, $C'' <_T C$. Note that $|C''\cap C|=|C\setminus\{(i,j,k)\}|=|C|-1$, so $C''\cap C$ is a codimension 1 face of $C$ which contains $S$. 
Therefore  $\Delta_{\mathcal{Q},\mm,\uu}$ is shellable.
\end{proof}

\subsection{Ball}
Recall \cite[Proposition 4.7.22]{BLSWZ99} (the readers are also referred to \cite[Lemma 3.6 and Theorem 3.7]{Knutson-Miller2004} to see how it is applied) asserts that, if every codimension 1 face of a shellable simlicial complex $\Delta$ is contained in at most two facets, then $\Delta$ is a topological manifold-with-boundary that is homeomorphic to a ball or a sphere. The facets of the topological boundary are codimension-1 faces contained in exactly one facet. In particular, if $\Delta$ has a nonempty boundary (that is, there exists a codimension-1 face contained in exactly one facet) then it is homeomorphic to a ball. 

\begin{lemma}\label{lem:one contained in two}
 Each codimension-1 face $S$ of  $\Delta_{\mathcal{Q},\mm,\uu}$  is contained in facets $C_{\max}(S)$ and $C_{\min}(S)$, and is not contained in other facets. Consequently, 
 ``$S$ is contained in exactly one facet'' 
 $\Leftrightarrow$ $C_{\max}(S)=C_{\min}(S)$ 
 $\Leftrightarrow$ $S$ contains all the essential SE corners of $C_{\max}(S)$
 $\Leftrightarrow$ $S$ contains all the essential NW corners of $C_{\min}(S)$.
\end{lemma}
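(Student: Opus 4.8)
The plan is to leverage the shellability established in Corollary \ref{corollary:shelling order} together with the combinatorial description of $C_{\max}$ and $C_{\min}$ developed in Proposition \ref{prop:C(S)>=C}. Let $S$ be a codimension-$1$ face, so $|S|=N_{\mathcal{Q},\mm,\uu}-1$. First I would observe that any facet $C$ containing $S$ satisfies $C_{\min}(S)\le_T C\le_T C_{\max}(S)$: indeed $S\subseteq C$ implies $C$ is one of the $\uu$-compatible supersets encountered when building $C_{\max}(S)$ (resp.\ $C_{\min}(S)$), and since $C$ is already maximal $\uu$-compatible the greedy procedure cannot move past it in one direction, giving the two inequalities from the definitions in Definition \ref{df:order}(iii). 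In particular $C_{\max}(S)$ and $C_{\min}(S)$ are themselves facets containing $S$ (they are maximal $\uu$-compatible by the remark after Definition \ref{df:order}).

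The core step is to show there is no third facet. Since $|C|=|C_{\max}(S)|$ for any facet $C\supseteq S$ and $|C\setminus S|=1$, write $C=S\cup\{P\}$ where $P$ is the unique point of $C$ outside $S$; similarly $C_{\max}(S)=S\cup\{P_{\max}\}$ and $C_{\min}(S)=S\cup\{P_{\min}\}$. I want to argue that $P\in\{P_{\max},P_{\min}\}$. The idea is: the set $S$ is $\uu$-compatible, and $C_{\max}(S)$ is obtained from $S$ by adjoining exactly one point $P_{\max}$ (since $|C_{\max}(S)\setminus S|=1$); the greedy construction of $C_{\max}(S)$ inserts points in decreasing $<_T$-order, so $P_{\max}$ is the $<_T$-largest point $Q$ with $S\cup\{Q\}$ $\uu$-compatible, and moreover for every $Q>_T P_{\max}$ the set $S\cup\{Q\}$ fails $\uu$-compatibility. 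Dually $P_{\min}$ is the $<_T$-smallest such point. Now if $C=S\cup\{P\}$ is any facet with $P_{\min}<_T P<_T P_{\max}$, I would derive a contradiction. The cleanest route is via Proposition \ref{prop:C(S)>=C} applied with the concurrent vertex map $C$ and its subset $S\subset C$: it gives $C_{\max}(S)\ge_T C$ with equality iff $S$ contains all essential NW corners of $C$, and $C_{\min}(S)\le_T C$ with equality iff $S$ contains all essential SE corners of $C$. The key point is that $C\setminus S$ is a single point, so $S$ fails to contain at most one point of $C$; I would show that this point cannot simultaneously be an obstruction on both ends unless $C_{\max}(S)=C_{\min}(S)$. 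More precisely: by Proposition \ref{prop:C(S)>=C}, $C_{\max}(S)=C$ fails exactly when the missing point $P$ of $C\setminus S$ is an essential NW corner of $C$, and $C_{\min}(S)=C$ fails exactly when $P$ is an essential SE corner of $C$. A point of a concurrent vertex map cannot be both an essential NW corner and an essential SE corner (an NW corner $R$ has $R+(1,0),R+(0,1)$ in the path while an SE corner has $R-(1,0),R-(0,1)$ in the path, incompatible on a path of the given endpoints unless degenerate — and in the degenerate length-$0$ case the point is an endpoint, not an interior corner of both types). Hence at least one of $C_{\max}(S)=C$, $C_{\min}(S)=C$ holds, forcing $P\in\{P_{\max},P_{\min}\}$ and $C\in\{C_{\max}(S),C_{\min}(S)\}$.

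The final chain of equivalences then falls out: $S$ is contained in exactly one facet iff $C_{\max}(S)=C_{\min}(S)$; and by the equality clauses of Proposition \ref{prop:C(S)>=C} applied to the facet $C:=C_{\max}(S)$ (with $S\subset C$), the condition $C_{\min}(S)=C_{\max}(S)$ is equivalent to ``$S$ contains all essential SE corners of $C_{\max}(S)$'', and symmetrically applied to $C:=C_{\min}(S)$ it is equivalent to ``$S$ contains all essential NW corners of $C_{\min}(S)$''. One must check the two ``moreover'' clauses are being invoked with the correct subset inclusions, but since $|C\setminus S|=1$ in each case the hypotheses of Proposition \ref{prop:C(S)>=C} ($S$ a subset of the concurrent vertex map in question) are clearly satisfied.

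\textbf{Main obstacle.} The delicate point is the claim that a single point $P=C\setminus S$ cannot be an obstruction preventing \emph{both} $C_{\max}(S)=C$ and $C_{\min}(S)=C$ — i.e.\ that $P$ cannot be both an essential NW corner and an essential SE corner of $C$. I expect this to require a short but careful case analysis of the local geometry of the horizontal and vertical paths at $P$ using Lemma \ref{lem:straight 2 conditions} and the structure of corners, handling the degenerate cases (length-$0$ intersections, boundary points $X_p$, $Y_p$) where a point might technically be a corner of a path of zero length. Everything else is bookkeeping with the greedy definitions of $C_{\max},C_{\min}$ and direct citation of Proposition \ref{prop:C(S)>=C}.
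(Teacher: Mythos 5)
Your proof takes the same route as the paper's: both show that no facet $C$ with $C_{\min}(S)<_T C<_T C_{\max}(S)$ can exist by observing that such a $C$ would have to miss both an essential NW corner and an essential SE corner, forcing $|C\setminus S|\ge 2$ (equivalently, in your phrasing, forcing the single missing point $P$ to be simultaneously an essential NW corner and an essential SE corner of $C$). The reduction of the exactly-one-facet equivalences to the equality clauses of Proposition~\ref{prop:C(S)>=C} is also exactly the paper's. So the structure is right.

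The gap is the one you flag yourself, and it is a real one: your inline justification that a point cannot be both an NW corner and an SE corner only treats the \emph{same-path} case. The four-edge contradiction (``$R\pm(1,0),R\pm(0,1)$ all in the path'') applies when $R$ is claimed to be a NW corner and a SE corner of one and the same path $H$ (or one and the same $V$). It says nothing about the cross case: $R$ a NW corner of a horizontal path $H$ and a SE corner of a vertical path $V$. There, $R+(1,0),R+(0,1)\in H$ while $R-(1,0),R-(0,1)\in V$, which is not an immediate degree obstruction since $H\ne V$. This cross case is precisely where the paper invokes Lemma~\ref{lem:straight 2 conditions}. The argument that closes it: by straightness, a NW corner $R$ of $H$ lies on a vertical path, which (by non-intersection) must be $V$, so $R$ lies on the connected subpath $P_{pq}\rightsquigarrow Q_{pq}=H\cap V$. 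If $R$ is interior to that subpath, both $H$ and $V$ traverse it the same way at $R$, making $R$ a NW corner of both, not a SE corner of $V$. If $R=Q_{pq}$ is the NE endpoint, then (from Lemma~\ref{lem:straight 2 conditions}(b)--(c)) $R$ is a NW corner of $H$ exactly when the last step of the subpath into $Q_{pq}$ is vertical, and a SE corner of $V$ exactly when that step is horizontal — mutually exclusive. The degenerate $P_{pq}=Q_{pq}$ case yields no corner on either path. Adding this (and its transpose $R=P_{pq}$) completes the proof; everything else in your write-up is correct.
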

\begin{proof}
Assume the contrary that there is a $C$ such that $C_{\min}(S)<_T C<_T C_{\max}(S)$. By Proposition \ref{prop:C(S)>=C},

$C<_T C_{\max}(S)$ implies $S$ does not contain at least one essential NW corner  $P$ of $C$;

$C>_T C_{\min}(S)$ implies $S$ does not contain at least one essential SE corner $Q$ of $C$.

\noindent Note that $P\neq Q$ since $C$ uniquely determines the horizontal and vertical paths, and NW corner and SE corner do not coincide (indeed, since a NW corner in a horizontal path cannot also be a SE corner of a horizontal path, we can assume $P$ is a NW corner of a horizontal path, $Q$ is a SE corner of a vertical path; but this still cannot happen according to the description of straight road map given in Lemma \ref{lem:straight 2 conditions}). Thus $|S|\le |C\setminus\{P,Q\}| = |C|-2$ contradicting the asumption $|S|=N_{\mathcal{Q},\mm,\uu}-1=|C|-1$. This proves the first statement.

The consequence follows from Proposition \ref{prop:C(S)>=C}.
\end{proof}

\begin{proposition}\label{prop:ball}
$\Delta_{\mathcal{Q},\mm,\uu}$ is homeomorphic to a ball of dimension $N_{\mathcal{Q},\mm,\uu}-1$. (Recall that $N_{\mathcal{Q},\mm,\uu}$ is defined in \eqref{eq:cardinality of max C}, and equals to the cardinality of any concurrent vertex map.)
\end{proposition}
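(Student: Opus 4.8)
The strategy is to invoke the topological criterion stated right before Lemma~\ref{lem:one contained in two}: a shellable simplicial complex in which every codimension-$1$ face lies in at most two facets is homeomorphic to a ball or a sphere, and it is a ball precisely when its boundary is nonempty. We have already assembled all the ingredients. First, $\Delta_{\mathcal{Q},\mm,\uu}$ is shellable by Proposition~\ref{prop:vertex decomposable} (or Corollary~\ref{corollary:shelling order}), and it is pure of dimension $N_{\mathcal{Q},\mm,\uu}-1$ by Theorem~\ref{thm:prime decomposition of I} together with Proposition~\ref{prop:concurrent equiv}(iii), which says every facet (concurrent vertex map) has cardinality exactly $N_{\mathcal{Q},\mm,\uu}$. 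Second, Lemma~\ref{lem:one contained in two} tells us that every codimension-$1$ face $S$ is contained in exactly the facets $C_{\max}(S)$ and $C_{\min}(S)$ (which may coincide), hence in at most two facets. So the criterion applies, and $\Delta_{\mathcal{Q},\mm,\uu}$ is homeomorphic either to a ball or to a sphere of dimension $N_{\mathcal{Q},\mm,\uu}-1$.

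\textbf{Key remaining step: the boundary is nonempty.} To rule out the sphere and conclude it is a ball, I must exhibit one codimension-$1$ face contained in exactly one facet; by Lemma~\ref{lem:one contained in two} this amounts to finding a facet $C$ and a point $P\in C$ such that $C\setminus\{P\}$ contains all the essential SE corners of $C_{\min}(C\setminus\{P\})$ — equivalently, such that $C_{\max}(C\setminus\{P\})=C_{\min}(C\setminus\{P\})=C$. The natural candidate is the initial concurrent vertex map $C=C_{\max}(\emptyset)$, which by Lemma~\ref{lem:initial C} has no essential NW corners. I would choose $P$ to be a point of $C$ that is \emph{not} an essential SE corner of $C$ — for instance the $<_T$-largest point of $C$, or a point whose removal leaves all essential SE corners of $C$ intact (such a $P$ exists as long as $C$ has more points than essential SE corners, which holds because $N_{\mathcal{Q},\mm,\uu}>0$ and the essential corners form a sparse subset). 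For this $S=C\setminus\{P\}$ we have $C_{\min}(S)=C$ by Proposition~\ref{prop:C(S)>=C} (since $S$ contains all essential SE corners of $C$), and $C_{\max}(S)=C$: indeed $C_{\max}(S)\ge_T C$, and any strict inequality $C_{\max}(S)>_T C$ would, by Lemma~\ref{lem:larger C'}, force $C$ to have an essential NW corner not in $S$ — but $C=C_{\max}(\emptyset)$ has no essential NW corners at all. Hence $C_{\max}(S)=C_{\min}(S)=C$, so $S$ lies in a unique facet, the boundary is nonempty, and $\Delta_{\mathcal{Q},\mm,\uu}$ is a ball.

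\textbf{Anticipated obstacle.} The only delicate point is making sure such a removable point $P$ actually exists, i.e.\ that $C_{\max}(\emptyset)$ is not entirely composed of essential SE corners. This is really a counting/structure observation: essential corners occur only at genuine turns of the horizontal and vertical paths, and the explicit description in Lemma~\ref{lem:initial C}(v) shows each path is a union of at most three straight segments, so the essential SE corners are few while $|C|=N_{\mathcal{Q},\mm,\uu}$ is (under the standing hypothesis $u_\gamma>0$) strictly larger. One clean way to finish is to take $P$ to be the $<_T$-largest point of $C$: since a SE corner of a path always has a neighbor of the path to its north and to its west, and the $<_T$-largest point of $C$ lies in the last page in the bottom-right region where paths run straight (by the picture in Lemma~\ref{lem:initial C}(iv), Figure~\ref{fig:C_k}), that point is not a corner at all; thus every essential SE corner of $C$ survives in $S=C\setminus\{P\}$. (If one prefers to avoid case analysis on $P$, one may instead argue abstractly: were the boundary empty, every codimension-$1$ face $S$ would satisfy $C_{\max}(S)=C_{\min}(S)$, forcing $\Delta_{\mathcal{Q},\mm,\uu}$ to be a sphere, but a shelling order exists with a strictly increasing sequence of facets under $<_T$ — the last facet attached, $C_{\max}(\emptyset)$ or $C_{\min}(\emptyset)$, is then glued along its entire boundary, which is incompatible with being the top-dimensional cell of a sphere decomposition unless some facet is attached along a proper subcomplex of its boundary; tracing this gives a codimension-$1$ face in one facet.) In the write-up I would use the concrete choice of $P$, as it is shortest.

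\begin{proof}
By Proposition~\ref{prop:vertex decomposable} (or Corollary~\ref{corollary:shelling order}), $\Delta_{\mathcal{Q},\mm,\uu}$ is shellable, and by Theorem~\ref{thm:prime decomposition of I} together with Proposition~\ref{prop:concurrent equiv}(iii) it is pure of dimension $N_{\mathcal{Q},\mm,\uu}-1$. By Lemma~\ref{lem:one contained in two}, every codimension-$1$ face $S$ is contained in exactly the facets $C_{\max}(S)$ and $C_{\min}(S)$, hence in at most two facets. By \cite[Proposition 4.7.22]{BLSWZ99}, $\Delta_{\mathcal{Q},\mm,\uu}$ is therefore homeomorphic to a ball or a sphere of dimension $N_{\mathcal{Q},\mm,\uu}-1$, and it is homeomorphic to a ball as soon as some codimension-$1$ face is contained in exactly one facet.

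Let $C=C_{\max}(\emptyset)$ be the initial concurrent vertex map, and let $P$ be the $<_T$-largest point of $C$. By the description in Lemma~\ref{lem:initial C}(iv)--(v) (see Figure~\ref{fig:C_k} and Figure~\ref{fig:H and V}), the point $P$ lies in the bottom-right straight portion of the last page and is not a corner of any horizontal or vertical path; in particular $P$ is not an essential SE corner of $C$. Set $S=C\setminus\{P\}$, a codimension-$1$ face of $\Delta_{\mathcal{Q},\mm,\uu}$. Since $S$ contains every essential SE corner of $C$, Proposition~\ref{prop:C(S)>=C} gives $C_{\min}(S)=C$. On the other hand, $C_{\max}(S)\ge_T C$ trivially; if this were a strict inequality, Lemma~\ref{lem:larger C'} (applied to $C$ and $S\subseteq C$) would produce an essential NW corner of $C$ not lying in $S$, contradicting Lemma~\ref{lem:initial C}, which asserts that $C=C_{\max}(\emptyset)$ has no essential NW corners. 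Hence $C_{\max}(S)=C=C_{\min}(S)$, so by Lemma~\ref{lem:one contained in two} the face $S$ is contained in exactly one facet. Therefore the boundary of $\Delta_{\mathcal{Q},\mm,\uu}$ is nonempty, and $\Delta_{\mathcal{Q},\mm,\uu}$ is homeomorphic to a ball of dimension $N_{\mathcal{Q},\mm,\uu}-1$.
\end{proof}
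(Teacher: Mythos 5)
Your plan is sound and matches the paper's strategy: invoke \cite[Proposition 4.7.22]{BLSWZ99} after establishing shellability (Proposition~\ref{prop:vertex decomposable}) and the two-facet bound (Lemma~\ref{lem:one contained in two}), then exhibit one codimension-$1$ face lying in a unique facet. The problem is in your choice of the point $P$ to delete from $C_{\max}(\emptyset)$.

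You take $P$ to be the $<_T$-largest point of $C=C_{\max}(\emptyset)$ and assert it ``lies in the bottom-right straight portion of the last page and is not a corner of any horizontal or vertical path.'' This is false in the generic case. The $<_T$-largest point of $L$, and of $C$, is the bottom-right lattice point of page $r$, which is simultaneously the SE corner of $A_{\target(h_r)}$ and of $A_{\source(h_r)}$ (page $r$ is the last block of both). By the description in Lemma~\ref{lem:initial C}(v), the lowest horizontal path $H^{\alpha}_{u_\alpha}$ runs east along the bottom row of $A_\alpha$ to $(a_\alpha,b_\alpha)$ and then turns north, while the rightmost vertical path $V^{\beta}_{u_\beta}$ runs south along the last column of $A_\beta$ to $(a_\beta,b_\beta)$ and then turns west. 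So whenever $u_\alpha<a_\alpha$ and $u_\beta<b_\beta$ (the non-degenerate case the paper works in), $P$ is a SE corner of both a horizontal and a vertical path — and a point that is both a horizontal and a vertical SE corner is automatically \emph{essential} (the SE mirror of the remark following the definition of non-essential corners). Concretely, for the classical determinantal complex with $m=n=3$, $u=2$, one has $C_{\max}(\emptyset)=\{(i,j):(i,j)\neq(1,1)\}$, and removing $(3,3)$ leaves $S$ that extends to both $C_{\max}(\emptyset)$ and to $S\cup\{(1,1)\}$; so $C_{\min}(S)\neq C_{\max}(S)$ and $(3,3)$ is indeed essential. Your $S$ therefore sits in two facets and the argument collapses.

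Your parenthetical fallback — ``such a $P$ exists as long as $C$ has more points than essential SE corners'' — is the actual content that needs proof, and you do not supply it; ``the essential corners form a sparse subset'' is not an argument. The paper establishes this cleanly by contradiction: if no such $P$ existed, every point of $C_{\max}(\emptyset)$ would be an essential SE corner, but the two specific points $P^{(r)}_{\mathrm{SW}}=(m_{\target(h_r)},1,r)$ and $P^{(r)}_{\mathrm{NE}}=(1,m_{\source(h_r)},r)$ (both in $C_{\max}(\emptyset)$ by Lemma~\ref{lem:initial C}) cannot both be SE corners — a short case analysis on whether $m_{\source(h_r)}=1$. To repair your write-up, replace the incorrect claim about the $<_T$-largest point with this (or an equivalent) explicit identification of a point of $C_{\max}(\emptyset)$ that is provably not an essential SE corner.
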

\begin{proof}
By Lemma \ref{lem:one contained in two}, each codimension-1 face of  $\Delta_{\mathcal{Q},\mm,\uu}$  is inside at most 2 facets. 
It then suffices to show that some codimension-1 face of  $\Delta_{\mathcal{Q},\mm,\uu}$  is contained in exactly one facet.  To show that, it suffices to show the existence of  a codimension-1 face $S$ inside of the facet $C_{\max}(\emptyset)$ that is not contained in any other facet. 
Assume that such a face does not exist. Then for any $P\in C_{\max}(\emptyset)$, let $S=C_{\max}(\emptyset)\setminus \{P\}$, we have $C_{\min}(S)<_T C_{\max}(\emptyset)$. Then $S$ does not contain at least one essential SE corner of $C_{\max}(\emptyset)$, which implies that $P$ is an essential SE corner of $C_{\max}(\emptyset)$. Therefore, every point in  $C_{\max}(\emptyset)$ is an essential SE corner. 
Let %$P^{(r)}_{\rm SW}$ be the SW-most point and $P^{(r)}_{\rm NE}$ be the NE-most point in the last page; that is, 
$P^{(r)}_{\rm SW}=(m_{\target(h_r)},1,r)$,  $P^{(r)}_{\rm NE}=(1,m_{\source(h_r)},r)$. Then $P^{(r)}_{\rm SW}$, $P^{(r)}_{\rm NE}$ are in $C_{\max}(\emptyset)$ by Lemma \ref{lem:initial C} (see Figure \ref{fig:C_k} Case 2).
We claim that $P^{(r)}_{\rm SW}$ and  $P^{(r)}_{\rm NE}$ cannot both be SE corners of  $C_{\max}(\emptyset)$. Since $P^{(r)}_{\rm SW}$ is at the left-most column of $A_{\source(h_r)}$, it cannot be a SE corner of any vertical path; it is at the bottom row of $A_{\target(h_r)}$, so is in the bottom-most horizontal path which has only one SE corner (the NE-most point of page $r$), this is only possible if page $r$ has one column, that is, $m_{\source(h_k)}=1$. Then  $P^{(r)}_{\rm NE}$ is not a SE corner of any horizontal path (for the same reason as before), and not a SE corner of any vertical path (because $A_{\source(h_r)}$ has only one column, so a vertical path in it has not SE corner). 
So we conclude that $\Delta_{\mathcal{Q},\mm,\uu}$ is always a ball. %Its dimension is obviously $N_{\mathcal{Q},\mm,\uu}-1$. 
\end{proof}

\section{Hilbert series, Multiplicity, and $h$-polynomial}

\begin{corollary}\label{cor:Hilbert series}
The  Hilbert series for ${\bf k}[X]/{\rm init }(I_{\mathcal{Q},\mm,\uu})$, which is defined as the sum of all monomials that lie outside 
${\rm init }(I_{\mathcal{Q},\mm,\uu})$, is 
\begin{equation}\label{eq:Hilb1X}
{\rm Hilb}({\bf k}[X]/{\rm init }(I_{\mathcal{Q},\mm,\uu}); {\bf X}) 
=\sum_C\frac{  \prod_{(i,j,k)\in C}x^{(k)}_{ij} \prod_{(i,j,k)\in L\setminus C} (1-x^{(k)}_{ij}) }{\prod_{(i,j,k)\in L} (1-x^{(k)}_{ij})},
\end{equation} where 
$C$ runs through all (not necessarily maximal) $\uu$-compatible subsets of $L$. (Note that $C$ could be the empty set.)  
In particular, the $\mathbb{Z}$-graded Hilbert series is
\begin{equation}\label{eq:Hilb1t}
{\rm Hilb}({\bf k}[X]/I_{\mathcal{Q},\mm,\uu}; t)
={\rm Hilb}({\bf k}[X]/{\rm init }(I_{\mathcal{Q},\mm,\uu}); t) 
%=\sum_{i\ge0}\dim ({\bf k}[X]/I_{\mathcal{Q},\mm,\uu})_i\; t^i
=\sum_C \Big(\frac{  t}{1-t}\Big)^{|C|} ,
\end{equation} where 
$C$ runs through all (not necessarily maximal) $\uu$-compatible subsets of $L$;
the multiplicity of the bipartite determinantal ideal is
$${\rm mult} ({\bf k}[X]/I_{\mathcal{Q},\mm,\uu})=\textrm{ the number of concurrent vertex maps.}$$
Moreover, we have
\begin{equation}\label{eq:Hilb2X}
{\rm Hilb}({\bf k}[X]/{\rm init }(I_{\mathcal{Q},\mm,\uu}); {\bf X}) 
=\sum_C\frac{  (-1)^{N_{\mathcal{Q},\mm,\uu}-|C|} \prod_{(i,j,k)\in L\setminus C} (1-x^{(k)}_{ij}) }{\prod_{(i,j,k)\in L} (1-x^{(k)}_{ij})},
\end{equation} 
where $C$ runs through all interior faces. In particular,
\begin{equation}\label{eq:Hilb2t}
{\rm Hilb}({\bf k}[X]/I_{\mathcal{Q},\mm,\uu}; t) 
=\sum_C\frac{  (-1)^{N_{\mathcal{Q},\mm,\uu}-|C|} }{(1-t)^{|C|}},
\end{equation} 
where $C$ runs through all interior faces.
\end{corollary}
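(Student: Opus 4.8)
The plan is to derive all four displays \eqref{eq:Hilb1X}--\eqref{eq:Hilb2t} from the Stanley--Reisner picture established in Theorem~\ref{thm:prime decomposition of I} together with the shellability/ball property from Theorems~\ref{thm:complex}. First I would recall the standard fact that for a simplicial complex $\Delta$ on vertex set $L$ with Stanley--Reisner ideal $I_\Delta$, the fine Hilbert series of $\mathbf{k}[x_v:v\in L]/I_\Delta$ equals $\sum_{\sigma\in\Delta}\prod_{v\in\sigma}\frac{x_v}{1-x_v}$; applying this to $\Delta=\Delta_{\mathcal{Q},\mm,\uu}$ and using that the faces are exactly the $\uu$-compatible subsets (Theorem~\ref{thm:prime decomposition of I}) gives, after clearing denominators over $\prod_{(i,j,k)\in L}(1-x^{(k)}_{ij})$, precisely \eqref{eq:Hilb1X}. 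Specializing every $x^{(k)}_{ij}\to t$ turns each face-term into $(t/(1-t))^{|C|}$, yielding \eqref{eq:Hilb1t}; and since $\mathrm{init}(I_{\mathcal{Q},\mm,\uu})=I_{\Delta_{\mathcal{Q},\mm,\uu}}$ is a flat degeneration, the graded Hilbert series of $\mathbf{k}[X]/I_{\mathcal{Q},\mm,\uu}$ agrees with that of $\mathbf{k}[X]/\mathrm{init}(I_{\mathcal{Q},\mm,\uu})$. The multiplicity then drops out: by Theorem~\ref{thm:complex} the ring is Cohen--Macaulay of dimension equal to $\dim\Delta+1 = N_{\mathcal{Q},\mm,\uu}$ (every facet being a concurrent vertex map of size $N_{\mathcal{Q},\mm,\uu}$ by Proposition~\ref{prop:concurrent equiv}), and the multiplicity of a Cohen--Macaulay Stanley--Reisner ring equals the number of facets of top dimension, i.e.\ the number of concurrent vertex maps.

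For the second pair of formulas \eqref{eq:Hilb2X}--\eqref{eq:Hilb2t} I would invoke the duality between a ball (or more precisely, a Cohen--Macaulay complex that is a ball) and its interior faces. Since $\Delta_{\mathcal{Q},\mm,\uu}$ is homeomorphic to a ball of dimension $N_{\mathcal{Q},\mm,\uu}-1$ (Proposition~\ref{prop:ball}), Hochster-type/Alexander-duality reasoning (concretely, the identity expressing the $h$-polynomial or the fine Hilbert series of a Cohen--Macaulay complex as an alternating sum over interior faces — see \cite[Corollary 5.1.14]{BH}) gives
\[
{\rm Hilb}(\mathbf{k}[X]/I_\Delta;\mathbf{X}) = \sum_{\sigma\ \mathrm{interior}} \frac{(-1)^{\dim\Delta+1-|\sigma|}\prod_{v\notin\sigma}(1-x_v)}{\prod_{v\in L}(1-x_v)}.
\]
With $\dim\Delta+1=N_{\mathcal{Q},\mm,\uu}$ this is exactly \eqref{eq:Hilb2X}, and the single-variable specialization $x_v\to t$ (noting $|L\setminus\sigma|$ factors of $(1-t)$ in the numerator against $|L|$ in the denominator) produces \eqref{eq:Hilb2t}. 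I would spell out why the interior faces of $\Delta_{\mathcal{Q},\mm,\uu}$ are the relevant indexing set — a face $C$ is interior precisely when it lies in no facet of the topological boundary, and by Lemma~\ref{lem:one contained in two} this happens iff $C_{\max}(C)=C_{\min}(C)$ fails to be forced onto the boundary, which is the combinatorial shadow of the ball structure.

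The main obstacle, and the step I would treat most carefully, is the passage to \eqref{eq:Hilb2X}: one must match the exact sign convention and the exact notion of ``interior face'' used in \cite[Corollary 5.1.14]{BH} (or in the relevant reciprocity statement for Cohen--Macaulay complexes) with the conventions in this paper, and confirm that the boundary complex being subtracted is genuinely the topological boundary identified in Proposition~\ref{prop:ball} rather than some other subcomplex. Everything else — the face-counting Hilbert series, the $t$-specialization, and the identification of multiplicity with the facet count — is routine once Theorems~\ref{thm:prime decomposition of I} and \ref{thm:complex} are in hand. I would therefore structure the write-up so that \eqref{eq:Hilb1X}--\eqref{eq:Hilb1t} and the multiplicity statement come first as immediate corollaries, then devote the bulk of the argument to carefully citing and applying the Cohen--Macaulay reciprocity to obtain \eqref{eq:Hilb2X}--\eqref{eq:Hilb2t}.
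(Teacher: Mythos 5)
Your proposal is correct and follows essentially the same route as the paper: the first two displays and the multiplicity fall out of the standard Stanley--Reisner face expansion plus Cohen--Macaulayness, and the second pair is obtained by the Knutson--Miller-style reciprocity for shellable balls (the paper cites \cite[Theorem 4.1]{Knutson-Miller2004} rather than \cite[Corollary 5.1.14]{BH} for this step, but the underlying mechanism you invoke is the same). The only soft spot is your closing sentence about interior faces being the ``combinatorial shadow'' of the ball structure, which is vague; what is actually needed, and what Proposition~\ref{prop:ball} delivers, is precisely that $\Delta_{\mathcal{Q},\mm,\uu}$ is a shellable ball so that the boundary subcomplex exists and the alternating-sum identity over interior faces applies verbatim.
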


\begin{proof}
The only nontrivial statement is \eqref{eq:Hilb2X}, its proof is similar to \cite[Theorem 4.1]{Knutson-Miller2004}. A key ingredient is that  $\Delta_{\mathcal{Q},\mm,\uu}$ is homeomorphic to a ball, which we proved in Proposition \ref{prop:ball}.
\end{proof}

The Hilbert series of the bipartite determinantal ideal $I_{\mathcal{Q},\mm,\uu}$ can be written as 
$$
{\rm Hilb}({\bf k}[X]/I_{\mathcal{Q},\mm,\uu}; t)
=\frac{H(t)}{(1-t)^{N_{\mathcal{Q},\mm,\uu}}}
$$
where  $H(t)=h_0+h_1t+\cdots$ is called the $h$-polynomial and $(h_0,h_1,\dots)$ is called the $h$-vector of $\Delta_{\mathcal{Q},\mm,\uu}$. Since ${\bf k}[X]/I_{\mathcal{Q},\mm,\uu}$ is Cohen-Mcaulay, its $h$-polynomial must be positive, that is, be contained in $\mathbb{Z}_{\ge0}[t]$ (see \cite[Chapter II \S3]{Stanley}). Such polynomials are studied for local rings of Schubert varieties in \cite{LY2}.
It is natural to look for a combinatorially manifestly positive formula for $H(t)$. This can be answered as follows:

\begin{corollary}\label{cor:h-poly}
The $h$-polynomial $h_0+h_1t+\cdots$ of $\Delta_{\mathcal{Q},\mm,\uu}$ is determined as follows: for $i=0,1,\dots, N_{\mathcal{Q},\mm,\uu}$, 
$$\aligned
h_i
&=\textrm{the number of concurrent vertex maps that contain exactly $i$ essential SE corners}\\
&=\textrm{the number of concurrent vertex maps that contain exactly $i$ essential NW corners}. 
\endaligned
$$
\end{corollary}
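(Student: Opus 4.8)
The plan is to extract the $h$-polynomial from the shelling order established in Corollary \ref{corollary:shelling order} and identify the restriction (or "new faces") contributed by each facet with its set of essential SE (resp.\ NW) corners. Recall the standard fact (e.g.\ \cite[Chapter III]{Stanley} or \cite[Chapter 5]{BH}) that if $F_1, F_2, \dots, F_m$ is a shelling order of a shellable complex $\Delta$, then for each $s\ge 2$ the intersection $(\bigcup_{t<s}F_t)\cap F_s$ is a nonempty union of codimension-1 faces of $F_s$, hence equals the boundary of a unique minimal new face $R(F_s)\subseteq F_s$ (the "restriction face"), and the $h$-polynomial satisfies $H(t)=\sum_{s=1}^m t^{|R(F_s)|}$. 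So it suffices to compute $R(C)$ for each concurrent vertex map $C$, where the shelling order is the increasing order under $<_T$.

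The key step is to show that $R(C)$ is exactly the set of essential SE corners of $C$. I would argue as follows. A facet $C'$ precedes $C$ in the shelling order iff $C'<_T C$, so $R(C)$ is the smallest face $F\subseteq C$ such that $F$ is not contained in any facet $C'<_T C$. By Lemma \ref{lem:one contained in two}, a codimension-1 face $S\subseteq C$ lies in a facet other than $C$ (necessarily $C_{\min}(S)$) iff $S$ does \emph{not} contain all essential SE corners of $C$ --- equivalently (Proposition \ref{prop:C(S)>=C}), iff $C_{\min}(S)<_T C$. Thus a subset $S\subseteq C$ is contained in some facet $C'<_T C$ iff $C_{\min}(S')<_T C$ for the codimension-1 completion $S'$ of $S$ inside $C$, which happens iff $S'$ (hence $S$) misses an essential SE corner of $C$. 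Therefore the faces of $C$ \emph{not} contained in any earlier facet are precisely those $S\subseteq C$ containing all essential SE corners; the minimal such $S$ is the set $E_{\mathrm{SE}}(C)$ of essential SE corners itself. More carefully, I need: (i) every face $S$ containing $E_{\mathrm{SE}}(C)$ is not in any earlier facet --- this follows since $C_{\min}(S)=C$ by Proposition \ref{prop:C(S)>=C} and no other facet $C'<_T C$ can contain $S$ by the two-facet dichotomy refined from Lemma \ref{lem:one contained in two} applied to the various codimension-1 faces; and (ii) $E_{\mathrm{SE}}(C)$ is itself a face, which holds because $E_{\mathrm{SE}}(C)\subseteq C$ and $C$ is $\uu$-compatible. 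Hence $R(C)=E_{\mathrm{SE}}(C)$ and $|R(C)|$ equals the number of essential SE corners of $C$, giving
$$H(t)=\sum_{C}t^{\,\#\{\text{essential SE corners of }C\}},$$
which is the first formula for $h_i$.

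For the second formula (with NW corners), I would invoke the symmetry already used repeatedly in \S3: reversing the total order $<_T$ (equivalently, passing to the "decreasing" shelling order, also valid by Corollary \ref{corollary:shelling order}) swaps the roles of $C_{\max}$ and $C_{\min}$, and of NW and SE corners, throughout Proposition \ref{prop:C(S)>=C} and Lemma \ref{lem:one contained in two}. Running the same restriction-face computation with the reversed shelling order yields $R(C)=E_{\mathrm{NW}}(C)$, hence $h_i$ also counts concurrent vertex maps with exactly $i$ essential NW corners. Alternatively one can note that both expressions compute the same polynomial $H(t)$, so the two corners-counts have equal distribution; but the shelling-reversal argument is cleaner and gives it directly.

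The main obstacle I anticipate is step (i) above: pinning down precisely that \emph{every} face $S\subseteq C$ containing $E_{\mathrm{SE}}(C)$ fails to lie in any strictly earlier facet. Lemma \ref{lem:one contained in two} is stated only for codimension-1 faces, so I must either push that lemma down to lower-dimensional faces or, more economically, use the general shelling formalism: $R(F_s)$ depends only on which codimension-1 faces of $F_s$ are "old", and a face $S$ is old iff it lies in some old codimension-1 face of $F_s$. Then the computation reduces cleanly to the codimension-1 case handled by Lemma \ref{lem:one contained in two} and Proposition \ref{prop:C(S)>=C}, namely: a codimension-1 face $C\setminus\{P\}$ is old iff $P$ is not an essential SE corner of $C$. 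Once this is phrased correctly the rest is bookkeeping. I would also double-check the edge case $C=C_{\max}(\emptyset)$ (which should contribute $t^0=1$, i.e.\ have no essential SE corners --- consistent with it being the $<_T$-largest facet and hence appearing first in the increasing shelling, so $R=\emptyset$), reconciling with the fact that in the reversed order $C_{\min}(\emptyset)$ plays that role; this is exactly the kind of symmetry sanity-check that the two formulas must pass.
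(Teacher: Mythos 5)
Your proposal is correct and follows essentially the same approach as the paper: both order the facets by $<_T$ via Corollary \ref{corollary:shelling order}, invoke the shelling characterization of the $h$-vector (your restriction face $R(C)$ is exactly the quantity $r_j$ from \cite[Corollary 5.1.14]{BH} that the paper uses), and identify it with the set of essential SE corners via Lemma \ref{lem:one contained in two} and Proposition \ref{prop:C(S)>=C}, with the NW-corner formula obtained by the same $<_T$-reversal symmetry.
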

\begin{proof}
Arrange the facets of  $\Delta_{\mathcal{Q},\mm,\uu}$ as $C_1,C_2,\dots, C_N$        sasdby the increasing order under $<_T$. 
By Corollary \ref{corollary:shelling order}, this is a shelling order of $\Delta_{\mathcal{Q},\mm,\uu}$.
By a result of McMullen and Walkup (see \cite[Corollary 5.1.14]{BH}), 
$h_i=|\{j: r_j=i\}|$ where $r_1=0$ and $r_j$ is the number of facets of $\langle C_j\rangle \cap \langle C_1,\dots,C_{j-1}\rangle$ for $2\le j\le N$.  
Note that $S$ is a facet of $\langle C_j\rangle \cap \langle C_1,\dots,C_{j-1}\rangle$ if and only if $|S|=N_{\mathcal{Q},\mm,\uu}-1$ and $S=F_j\cap F_i$ for some $i<j$. Let $S$ be such a facet. By Lemma \ref{lem:larger C'} and \ref{lem:one contained in two}, $C_i=C_{\min}(S)=S\cup\{P\}$ and $C_j=C_{\max}(S)=S\cup\{P'\}$, where $P$ is an essential NW corner of $C_i$, $P'$ is an essential SE corner of $C_j$. Note that $P$ and $P'$ uniquely determine each other. So $S$ is a desired facet if and only if $S$ is obtained from $C_j$ by removing an essential SE corner of $C_j$. Therefore $r_j$ is the number of essential SE corners of $C_j$, and thus the first equality holds. The second equality holds by symmetry. 
\end{proof}
It is relatively fast for a computer to generate all concurrent vertex maps, in particular to compute the multiplicity, by applying chute moves iteratively to $C_{\max}(\emptyset)$.
It is slow to compute the Hilbert series using \eqref{eq:Hilb1t} or \eqref{eq:Hilb2t}. Instead, it is relatively fast to compute the  Hilbert series using Corollary \ref{cor:h-poly}.
For the readers convenience, the related Sagemath codes used for study the concurrent vertex maps are available at 

{\tt https://sites.google.com/oakland.edu/bipartite-determinantal}

\section{Examples, algorithms, and beyond}
\subsection{Some examples}
\begin{example}\label{eg:double det}
Consider a determinantal ideal $I^{\rm det}_{m,n,u+1}$ of $(u+1)$-minors in an $m\times n$ matrix. It is bipartite determinantal ideal for $\mathcal{Q}:2\to 1$, ${\bf m}=(m_1,m_2)=(m,n)$, ${\bf u}=(u_1,u_2)=(u,u)$. Each concurrent vertex map $C$ consists of lattice points in the union of $u$ nonintersecting paths $(m-u+i,i)\rightsquigarrow(i,n-u+i)$ for $1\le i\le u$ (see Figure \ref{fig:lemma contained in path} Left) and points in the two triangles at the SW and NE corners: 
$$\{(i,j)\in[1,m]\times[1,n] \ | \ j\le i-(m-u) \textrm { or } i\le j-(n-u)\}.$$ 
The corresponding road map $(\{H^1_i\}, \{V^2_i\})_{1\le i\le u}$ can be obtained as follows:

$H^1_i$ is obtained by joining $(m-u+i,1), (m-u+i,i)\rightsquigarrow(i,n-u+i), (i,n)$;% (see Figure \ref{fig:lemma contained in path} Right);

$V^2_i$ is obtained by joining $(1,n-u+1), (i,n-u+i)\rightsquigarrow (m-u+i,i), (m,i)$.

So in this case, we can view $C$ as lattice points in the union of $H^1_i$, or view as lattice points in the union of $V^2_i$. 
See Figure \ref{fig:det} for the concurrent vertex maps and the corresponding road maps when $m=3,n=3,u=2$.

\begin{figure}[ht]
\begin{center}
\begin{tikzpicture}[scale=.8]

    \begin{scope}[shift={(0,0)}]
      \draw[draw=black] (-.5,-.5) rectangle ++(3,3);
      \node at (0,2) {+};
      \node at (1,2) {.};
      \node at (2,2) {.};
      \node at (0,1) {.};
      \node at (1,1) {.};
      \node at (2,1) {.};
      \node at (0,0) {.};
      \node at (1,0) {.};
      \node at (2,0) {.};
      \end{scope}

    \begin{scope}[shift={(5,0)}]
      \draw[draw=black] (-.5,-.5) rectangle ++(3,3);
      \node at (0,2) {.};
      \node at (1,2) {.};
      \node at (2,2) {.};
      \node at (0,1) {.};
      \node at (1,1) {+};
      \node at (2,1) {.};
      \node at (0,0) {.};
      \node at (1,0) {.};
      \node at (2,0) {.};
      \end{scope}

    \begin{scope}[shift={(10,0)}]
      \draw[draw=black] (-.5,-.5) rectangle ++(3,3);
      \node at (0,2) {.};
      \node at (1,2) {.};
      \node at (2,2) {.};
      \node at (0,1) {.};
      \node at (1,1) {.};
      \node at (2,1) {.};
      \node at (0,0) {.};
      \node at (1,0) {.};
      \node at (2,0) {+};
      \end{scope}

\begin{scope}[shift={(0,-4)}]
\draw  [gray] (0,0) grid (2,2);
\begin{scope}[shift={(.07,-.07)}]
\draw [blue,densely dotted,very thick](-.2,1)--(1,1)--(1,2)--(2,2);
\draw [blue,densely dotted,very thick](-.2,0)--(2,0)--(2,1);

\draw (0,1) node [left] {\tiny $H^1_1$};
\draw (0,0) node [left] {\tiny $H^1_2$};
\end{scope}

\begin{scope}[shift={(-.07,.07)}]
\draw [red,densely dotted,very thick](1,2)--(1,1)--(0,1)--(0,0);
\draw [red,densely dotted,very thick](2,2)--(2,0)--(1,0);

\draw (1,2) node [above] {\tiny $V^2_1$};
\draw (2,2) node [above] {\tiny $V^2_2$};
\end{scope}
\end{scope}

\begin{scope}[shift={(5,-4)}]
\draw  [gray] (0,0) grid (2,2);
\begin{scope}[shift={(.07,-.07)}]
\draw [blue,densely dotted,very thick](0,1)--(0,2)--(2,2);
\draw [blue,densely dotted,very thick](0,0)--(2,0)--(2,1);

\draw (0,1) node [left] {\tiny $H^1_1$};
\draw (0,0) node [left] {\tiny $H^1_2$};
\end{scope}

\begin{scope}[shift={(-.07,.07)}]
\draw [red,densely dotted,very thick](1,2)--(0,2)--(0,0);
\draw [red,densely dotted,very thick](2,2)--(2,0)--(1,0);

\draw (1,2) node [above] {\tiny $V^2_1$};
\draw (2,2) node [above] {\tiny $V^2_2$};
\end{scope}
\end{scope}

\begin{scope}[shift={(10,-4)}]
\draw  [gray] (0,0) grid (2,2);
\begin{scope}[shift={(.07,-.07)}]
\draw [blue,densely dotted,very thick](0,1)--(0,2)--(2,2);
\draw [blue,densely dotted,very thick](0,0)--(1,0)--(1,1)--(2,1);

\draw (0,1) node [left] {\tiny $H^1_1$};
\draw (0,0) node [left] {\tiny $H^1_2$};
\end{scope}

\begin{scope}[shift={(-.07,.07)}]
\draw [red,densely dotted,very thick](1,2)--(0,2)--(0,0);
\draw [red,densely dotted,very thick](2,2)--(2,1)--(1,1)--(1,0);

\draw (1,2) node [above] {\tiny $V^2_1$};
\draw (2,2) node [above] {\tiny $V^2_2$};
\end{scope}
\end{scope}

\end{tikzpicture}       
\end{center}
\caption{Example \ref{eg:double det}.}
\label{fig:det}
\end{figure}

\end{example}

\begin{example}\label{eg:double det r=2, m=3, n=2, u=v=1}
Consider the double determinantal ideal $I^{(r)}_{m,n,u,v}$ where $r=2, m=3, n=2, u=v=1$. Denote the corresponding quiver to be $\mathcal{Q}:2\stackrel{r}{\to}1$ with $r$ arrows.
The ideal is generated by 2 minors in the following two matrices:
$$A_1=\begin{bmatrix}
x^{(1)}_{11}&x^{(1)}_{12}&x^{(2)}_{11}&x^{(2)}_{12}\\
x^{(1)}_{21}&x^{(1)}_{22}&x^{(2)}_{21}&x^{(2)}_{22}\\
x^{(1)}_{31}&x^{(1)}_{32}&x^{(2)}_{31}&x^{(2)}_{32}\\
\end{bmatrix},
\quad
A_2=\begin{bmatrix}
x^{(1)}_{11}&x^{(1)}_{12}\\
x^{(1)}_{21}&x^{(1)}_{22}\\
x^{(1)}_{31}&x^{(1)}_{32}\\
x^{(2)}_{11}&x^{(2)}_{12}\\
x^{(2)}_{21}&x^{(2)}_{22}\\
x^{(2)}_{31}&x^{(2)}_{32}\\
\end{bmatrix}.
$$
The facets of its Stanley-Reisner complex $\Delta$ are illustrated in Figure \ref{fig:double det}, where arrows denote all possible chute moves, red ``$\color{red}+$''s indicate where the chute move can apply, the numbers indicate the shelling order given by the decreasing order under $<_T$. 
\begin{figure}[ht]
\begin{center}
\begin{tikzpicture}[scale=.35]
    \begin{scope}[shift={(0,0)}]
      \node at (-3,1) {\tiny $C_{\max}(\emptyset)=$};
      \draw[draw=black] (-.5,-.5) rectangle ++(2,3);
      \draw[draw=black] (1.5,-.5) rectangle ++(2,3);
      \node at (0,0) {+};
      \node at (1,0) {.};
      \node at (2,0) {.};
      \node at (3,0) {.};
      \node at (0,1) {+};
      \node at (1,1) {+};
      \node [red] at (2,1) {+};
      \node at (3,1) {.};
      \node at (0,2) {+};
      \node at (1,2) {+};
      \node at (2,2) {+};
      \node at (3,2) {.};
      \draw[->,>=stealth] (4,1) to (5,1);
      \node at (1.5,-1) {\tiny $1$};

      \end{scope}
    \begin{scope}[shift={(6,0)}]
      \draw[draw=black] (-.5,-.5) rectangle ++(2,3);
      \draw[draw=black] (1.5,-.5) rectangle ++(2,3);
      \node at (0,0) {+};
      \node at (1,0) {.};
      \node at (2,0) {.};
      \node at (3,0) {+};
      \node at (0,1) {+};
      \node [red] at (1,1) {+};
      \node at (2,1) {.};
      \node at (3,1) {.};
      \node at (0,2) {+};
      \node at (1,2) {+};
      \node [red] at (2,2) {+};
      \node at (3,2) {.};
      \draw[->,>=stealth] (4,1) to (5,1);
      \draw[->,>=stealth] (-1,-1) to (-2,-2);
      \node at (1.5,-1) {\tiny $2$};
      \end{scope}
    \begin{scope}[shift={(12,0)}]
      \draw[draw=black] (-.5,-.5) rectangle ++(2,3);
      \draw[draw=black] (1.5,-.5) rectangle ++(2,3);
      \node [red] at (0,0) {+};
      \node at (1,0) {.};
      \node at (2,0) {.};
      \node at (3,0) {+};
      \node at (0,1) {+};
      \node [red] at (1,1) {+};
      \node at (2,1) {.};
      \node at (3,1) {+};
      \node at (0,2) {+};
      \node at (1,2) {+};
      \node at (2,2) {.};
      \node at (3,2) {.};
      \draw[->,>=stealth] (4,1) to (5,1);
      \draw[->,>=stealth] (-1,-1) to (-2,-2);
      \node at (1.5,-1) {\tiny $3$};
      \end{scope}
    \begin{scope}[shift={(18,0)}]
      \draw[draw=black] (-.5,-.5) rectangle ++(2,3);
      \draw[draw=black] (1.5,-.5) rectangle ++(2,3);
      \node at (0,0) {.};
      \node at (1,0) {.};
      \node at (2,0) {.};
      \node at (3,0) {+};
      \node at (0,1) {+};
      \node [red] at (1,1) {+};
      \node at (2,1) {.};
      \node at (3,1) {+};
      \node at (0,2) {+};
      \node at (1,2) {+};
      \node at (2,2) {.};
      \node at (3,2) {+};
      \draw[->,>=stealth] (4,1) to (5,1);
      \node at (1.5,-1) {\tiny $4$};
      \end{scope}
    \begin{scope}[shift={(24,0)}]
      \draw[draw=black] (-.5,-.5) rectangle ++(2,3);
      \draw[draw=black] (1.5,-.5) rectangle ++(2,3);
      \node at (0,0) {.};
      \node at (1,0) {.};
      \node at (2,0) {+};
      \node at (3,0) {+};
      \node [red] at (0,1) {+};
      \node at (1,1) {.};
      \node at (2,1) {.};
      \node at (3,1) {+};
      \node at (0,2) {+};
      \node [red] at (1,2) {+};
      \node at (2,2) {.};
      \node at (3,2) {+};
      \draw[->,>=stealth] (4,1) to (5,1);
      \draw[->,>=stealth] (-1,-1) to (-2,-2);
      \node at (1.5,-1) {\tiny $7$};
      \end{scope}
    \begin{scope}[shift={(30,0)}]
      \draw[draw=black] (-.5,-.5) rectangle ++(2,3);
      \draw[draw=black] (1.5,-.5) rectangle ++(2,3);
      \node at (0,0) {.};
      \node at (1,0) {+};
      \node at (2,0) {+};
      \node at (3,0) {+};
      \node at (0,1) {.};
      \node at (1,1) {.};
      \node at (2,1) {.};
      \node at (3,1) {+};
      \node at (0,2) {+};
      \node [red] at (1,2) {+};
      \node at (2,2) {.};
      \node at (3,2) {+};
      \draw[->,>=stealth] (-1,-1) to (-2,-2);
      \node at (1.5,-1) {\tiny $8$};
      \end{scope}
     %
     %the second row 
    \begin{scope}[shift={(0,-5)}]
      \draw[draw=black] (-.5,-.5) rectangle ++(2,3);
      \draw[draw=black] (1.5,-.5) rectangle ++(2,3);
      \node at (0,0) {+};
      \node at (1,0) {.};
      \node at (2,0) {+};
      \node at (3,0) {+};
      \node at (0,1) {+};
      \node at (1,1) {.};
      \node at (2,1) {.};
      \node at (3,1) {.};
      \node at (0,2) {+};
      \node at (1,2) {+};
      \node [red] at (2,2) {+};
      \node at (3,2) {.};
      \draw[->,>=stealth] (4,1) to (5,1);
      \node at (1.5,-1) {\tiny $5$};
      \end{scope}
    \begin{scope}[shift={(6,-5)}]
      \draw[draw=black] (-.5,-.5) rectangle ++(2,3);
      \draw[draw=black] (1.5,-.5) rectangle ++(2,3);
      \node [red] at (0,0) {+};
      \node at (1,0) {.};
      \node at (2,0) {+};
      \node at (3,0) {+};
      \node at (0,1) {+};
      \node at (1,1) {.};
      \node at (2,1) {.};
      \node at (3,1) {+};
      \node at (0,2) {+};
      \node [red] at (1,2) {+};
      \node at (2,2) {.};
      \node at (3,2) {.};
      \draw[->,>=stealth] (4,1) to (5,1);
      \draw[->,>=stealth] (4,2.5) to (17,4.5);      
      \node at (1.5,-1) {\tiny $6$};
      \end{scope}
    \begin{scope}[shift={(12,-5)}]
      \draw[draw=black] (-.5,-.5) rectangle ++(2,3);
      \draw[draw=black] (1.5,-.5) rectangle ++(2,3);
      \node [red] at (0,0) {+};
      \node at (1,0) {.};
      \node at (2,0) {+};
      \node at (3,0) {+};
      \node at (0,1) {+};
      \node at (1,1) {.};
      \node at (2,1) {+};
      \node at (3,1) {+};
      \node at (0,2) {+};
      \node at (1,2) {.};
      \node at (2,2) {.};
      \node at (3,2) {.};
      \draw[ ->,>=stealth] (4,1) to (5,1);
      \node at (1.5,-1) {\tiny $9$};
      \end{scope}
    \begin{scope}[shift={(18,-5)}]
      \draw[draw=black] (-.5,-.5) rectangle ++(2,3);
      \draw[draw=black] (1.5,-.5) rectangle ++(2,3);
      \node at (0,0) {.};
      \node at (1,0) {.};
      \node at (2,0) {+};
      \node at (3,0) {+};
      \node [red] at (0,1) {+};
      \node at (1,1) {.};
      \node at (2,1) {+};
      \node at (3,1) {+};
      \node at (0,2) {+};
      \node at (1,2) {.};
      \node at (2,2) {.};
      \node at (3,2) {+};
      \draw[->,>=stealth] (4,1) to (5,1);
      \node at (1.5,-1) {\tiny $10$};
      \end{scope}
    \begin{scope}[shift={(24,-5)}]
      \draw[draw=black] (-.5,-.5) rectangle ++(2,3);
      \draw[draw=black] (1.5,-.5) rectangle ++(2,3);
      \node at (0,0) {.};
      \node at (1,0) {+};
      \node at (2,0) {+};
      \node at (3,0) {+};
      \node at (0,1) {.};
      \node at (1,1) {.};
      \node at (2,1) {+};
      \node at (3,1) {+};
      \node [red] at (0,2) {+};
      \node at (1,2) {.};
      \node at (2,2) {.};
      \node at (3,2) {+};
      \draw[->,>=stealth] (4,1) to (5,1);
      \node at (1.5,-1) {\tiny $11$};
      \end{scope}
    \begin{scope}[shift={(30,-5)}]
      \draw[draw=black] (-.5,-.5) rectangle ++(2,3);
      \draw[draw=black] (1.5,-.5) rectangle ++(2,3);
      \node at (0,0) {.};
      \node at (1,0) {+};
      \node at (2,0) {+};
      \node at (3,0) {+};
      \node at (0,1) {.};
      \node at (1,1) {+};
      \node at (2,1) {+};
      \node at (3,1) {+};
      \node at (0,2) {.};
      \node at (1,2) {.};
      \node at (2,2) {.};
      \node at (3,2) {+};
      \node at (6,1) {\tiny $=C_{\min}(\emptyset)$};
      \node at (1.5,-1) {\tiny $12$};
      \end{scope}

 \end{tikzpicture}
\end{center}
\caption{Example \ref{eg:double det r=2, m=3, n=2, u=v=1}.}
\label{fig:double det}
\end{figure}
So $N_{\mathcal{Q},\mm,\uu} = \textrm{the number of dots} = 5$,
$\dim \Delta =N_{\mathcal{Q},\mm,\uu}-1=4$,  
multiplicity $=$ the number of facets of $\Delta=$ the number of concurrent vertex maps $=12$.
The numbers of $d$-faces in $\Delta$ are easily counted by a computer:
\begin{center}
\begin{tabular}{|c| c c c c c c | c|} 
 \hline
$d$ & $-1$ & $0$ & $1$ & $2$ & $3$ & $4$ & total\\
 \hline
$\#$ of $d$-faces & $1$ & $12$ & $42$ & $64$ & $45$ & $12$ & $176$ \\ 
 \hline
$\#$ of interior $d$-faces & $0$ & $0$ & $0$ & $4$ & $15$ & $12$ & $31$ \\ 
 \hline
\end{tabular}
\end{center}
Consequently, by \eqref{eq:Hilb1t},
$$\aligned
&{\rm Hilb}({\bf k}[X]/I_{\mathcal{Q},\mm,\uu}; t)
=\sum_{\textrm{faces } C} \Big(\frac{  t}{1-t}\Big)^{|C|} 
=\sum_{d} (\textrm{\# of $d$-faces}) \Big(\frac{  t}{1-t}\Big)^{d+1}\\
&=
1\Big(\frac{  t}{1-t}\Big)^0
+12\Big(\frac{  t}{1-t}\Big)^1
+42\Big(\frac{  t}{1-t}\Big)^2
+64\Big(\frac{  t}{1-t}\Big)^3
+45\Big(\frac{  t}{1-t}\Big)^4
+12\Big(\frac{  t}{1-t}\Big)^5
\\
&=\frac{1+7t+4t^2}{(1-t)^5}.
\endaligned
$$ 
Alternatively, by \eqref{eq:Hilb2t},
 $$\aligned
{\rm Hilb}({\bf k}[X]/I_{\mathcal{Q},\mm,\uu}; t)
&=\sum_{\textrm{interior faces } C} \frac{  (-1)^{N_{\mathcal{Q},\mm,\uu}-|C|} }{(1-t)^{|C|}}
=\sum_{d} (\textrm{\# of interior $d$-faces}) \frac{  (-1)^{5-(d+1)} }{(1-t)^{d+1}}\\
&=
4\frac{1}{(1-t)^3}
+15\frac{-1}{(1-t)^4}
+12\frac{1}{(1-t)^5}
=\frac{1+7t+4t^2}{(1-t)^5}.
\endaligned
$$ 
Alternatively, by Corollary \ref{cor:h-poly}, the number of essential SE corners of the concurrent vertex maps in Figure \ref{fig:double det} are $1, 2, 2, 1, 2, 1; 1, 2, 1, 1, 1, 0$, respectively; the number of essential NW corners of the concurrent vertex maps in Figure \ref{fig:double det} are $0, 1, 1, 1, 2, 1; 1, 2, 1, 2, 2, 1$, respectively. 
So the $h$-polynomial is $1+7t+4t^2$. 
\end{example}

\begin{example}\label{eg:bipartite 234 to 1}
Continue Example \ref{eg:234to1}. 
\begin{figure}[ht]
\begin{center}
\begin{tikzpicture}[scale=.3]
    \begin{scope}[shift={(0,0)}] %1
      \node at (-3.3,1) {\tiny $C_{\max}(\emptyset)=$};
      \draw[draw=black] (-.5,-.5) rectangle ++(2,3);
      \draw[draw=black] (1.5,-.5) rectangle ++(2,3);
      \draw[draw=black] (3.5,-.5) rectangle ++(2,3);
      \node at (0,0) {.};
      \node at (1,0) {.};
      \node at (2,0) {.};
      \node at (3,0) {.};
      \node at (4,0) {.};
      \node at (5,0) {.};
      \node [red] at (0,1) {+};
      \node at (1,1) {.};
      \node [red] at (2,1) {+};
      \node at (3,1) {.};
      \node [red] at (4,1) {+};
      \node at (5,1) {.};
      \node at (0,2) {+};
      \node [red] at (1,2) {+};
      \node at (2,2) {+};
      \node at (3,2) {.};
      \node at (4,2) {+};
      \node  at (5,2) {.};
      \draw[->,>=stealth] (-1,-1) to (-7,-2);
      \draw[->,>=stealth] (1,-1) to (-1,-2);
      \draw[->,>=stealth] (3,-1) to (5,-2);
      \draw[->,>=stealth] (5,-1) to (11,-2);
      \end{scope}
    \begin{scope}[shift={(-12,-5)}] %2
      \draw[draw=black] (-.5,-.5) rectangle ++(2,3);
      \draw[draw=black] (1.5,-.5) rectangle ++(2,3);
      \draw[draw=black] (3.5,-.5) rectangle ++(2,3);
      \node at (0,0) {.};
      \node at (1,0) {.};
      \node at (2,0) {.};
      \node at (3,0) {.};
      \node at (4,0) {.};
      \node at (5,0) {.};
      \node[red] at (0,1) {+};
      \node at (1,1) {.};
      \node at (2,1) {+};
      \node at (3,1) {+};
      \node[red] at (4,1) {+};
      \node at (5,1) {.};
      \node at (0,2) {+};
      \node at (1,2) {.};
      \node[red] at (2,2) {+};
      \node at (3,2) {.};
      \node at (4,2) {+};
      \node at (5,2) {.};
      \draw[->,>=stealth] (0,-1) to (-9,-4);
      \draw[->,>=stealth] (2,-1) to (-2,-4);
      \draw[->,>=stealth] (4,-1) to (4,-4);
      \end{scope}
    \begin{scope}[shift={(-4,-5)}] %3
      \draw[draw=black] (-.5,-.5) rectangle ++(2,3);
      \draw[draw=black] (1.5,-.5) rectangle ++(2,3);
      \draw[draw=black] (3.5,-.5) rectangle ++(2,3);
      \node at (0,0) {.};
      \node at (1,0) {+};
      \node at (2,0) {.};
      \node at (3,0) {.};
      \node at (4,0) {.};
      \node at (5,0) {.};
      \node at (0,1) {.};
      \node at (1,1) {.};
      \node[red] at (2,1) {+};
      \node at (3,1) {.};
      \node[red] at (4,1) {+};
      \node at (5,1) {.};
      \node at (0,2) {+};
      \node[red] at (1,2) {+};
      \node at (2,2) {+};
      \node at (3,2) {.};
      \node at (4,2) {+};
      \node at (5,2) {.};
      \draw[->,>=stealth] (0,-1) to (-16,-4);
      \draw[->,>=stealth] (2,-1) to (2,-4);
      \draw[->,>=stealth] (4,-1) to (8,-4);
      \end{scope}
    \begin{scope}[shift={(4,-5)}] %4
      \draw[draw=black] (-.5,-.5) rectangle ++(2,3);
      \draw[draw=black] (1.5,-.5) rectangle ++(2,3);
      \draw[draw=black] (3.5,-.5) rectangle ++(2,3);
      \node at (0,0) {.};
      \node at (1,0) {.};
      \node at (2,0) {.};
      \node at (3,0) {+};
      \node at (4,0) {.};
      \node at (5,0) {.};
      \node[red] at (0,1) {+};
      \node at (1,1) {.};
      \node at (2,1) {.};
      \node at (3,1) {.};
      \node[red] at (4,1) {+};
      \node at (5,1) {.};
      \node at (0,2) {+};
      \node at (1,2) {+};
      \node[red] at (2,2) {+};
      \node at (3,2) {.};
      \node at (4,2) {+};
      \node at (5,2) {.};
      \draw[->,>=stealth] (0,-1) to (-3,-4);
      \draw[->,>=stealth] (2,-1) to (7,-4);
      \draw[->,>=stealth] (4,-1) to (14,-4);
      \end{scope}
    \begin{scope}[shift={(12,-5)}] %5
      \draw[draw=black] (-.5,-.5) rectangle ++(2,3);
      \draw[draw=black] (1.5,-.5) rectangle ++(2,3);
      \draw[draw=black] (3.5,-.5) rectangle ++(2,3);
      \node at (0,0) {.};
      \node at (1,0) {.};
      \node at (2,0) {.};
      \node at (3,0) {.};
      \node at (4,0) {.};
      \node at (5,0) {+};
      \node[red] at (0,1) {+};
      \node at (1,1) {.};
      \node[red] at (2,1) {+};
      \node at (3,1) {.};
      \node at (4,1) {.};
      \node at (5,1) {.};
      \node at (0,2) {+};
      \node[red] at (1,2) {+};
      \node at (2,2) {+};
      \node at (3,2) {.};
      \node[red] at (4,2) {+};
      \node at (5,2) {.};
      \draw[->,>=stealth] (0,-1) to (-25,-4);
      \draw[->,>=stealth] (2,-1) to (-7,-4);
      \draw[->,>=stealth] (4,-1) to (7,-4);
      \draw[->,>=stealth] (6,-1) to (13,-4);
      \end{scope}
    \begin{scope}[shift={(-25,-12)}] %6
      \draw[draw=black] (-.5,-.5) rectangle ++(2,3);
      \draw[draw=black] (1.5,-.5) rectangle ++(2,3);
      \draw[draw=black] (3.5,-.5) rectangle ++(2,3);
      \node at (0,0) {.};
      \node at (1,0) {+};
      \node at (2,0) {.};
      \node at (3,0) {.};
      \node at (4,0) {.};
      \node at (5,0) {.};
      \node at (0,1) {.};
      \node at (1,1) {.};
      \node at (2,1) {+};
      \node at (3,1) {+};
      \node[red] at (4,1) {+};
      \node at (5,1) {.};
      \node[red] at (0,2) {+};
      \node at (1,2) {.};
      \node[red] at (2,2) {+};
      \node at (3,2) {.};
      \node at (4,2) {+};
      \node at (5,2) {.};
      \end{scope}
    \begin{scope}[shift={(-18,-12)}] %7
      \draw[draw=black] (-.5,-.5) rectangle ++(2,3);
      \draw[draw=black] (1.5,-.5) rectangle ++(2,3);
      \draw[draw=black] (3.5,-.5) rectangle ++(2,3);
      \node at (0,0) {.};
      \node at (1,0) {.};
      \node at (2,0) {.};
      \node at (3,0) {.};
      \node at (4,0) {.};
      \node at (5,0) {+};
      \node[red] at (0,1) {+};
      \node at (1,1) {.};
      \node at (2,1) {+};
      \node[red] at (3,1) {+};
      \node at (4,1) {.};
      \node at (5,1) {.};
      \node at (0,2) {+};
      \node at (1,2) {.};
      \node[red] at (2,2) {+};
      \node at (3,2) {.};
      \node[red] at (4,2) {+};
      \node at (5,2) {.};
      \end{scope}

    \begin{scope}[shift={(-11,-12)}] %8
      \draw[draw=black] (-.5,-.5) rectangle ++(2,3);
      \draw[draw=black] (1.5,-.5) rectangle ++(2,3);
      \draw[draw=black] (3.5,-.5) rectangle ++(2,3);
      \node at (0,0) {.};
      \node at (1,0) {.};
      \node at (2,0) {.};
      \node at (3,0) {+};
      \node at (4,0) {.};
      \node at (5,0) {.};
      \node[red] at (0,1) {+};
      \node at (1,1) {.};
      \node at (2,1) {+};
      \node at (3,1) {+};
      \node[red] at (4,1) {+};
      \node at (5,1) {.};
      \node at (0,2) {+};
      \node at (1,2) {.};
      \node at (2,2) {+};
      \node at (3,2) {.};
      \node at (4,2) {+};
      \node at (5,2) {.};
      \end{scope}
    \begin{scope}[shift={(-4,-12)}] %9
      \draw[draw=black] (-.5,-.5) rectangle ++(2,3);
      \draw[draw=black] (1.5,-.5) rectangle ++(2,3);
      \draw[draw=black] (3.5,-.5) rectangle ++(2,3);
      \node at (0,0) {.};
      \node at (1,0) {+};
      \node at (2,0) {.};
      \node at (3,0) {+};
      \node at (4,0) {.};
      \node at (5,0) {.};
      \node at (0,1) {.};
      \node at (1,1) {.};
      \node at (2,1) {.};
      \node at (3,1) {.};
      \node[red] at (4,1) {+};
      \node at (5,1) {.};
      \node at (0,2) {+};
      \node at (1,2) {+};
      \node[red] at (2,2) {+};
      \node at (3,2) {.};
      \node at (4,2) {+};
      \node at (5,2) {.};
      \end{scope}
    \begin{scope}[shift={(3,-12)}] %10
      \draw[draw=black] (-.5,-.5) rectangle ++(2,3);
      \draw[draw=black] (1.5,-.5) rectangle ++(2,3);
      \draw[draw=black] (3.5,-.5) rectangle ++(2,3);
      \node at (0,0) {.};
      \node at (1,0) {+};
      \node at (2,0) {.};
      \node at (3,0) {.};
      \node at (4,0) {.};
      \node at (5,0) {+};
      \node at (0,1) {.};
      \node at (1,1) {.};
      \node[red] at (2,1) {+};
      \node at (3,1) {.};
      \node at (4,1) {.};
      \node at (5,1) {.};
      \node at (0,2) {+};
      \node[red] at (1,2) {+};
      \node at (2,2) {+};
      \node at (3,2) {.};
      \node[red] at (4,2) {+};
      \node at (5,2) {.};
      \end{scope}
    \begin{scope}[shift={(10,-12)}] %11
      \draw[draw=black] (-.5,-.5) rectangle ++(2,3);
      \draw[draw=black] (1.5,-.5) rectangle ++(2,3);
      \draw[draw=black] (3.5,-.5) rectangle ++(2,3);
      \node at (0,0) {.};
      \node at (1,0) {.};
      \node at (2,0) {.};
      \node at (3,0) {+};
      \node at (4,0) {.};
      \node at (5,0) {.};
      \node[red] at (0,1) {+};
      \node at (1,1) {.};
      \node at (2,1) {.};
      \node at (3,1) {+};
      \node[red] at (4,1) {+};
      \node at (5,1) {.};
      \node at (0,2) {+};
      \node[red] at (1,2) {+};
      \node at (2,2) {.};
      \node at (3,2) {.};
      \node at (4,2) {+};
      \node at (5,2) {.};
      \end{scope}
    \begin{scope}[shift={(17,-12)}] %12
      \draw[draw=black] (-.5,-.5) rectangle ++(2,3);
      \draw[draw=black] (1.5,-.5) rectangle ++(2,3);
      \draw[draw=black] (3.5,-.5) rectangle ++(2,3);
      \node at (0,0) {.};
      \node at (1,0) {.};
      \node at (2,0) {.};
      \node at (3,0) {+};
      \node at (4,0) {.};
      \node at (5,0) {+};
      \node[red] at (0,1) {+};
      \node at (1,1) {.};
      \node at (2,1) {.};
      \node at (3,1) {.};
      \node at (4,1) {.};
      \node at (5,1) {.};
      \node at (0,2) {+};
      \node at (1,2) {+};
      \node[red] at (2,2) {+};
      \node at (3,2) {.};
      \node[red] at (4,2) {+};
      \node at (5,2) {.};
      \end{scope}
    \begin{scope}[shift={(24,-12)}] %13
      \draw[draw=black] (-.5,-.5) rectangle ++(2,3);
      \draw[draw=black] (1.5,-.5) rectangle ++(2,3);
      \draw[draw=black] (3.5,-.5) rectangle ++(2,3);
      \node at (0,0) {.};
      \node at (1,0) {.};
      \node at (2,0) {.};
      \node at (3,0) {.};
      \node at (4,0) {.};
      \node at (5,0) {+};
      \node[red] at (0,1) {+};
      \node at (1,1) {.};
      \node[red] at (2,1) {+};
      \node at (3,1) {.};
      \node at (4,1) {.};
      \node at (5,1) {+};
      \node at (0,2) {+};
      \node[red] at (1,2) {+};
      \node at (2,2) {+};
      \node at (3,2) {.};
      \node at (4,2) {.};
      \node at (5,2) {.};
      \end{scope}
 \end{tikzpicture}
\end{center}
\caption{Example \ref{eg:bipartite 234 to 1}.}
\label{fig:234 to 1}
\end{figure}
Figure \ref{fig:234 to 1} illustrates those facets of the Stanley-Reisner complex  obtained from $C_{\max}(\emptyset)$ by applying at most two chute moves; the arrows denote chute moves, red ``$+$''s indicate where the chute move can apply.
So $N_{\mathcal{Q},\mm,\uu} = 11$,
$\dim \Delta=10$.
The numbers of $d$-faces in $\Delta$ are easily counted by a computer:
\begin{center}
\begin{adjustbox}{width=\columnwidth,center}
\begin{tabular}{|c| c c c c c c c c c c c c| c|} 
 \hline
$d$ & $-1$ & $0$ & $1$ & $2$ & $3$ & $4$ &$5$  & $6$ & $7$ & $8$ & $9$ & $10$ & total\\
 \hline
$\#$ of $d$-faces & $1$ & $18$ & $144$ & $670$ & $2013$ & $4110$ & $5837$  & $5784$ & $3930$ & $1748$ & $459$ & $54$ & $24768$\\ 
 \hline
$\#$ of interior $d$-faces & $0$ & $0$ & $0$ & $0$ & $0$ & $0$ & $1$ & $12$ & $57$ & $128$ & $135$ & $54$ & $387$ \\  
 \hline
\end{tabular}
\end{adjustbox}
\end{center}
So multiplicity$=54$.
By \eqref{eq:Hilb1t} or  \eqref{eq:Hilb2t}, 
$${\rm Hilb}({\bf k}[X]/I_{\mathcal{Q},\mm,\uu}; t)
=\frac{1+7t+19t^2+19t^3+7t^4+t^5}{(1-t)^{11}}
=\frac{(1+t)(1+6t+13t^2+6t^3+t^4)}{(1-t)^{11}}. 
$$
\end{example}

\subsection{Secant varieties}\label{subsection:Secant varieties}
In the special case when $r=2$, $u=v$, we recover some results on $t$-secant varieties by Conca, De Negri, and Stojanac \cite[Theorem 3.2, Theorem 4.1, Proposition 4.7]{CNS}. Recall the notation therein:  $I_{t+1}=I_{t+1}(\mathcal{X}^{\{2\}})+I_{t+1}(\mathcal{X}^{\{3\}})$ is the defining ideal of the $t$-secant variety $\sigma_t(2,a,b)$ of the Segre variety ${\rm Seg}(2,a,b)$. This ideal is, in our terminology, the double determinantal ideal $I^{(2)}_{a,b,t,t}$. 
It is easy to obtain the following corollary:
\begin{corollary}[\cite{CNS}]
{\rm(i)} The natural generators of $I^{(2)}_{a,b,t,t}$ are a Gr\"obner basis (with respect to an appropriate monomial order).

{\rm(ii)} ${\bf k}[X]/I^{(2)}_{a,b,t,t}$ is a Cohen-Macaulay domain. 

{\rm(iii)} The corresponding simplicial complex $\Delta$ is shellable. 

{\rm(iv)} Each facet of $\Delta$ can be expressed by a collection of $t$ disjoint paths $F_1,\dots,F_t$ such that $F_i$ is a lattice path starting in $(1,b+h_i)$ and ending in $(a,h_i)$, from NW to SE, and walk along the direction $(1,0)$ or $(0,-1)$ in each step, for some integers $h_1,\dots,h_t$ satisfying $1\le h_1<\cdots<h_t\le b$.
\end{corollary}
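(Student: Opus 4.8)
The plan is to read off all four parts from the dictionary, recalled in the introduction, identifying $I^{(2)}_{a,b,t,t}$ with the bipartite determinantal ideal $I_{\mathcal{Q},\mm,\uu}$ for the quiver $\mathcal{Q}\colon 2\stackrel{2}{\to}1$, $\mm=(a,b)$, $\uu=(t,t)$. Under this identification $A_1=[X^{(1)}\,|\,X^{(2)}]$ has size $a\times 2b$ and $A_2=\left[\begin{smallmatrix}X^{(1)}\\ X^{(2)}\end{smallmatrix}\right]$ has size $2a\times b$; since an ideal of $(t+1)$-minors is unchanged under transposition, $I_{t+1}(\mathcal{X}^{\{2\}})=I_{t+1}(A_1)$ and $I_{t+1}(\mathcal{X}^{\{3\}})=I_{t+1}(A_2)$, so $I_{t+1}=I_{\mathcal{Q},\mm,\uu}$ and the natural generators of the two ideals coincide. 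Granting this, parts (i)--(iii) are immediate: (i) is Theorem~\ref{general}; the ``domain'' half of (ii) is Corollary~\ref{cor:general} and the ``Cohen--Macaulay'' half is Theorem~\ref{thm:complex} (equivalently Proposition~\ref{prop:vertex decomposable}); and (iii) is the shellability recorded in Proposition~\ref{prop:vertex decomposable} and Corollary~\ref{corollary:shelling order}.

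The content of the corollary is part (iv). By Theorem~\ref{thm:prime decomposition of I} the facets of $\Delta=\Delta_{\mathcal{Q},\mm,\uu}$ are exactly the concurrent vertex maps, so it suffices to match concurrent vertex maps for this quiver with the path families $(F_1,\dots,F_t)$ in the statement. I would do this through the unique straight road map $(\{H^1_p\}_{p=1}^t,\{V^2_q\}_{q=1}^t)$ attached to a concurrent vertex map $C$ by Proposition~\ref{prop:concurrent equiv}. Because $\mathcal{Q}$ has a single source and a single target, both families have exactly $t$ members, and Lemma~\ref{lem:straight 2 conditions} describes each $V^2_q$ as the concatenation, joined by straight vertical segments, of the connected blocks $P^k_{pq}\rightsquigarrow Q^k_{pq}$ living on the two pages. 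Transporting each such block from $A_2$ into the $a\times 2b$ array $A_1$ (its page-$1$ part occupying columns $1,\dots,b$ and its page-$2$ part occupying columns $b+1,\dots,2b$) and reconnecting it by the straight segments dictated by the $H^1_p$'s produces a single monotone south/west lattice path $F_q$ in $A_1$; straightness of the road map is exactly what makes these joins legal and keeps $F_1,\dots,F_t$ pairwise disjoint. The endpoints of $F_q$ come out to be $(1,b+h_q)$ and $(a,h_q)$, where $h_q$ records the column in which $V^2_q$ meets the left edge of $A_1$, and the nonintersection of $\{V^2_q\}$ translates into $1\le h_1<\cdots<h_t\le b$. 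Conversely, from a family $(F_1,\dots,F_t)$ one recovers $H^1_p$, $V^2_q$, and hence $C$, by the ``connect the dots'' recipe of Proposition~\ref{prop:concurrent equiv}. This whole picture is the $t$-secant specialization of the $r=1$ description in Example~\ref{eg:double det}, and the resulting facet count reproduces the multiplicity formula of Corollary~\ref{cor:Hilbert series}, matching the enumeration in \cite{CNS}.

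The main obstacle, and the step I expect to require the most care, is the bookkeeping in (iv): one must reconcile the conventions of the present paper (the nonstandard $x$/$y$ axes of \S\ref{section:concurrent maps}, the page structure, and the fact that $X^{(2)}$ sits to the right of $X^{(1)}$ in $A_1$ but below it in $A_2$) with the coordinates used in \cite{CNS}, and then verify \emph{in full} that the transport-and-reconnect operation is well defined precisely because every corner of a horizontal path lies on a vertical path and vice versa, and that disjointness, the endpoint data, and the range of the $h_q$ are preserved in both directions. Once the correspondence is pinned down, (iv) follows formally from Theorem~\ref{thm:prime decomposition of I} together with Lemma~\ref{contained in path}.
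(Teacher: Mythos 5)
Parts (i)--(iii) are handled exactly as in the paper (via Theorem~\ref{general}, Corollary~\ref{cor:general}, Theorem~\ref{thm:complex}, Proposition~\ref{prop:vertex decomposable}, Corollary~\ref{corollary:shelling order}); that part of your proposal is fine.

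For part (iv) your strategy at the high level is the same as the paper's --- pass from a facet to its unique straight road map $(\{H^1_p\},\{V^2_q\})$ via Proposition~\ref{prop:concurrent equiv} and read off the family $(F_1,\dots,F_t)$ --- but the specific construction you sketch, ``transport the blocks of $V^2_q$ from $A_2$ into $A_1$ and reconnect them by the straight segments dictated by the $H^1_p$'s,'' is not correct, and the problem is not merely bookkeeping. If you literally transport $V^2_q|_1$ to $V^1_q$ (columns $1,\dots,b$) and $V^2_q|_2$ to $V^1_{q+t}$ (columns $b+1,\dots,2b$) and use both in full, the resulting object is not a monotone SW--NE path: $V^1_q$ runs from $(1,b-t+q)$ down to $(a,h_q)$ and $V^1_{q+t}$ from $(1,b+h_q)$ down to $(a,b+q)$, and after descending one of them you would have to go back up the other. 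The paper's $F_q$ is more selective. In the notation of \eqref{eq:convenient notation for H and V}--\eqref{eq:P to Q} with $\alpha=1$, it uses only the \emph{lower} portion of $V^1_q$, from $P_{tq}$ up to $Q_{qq}$, then the stretch of $H^1_q$ from $Q_{qq}$ to $Q_{q,q+t}$ --- this stretch crosses the page boundary and includes the blocks $P_{q,q'}\rightsquigarrow Q_{q,q'}$ for $q+1\le q'\le q+t-1$, which lie on \emph{other} vertical paths $V^2_{q'}$, not on $V^2_q$ --- and then only the \emph{upper} portion of $V^1_{q+t}$, from $Q_{q,q+t}$ up to $Q_{1,q+t}$. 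The detour along $H^1_q$ is essential for monotonicity, and discarding the upper half of $V^1_q$ and lower half of $V^1_{q+t}$ is what makes the family $F_1,\dots,F_t$ disjoint (the $F_i$ partition the $2t^2$ blocks $P_{pq}\rightsquigarrow Q_{pq}$ into groups of $2t$, one per $i$, in a way that is not simply the partition by which $V^2_q$ a block lies on). You would also still need the facts the paper quotes from Lemma~\ref{lem:straight 2 conditions}/Lemma~\ref{lem:no corner} to place $P_{tq}$ in row $a$ and $Q_{1,q+t}$ in row $1$, plus the observation that $V^1_q$ and $V^1_{q+t}$ merge across the page boundary of $A_2$ to force $(Q_{1,q+t})_y=b+(P_{tq})_y$. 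Until the transport-and-reconnect picture is replaced by this one, the argument does not go through.
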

\begin{proof}
(i) follows from Theorem \ref{general}, (ii) and (iii) follow from Theorem \ref{thm:complex}. 
To show (iv), consider 
$\mathcal{Q}:2\stackrel{2}{\to}1, \quad A_1=[X^{(1)},X^{(2)}], \quad A_2=\begin{bmatrix}X^{(1)}\\X^{(2)}\end{bmatrix}.$
For $p=1,\dots,t$, let the segment joining $(c_p,b)$ and $(c_p,b+1)$ be the horizontal segment of $H^1_p$ in $A_1$ that crosses the two pages. 
For $q=1,\dots,t$, let the segment joining $(a,d_q)$ and $(a+1,d_q)$ be the vertical segment of $V^2_q$ in $A_2$ that crosses the two pages.  
We use our previous notation as in Figure \ref{fig:straight relabel} (taking $\alpha=1$) to denote the path $P_{pq}\rightsquigarrow Q_{pq}=H^1_p\cap V^1_q$ for $1\le p\le t$, $1\le q\le 2t$. Note that for $1\le p\le t$, $H^1_p$ contains no vertical segment between the two vertices $(a-t+p,1)$ and $P_{pp}$, $V^1_p$ contains no horizontal segment between the two vertices $(1,b-t+p)$ and $Q_{pp}$,  by Lemma \ref{lem:straight 2 conditions}. 
So in the first block,
for $p>q$, the points $P_{pq},Q_{pq}$ have the same $x$-coordinate $a-t+p$;
for $p<q$, the points $P_{pq},Q_{pq}$ have the same $y$-coordinate $b-t+q$. 
Similarly in the second block, where $t+1\le q\le 2t$,   
for $p>q-t$, the points $P_{pq},Q_{pq}$ have the same $y$-coordinate $q$;
for $p<q-t$, the points $P_{pq},Q_{pq}$ have the same $x$-coordinate $p$. 
For $1\le i\le t$, define 
$F_i$ to be the unique path that, if walking from SW to NE, contains the following subpaths in order:

$P_{pi}\rightsquigarrow Q_{pi}$ for $p=t,t-1,\dots,i$ (in decreasing order); 

$P_{iq}\rightsquigarrow Q_{iq}$ for $q=i,i+1,\dots,i+t$ (in increasing order); 

$P_{p,i+t}\rightsquigarrow Q_{p,i+t}$ for $p=i-1,i-2,\dots,1$ (in decreasing order);

\noindent In other words, if we walk in the direction from SW to NE, then $F_i$ starts at the point $P_{ti}$, follows $V^1_i$, then follows $H^1_i$, then follows $V^1_{t+i}$, and ends at the point $Q_{1,i+t}$. 
To be consistent with the convention in \cite{CNS}, we reverse the direction of $F_i$ by walking from NE to SW instead; then $F_i$ starts in $(1,b+h_i)=Q_{1,i+t}$, and ends in $(a,h_i)=P_{ti}$. Note that the $(Q_{1,i+t})_y = b + (P_{ti})_y$ because $\phi_2^{-1}\phi_1(V^1_i)$ and $\phi_2^{-1}\phi_1(V^1_{i+t})$ have to merge into a vertical path $V^2_i$ in $A_2$. Also note that $Q_{1,1+t},Q_{1,2+t},\dots,Q_{1,2t}$ are ordered  from left to right between $(1,b+1)$ and $(1,2b)$, so $1\le h_1< \cdots< h_t\le b$.  
\end{proof}
\begin{example}
Let $a=b=4, t=2$. An example of straight road map is given in Figure \ref{fig:secant}, where $h_1=2,h_2=4$. 

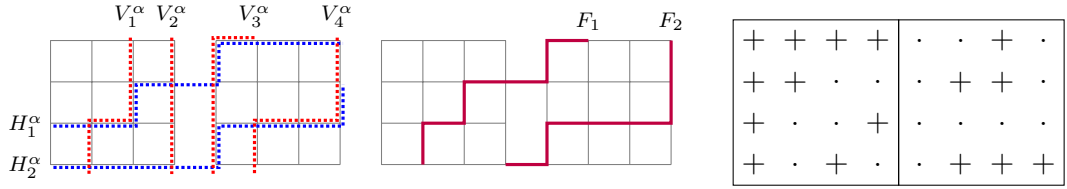
\begin{figure}[ht]
\begin{center}
\begin{tikzpicture}[scale=.55]

\draw  [gray] (0,0) grid (3,3);
\draw  [gray] (4,0) grid (7,3);

\begin{scope}[shift={(.07,-.07)}]
\draw [blue,densely dotted,very thick](0,1)--(2,1)--(2,2)--(4,2)--(4,3)--(7,3);
\draw [blue,densely dotted,very thick](0,0)--(4,0)--(4,1)--(7,1)--(7,2);

\draw (0,1) node [left] {\tiny $H^\alpha_1$};
\draw (0,0) node [left] {\tiny $H^\alpha_2$};
\end{scope}

\begin{scope}[shift={(-.07,.07)}]
\draw [red,densely dotted,very thick](2,3)--(2,1)--(1,1)--(1,-.3);
\draw [red,densely dotted,very thick](3,3)--(3,-.3);
\draw [red,densely dotted,very thick](5,3)--(4,3)--(4,-.3);
\draw [red,densely dotted,very thick](7,3)--(7,1)--(5,1)--(5,-.3);

\draw (2,3) node [above] {\tiny $V^\alpha_1$};
\draw (3,3) node [above] {\tiny $V^\alpha_2$};
\draw (5,3) node [above] {\tiny $V^\alpha_3$};
\draw (7,3) node [above] {\tiny $V^\alpha_4$};
\end{scope}

\begin{scope}[shift={(8,0)}]
\draw  [gray] (0,0) grid (3,3);
\draw  [gray] (4,0) grid (7,3);

\draw [purple,very thick](1,0)--(1,1)--(2,1)--(2,2)--(4,2)--(4,3)--(5,3);
\draw [purple,very thick](3,0)--(4,0)--(4,1)--(7,1)--(7,3);

\draw (5,3) node [above] {\tiny $F_1$};
\draw (7,3) node [above] {\tiny $F_2$};
\end{scope}

    \begin{scope}[shift={(17,0)}]
      \draw[draw=black] (-.5,-.5) rectangle ++(4,4);
      \draw[draw=black] (3.5,-.5) rectangle ++(4,4);
      \node at (0,3) {+};
      \node at (1,3) {+};
      \node at (2,3) {+};
      \node at (3,3) {+};
      \node at (4,3) {.};
      \node at (5,3) {.};
      \node at (6,3) {+};
      \node at (7,3) {.};
      \node at (0,2) {+};
      \node at (1,2) {+};
      \node at (2,2) {.};
      \node at (3,2) {.};
      \node at (4,2) {.};
      \node at (5,2) {+};
      \node at (6,2) {+};
      \node at (7,2) {.};
      \node at (0,1) {+};
      \node at (1,1) {.};
      \node at (2,1) {.};
      \node at (3,1) {+};
      \node at (4,1) {.};
      \node at (5,1) {.};
      \node at (6,1) {.};
      \node at (7,1) {.};
      \node at (0,0) {+};
      \node at (1,0) {.};
      \node at (2,0) {+};
      \node at (3,0) {.};
      \node at (4,0) {.};
      \node at (5,0) {+};
      \node at (6,0) {+};
      \node at (7,0) {+};
      \end{scope}

\end{tikzpicture}       
\end{center}
\caption{Left: an example of straight road map; Middle: the corresponding disjoint paths $F_i$'s; Right: the corresponding concurrent vertex map.}
\label{fig:secant}
\end{figure}
\end{example}

\subsection{Some questions to explore further}
There are many algebraic, geometric, or combinatorial questions we may ask about the bipartite determinantal ideals. To name a few:

\subsubsection{Redued Gr\"obner basis}
Recall that the natural generators of a classical determinantal ideal forms a reduced Gr\"obner basis \cite{Sturmfels}. Similar statements are false in general for bipartite determinantal varieties, or even for double determinantal varieties. Consider the double determinantal ideal $I^{(r)}_{m,n,u,v}$ with $r=2$, $m=n=2$, $u=v=1$. Denote 
$$X^{(1)}=\begin{bmatrix}a&b\\c&d\end{bmatrix},  X^{(2)}=\begin{bmatrix}a'&b'\\c'&d'\end{bmatrix}.
$$ 
The reduced Gr\"obner basis of $I^{(r)}_{m,n,u,v}$  is 
$$\{a'd'-b'c',cd'-dc',cb'-da', bd'-db',bc'-da',ac'-ca',ab'-ba',ad-bc,ad'-da'\}$$
Note that the last generator in the above list is not a natural generator. It is natural to ask

\noindent {\bf Question}: Can we combinatorially characterize the reduced Gr\"obner basis?

\subsubsection{Multiplicity formula in terms of determinants} The multiplicity of a determinantal ideal, which is the number of nonintersecting paths with certain fixed starting and ending points,  can be written as a determinant (see \cite{Herzog-Trung}). Similar formulas are found for more general ideals, for example see \cite{Krattenthaler-Prohaska} and the references therein. 

\noindent {\bf Question}: Can we find a multiplicity formula for a bipartite determinantal ideal in terms of determinants, or equivalently, a determinantal formula for the number of concurrent vertex maps?

\subsubsection{Gorenstein property} (We thank Aldo Conca  for suggesting the question on Gorenstein property.) 
 Given a Cohen-Macaulay standard graded domain, it has a symmetric $h$-polynomial if and only if  it is Gorenstein (by a theorem of Stanley; see \cite[Corollary 4.4.6]{BH}).  In Example \ref{eg:bipartite 234 to 1}, the $h$-polynomial is symmetric (also called palindromic); so ${\bf k}[X]/I_{\mathcal{Q},\mm,\uu}$ is Gorenstein for that example. In \cite[Conjecture 4.9]{CNS}, Conca, Negri and Stojanac proposed a conjecture about the Gorenstein property in their setting.

\noindent {\bf Question}: Can we explicitly identify all bipartite determinantal ideals with Gorenstein property?


\begin{thebibliography}{10}

\bibitem{Baetica}
Baetica, C. (2006). Combinatorics of determinantal ideals (p. x+141). Nova Science Publishers, Inc., Hauppauge, NY.

\bibitem{Bergeron-Billey}
N.~Bergeron, S.~Billey, 
RC-graphs and Schubert polynomials. 
Experiment. Math. 2 (1993), no. 4, 257--269. 


\bibitem{BLSWZ99}
A.~Bj\"orner, M.~Las Vergnas, B.~Sturmfels, N.~White, G.~M.~Ziegler, 
Oriented matroids, second ed., Cambridge University Press, Cambridge, 1999.


\bibitem{BCRV}
W.~Bruns, A.~Conca, C.~Raicu, M.~Varbaro, 
Determinants, Gröbner Bases and Cohomology,
Springer Monographs in Mathematics, 2022. 


\bibitem{BH}
W.~Bruns, J.~Herzog, Cohen-Macaulay rings. Cambridge Studies in Advanced Mathematics, 39. Cambridge University Press, Cambridge, 1993.

\bibitem{CNS}
A.~Conca, E.~De Negri, and \v{Z}. Stojanac.
\newblock A characteristic free approach to secant varieties of triple segre
  products. Algebr. Comb. 3 (2020), no. 5, 1011--1021. 


\bibitem{FK}
N.~Fieldsteel and P.~Klein.
Gr\"obner bases and the Cohen-Macaulay property of Li's double determinantal varieties. {\em Proc. Amer. Math. Soc. Ser. B}. \textbf{7} pp. 142--158 (2020)

 \bibitem{Knutson-Miller2004}
 A.~Knutson, E.~Miller, Subword complexes in Coxeter groups. Adv. Math. 184 (2004), no. 1, 161--176.
 
\bibitem{Knutson-Miller}
A.~Knutson, E.~Miller,
Gr\"obner geometry of Schubert polynomials. Ann. of Math. (2) 161 (2005), no. 3, 1245--1318. 


\bibitem{M2}
D.~Grayson, M.~Stillman, Macaulay2, a software system for research in algebraic geometry. (Available at http://www.math.uiuc.edu/Macaulay2/)


\bibitem{Harris}
J.~Harris, Algebraic geometry. (Springer-Verlag, New York,1992), A first course


 \bibitem{Herzog-Trung}
 J.~Herzog, N.~V.~Trung, Gr\"obner bases and multiplicity of determinantal and Pfaffian ideals. Adv. Math. 96 (1992), no. 1, 1--37.
 
 
\bibitem{Krattenthaler-Prohaska}
C.~Krattenthaler, M.~Prohaska,
A remarkable formula for counting nonintersecting lattice paths in a ladder with respect to turns.  
Trans. Amer. Math. Soc. 351 (1999), no. 3, 1015--1042. 

\bibitem{IL}
J.~Illian and L.~Li.
\newblock Gr\"obner basis for the double determinantal ideals.
\newblock arXiv:2305.01724


\bibitem{Li}
L.~Li.
\newblock {Nakajima's quiver varieties and triangular bases of rank-2 cluster algebras}. 
\newblock arXiv:2208.12307


\bibitem{LY2}
L.~Li, A.~Yong, Kazhdan-Lusztig polynomials and drift configurations, Algebra Number Theory 5 (2011), no. 5, 595--626.


\bibitem{MS}
E.~Miller and B.~Sturmfels.
\newblock {\em Combinatorial Commutative Algebra}, volume 227 of {\em Graduate
  Texts in Mathematics}.
\newblock Springer-Verlag, 2004.

\bibitem{Miro-Roig}
R.~Mir\'{o}-Roig, 
Determinantal ideals. 
Progress in Mathematics, 264. Birkh\"{a}user Verlag, Basel, 2008. xvi+138 pp. 


\bibitem{Narasimhan}
H.~Narasimhan.
\newblock The irreducibility of ladder determinantal varieties.
\newblock {\em J. Algebra}, 165:162--185, 1986.

\bibitem{Sage}
The Sage Developers, \emph{{S}ageMath, the {S}age {M}athematics {S}oftware {S}ystem ({V}ersion
9.2)},  2020, {\tt https://www.sagemath.org}.
  
\bibitem{Stanley}
R.~Stanley, Combinatorics and commutative algebra. Second edition. Progress in Mathematics, 41. Birkh\"{a}user Boston, Inc., Boston, MA, 1996. x+164 pp.
  
\bibitem{Sturmfels}
B.~Sturmfels, 
Gr\"obner bases and Stanley decompositions of determinantal rings. 
Math. Z. 205 (1990), no. 1, 137--144. 


\bibitem{Weyman}
Weyman, J. Cohomology of vector bundles and syzygies. (Cambridge University Press, Cambridge,2003)

\end{thebibliography}
\end{document}